\documentclass{article}
\usepackage[portrait,margin=1in]{geometry} 

\usepackage[utf8]{inputenc}
\usepackage{amsthm,amssymb,amsmath,bbm}

\usepackage{tikz}
\usetikzlibrary{positioning}
\usetikzlibrary{calc, matrix, patterns}
\usepackage{pgfplots}
\pgfplotsset{compat=1.10}
\usepgfplotslibrary{fillbetween}
\usetikzlibrary{mindmap}

\DeclareFontFamily{U}{mathx}{}
\DeclareFontShape{U}{mathx}{m}{n}{<-> mathx10}{}
\DeclareSymbolFont{mathx}{U}{mathx}{m}{n}
\DeclareMathAccent{\widehat}{0}{mathx}{"70}
\DeclareMathAccent{\widecheck}{0}{mathx}{"71}

\usepackage{caption,subcaption}
\usepackage{epsfig,graphicx,graphics,color,tikz,tikz-cd,tcolorbox}
\usepackage{enumerate,enumitem}
\usepackage[sort,noadjust]{cite}
\usepackage{hyperref}
\usepackage{todonotes}
\usepackage{newtxmath}
\usepackage[normalem]{ulem}
\hypersetup{
    colorlinks=true,       
    linkcolor=blue,        
    citecolor=magenta,         
    filecolor=magenta,     
    urlcolor=cyan,         
    linktoc=all
}

\def\final{0}  
\def\iflong{\iffalse}
\ifnum\final=0  
\newcommand{\kristof}[1]{{\color{red}[{\textbf{Kristóf:} \bf #1}]\marginpar{\color{red}*}}}
\newcommand{\marci}[1]{{\color{blue}[{\textbf{Marci:} \bf #1}]\marginpar{\color{blue}*}}}
\newcommand{\llaci}[1]{{\color{orange}[{\textbf{LLaci:} \bf #1}]\marginpar{\color{orange}*}}}
\newcommand{\tlaci}[1]{{\color{purple}[{\textbf{TLaci:} \bf #1}]\marginpar{\color{purple}*}}}
\newcommand{\greg}[1]{{\color{violet}[{\textbf{Greg:} \bf #1}]\marginpar{\color{violet}*}}}

\else 
\newcommand{\kristof}[1]{}
\newcommand{\marci}[1]{}
\newcommand{\llaci}[1]{}
\newcommand{\tlaci}[1]{}
\newcommand{\greg}[1]{}
\fi

\theoremstyle{plain}
\newtheorem{thm}{Theorem}[section]
\newtheorem{lem}[thm]{Lemma}
\newtheorem{cor}[thm]{Corollary}
\newtheorem{cl}[thm]{Claim}
\newtheorem{obs}[thm]{Observation}

\theoremstyle{definition}
\newtheorem{defn}[thm]{Definition}

\newtheorem{ex}[thm]{Example}

\newtheorem{rem}[thm]{Remark}

\newcommand{\R}{\mathbb{R}}
\newcommand{\Z}{\mathbb{Z}}
\newcommand{\N}{\mathbb{N}}
\newcommand{\Cay}{\mathrm{Cay}}

\newcommand*\diff{\mathop{}\!\mathrm{d}}

\newcommand{\cA}{\mathcal{A}}

\newcommand{\cG}{\mathcal{G}}

\newcommand{\cV}{\mathcal{V}}
\newcommand{\cQ}{\mathcal{Q}}
\newcommand{\cE}{\mathcal{E}}

\newcommand{\cF}{\mathcal{F}}
\newcommand{\cH}{\mathcal{H}}

\newcommand{\cT}{\mathcal{T}}

\newcommand{\fg}{\varphi}
\newcommand{\eps}{\varepsilon}

\tikzset{
		big dot/.style={
			circle, inner sep=0pt, 
			minimum size=1mm, fill=black
		}
	}

\title{Whitney's 2-isomorphism theorem for graphings}

\author{
Márton Borbényi
\and
Grigory Terlov
\and
László Márton Tóth
\\[-3.0ex] 
}

\date{}

\begin{document}

\maketitle
\begin{abstract}
We prove measurable analogues of Whitney’s classical theorems on weak isomorphisms of finite graphs. In the setting of locally finite graphings, we introduce a notion of weak isomorphism as an edge-measure-preserving Borel bijection that preserves cycles and hyperfinite subgraphs, modulo null sets. We first show a rigidity theorem, proving that for weakly 3-connected infinitely-ended graphings, every weak isomorphism is induced by an isomorphism of graphings.
To our knowledge, this gives the first general sufficient condition in measurable combinatorics for the existence of an isomorphism between two given graphings.
Next, we give a full measurable version of Whitney’s theorem, showing that every weak isomorphism between graphings can be implemented by countably many measurable Whitney operations, which we introduce in this setting. The proofs require new measurable-combinatorial tools, including a careful analysis of infinitely-ended subforests. This work further develops the limit theory of matroids recently initiated by Lovász. 

\end{abstract}

\setcounter{tocdepth}{2}\tableofcontents

\section{Introduction and main results}
\label{sec:uniqueness}

The main focus of measurable combinatorics is to study graph-theoretic properties of infinite, measurable graphs on standard Borel probability spaces. That is, the vertex set of the graph is the space $(X,\mu)$ itself, and the edge set is a symmetric Borel subset of $X^2$. Most commonly, the theory studies a particular subclass of such graphs called graphings, which are locally finite, measure-preserving measurable graphs, see Subsection~\ref{subsec:graphings} for the precise definitions.
Graphings are connected to finite combinatorics through the limit theory of sparse graphs, while measurable combinatorics in general is also closely related to other fields, including measurable dynamics, descriptive set theory, and distributed algorithms, see e.g.\ \cite{lovasz2012large,kechris2016descriptive,BernshteynAlgo}. While some results from finite combinatorics pass to the measurable setting with relatively little change, many others require additional assumptions or the introduction of new notions for their formulation, and reveal interesting new phenomena. The results of this paper, describing how weak isomorphisms between graphings are induced by actual isomorphisms, belong to the latter class.

They generalize two classical, finite graph theoretic theorems of Whitney that are fundamentally related to matroids.
Our work is also motivated by the recent emergence of a limit theory for matroids, initiated by Lovász \cite{lovasz2023submodular, lovasz2024matroid} and further developed by Bérczi, Lovász, and the first and last authors \cite{berczi2026cycle,berczi2024quotient, berczi2025convergent}. Although we can state our results without reference to matroids, we still see our work as one that further develops the limit theory of matroids.  In fact, the following definition of weak isomorphism for graphings is one of the main insights that the matroid perspective provided; it is an equivalent formulation of an isomorphism of the cycle matroids associated to the graphings, see Lemma~\ref{lem:eq_of_rankpres}. 

Recall that graphings carry measures on both their vertex and edge sets, which we assume to be finite throughout the paper, and understand properties of graphings up to the deletion of a nullset of components. A graphing $\cG$ is \textbf{hyperfinite} if, for every $\eps > 0$, there is a Borel subset of edges $E_0 \subseteq E(\cG)$ of a measure $\leq \eps$ such that its removal from $\cG$ yields finite components. We say a subset of edges is hyperfinite if the graph it spans is hyperfinite.

\begin{defn}[Weak isomorphism of graphings]\label{defn:weak_iso}
    A \textbf{weak isomorphism} of graphings $\cG_1$ and $\cG_2$ as an edge-measure preserving Borel bijection $\varphi: E(\cG_1) \to E(\cG_2)$ that preserves cycles and hyperfiniteness of edge subsets. 
\end{defn}

We also say that a countable, locally finite graph is \textbf{weakly $n$-connected} if the removal of $n-1$ vertices does not create a finite component. A graphing is weakly $n$-connected if a.e.\ connected component is. Observe that, somewhat counter-intuitively, an acyclic graphing with degrees at least 3 for every vertex is weakly 3-connected. Finally, an \textbf{isomorphism of graphings} is a measure preserving Borel bijection of the vertex sets that is a graph isomorphism up to a measure zero set of vertices. Our first main result extends Whitney's $3$-connected theorem in the $\infty$-ended case, that is, for graphings whose a.e.\ component has infinitely many ends.
\begin{thm}\label{thm:graph_isom}
    Let $\cG_1$ be a weakly $3$-connected and infinitely-ended graphing. Any weak isomorphism $\fg: E(\cG_1) \to E(\cG_2)$ is induced by an isomorphism of graphings.
\end{thm}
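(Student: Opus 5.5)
We plan to mimic Whitney's classical argument: in a 3-connected graph, a vertex star is characterized matroidally, namely as a bond $\delta(v)$ whose deletion leaves a connected graph (a nonseparating cocircuit). In our setting cycles are preserved, but since components are infinite, "connected after deletion" must be replaced by a condition visible to the cycle matroid together with hyperfiniteness. Concretely, we aim to characterize, for a.e.\ vertex $v$ of $\cG_1$, the finite edge set $\delta(v)$ by a Borel property of finite edge sets that $\fg$ preserves. We then define the vertex map $\psi(v)=w$ where $\fg(\delta(v))=\delta(w)$, and check that it is a measure-preserving Borel bijection inducing $\fg$.

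\textbf{Step 1 (local matroid characterization).} First, we note that $\fg$ preserves the span/closure operator on finite sets: $e\in\mathrm{cl}(F)$ iff $e$ lies in a cycle contained in $F\cup\{e\}$. Hence $\fg$ preserves finite cocircuit-like sets, i.e.\ finite minimal edge sets $D$ meeting every cycle... but in an infinite graph a finite cut can separate two infinite parts, and such a cut is \emph{not} detected by cycles alone (it is a "topological" cocircuit, orthogonal to all finite cycles even though infinite double rays cross it). This is exactly where hyperfiniteness must enter. Our plan is to use weak 3-connectivity and infinitely many ends: removing $\delta(v)$ from the component leaves no finite pieces except $\{v\}$. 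We will characterize vertex stars as the finite sets $D$ that are minimal subject to being orthogonal to all finite cycles and such that $E\setminus D$ restricted to the component "does not split off a hyperfinite-detectable piece." To make the latter measurable and $\fg$-invariant, we will use a Borel family of infinitely-ended subforests: given an edge set $F$ spanning a forest, whether it has infinitely many ends in a.e.\ component should be expressible via hyperfiniteness (a forest all of whose components are at most 2-ended is hyperfinite, while infinitely-ended pieces of positive measure are not). Thus we expect to show that a finite $D$ orthogonal to cycles is a vertex star iff removing it does not separate the component into two sides each carrying a non-hyperfinite (infinitely-ended) subforest. Weak 3-connectivity prevents 2-vertex separations from producing bonds with two finite-ish sides, in the manner of the classical proof.

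\textbf{Step 2 (defining and checking $\psi$).} Once stars are characterized by a $\fg$-invariant Borel property on a conull invariant set, $\fg(\delta(v))$ is a star $\delta(w)$ of $\cG_2$ (we must also show $\cG_2$ inherits the needed properties, via $\fg^{-1}$, or else prove the characterization only in one direction plus counting). Two adjacent vertices share exactly their common edges, so $\psi$ preserves adjacency; injectivity follows since distinct vertices of degree $\ge 3$ have distinct stars. Borelness follows from the Lusin--Novikov uniformization, since stars form a countable-to-one Borel family. Measure preservation follows from edge-measure preservation of $\fg$ via the mass transport principle: the vertex measure is recovered from the edge measure by $\mu_E(A)=\int \deg_A$, and $\psi$ preserves degrees.

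\textbf{Main obstacle.} The crux is Step 1: finding a characterization of stars that uses only cycles and hyperfiniteness and works almost everywhere, since pointwise arguments fail. We would first try to prove that every non-star finite bond $D$ in a weakly 3-connected infinitely-ended component separates two infinitely-ended parts, and then show that the Borel set of such separations, if charged positively, yields a non-hyperfinite subset whose image under $\fg$ forces a corresponding separation in $\cG_2$. This requires the careful measurable analysis of infinitely-ended subforests.
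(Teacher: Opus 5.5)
\textbf{There is a genuine gap in Step 1.} Step 1 carries the whole weight of the theorem, and the criterion you propose there is false in exactly the setting the theorem is about. Weak $3$-connectivity does not make $\mathcal{G}-v$ connected, let alone $2$-connected. Take a $3$-regular treeing, which is weakly $3$-connected and infinitely-ended.
\begin{itemize}
\item It has no cycles, so every finite edge set is orthogonal to all cycles. The minimal such sets are single edges, so stars are never minimal in your sense.
\item Deleting $\delta(v)$ leaves $v$ isolated together with three infinitely-ended subtrees. So your test (``$D$ is a star iff removing it does not separate the component into two sides each carrying an infinitely-ended subforest'') rejects every star.
\end{itemize}
The classical ``nonseparating cocircuit'' description of stars uses that $G-v$ is $2$-connected. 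That is precisely strong $3$-connectivity, which is not assumed here.

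There is a second gap. Even with a correct pointwise criterion, $\varphi$ only preserves ranks, equivalently cycles and hyperfiniteness, of Borel edge sets, and only modulo null sets. You give no mechanism for turning a property of individual finite sets $D$ into such a statement. You flag this as the main obstacle, but it remains open in the proposal. Your Step 2 (Borelness of the vertex map, measure preservation via degrees) is fine, but it is the easy part.

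\textbf{What the paper does instead.} It never characterizes stars. It shows that wedges are mapped to wedges; stars then go to stars because cycles are preserved, and the vertex map is read off from that. The rank-visible substitute for cycles is a \emph{superfluous} edge set, meaning a countable union of sets $F$ with $\rho(E)=\rho(E\setminus F)$. Superfluousness, and having no disposable edges at all, are preserved by $\varphi$. Lemma~\ref{lem:forest_superfluous} shows that among acyclic sets the superfluous ones are exactly the leafless infinitely-ended forests.

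The hard measurable-combinatorial input is Theorem~\ref{thm:cycle_or_forest}. For a sparse Borel class $I$ of wedges, it produces in $\mathcal{G}\setminus\widehat{I}$ either disjoint cycles or a leafless infinitely-ended forest $\mathcal{F}$ covering the endpoints. The construction uses trifurcation cells, Menger-type local trees, and a pruned FMSF. The argument then runs as follows.
\begin{itemize}
\item The left edges $L(I)$ are leaves of $\mathcal{F}\cup L(I)$, so they have no disposable edges there. Hence their images cannot have both endpoints on $\varphi(\mathcal{F})$.
\item The set $\mathcal{F}\cup L(I)\cup R(I)$ is superfluous, so its image is leafless.
\item Together these force $\varphi(e)$ and $\varphi(f)$ to share a vertex.
\end{itemize}
The cycle alternative handles bifurcation configurations like Example~\ref{ex:wedge_cycle}, using the classical chord argument.
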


The infinite-ended assumption in Theorem~\ref{thm:graph_isom} is necessary; we provide a $2$-ended counterexample in Subsection~\ref{subsec:examples}. We note, however, that if we assumed $3$-connectivity of $\cG_1$ in the traditional sense (that is, a.e.\ component remains connected after deleting any 2 vertices), the conclusion follows from the finite proof (sketched briefly in Section~\ref{subsec:ideas_and_overview}). To pronounce the distinction, we will refer to this traditional notion as \textbf{strong $3$-connectivity} of $\cG_1$. If $\cG_1$ is $1$-ended and weakly $3$-connected, then it is also strongly $3$-connected; therefore the conclusion of Theorem~\ref{thm:graph_isom} holds. We choose to state it only for $\infty$-ended because that is where our new tools are necessary. In particular, infinite-ended leafless subforests of $\cG_1$ will play a key role.

To our knowledge, this is the first work in measurable combinatorics to establish a general sufficient condition for the existence of an isomorphism between two arbitrary graphings. The only prior related results of which we are aware come from measured group theory, where the role of isomorphism is somewhat different. First, there are celebrated results on \emph{orbit equivalence superrigidity} that establish conjugacy of probability measure preserving group actions \cite{furman1999orbit,monod2006orbit, popa2007cocycle}, see e.g.\ \cite{furman2011survey} for a survey. Consequently, this property yields isomorphisms for the graphings whose connected components are the orbits and edges are also measurably decorated by group elements encoding a group action (also known as Schreier graphs). Another way in which graphing isomorphism appears in measured group theory is under the name of \emph{isometric orbit equivalence} \cite{joseph2022isometric}. In these results, however, isomorphism enters either negatively, by showing that the Schreier graphs of certain actions are never isomorphic even after forgetting the labels; or existentially, by showing that if the Cayley graphs of two groups are isomorphic after forgetting the labels, one can choose suitable probability measure preserving actions to obtain measurably isomorphic Schreier graphings. Further related negative examples can also be found in \cite{Weilacher}. These results therefore do not address the problem of determining when two given graphings are isomorphic.

The lack of results on unlabeled graphing isomorphism might be surprising at first, as it is a natural question to study. We believe there are two main reasons why this has been absent in the literature. First, establishing an isomorphism seems to be difficult in general: the bijection one needs to build between the vertex sets not only has to preserve edges, but it has to be a Borel map as well. Even when some measurability assumption is included (such as in the definition of orbit equivalence in superrigidity results, or weak isomorphism in this work), the proofs still require a significant effort. Second, although graph isomorphism is a natural notion from the point of view of combinatorics, it is often too rigid for the purposes of adjacent areas. For instance, measured group theory typically focuses on orbit equivalence relations arising from Borel actions of groups on probability spaces. Graphs spanning the equivalence classes then serve as an auxiliary structure, often regarded as objects that may be modified locally while preserving the underlying connectivity (see e.g.\ \cite{kechris2004topics,kechris2024theory}). Another example stems from sparse graph limit theory: while limits of convergent sequences of finite graphs can be represented by graphings, the limit is not unique up to isomorphism, only up to local isomorphism \cite{lovasz2012large}.

The name \emph{weak isomorphism} comes from finite combinatorics. There, a weak isomorphism, or $2$-isomorphism, between two finite graphs $G=(V,E)$ and $H=(U,F)$ means a cycle-preserving bijection $\theta: E\to F$ between their sets of edges. 
(Comparing this definition to Definition~\ref{defn:weak_iso}, one can see that in the measurable setting hyperfinite subgraphs play an additional role. This is precisely the insight provided by matroid theory for graphings.)
Any isomorphism between finite graphs $G$ and $H$ induces a weak isomorphism on the edge sets, but the converse does not hold. For example, if $G$ and $H$ are both cycles of length $n$, any permutation between the edge sets is a weak isomorphism, but it is not induced by a graph isomorphism. Nevertheless, weak isomorphisms are well understood: Whitney~\cite{whitney19332} showed that any weak isomorphism from $G$ to $H$ can be implemented by repeated applications of the three operations illustrated in Figure~\ref{fig:weakly}: splitting or joining at cut vertices,
as well as performing so-called \textbf{Whitney twists}. In a Whitney twist the graph is split into two components along a vertex-cut of size two (keeping both vertices in both parts), and the components are glued back again with the same two vertices but with a switched pairing. Performing a sequence of these operations naturally induces a cycle-preserving bijection $\theta$ between the edges of the original and the resulting graph. In this case, we say that $\theta$ is \textbf{implemented} by the sequence of Whitney operations. Note that a $3$-connected graph admits no vertex cuts of size at most 2. Therefore, as a special case of Whitney's result, one can also obtain that any weak isomorphism of a $3$-connected finite graph $G$ to any $H$ is induced by a graph isomorphism. 

\begin{figure}[h!]
\centering
\begin{subfigure}[t]{0.32\textwidth}
\begin{center}
	\begin{tikzpicture}[thick, scale=0.5]
    \fill[pattern=north east lines, opacity=.25, inner sep=1pt] plot[smooth cycle] coordinates {(0,0) (2,0.5) (3,2) (1.5,3) (-0.5,2)};
    \draw[thick] plot[smooth cycle] coordinates {(0,0) (2,0.5) (3,2) (1.5,3) (-0.5,2)};
    \draw[gray,thick] plot[smooth cycle] coordinates {(0.4,0.9) (1.2,1.1) (1.6,1.7) (1.0,2.1) (0.2,1.7)};
    
    \fill[pattern=north east lines, opacity=.25, inner sep=1pt] plot[smooth cycle] coordinates {(6,0) (4,0.5) (3,2) (4.5,3) (6.5,2)};
    \draw[thick] plot[smooth cycle] coordinates {(6,0) (4,0.5) (3,2) (4.5,3) (6.5,2)};
    \draw[gray,thick] plot[smooth cycle] coordinates {(5.6,0.9) (4.8,1.1) (4.4,1.7) (5.0,2.1) (5.8,1.7)};
    \node[big dot,] at (3,2) {};

    \draw [dashed,->] (3,0) -- (3,-2);

    \fill[pattern=north east lines, opacity=.25, inner sep=1pt] plot[smooth cycle] coordinates {(-1,-5) (1,-4.5) (2,-3) (0.5,-2) (-1.5,-3)};
    \draw[thick] plot[smooth cycle] coordinates {(-1,-5) (1,-4.5) (2,-3) (0.5,-2) (-1.5,-3)};
    \draw[gray,thick] plot[smooth cycle] coordinates {(-0.6,-4.1) (0.2,-3.9) (0.6,-3.3) (0,-2.9) (-0.8,-3.3)};
    \node[big dot,] at (2,-3) {};
    
    \fill[pattern=north east lines, opacity=.25, inner sep=1pt] plot[smooth cycle] coordinates {(7,-5) (5,-4.5) (4,-3) (5.5,-2) (7.5,-3)};
    \draw[thick] plot[smooth cycle] coordinates {(7,-5) (5,-4.5) (4,-3) (5.5,-2) (7.5,-3)};
    \draw[gray,thick] plot[smooth cycle] coordinates {(6.6,-4.1) (5.8,-3.9) (5.4,-3.3) (6.0,-2.9) (6.8,-3.3)};
    \node[big dot,] at (4,-3) {};
    \end{tikzpicture}	
\end{center}
\caption{
Split.
}
\label{fig:opa}
\end{subfigure}\hfill
\begin{subfigure}[t]{0.32\textwidth}
\begin{center}
    \begin{tikzpicture}[thick, scale=0.5]
    \fill[pattern=north east lines, opacity=.25, inner sep=1pt] plot[smooth cycle] coordinates {(-1,0) (1,0.5) (2,2) (0.5,3) (-1.5,2)};
    \draw[thick] plot[smooth cycle] coordinates {(-1,0) (1,0.5) (2,2) (0.5,3) (-1.5,2)};
    \draw[gray,thick] plot[smooth cycle] coordinates {(-0.6,0.9) (0.2,1.1) (0.6,1.7) (0,2.1) (-0.8,1.7)};
    \node[big dot,] at (2,2) {};

    \fill[pattern=north east lines, opacity=.25, inner sep=1pt] plot[smooth cycle] coordinates {(7,0) (5,0.5) (4,2) (5.5,3) (7.5,2)};
    \draw[thick] plot[smooth cycle] coordinates {(7,0) (5,0.5) (4,2) (5.5,3) (7.5,2)};
    \draw[gray,thick] plot[smooth cycle] coordinates {(6.6,0.9) (5.8,1.1) (5.4,1.7) (6.0,2.1) (6.8,1.7)};
    \node[big dot,] at (4,2) {};

    \draw [dashed,->] (3,0) -- (3,-2);

    \fill[pattern=north east lines, opacity=.25, inner sep=1pt] plot[smooth cycle] coordinates {(0,-5) (2,-4.5) (3,-3) (1.5,-2) (-0.5,-3)};
    \draw[thick] plot[smooth cycle] coordinates {(0,-5) (2,-4.5) (3,-3) (1.5,-2) (-0.5,-3)};
    \draw[gray,thick] plot[smooth cycle] coordinates {(0.4,-4.1) (1.2,-3.9) (1.6,-3.3) (1.0,-2.9) (0.2,-3.3)};

    \fill[pattern=north east lines, opacity=.25, inner sep=1pt] plot[smooth cycle] coordinates {(6,-5) (4,-4.5) (3,-3) (4.5,-2) (6.5,-3)};
    \draw[thick] plot[smooth cycle] coordinates {(6,-5) (4,-4.5) (3,-3) (4.5,-2) (6.5,-3)};
    \draw[gray,thick] plot[smooth cycle] coordinates {(5.6,-4.1) (4.8,-3.9) (4.4,-3.3) (5.0,-2.9) (5.8,-3.3)};
    \node[big dot,] at (3,-3) {};
\end{tikzpicture}
\end{center}
\caption{
Join.
}
\label{fig:opb}
\end{subfigure}\hfill
\begin{subfigure}[t]{0.32\textwidth}
\begin{center}
	\begin{tikzpicture}[thick, scale=0.5]
    \fill[pattern=north east lines, opacity=.25, inner sep=1pt] plot[smooth cycle] coordinates {(0,0) (2,0.5) (3,1) (2.3,1.5) (3,2) (1.5,3) (-0.5,2)};
    \draw[thick] plot[smooth cycle] coordinates {(0,0) (2,0.5) (3,1) (2.3,1.5) (3,2) (1.5,3) (-0.5,2)};
    \draw[gray,thick] plot[smooth cycle] coordinates {(0.1, 0.9) (0.9, 1.1) (1.3, 1.7) (0.7, 2.1) (-0.1, 1.7)};
    \draw[gray,thick] plot[smooth] coordinates {(3,1) (2,1) (1.5,1.5) (2,2.2) (3,2)};

    \fill[pattern=north east lines, opacity=.25, inner sep=1pt] plot[smooth cycle] coordinates {(6,0) (4,0.5) (3,1) (3.7,1.5) (3,2) (4.5,3) (6.5,2)};
    \draw[thick] plot[smooth cycle] coordinates {(6,0) (4,0.5) (3,1) (3.7,1.5) (3,2) (4.5,3) (6.5,2)};
    \draw[gray,thick] plot[smooth cycle] coordinates {(5.9,0.9) (5.1,1.1) (4.7,1.7) (5.3,2.1) (6.1,1.7)};
    \draw[gray,thick] plot[smooth] coordinates {(3,1) (4,1) (4.5,1.5) (4,2.2) (3,2)};
    \node[big dot] at (3,2) {};
    \node[big dot] at (3,1) {};
    
    \draw [dashed,->] (3,0) -- (3,-2);

    \fill[pattern=north east lines, opacity=.25, inner sep=1pt] plot[smooth cycle] coordinates {(0,-5) (2,-4.5) (3,-4) (2.3,-3.5) (3,-3) (1.5,-2) (-0.5,-3)};
    \draw[thick] plot[smooth cycle] coordinates {(0,-5) (2,-4.5) (3,-4) (2.3,-3.5) (3,-3) (1.5,-2) (-0.5,-3)};
    \draw[gray,thick] plot[smooth cycle] coordinates {(0.1,-4.1) (0.9,-3.9) (1.3,-3.3) (0.7,-2.9) (-0.1,-3.3)};
    \draw[gray,thick] plot[smooth] coordinates {(3,-4) (2,-4) (1.5,-3.5) (2,-2.8) (3,-3)};
    
    \fill[pattern=north east lines, opacity=.25, inner sep=1pt] plot[smooth cycle] coordinates {(6,-2) (4,-2.5) (3,-3) (3.7,-3.5) (3,-4) (4.5,-5) (6.5,-4)};
    \draw[thick] plot[smooth cycle] coordinates {(6,-2) (4,-2.5) (3,-3) (3.7,-3.5) (3,-4) (4.5,-5) (6.5,-4)};
    \draw[gray,thick] plot[smooth cycle] coordinates {(5.9,-2.9) (5.1,-3.1) (4.7,-3.7) (5.3,-4.1) (6.1,-3.7)};
    \draw[gray,thick] plot[smooth] coordinates {(3,-3) (4,-3) (4.5,-3.5) (4,-4.2) (3,-4)};
    \node[big dot] at (3,-3) {};
    \node[big dot] at (3,-4) {};
    \end{tikzpicture}	
\end{center}
\caption{
Whitney twist.
}
\label{fig:opc}
\end{subfigure}
\caption{Illustration of Whitney's theorem. Grey lines show how the cycles are transformed for each operation.}
\label{fig:weakly}
\end{figure}

In Section~\ref{subsec:whitney_operations}, we also introduce \emph{Whitney operations for graphings}. They are similar to the finite operations, but require some care to set up correctly in the measurable setting. We define 6 operations. First, we have analogues of the finite Whitney operations (performed in infinitely many places in the graphing simultaneously), which include a split or a join along cut vertices that separate disjoint \emph{finite} components from possibly infinite connected components of a graph; or twisting a finite part of the graph at a cut-vertex-pair (for which again at least one side is finite). Second, we can perform versions of these operations by splitting, joining, or twisting simultaneously along $2$-ended components. See Section~\ref{subsec:whitney_operations} for a detailed discussion. We build on Theorem~\ref{thm:graph_isom} to prove a full generalization of Whitney's theorem for graphings.

\begin{thm}\label{thm:Whitney_2-isom_intro}
   Let $\cG_1$ and $\cG_2$ be graphings and $\varphi:E(\cG_1) \to E(\cG_2)$ a weak isomorphism. Then $\varphi$ can be implemented by performing countably many Whitney operations. 
\end{thm}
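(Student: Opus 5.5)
We mirror Whitney's finite proof: decompose both graphings into pieces on which $\fg$ is rigid, apply Theorem~\ref{thm:graph_isom} or its finite/one-ended analogues on those pieces, and then reassemble the pieces with Whitney operations. The decomposition is into blocks and then into $3$-connected pieces (an SPQR-type tree decomposition), done measurably on each component and organised by the \emph{type} of each piece: finite, one-ended, two-ended, or infinitely-ended.

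\textbf{Step 1: reduce to $2$-connected blocks.} On a.e.\ component of $\cG_1$, look at the block--cut-vertex tree. A cut vertex separating off a finite part is handled as a measurable family of splits. Choices of cut vertices can be made Borel, for instance by splitting all finite-side cut vertices simultaneously, layer by layer in the finite depth of the block tree. Infinite blocks meeting along cut vertices with both sides infinite are handled by the two-ended and general split operations. After countably many splits, both graphings become disjoint unions of $2$-connected blocks, each block possibly duplicated at former cut vertices. Blocks are characterised by cycles: two edges lie in the same block iff some cycle contains both. Hence $\fg$ maps blocks of $\cG_1$ bijectively onto blocks of $\cG_2$, and matching splits on the $\cG_2$ side reduces the problem to $2$-connected graphings.

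\textbf{Step 2: decompose $2$-connected pieces along $2$-cuts.} Inside a $2$-connected component, consider the $2$-vertex-cuts that separate off a finite part. Take the maximal such finite "bags" and twist or split them away. Some care is needed: nested and crossing $2$-cuts mean we should use the canonical, Tutte-style decomposition, which is invariant and therefore Borel. What remains are weakly $3$-connected pieces, cycles, and bonds (bundles of parallel edges), glued in a tree pattern. The pieces then split into two kinds.
\begin{itemize}
\item \emph{Finite pieces, infinitely-ended weakly $3$-connected pieces, and one-ended pieces.} Finite pieces are handled by Whitney's finite theorem in a measurable way: there are countably many isomorphism types, and the operations are chosen by a Borel selection. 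Infinitely-ended weakly $3$-connected pieces are handled by Theorem~\ref{thm:graph_isom}. One-ended pieces are strongly $3$-connected, so the finite argument applies.
\item \emph{Two-ended pieces.} Here the two-ended Whitney operations are needed, since Subsection~\ref{subsec:examples} shows rigidity fails. The plan is to identify the two-ended structure, such as bi-infinite "ladders" of $2$-cuts, via cycle-and-hyperfiniteness data, which $\fg$ preserves because two-ended graphings are hyperfinite. Along such a ladder, the $2$-cuts form a $\Z$-indexed sequence, and $\fg$ is realised by a simultaneous twist/split/join along these.
\end{itemize}

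\textbf{Step 3: reassemble and check the result.} Perform the corresponding joins on the $\cG_2$ side to glue the pieces back. The countably many operations are indexed by depth in the decomposition and by piece type. Finally, verify that the composition of all these operations is a well-defined Borel map that converges, in the sense that each edge is moved only finitely often (or stabilises), and that it agrees with $\fg$ almost everywhere.

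The main obstacle I expect is in Step 2: showing that $\fg$ respects the decomposition, namely that pieces of $\cG_1$ correspond to pieces of $\cG_2$ of the same type. Cycles alone only determine the $2$-separations up to infinite phenomena, so the hyperfiniteness condition must be used to tell two-ended pieces from infinitely-ended ones. A related difficulty is making the infinitely many nested twists converge measurably. My first attempt would characterise two-ended pieces as the maximal hyperfinite unions of "$2$-separation classes" of edges, and control convergence through the finiteness of the edge measure.
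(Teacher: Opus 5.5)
\textbf{Gap: the infinitely-ended components.} The approach breaks down there. In Step~1 you split at cut vertices both of whose sides are infinite (``two-ended and general split operations'') until only $2$-connected blocks remain. In Step~2 you plan simultaneous twists along $\Z$-indexed families of $2$-cuts in ``two-ended pieces''.

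Neither is one of the Whitney operations of Subsection~\ref{subsec:whitney_operations}. The only splits are finite vertex splits, where the split-off side is finite, and $2$-ended infinite splits, where the whole component is $2$-ended. Twists are restricted in the same way.

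This restriction is essential, not bookkeeping. The operations are chosen so that each one induces a weak isomorphism (Lemma~\ref{lem:locally_finite_sequence_gives_weak_isom}), and your operations do not. In Example~\ref{ex:traingle_tree}, splitting at cut vertices with two infinite sides turns the free product of triangles into hyperfinite lines of triangles. A $3$-regular treeing would become a hyperfinite matching. The ladder-in-a-tree example in Subsection~\ref{subsec:examples} shows that twisting at $2$-cuts with two infinite sides inside an $\infty$-ended component likewise need not preserve hyperfiniteness.

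So an $\infty$-ended component can only be reduced to a \emph{weakly} $2$-connected one, and there your block argument has no content. For a treeing the blocks are single edges. Since such graphings may have no cycles at all, cycle data does not even show that $\fg$ maps components into components.

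\textbf{Missing machinery for weakly $2$-connected $\infty$-ended graphings.}
\begin{itemize}
\item The paper only splits off finite pieces. It then decomposes into maximal bananas: finite connected edge sets with $|\partial B|=2$. Your ``maximal finite bags'' are the right instinct. The quotient $\texttt{Ban}(\cG_1)$ is weakly $3$-connected (Lemma~\ref{lem:banana_3-connected}).
\item To apply Theorem~\ref{thm:graph_isom} to $\texttt{Ban}(\cG_1)$, one realises it as a minor by contracting an acyclic set $C$ with finite components. One must then prove that $\fg(C)$ also has finite components (Lemma~\ref{lem:image_of_finite_is_finite}, which uses the leafless-forest covers of Theorem~\ref{thm:union_of_inf_ended_leafless}). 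Only then is the corresponding minor of $\cG_2$ a graphing.
\item Theorem~\ref{thm:graph_isom} applied to this minor gives preservation of components and bananas (Theorem~\ref{thm:connected_preserving}, Corollary~\ref{cor:bananas_preserved}). Preservation of numbers of ends (Corollary~\ref{cor:preserve_no_ends}) then separates the $\infty$-ended components from the rest.
\item To run the finite Whitney theorem inside a banana, you need $\fg$ to be cycle-preserving on $\overline{B}$. That is, $\fg$ must map paths between the two vertices of $\partial B$ to paths between those of $\partial\fg(B)$. With no cycles through the rest of the graph, this again comes from infinitely-ended leafless forests (Lemma~\ref{lem:banana_path_preserving}).
\end{itemize}
Your suggested use of ``maximal hyperfinite unions of $2$-separation classes'' does not supply any of these steps.

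\textbf{What is fine.} Your Tutte-style plan is essentially what the paper does for the strongly $2$-connected $0$-, $1$- and $2$-ended components, via Thomassen and Droms--Servatius--Servatius (Lemma~\ref{lemma:tutte_decomp_preserved}).
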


We note that Theorems~\ref{thm:graph_isom} and \ref{thm:Whitney_2-isom_intro} go beyond extending Whitney's result to infinite, locally finite graphs, as the isomorphism and the Whitney operations need to be measurable. Some previous results for locally finite graphs, which we also use as tools, are recalled in Section~\ref{subsec:locally_finite_tools}. We emphasize that these earlier results rely only on the cycle-preserving property of the edge bijections, and accordingly, they are only useful for us in the $1$- and $2$-ended case of Theorem~\ref{thm:Whitney_2-isom_intro}, where hyperfiniteness is automatically preserved. To derive our results we develop completely different, novel tools to treat the infinite-ended case.

\subsection{Cycle matroids and the rank function} \label{subsec:cycle_matroids_and_rank}

Matroid theory generalizes the notion of linear independence among finite sets of vectors in a vector space. It acts as a general framework for treating a large class of individual problems in finite combinatorics and optimization. In graph theory, matroids show up behind many classical theorems, for example, on matchings and spanning trees. They can be defined equivalently through various sets of axioms on their independent sets, bases, cycles, or their rank function. Given a finite \emph{ground set} $J$ and $\mathcal{I} \subseteq2^J$ a collection of \emph{independent sets}, we say $M=(J,\mathcal{I})$ is a \textbf{matroid} if it satisfies \textbf{the independence axioms}:
\begin{enumerate}
    \item the downward-closeness: $\emptyset \in \mathcal{I}$, $(A \in \mathcal{I}, B \subseteq A) \Rightarrow B \in \mathcal{I}$,
    \item the exchange property: $(A,B \in \mathcal{I}, \ |B|>|A|) \Rightarrow \exists b \in B\setminus A : A \cup \{b\} \in \mathcal{I}$.
\end{enumerate}
The \textbf{rank function} $r_M\colon2^J\to\mathbb{Z}_+$ of the matroid is defined by $$r_M(A)=\max\{|B| ~\colon~ B\subseteq A, \ B \in \mathcal{I}\}.$$ It satisfies the following \textbf{rank axioms}: 
\begin{enumerate}
    \item cardinality bound: $\forall X \subseteq M, \ 0\leq r(X)\leq |X|$,
    \item monotonicity: $X\subseteq Y\Rightarrow r(X)\leq r(Y)$,
    \item submodularity: $\forall X,Y \subseteq M, \ r(X)+r(Y)\geq r(X\cap Y)+r(X\cup Y)$.
\end{enumerate}
These axioms characterize rank functions of matroids, in the sense that, if $(J,r)$ satisfies the rank axioms, then $\mathcal{I}=\{X \subseteq J ~\colon~ r(X) = |X|\}$ satisfies the independence axioms. Note that the cardinality bound implies $r(\emptyset)=0$.

The \emph{cycle matroid} of a finite graph $G$ has ground set $J=E(G)$, and $\mathcal{I}$ consists of all acyclic subsets of $E(G)$. Equivalently, the rank function is given by $$r(X)=|V(G)|-\#\{\textrm{$X$-connected components of $G$}\}.$$ The cycle matroid carries a lot of information about the graph, but not everything. In particular, two non-isomorphic graphs may have isomorphic cycle matroids. For example, the cycle matroids of two trees that have the same number of edges are isomorphic -- they both coincide with the free matroid $M=([m],2^{[m]})$ 
where $m$ denotes the number of edges. Note that an isomorphism of cycle matroids is the same object as a weak isomorphism between the graphs. In this light, Whitney's results can be phrased as follows.

\begin{thm}[Whitney, \cite{whitney19332}]
The following statements hold
\begin{enumerate}
    \item 3-connected finite graphs are uniquely determined by their cycle matroid.
    \item More generally, two graphs have isomorphic cycle matroids if and only if one can be transformed into the other by a sequence of splits, joins, and Whitney twists, which together implement the matroid isomorphism.
\end{enumerate}

We refer to the first as the \emph{rigidity} result, and to the second as the \emph{general} result.
\end{thm}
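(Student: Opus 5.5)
This is Whitney's classical theorem for finite graphs, and I would follow the standard route, proving the rigidity statement (1) first and then deducing the general statement (2) by induction along a decomposition into 3-connected pieces.

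\emph{Rigidity.} Let $\theta\colon E(G)\to E(H)$ be cycle-preserving, with $G$ 3-connected and without isolated vertices. The plan is to reconstruct vertices purely from the cycle matroid.
\begin{enumerate}
\item For a vertex $v$, the star $\delta(v)$ is a cocycle (bond), because $G-v$ is connected.
\item Call a bond $D$ \emph{nonseparating} if $E\setminus D$ is connected in the matroid sense, i.e.\ $M\setminus D$ is connected. In a 3-connected graph (with at least 4 vertices) the nonseparating bonds are exactly the vertex stars. Indeed, a bond $D$ separating $G$ into sides $S$ and $S^c$ with $|S|,|S^c|\ge 2$ leaves two edge sets on opposite sides, and each side contains an edge because of 3-connectivity; the two sides then form a matroid separation of $M\setminus D$.
\item Bonds and matroid connectivity are matroid notions. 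So $\theta$ maps the stars of $G$ to nonseparating bonds of $H$.
\item Since $\theta$ preserves connectivity of the matroid, $H$ is 2-connected up to isolated vertices. Then $H$ is 3-connected (up to the trivial small cases), because a 2-separation of $H$ would give a matroid 2-separation, and $G$ has none. Hence these nonseparating bonds of $H$ are exactly the stars of $H$.
\item Define $\psi(v)$ as the vertex of $H$ whose star is $\theta(\delta(v))$. An edge $uv$ lies in exactly the two stars of $u$ and $v$, so $\theta(uv)$ joins $\psi(u)$ and $\psi(v)$. Thus $\psi$ is an isomorphism inducing $\theta$.
\end{enumerate}
The small cases ($|V|\le 3$, parallel edges) are checked by hand.

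\emph{General result.} We argue by induction on $|E|$.
\begin{enumerate}
\item Splits and joins at cut vertices do not change the cycle matroid, which is the direct sum over blocks. Using them, we may assume $G$ and $H$ are each disjoint unions of blocks. Since $\theta$ preserves matroid components, it maps blocks to blocks, so it suffices to treat a 2-connected $G$.
\item If $G$ is 3-connected, or is a cycle or a bond, we are done by rigidity, or by trivial reorderings implemented through twists.
\item Otherwise take a 2-separation $(A,B)$ at vertices $\{x,y\}$, with $|A|,|B|\ge 2$. Since $r(A)+r(B)-r(E)=1$ is matroidal, $(\theta A,\theta B)$ is a 2-separation of $H$ at some pair $\{x',y'\}$. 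Replace $B$ by a virtual edge $e=xy$ to get $G_A$, and similarly form $G_B$. These are the standard 2-sums, so $M(G)=M(G_A)\oplus_2 M(G_B)$. The map $\theta$ extends to weak isomorphisms $G_A\to H_{\theta A}$ and $G_B\to H_{\theta B}$, sending $e$ to the corresponding virtual edge; the cycles through $e$ are determined by cycles of $G$ crossing the separation.
\item By induction, each piece is implemented by Whitney operations. These lift to $G$ without touching the virtual edge: one may assume the operations fix the virtual edge's endpoints up to a final twist.
\item Finally, gluing back along $\{x',y'\}$ with one of the two orientations produces $H$, possibly up to a Whitney twist at $\{x,y\}$.
\end{enumerate}

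The main obstacle is this lifting and compatibility step: ensuring that the operations performed on $G_A$ respect the endpoints of the virtual edge, so that they can be carried out inside $G$. I would handle it by choosing the separation with $B$ minimal, which makes $G_B$ 3-connected or a cycle or bond. The gluing orientation is then fixed by a single Whitney twist.
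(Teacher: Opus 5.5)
\textbf{Part 1 (rigidity).} Your proof is correct, and it takes a different route from the one the paper sketches (Subsection~\ref{subsec:ideas_and_overview}, Claim~\ref{lem:w_to_w_cycle}). There, for a wedge $xzy$ one uses $3$-connectivity to find a cycle through $x,y$ avoiding $z$. A chord argument then shows that the image of the wedge is again a wedge, and stars, hence vertices, are read off. You instead characterize stars intrinsically as the non-separating cocircuits, which gives the vertex map at once. The wedge version is the one the authors can transport to graphings, by replacing cycles with leafless $\infty$-ended forests.

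One simplification: your step~2 argument only uses that both sides of a bond are connected. So in \emph{any} connected graph a non-separating bond is a vertex star. That is all you need on the $H$ side, so step~4, with its appeal to $3$-connectivity of $M(G)$, can be dropped.

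\textbf{Part 2 (general result).} The paper does not prove this part; it quotes it from Whitney. Your argument for it has a genuine gap at step~3. A matroid $2$-separation $(A',B')$ of a $2$-connected graph only gives $|V(A')\cap V(B')|=c(A')+c(B')$, where $c$ counts components of the spanned subgraphs. This need not be a vertex $2$-separation: in a cycle, every split into two sets of size at least $2$ is a matroid $2$-separation.

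Here is a concrete counterexample. Let $D_1,D_2$ be copies of $K_4$ minus an edge, with the two non-adjacent vertices as terminals. Let $G$ arise from the $4$-cycle $v_1v_2v_3v_4$ by replacing $v_1v_2$ with $D_1$ and $v_4v_1$ with $D_2$, keeping $b_1=v_2v_3$ and $b_2=v_3v_4$. Let $H$ be the Whitney twist of $G$ at $\{v_1,v_3\}$ (cyclic order $D_1,b_1,D_2,b_2$), with the induced $\theta$. Then:
\begin{itemize}
\item $B=\{b_1,b_2\}$ is a side of the $2$-separation at $\{v_2,v_4\}$;
\item $B$ is inclusion-minimal and $G_B$ is a triangle, so this is exactly the choice your last paragraph prescribes;
\item yet $\theta B$ is a pair of disjoint edges of $H$, and $V(\theta A)\cap V(\theta B)$ has four vertices.
\end{itemize}
So $H_{\theta A}$ and $H_{\theta B}$ are not even defined. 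Series classes are matroidal, but they need not be paths in $H$.

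\textbf{How step 3 can be rescued.} The step does hold if one side, say $M|B$, is connected. Then $\theta B$ spans a connected subgraph, and every component of $\theta A$ meets $V(\theta B)$ in at least two vertices by $2$-connectivity of $H$. The count then forces $\theta A$ to be connected and $|V(\theta A)\cap V(\theta B)|=2$. So you must choose a separation with a matroid-connected side. This is possible via a leaf of Tutte's decomposition of $G$: a $3$-connected or bond leaf gives a connected $B$, and a cycle leaf gives a connected $A$. Alternatively, you can first twist $H$ so that $\theta B$ becomes a path.

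\textbf{Steps 4--5.} Even after this repair, these steps are asserted rather than proved. You still need to show two things:
\begin{itemize}
\item a twist of $G_A$ whose twisted side contains the virtual edge can be replaced by twisting the complementary side, at the cost of an isomorphism;
\item the isomorphisms obtained for the two pieces match virtual edge to virtual edge and glue to $H$ up to a single twist at $\{x',y'\}$.
\end{itemize}
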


The cycle matroid of a graphing $\cG=(X,E,\mu)$  was introduced by Lovász \cite{lovasz2024matroid} via its rank function: 

\begin{equation} \label{eqn:graphing_rank}
    \rho_{\mathcal{G}}(F) = \mu(X) - \int_{x\in X}\frac{1}{|V(\mathcal{G}^F_x)|} d \mu(x), \textrm{ for every measurable } F\subseteq E,
\end{equation}
where $\mathcal{G}^F_x$ denotes the $F$-connected component of $x$.

Contrary to most of the literature, in this paper, \textbf{we do not assume that $\mu$ is a probability measure on $X$.} We only assume that $\mu(X)$ is finite.
There are two reasons for this convention. First, for a finite graph $G=(V,E)$ and $\mu$ the counting measure on $V$, equation (\ref{eqn:graphing_rank}) explicitly coincides with the earlier definition of the rank function of the cycle matroid. Second, performing splits and joins changes the total vertex measure, so it is natural to allow this flexibility.  

In terms of the rank functions, an isomorphism of matroids is a bijection of the ground set that preserves the rank. That is, for cycle matroids of graphings, an isomorphism is a Borel bijection $\varphi:E(\cG_1)\to E(\cG_2)$ such that $\rho_{\cG_2}(\varphi(F)) = \rho_{\cG_1}(F)$ for all $F \subseteq E(\cG_1)$ Borel. We call such a Borel bijection $\varphi$ \textbf{rank-preserving}. We now present two fundamental lemmas that rephrase this in terms of preserving the edge-measure, cycles, and hyperfiniteness.

\begin{lem}\label{lem:cylce_preserv}
    Let $\cG_1$ and $\cG_2$ be graphings, and let $\varphi:E(\cG_1)\to E(\cG_2)$ be a rank-preserving Borel bijection. Then $\varphi$ 
    preserves the edge-measure, and also preserves cycles and their length almost surely.
\end{lem}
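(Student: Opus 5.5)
The plan is to get each claimed property from the rank formula \eqref{eqn:graphing_rank} evaluated on well-chosen edge sets. The common thread is that a small edge set changes the rank by an amount that depends only on local cycle structure.

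\textbf{Edge measure.} Take a Borel $F\subseteq E(\cG_1)$ such that every $F$-component is a single edge. This is a matching, and by a standard Borel colouring argument (Kechris--Solecki--Todorcevic for bounded or locally finite degree) $E(\cG_1)$ is a countable union of Borel matchings. For a matching, only the endpoints of $F$ have component size $2$, so $\rho_{\cG_1}(F)=\mu(X)-\mu(X\setminus V(F))-\tfrac12\mu(V(F))=\tfrac12\mu(V(F))$. By the mass transport principle this equals the edge measure of $F$, with the edge measure normalised as in Subsection~\ref{subsec:graphings}.

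More generally, for any acyclic $F$ whose components are finite trees, summing $1-1/|V(T)|$ over the vertices of each tree $T$ gives $|V(T)|-1=|E(T)|$. Hence $\rho(F)$ is the edge measure of $F$. The converse inequality $\rho(F)\le \eta(F)$ holds for every $F$, because $|V(T)|-1\le|E(T)|$ for every finite component.

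\textbf{Key step: a rank characterisation of finite-component acyclic sets.} I would show that $\rho(F)=\eta(F)$ holds iff a.e.\ $F$-component is acyclic, up to nullsets. If the components are finite, equality forces $|E|=|V|-1$ on a.e.\ component, i.e.\ trees. If $F$ has infinite components, those vertices contribute $1$, and a mass-transport comparison shows they still contribute at most their edge measure. The inequality is strict if there is a cycle or if the component is a non-hyperfinite tree.

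This is the main obstacle. For this lemma I only need the following weaker fact: sets of edges lying in finitely many \emph{finite} subgraphs are detected. So I would localise. Write $E$ as a countable union of Borel families, each consisting of disjoint copies of a fixed finite edge pattern $P$ (finite connected subgraphs, chosen via a Borel proper colouring of the intersection graph of finite subgraphs). For such an $F$, $\rho(F)=\sum_{\text{copies}}(|V(P)|-1)$, integrated appropriately. Then $\rho(F)=\eta(F)$ iff $P$ is a forest, and $\rho(F)=\eta(F)\cdot\frac{|V(P)|-1}{|E(P)|}$ in general.

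\textbf{Cycles and lengths.} Now apply this to subsets: for any Borel $F$ whose components are finite, a sub-collection is independent iff $\rho=\eta$ on it. Since $\varphi$ preserves both $\rho$ and edge measure, it preserves finite independence. To go from a measure statement to a pointwise one, use the usual exhaustion. Consider the Borel set $B$ of edge sets $C$ that are $k$-cycles in $\cG_1$ but whose image $\varphi(C)$ is not a $k$-cycle. If $B$ had positive measure, choose a positive-measure Borel family of disjoint such cycles whose images are also pairwise far apart; this is possible by local finiteness and countable colouring. Two things can then fail on the image side. If $\varphi(C)$ is independent, the rank of the family differs. If $\varphi(C)$ contains a shorter cycle, then a proper subset of $C$, which is independent, maps to a dependent set. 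Either way this contradicts rank preservation on the appropriate subfamily.

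Hence $\varphi$ maps a.e.\ circuit to a circuit of the same cardinality. Applying the same argument to $\varphi^{-1}$, which is also rank-preserving, gives that cycles are preserved in both directions. The delicate point throughout is ensuring that the images of a disjoint finite family remain in finite components. If they do not, I would compare $\rho$ on the family with its $\varphi$-image directly, using only the inequality $\rho\le\eta$.
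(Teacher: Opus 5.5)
Your proposal is correct and follows essentially the paper's route. Edge measure comes from a countable decomposition into Borel matchings, on which rank equals edge measure, combined with $\rho\le\eta$ and the symmetric argument for $\varphi^{-1}$; cycle preservation comes from thinning a bad family of $k$-cycles until both the cycles and their $\varphi$-images are far apart, which reduces everything to bounded components where finite rank comparisons apply. One caution: your ``key step'' claim that $\rho(F)=\eta(F)$ iff $F$ is acyclic is false as written, since non-hyperfinite trees have $\rho<\eta$ and the correct criterion is the hyperfinite-forest one of Lemma~\ref{lem:hyperfinite_forest_rank}; your argument never uses it, though, because after thinning every relevant edge set has finite components.
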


In \cite{berczi2026cycle} Bérczi, Lovász, and the first and last authors describe independent sets of the cycle matroid of a graphing as acyclic hyperfinite subgraphs. This leads to the following lemma, motivating our definition of weak isomorphism.

\begin{lem}\label{lem:eq_of_rankpres}
    Let $\cG_1$ and $\cG_2$ be graphings, and let $\varphi:E(\cG_1)\to E(\cG_2)$ be an edge-measure-preserving Borel bijection. Then the following are equivalent:
    \begin{enumerate}

        \item $\fg$ preserves the rank function;\label{lemma:eq_of_rankpres_rank}
        \item $\fg$ preserves hyperfinite subforests; \label{lemma:eq_of_rankpres_subforest}
        \item $\fg$ is a weak isomorphism of graphings.\label{lemma:eq_of_rankpres_cyclehf}
    \end{enumerate}
\end{lem}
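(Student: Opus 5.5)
We prove the cycle of implications (\ref{lemma:eq_of_rankpres_rank})$\Rightarrow$(\ref{lemma:eq_of_rankpres_cyclehf})$\Rightarrow$(\ref{lemma:eq_of_rankpres_subforest})$\Rightarrow$(\ref{lemma:eq_of_rankpres_rank}). Two inputs carry most of the argument. The first is Lemma~\ref{lem:cylce_preserv}. The second is the characterization from \cite{berczi2026cycle}: a Borel $F\subseteq E$ is independent in the cycle matroid of a graphing, i.e.\ $\rho(F)=\mu_E(F)$ for the edge measure suitably normalized, if and only if $F$ is (a.e.) acyclic and hyperfinite.

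For (\ref{lemma:eq_of_rankpres_rank})$\Rightarrow$(\ref{lemma:eq_of_rankpres_cyclehf}), Lemma~\ref{lem:cylce_preserv} already shows that $\fg$ preserves cycles. For hyperfiniteness, take a hyperfinite $F$ and $\eps>0$, and choose $F_0\subseteq F$ with $\mu_E(F_0)\le\eps$ such that $F\setminus F_0$ has finite components. We need to see that finiteness of components is detected by the rank. If $F'$ has finite components, a spanning forest $T\subseteq F'$ chosen by a Borel choice inside each finite component is independent and satisfies $\rho(T)=\rho(F')$. Then $\fg(T)$ is independent with the same rank, hence acyclic and hyperfinite. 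Since cycles are preserved, $\fg(F')$ is obtained from the forest $\fg(T)$ by adding edges that close cycles in $\fg(T)$, so its components coincide with those of $\fg(T)$. These components are finite: $\fg$ carries finite cycles to finite cycles, so a component of $\fg(T)$ can be infinite only if some fundamental cycle of $F'$ were mapped to an infinite path, which cycle preservation excludes. Alternatively, one can compare $\rho$ and the measure on a component-by-component basis. With this, $\fg(F\setminus F_0)$ is hyperfinite for every $\eps$, and $\mu_E(\fg(F_0))\le\eps$. A union of a hyperfinite set and a set of measure at most $\eps$, for all $\eps$, gives hyperfiniteness of $\fg(F)$. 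This step is where I would take the most care.

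(\ref{lemma:eq_of_rankpres_cyclehf})$\Rightarrow$(\ref{lemma:eq_of_rankpres_subforest}) is immediate. A subforest is acyclic, and since $\fg^{-1}$ also preserves cycles, $\fg(F)$ contains no cycle; it is also hyperfinite. The same argument applied to $\fg^{-1}$ gives the reverse direction.

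(\ref{lemma:eq_of_rankpres_subforest})$\Rightarrow$(\ref{lemma:eq_of_rankpres_rank}) is the main step. By the characterization, $\fg$ and $\fg^{-1}$ map independent sets to independent sets. We then use a measurable analogue of the finite fact that the rank equals the maximal size of an independent subset: for every Borel $F$,
\[
\rho(F)=\sup\{\mu_E(I): I\subseteq F \text{ Borel, acyclic and hyperfinite}\}.
\]
Since $\fg$ preserves edge measure and induces a bijection between the independent subsets of $F$ and those of $\fg(F)$, the equality $\rho_{\cG_2}(\fg(F))=\rho_{\cG_1}(F)$ follows.

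The main obstacle is establishing this sup formula. The inequality $\ge$ comes from monotonicity and from $\rho(I)=\mu_E(I)$ on independent sets. For $\le$, I would exhaust $F$ by finite pieces. Using a Borel exhaustion, take a hyperfinite approximation: for $F$-components that are finite, choose spanning forests Borel-measurably. For infinite components, take large finite connected pieces and compute $1/|V|$ through the $\mu$-integral in \eqref{eqn:graphing_rank}, showing that the loss $\int 1/|V(\cG^{F}_x)|$ is approximated by spanning forests of increasing finite subgraphs, which are hyperfinite by construction. If this is already stated in \cite{berczi2026cycle} as rank $=$ sup over independent subsets, I would simply invoke it.
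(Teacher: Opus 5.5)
Your overall plan matches the paper's. (2)$\Rightarrow$(1) is exactly Lemma~\ref{lem:hyperfinite_forest_rank}, whose second part is the sup formula you want, so just cite it. (3)$\Rightarrow$(2) is immediate. In (1)$\Rightarrow$(3), you use Lemma~\ref{lem:cylce_preserv}, the independence of $\fg(T)$, and fundamental cycles to see that $\fg(F')$ has the same components as $\fg(T)$; this is the paper's base-cycle argument.

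The gap is your next claim, that these components are finite. It is false, and the justification is a non sequitur: an infinite component of the forest $\fg(T)$ has nothing to do with fundamental cycles of $F'$ being sent to infinite paths. The example in Subsection~\ref{subsec:infinitely_many_steps} refutes it. Let $\cG_1$ be a perfect matching and $\cG_2$ a $1$-ended treeing of the same edge measure. Every edge set on either side is a hyperfinite forest, so $\rho=\eta$ on both sides and every measure-preserving Borel bijection $\fg$ is rank-preserving. Yet $T=F'=E(\cG_1)$ has components of size $2$, while $\fg(T)=E(\cG_2)$ has only infinite components. The same example rules out your fallback of comparing $\rho$ with the measure component by component: on a hyperfinite forest $\rho=\eta$ whether or not the components are finite, so the rank cannot detect finiteness.

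What you have actually shown is that $\fg(F')$ has the same connectedness relation as the hyperfinite forest $\fg(T)$. You need an extra ingredient to conclude that $\fg(F')$ is hyperfinite, and it does not follow directly from the edge-removal definition. Deleting a small set $S\subseteq\fg(T)$ that cuts $\fg(T)$ into finite pieces may leave many edges of $\fg(F')\setminus\fg(T)$ reconnecting those pieces. The paper fills precisely this step with the fact that a graphing spanned by a hyperfinite subforest is hyperfinite, citing \cite[Lemma 2.4]{Miller_ends}.

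Alternatively, you can stay elementary:
\begin{itemize}
\item Also move into $F_0$ the components of $F\setminus F_0$ with more than $N$ vertices; these have small measure for large $N$.
\item An edge $\fg(e)\in\fg(F')\setminus\fg(T)$ joins two different pieces of $\fg(T)\setminus S$ only if the fundamental cycle of $e$ in $T\cup\{e\}$ meets $\fg^{-1}(S)$.
\item By mass transport within the components of $F'$, these edges have measure at most $N^2\eta(S)$, so they can be removed as well.
\end{itemize}

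With either repair, your closing step (hyperfinite plus an $\eps$-small remainder, for every $\eps$) goes through. The result is a mild variant of the paper's proof: your $\eps$-approximation only needs spanning forests of finite components, whereas the paper takes a hyperfinite spanning forest of the whole hyperfinite set.
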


We present a nontrivial example, as well as some non-examples of weak isomorphism in Subsection~\ref{subsec:examples} below.

\subsection{Ideas and overview of the proof} \label{subsec:ideas_and_overview}

We now sketch the proof of the rigidity part of Whitney's theorem for finite graphs, namely that a weak isomorphism $\varphi$ of a strongly $3$-connected finite graph is induced by a graph isomorphism. Consider a \emph{wedge}, that is, a path of length two, denoted $W$ in $G_1$. By the $3$-connectedness one can find a cycle $C$ through its endpoints avoiding the middle point. Analyzing  $\varphi(C \cup W) \subseteq E(G_2)$, one can show that $\varphi(W)$ is again a wedge in $G_2$, see Claim~\ref{lem:w_to_w_cycle}. This then implies that $\varphi$ is induced by a graph isomorphism. The reasoning goes through to locally finite graphs with little extra effort if one assumes \emph{strong} $3$-connectivity. We also aim to adapt this approach, but, as we only assume \emph{weak} $3$-connectivity, we need to find alternatives to cycles. Indeed, $3$-regular treeings are weakly $3$-connected, so we claim in Theorem~\ref{thm:graph_isom} that they are rigid under weak isomorphisms. Yet they admit no cycles.

Cycles have the property that any individual edge is ``superfluous" in terms of the rank function. That is, removing any single edge does not decrease the rank of the edge set. In infinite-ended graphings, we will not only look for cycles, but also for infinite-ended, leafless subforests. These also have the property that all their edges are superfluous. In fact, among acyclic subgraphs, this property characterizes them, see Lemma~\ref{lem:forest_superfluous}. The main challenge is to find such infinite-ended, leafless subforests (or cycles) covering the endpoints of wedges in weakly 3-connected, infinite-ended graphings (Theorem~\ref{thm:cycle_or_forest}). To find such forests, we exploit the weak connectivity condition by using Menger's theorem locally to build finite parts of the forest that have only a few leaves, on different sides of trifurcations. We can then extend these finite parts by building a Free Minimal Spanning Forest, and pruning it until it has no leaves. We conclude Theorem~\ref{thm:graph_isom} by showing that a weak isomorphism maps wedges to wedges, adapting the argument from the finite proof to the measurable setting and taking into account the leafless subforests as well as the cycles.

To prove the full generalization of Whitney's theorem, we build on Theorem~\ref{thm:graph_isom} and also on another result about the abundance of leafless subforests in the infinite-ended case. In Theorem~\ref{thm:union_of_inf_ended_leafless} we show that in weakly $2$-connected, infinite-ended graphings the whole edge set is covered by infinite-ended leafless forests. We also define a novel decomposition of the components into certain maximal finite graphs with two boundary vertices, and develop measurable graph-minor techniques to show that the decomposition is preserved under a weak isomorphism of graphings. We call the parts of our decomposition \textbf{bananas}, as they resemble bananas in our pictures because of the two boundary points. Treating the $1$- and $2$-ended case requires a different set of tools, namely Tutte-decompositions of infinite, locally finite graphs, and an extension of Whitney's theorem to this setting. Fortunately, these were already available in the literature, and are recalled in Section~\ref{subsec:locally_finite_tools}.

\subsection{Motivating examples and non-examples}\label{subsec:examples}
\hfill

We first present a counterexample to Theorem~\ref{thm:graph_isom} in the $2$-ended case.

\begin{ex}[Nontrivial example of weak isomorphism]\label{ex:ladder} 
     Let $\cG_1$ be a graphing whose connected components are infinite ladders with vertex-disjoint diagonals added in each square; see Figure~\ref{fig:infinite_ladder_twist}. For convenience assume further that $\cG_1$ admits a Borel coloring that produces the periodic $4$-coloring seen on Figure~\ref{fig:infinite_ladder_twist} on almost every connected component. (This way, diagonals can be distinguished from the rungs of the ladder, and we can measurably choose every second diagonal.) Let $\cG_2$ be the same as $\cG_1$, but with every second diagonal switched. (As the $4$-coloring is Borel, $\cG_2$ is also a graphing.) Note that both $\cG_1$ and $\cG_2$ are weakly $2$-connected. Clearly, $\cG_1$ and $\cG_2$ are not isomorphic. Yet the edge-bijection indicated in Figure~\ref{fig:infinite_ladder_twist} is a weak isomorphism. (Diagonals are mapped to diagonals; and in every second square, where the diagonal is switched, the two sides of the ladder are also swapped.) 

\begin{figure}[htb]
\begin{center}
	\begin{tikzpicture}[thick, scale=0.7]    
    \node[big dot, minimum size=5pt, blue] (a0) at (-9,6) {};
	\node[big dot, minimum size=5pt, green] (b0) at (-9,4) {};
    \node[big dot, minimum size=5pt] (c0) at (-7,6) {};
    \node[big dot, minimum size=5pt, red] (d0) at (-7,4) {};
    \node[big dot, minimum size=5pt, blue] (a1) at (-5,6) {};
	\node[big dot, minimum size=5pt, green] (b1) at (-5,4) {};
    \node[big dot, minimum size=5pt] (c1) at (-3,6) {};
    \node[big dot, minimum size=5pt, red] (d1) at (-3,4) {};
    \node[big dot, minimum size=5pt, blue] (a2) at (-1,6) {};
	\node[big dot, minimum size=5pt, green] (b2) at (-1,4) {};
    \node[big dot, minimum size=5pt] (c2) at (1,6) {};
    \node[big dot, minimum size=5pt, red] (d2) at (1,4) {};
    \node[big dot, minimum size=5pt, blue] (a3) at (3,6) {};
	\node[big dot, minimum size=5pt, green] (b3) at (3,4) {};
    \node[big dot, minimum size=5pt] (c3) at (5,6) {};
    \node[big dot, minimum size=5pt, red] (d3) at (5,4) {};
    \node[big dot, minimum size=5pt, blue] (a4) at (7,6) {};
	\node[big dot, minimum size=5pt, green] (b4) at (7,4) {};
    \node[big dot, minimum size=5pt] (c4) at (9,6) {};
    \node[big dot, minimum size=5pt, red] (d4) at (9,4) {};

    \draw (a0)--(c0);
    \draw[dashed] (c0)--(a1);
    \draw  (a1)--(c1)--(a2)--(c2)--(a3)--(c3)--(a4)--(c4);
    \draw (b0)--(d0);
    \draw[dotted] (d0)--(b1);
    \draw (b1)--(d1)--(b2)--(d2)--(b3)--(d3)--(b4)--(d4);
    \draw (a1)--(b1);
    \draw (a2)--(b2);
    \draw (a3)--(b3);
    \draw (a4)--(b4);
    \draw (c0)--(d0);
    \draw (c1)--(d1);
    \draw (c2)--(d2);
    \draw (c3)--(d3);

    \draw (a0)--(d0);
    \draw[dashdotted] (c0)--(b1);
    \draw (a1)--(d1);
    \draw (c1)--(b2);
    \draw (a2)--(d2);
    \draw (c2)--(b3);
    \draw (a3)--(d3);
    \draw (c3)--(b4);
    \draw (a4)--(d4);

    \draw (-10,5) node {$\dots$};
    \draw (-10,6) node {$\cG_1$};
    \draw (10,5) node {$\dots$};

    \draw[<->] (0,2.5) -- (0,3.5);
    \draw (3,3) node {weakly isomorphic};

    \node[big dot, minimum size=5pt, blue] (aa0) at (-9,2) {};
	\node[big dot, minimum size=5pt, green] (bb0) at (-9,0) {};
    \node[big dot, minimum size=5pt] (cc0) at (-7,2) {};
    \node[big dot, minimum size=5pt, red] (dd0) at (-7,0) {};
    \node[big dot, minimum size=5pt, blue] (aa1) at (-5,2) {};
	\node[big dot, minimum size=5pt, green] (bb1) at (-5,0) {};
    \node[big dot, minimum size=5pt] (cc1) at (-3,2) {};
    \node[big dot, minimum size=5pt, red] (dd1) at (-3,0) {};
    \node[big dot, minimum size=5pt, blue] (aa2) at (-1,2) {};
	\node[big dot, minimum size=5pt, green] (bb2) at (-1,0) {};
    \node[big dot, minimum size=5pt] (cc2) at (1,2) {};
    \node[big dot, minimum size=5pt, red] (dd2) at (1,0) {};
    \node[big dot, minimum size=5pt, blue] (aa3) at (3,2) {};
	\node[big dot, minimum size=5pt, green] (bb3) at (3,0) {};
    \node[big dot, minimum size=5pt] (cc3) at (5,2) {};
    \node[big dot, minimum size=5pt, red] (dd3) at (5,0) {};
    \node[big dot, minimum size=5pt, blue] (aa4) at (7,2) {};
	\node[big dot, minimum size=5pt, green] (bb4) at (7,0) {};
    \node[big dot, minimum size=5pt] (cc4) at (9,2) {};
    \node[big dot, minimum size=5pt, red] (dd4) at (9,0) {};

    \draw (aa0)--(cc0);
    \draw[dotted] (cc0)--(aa1);
    \draw (aa1)--(cc1)--(aa2)--(cc2)--(aa3)--(cc3)--(aa4)--(cc4);
    \draw (bb0)--(dd0);
    \draw[dashed] (dd0)--(bb1);
    \draw (bb1)--(dd1)--(bb2)--(dd2)--(bb3)--(dd3)--(bb4)--(dd4);
    \draw (aa1)--(bb1);
    \draw (aa2)--(bb2);
    \draw (aa3)--(bb3);
    \draw (aa4)--(bb4);
    \draw (cc0)--(dd0);
    \draw (cc1)--(dd1);
    \draw (cc2)--(dd2);
    \draw (cc3)--(dd3);

    \draw (aa0)--(dd0);
    \draw[dashdotted] (dd0)--(aa1);
    \draw (aa1)--(dd1);
    \draw (dd1)--(aa2);
    \draw (aa2)--(dd2);
    \draw (dd2)--(aa3);
    \draw (aa3)--(dd3);
    \draw (dd3)--(aa4);
    \draw (aa4)--(dd4);

    \draw (-10,1) node {$\dots$};
    \draw (-10,2) node {$\cG_2$};
    \draw (10,1) node {$\dots$};
    
    \end{tikzpicture}	
\end{center}
\caption{A weak isomorphism of weakly $2$-connected, $2$-ended graphings that is not induced by an isomorphism of graphings. \label{fig:infinite_ladder_twist}}
\end{figure}
\end{ex}

\begin{rem}
    Note that the weak isomorphism between $\cG_1$ and $\cG_2$ can be implemented by performing Whintey twists in all components of $\cG_1$ at every blue-green and black-red cut-pair simultaneously. This an example of a \emph{2-ended simultaneous Whitney twist}, see Subsection~\ref{subsec:whitney_operations}.
\end{rem}

\begin{ex}[Splitting in infinite ended components]\label{ex:traingle_tree}
This example shows that splitting a graphing at cut vertices might not produce a weak isomorphism, as hyperfiniteness of certain edge sets might be altered. Let $\cG_1$ be a graphing whose components are free products of two triangles, see Figure~\ref{fig:split_destroys_hyperfinite}. Considering the triangles themselves as a vertex set and placing an edge between any two triangles that share a vertex yields a $3$-regular treeing. Assume this treeing admits a measurable perfect matching, giving rise to the red vertices displayed on Figure~\ref{fig:split_destroys_hyperfinite}. Splitting at every red vertex would give rise to a graphing $\cG_2$ whose every component is a bi-infinite line of triangles. The induced edge-bijection preserves cycles, but not hyperfiniteness: the entire edge set is hyperfinite in $\cG_2$, but not in $\cG_1$. 
\end{ex}

\begin{figure}[htb]
\centering
\makebox[\textwidth][c]{%
\begin{tikzpicture}[
    line join=round,
    line cap=round,
    every path/.style={thick},
    big dot/.style={circle, inner sep=0pt, minimum size=1mm, fill=black},
    red dot/.style={circle, inner sep=0pt, minimum size=1.2mm, fill=red}
]

\def\R{1.2}      
\def\s{1}     
\def\two{0.6}   
\def\d{0.2}     

\newcommand{\trianglerow}[1]{%
    \fill (-0.8,#1) circle (0.7pt);
    \fill (-0.5,#1) circle (0.7pt);
    \fill (-0.2,#1) circle (0.7pt);

    \foreach \x in {0,1,2} {
        \draw (\x,#1) -- (\x+1,#1) -- (\x+0.5,{#1-0.8}) -- cycle;
    }

    \fill (3.2,#1) circle (0.7pt);
    \fill (3.5,#1) circle (0.7pt);
    \fill (3.8,#1) circle (0.7pt);
}

\begin{scope}[shift={(-5,0)}]

    \coordinate (A1) at ({\R*cos(90)},{\R*sin(90)});
    \coordinate (B1) at ({\R*cos(210)},{\R*sin(210)});
    \coordinate (C1) at ({\R*cos(330)},{\R*sin(330)});

    \coordinate (LA1) at ($(A1)+({\s*cos(60)},{\s*sin(60)})$);
    \coordinate (RA1) at ($(A1)+({\s*cos(120)},{\s*sin(120)})$);

    \coordinate (LB1) at ($(B1)+({\s*cos(180)},{\s*sin(180)})$);
    \coordinate (RB1) at ($(B1)+({\s*cos(240)},{\s*sin(240)})$);

    \coordinate (LC1) at ($(C1)+({\s*cos(300)},{\s*sin(300)})$);
    \coordinate (RC1) at ($(C1)+({\s*cos(0)},{\s*sin(0)})$);

    \coordinate (LA1a) at ($(LA1)+({\two*cos(0)},{\two*sin(0)})$);
    \coordinate (LA1b) at ($(LA1)+({\two*cos(60)},{\two*sin(60)})$);

    \coordinate (RA1a) at ($(RA1)+({\two*cos(120)},{\two*sin(120)})$);
    \coordinate (RA1b) at ($(RA1)+({\two*cos(180)},{\two*sin(180)})$);

    \coordinate (LB1a) at ($(LB1)+({\two*cos(120)},{\two*sin(120)})$);
    \coordinate (LB1b) at ($(LB1)+({\two*cos(180)},{\two*sin(180)})$);

    \coordinate (RB1a) at ($(RB1)+({\two*cos(240)},{\two*sin(240)})$);
    \coordinate (RB1b) at ($(RB1)+({\two*cos(300)},{\two*sin(300)})$);

    \coordinate (LC1a) at ($(LC1)+({\two*cos(240)},{\two*sin(240)})$);
    \coordinate (LC1b) at ($(LC1)+({\two*cos(300)},{\two*sin(300)})$);

    \coordinate (RC1a) at ($(RC1)+({\two*cos(0)},{\two*sin(0)})$);
    \coordinate (RC1b) at ($(RC1)+({\two*cos(60)},{\two*sin(60)})$);

    \draw (A1) -- (B1) -- (C1) -- cycle;

    \draw (A1) -- (LA1) -- (RA1) -- cycle;
    \draw (B1) -- (LB1) -- (RB1) -- cycle;
    \draw (C1) -- (LC1) -- (RC1) -- cycle;

    \draw (LA1) -- (LA1a) -- (LA1b) -- cycle;
    \draw (RA1) -- (RA1a) -- (RA1b) -- cycle;

    \draw (LB1) -- (LB1a) -- (LB1b) -- cycle;
    \draw (RB1) -- (RB1a) -- (RB1b) -- cycle;

    \draw (LC1) -- (LC1a) -- (LC1b) -- cycle;
    \draw (RC1) -- (RC1a) -- (RC1b) -- cycle;

    \node[big dot] at (A1) {};
    \node[big dot] at (B1) {};
    \node[big dot] at (C1) {};

    \node[big dot] at (LA1) {};
    \node[big dot] at (RA1) {};
    \node[big dot] at (LB1) {};
    \node[big dot] at (RB1) {};
    \node[big dot] at (LC1) {};
    \node[big dot] at (RC1) {};

    \node[big dot] at (LA1a) {};
    \node[big dot] at (LA1b) {};
    \node[big dot] at (RA1a) {};
    \node[big dot] at (RA1b) {};
    \node[big dot] at (LB1a) {};
    \node[big dot] at (LB1b) {};
    \node[big dot] at (RB1a) {};
    \node[big dot] at (RB1b) {};
    \node[big dot] at (LC1a) {};
    \node[big dot] at (LC1b) {};
    \node[big dot] at (RC1a) {};
    \node[big dot] at (RC1b) {};

    \foreach \k in {1,2,3} {
        \fill ($(LA1a)+({\k*\d*cos(330)},{\k*\d*sin(330)})$) circle (0.7pt);
        \fill ($(LA1b)+({\k*\d*cos(90)},{\k*\d*sin(90)})$)   circle (0.7pt);

        \fill ($(RA1a)+({\k*\d*cos(90)},{\k*\d*sin(90)})$)   circle (0.7pt);
        \fill ($(RA1b)+({\k*\d*cos(210)},{\k*\d*sin(210)})$) circle (0.7pt);

        \fill ($(LB1a)+({\k*\d*cos(90)},{\k*\d*sin(90)})$)   circle (0.7pt);
        \fill ($(LB1b)+({\k*\d*cos(210)},{\k*\d*sin(210)})$) circle (0.7pt);

        \fill ($(RB1a)+({\k*\d*cos(210)},{\k*\d*sin(210)})$) circle (0.7pt);
        \fill ($(RB1b)+({\k*\d*cos(330)},{\k*\d*sin(330)})$) circle (0.7pt);

        \fill ($(LC1a)+({\k*\d*cos(210)},{\k*\d*sin(210)})$) circle (0.7pt);
        \fill ($(LC1b)+({\k*\d*cos(330)},{\k*\d*sin(330)})$) circle (0.7pt);

        \fill ($(RC1a)+({\k*\d*cos(330)},{\k*\d*sin(330)})$) circle (0.7pt);
        \fill ($(RC1b)+({\k*\d*cos(90)},{\k*\d*sin(90)})$)   circle (0.7pt);
    }

    \node[red dot] at (A1)   {};   
    \node[red dot] at (LB1)  {};   
    \node[red dot] at (RC1)  {};   

    \node[red dot] at (LA1a) {};   
    \node[red dot] at (RA1a) {};   
    \node[red dot] at (RB1a) {};   
    \node[red dot] at (LC1b) {};   

\end{scope}

\begin{scope}[shift={(2,0.5)}]

    \foreach \y in {1.8,0,-1.8} {

        \foreach \k in {1,2,3} {
            \fill (-\k*\d,\y) circle (0.7pt);
        }

        \foreach \x in {0,1,2} {
            \draw (\x,\y) -- (\x+1,\y) -- (\x+0.5,\y-0.8) -- cycle;
        }

        \foreach \x in {0,1,2,3} {
            \node[big dot] at (\x,\y) {};
        }

        \foreach \x in {0.5,1.5,2.5} {
            \node[red dot] at (\x,\y-0.8) {};
        }

        \foreach \k in {1,2,3} {
            \fill (3+\k*\d,\y) circle (0.7pt);
        }
    }

    \foreach \k in {1,2,3} {
        \fill (1.5,{2+\k*\d}) circle (0.7pt);
    }

    \foreach \k in {1,2,3} {
        \fill (1.5,{-2.7-\k*\d}) circle (0.7pt);
    }

\end{scope}
\draw[dashed,->] (-1,1) -- (0,1);
\path[use as bounding box] (-9,-3) rectangle (8,3);
\end{tikzpicture}
}
\caption{A split at cut vertices that does not preserve hyperfiniteness.}\label{fig:split_destroys_hyperfinite}
\end{figure}

\begin{ex}    
With little effort, one could build a weakly $3$-connected version of this example by starting from a 3-regular tree, and forming $\cG_1$ by replacing each vertex with a triangle, and each edge by a ladder of length 3, with disjoint diagonals added. Switching the diagonal in every middle square of each ladder defines a graphing $\cG_2$ and an edge-bijection that preserves cycles, but again, not hyperfiniteness. 
\end{ex}

These examples show that in Theorem~\ref{thm:graph_isom} assuming it is crucial to assume that hyperfiniteness is preserved. Also, one cannot split or simultaneously perform Whitney twists without restriction. In Section~\ref{subsec:whitney_operations} we introduce these steps only in case one part of the split/twist is finite, or if the component is 2-ended. In Lemma~\ref{lem:locally_finite_sequence_gives_weak_isom} we show that with these restrictions, even sequences of such operations produce weak isomorphisms, as long as the procedure stabilizes, meaning that almost every edge is perturbed only finitely many times. And Theorem~\ref{thm:Whitney_2-isom_intro} shows that these operations are enough to implement any weak isomorphism. 

\paragraph{Organization.}
Section~\ref{sec:prelim} covers definitions for the main notions and supporting results. Section~\ref{sec:cycle_forest_cover} treats the constructions of leafless infinitely-ended forests and the proof of Theorem~\ref{thm:graph_isom}. Section~\ref{sec:assembling_tools} introduces and develops the tools necessary for the general theorem: versions of Whitney's theorem and Tutte decomposition for countable, locally finite graphs, banana decompositions, and minor techniques in graphings. Finally, Section~\ref{sec:general_Whitney_for_graphings} introduces Whitney operations for graphings and presents the proof of Theorem~\ref{thm:Whitney_2-isom_intro}.

\section{Preliminaries}\label{sec:prelim}
\subsection{Graphings and the rank function} \label{subsec:graphings}

Let $X$ be a standard Borel space and $\mu$ be a finite (not necessary probability) Borel measure on $X$. A \emph{graphing} is a graph $\cG$ with vertex set $V(\cG) = X$ and Borel edge set $E \subseteq \binom{X}{2}$, in which all degrees are finite, and 
\begin{equation*} \label{eqn:graphing}
\int_{A} \deg(B,x) \diff \mu (x) = \int_{B} \deg(A,x) \diff \mu (x)
\end{equation*}
for all measurable sets $A,B \subseteq X$, where $\deg(S,x)$ is the number of edges from $x \in X$ to $S \subseteq X$. This assumption allows one to meaningfully define the \emph{edge measure} $\eta_\cG $ on Borel subsets $F\subseteq E$ by setting
\begin{equation*}
\eta_\cG(F)=\frac{1}{2}\int \deg_F(x)\diff \mu (x),
\end{equation*}
where $\deg_F(x)$ is the degree of $x$ in $F$. Contrary to previous results in \cite{lovasz2024matroid,berczi2026cycle}, we do not assume $\cG$ to have bounded degree; we only require $\cG $ to have finite edge measure.

An acyclic graphing, that is, a graphing in which almost all components are trees, is called a \emph{treeing}. When discussing an acyclic subgraph $F$ of a given graphing $\cG$, we shall instead use the terms \emph{subforest}, or simply \emph{forest}, to emphasize that we do not assume that the $F$-connected components coincide with the $\cG$-components.
Recall, a graphing $\cG=(X,E, \mu)$ is \emph{hyperfinite} if, for every $\eps > 0$, there is a Borel set $E_0 \subseteq E$ with $\eta_\cG(E_0) \leq \eps$ such that every connected component of $E\setminus E_0$ is finite. The following results connecting hyperfinite subforests and the rank function are established in \cite{berczi2026cycle}. Although, in \cite{berczi2026cycle} a graphing is required to have bounded degree and have a probability measure on the vertices, the results there generalize for this extended definition. 
\begin{lem}[\cite{berczi2026cycle}]\label{lem:hyperfinite_forest_rank}
    Let $\cG=(X, E , \mu)$ be a graphing.
    \begin{enumerate}
        \item $F\subseteq E$ is a hyperfinite forest if and only if $\rho_{\cG}(F) = \eta_{\cG}(F)$.
        \item For any $A\subseteq E$ Borel, we have $\rho_\cG(A)=\sup\{\eta_\cG(F)\mid F\subseteq A\ \text{is a hyperfinite forest}\}$.
    \end{enumerate}
\end{lem}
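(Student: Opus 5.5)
The plan is to prove both parts by comparing the graphing with an exhausting sequence of finite-component subgraphings, where the rank formula \eqref{eqn:graphing_rank} reduces to finite counting.

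For a Borel edge set $F$ whose components are all finite, I will first note that
\[
\rho_\cG(F)=\int_X \Bigl(1-\tfrac{1}{|V(\cG^F_x)|}\Bigr)\diff\mu(x)
\]
equals the integral over components of $|V(C)|-1$ per component. The weight $1/|V(C)|$ spreads each component's contribution evenly over its vertices, and the mass transport principle, which is equivalent to the graphing identity, turns this into a component-averaged quantity. For each finite component $C$, the number $|V(C)|-1$ is the size of a spanning tree of $C$ and is at most $|E(C)|$, with equality exactly when $C$ is a tree. Integrating via mass transport, with each edge's mass $\tfrac12$ sent to each endpoint, gives
\[
\rho_\cG(F)\le \eta_\cG(F),
\]
with equality iff $F$ is a forest. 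Using a Borel choice of spanning tree in each finite component (finite components admit Borel selectors), I also get $\rho_\cG(F)=\eta_\cG(T)$ for a forest $T\subseteq F$.

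For part 1, suppose first that $F$ is a hyperfinite forest. I choose $E_n\subseteq F$ with $\eta_\cG(E_n)\to0$ such that $F\setminus E_n$ has finite components; by taking unions I may assume these subsets decrease and the components increase to those of $F$. Then $|V(\cG^{F\setminus E_n}_x)|\uparrow|V(\cG^F_x)|$, so monotone convergence gives $\rho_\cG(F\setminus E_n)\to\rho_\cG(F)$. Since $F\setminus E_n$ is a finite-component forest,
\[
\rho_\cG(F\setminus E_n)=\eta_\cG(F\setminus E_n)\to\eta_\cG(F).
\]
Conversely, suppose $\rho_\cG(F)=\eta_\cG(F)$. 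I will first show that $F$ is a forest: if cycles occupy positive measure, then removing a Borel independent set of cycle edges (say, the edges lying on a shortest cycle, chosen via a Borel proper colouring of the intersection graph of cycles) lowers $\eta$ without lowering $\rho$. This needs $\rho\le\eta$ for general sets, which follows from part 2 below. For hyperfiniteness, I will use that in a non-hyperfinite forest the cost/rank deficit is strict. Concretely, by the known fact that non-hyperfinite treeings contain a positive-measure set of components with infinitely many ends, the quantity $\int 1/|V(\cG^F_x)|$ equals $0$ on infinite components. On the infinite part, $\rho$ is therefore $\mu$ of that part, while $\eta$ equals $\tfrac12\int\deg_F$. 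The average degree on a treeing component set equals $2$ exactly in the hyperfinite (at most $2$-ended, amenable) case and exceeds $2$ otherwise, by the theorem on expected degree of treeings due to Levitt/Gaboriau. This gives the strict inequality $\rho<\eta$ unless $F$ is hyperfinite.

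For part 2, the inequality ``$\ge$'' follows from monotonicity of $\rho$ (the map $F\mapsto 1/|V(\cG^F_x)|$ is decreasing) together with part 1. For ``$\le$'', I will exhaust $A$ by finite-component subsets: fix a Borel enumeration and let $A_n$ consist of the edges of $A$ whose components in a suitable finite truncation (for example, balls of radius $n$ after removing edges according to a Borel marker set of small density) are finite and increase to those of $A$. Monotone convergence gives $\rho_\cG(A_n)\to\rho_\cG(A)$. Each $A_n$ contains a spanning forest $T_n$ that is finite-component, hence hyperfinite, with $\eta_\cG(T_n)=\rho_\cG(A_n)$.

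The main obstacle is this exhaustion in the non-hyperfinite case: finite-component subgraphs cannot simply increase to the components of $A$ while their spanning trees stay inside a hyperfinite forest. I would instead take an arbitrary increasing sequence of finite-component subgraphs and only use that $\rho$ is lower semicontinuous along it, which suffices since we need only the supremum. The strict inequality in the converse of part 1 is the second delicate point, and there I rely on the Gaboriau cost characterization of hyperfinite treeings.
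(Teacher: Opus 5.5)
\paragraph{Verdict: genuine gap in part~2 for non-hyperfinite $A$.}
The paper does not reprove this lemma; it imports it from \cite{berczi2026cycle}. So your argument has to stand on its own, and most of it does. The following steps are fine:
\begin{itemize}
\item the finite-component computation via mass transport;
\item the forward direction of part~1, after making the $E_n$ decreasing with summable $\eta_\cG(E_n)$;
\item the inequality $\ge$ in part~2;
\item the hyperfiniteness step of the converse in part~1, via the ends/expected-degree dichotomy for treeings and Adams' theorem.
\end{itemize}
The gap is the inequality $\rho_\cG(A)\le\sup\eta_\cG(F)$ in part~2 when $A$ is not hyperfinite. You derive the general bound $\rho_\cG\le\eta_\cG$ from part~2, and the forest step of the converse in part~1 needs that bound, so the gap propagates there too.

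\paragraph{Why the fallback fails.}
Suppose $A_1\subseteq A_2\subseteq\cdots\subseteq A$ all have finite components, and let $A_\infty=\bigcup_n A_n$. Then $A_\infty$ is hyperfinite, since $\eta_\cG(A_\infty\setminus A_n)\to 0$. Moreover $\rho_\cG(A_n)\to\rho_\cG(A_\infty)$ by continuity from below; no semicontinuity is involved. Nothing in this relates $\rho_\cG(A_\infty)$ to $\rho_\cG(A)$. Showing that some hyperfinite subgraph of $A$ has rank close to $\rho_\cG(A)$ is exactly what has to be proved. Nor can any semicontinuity argument with $A_n\to A$ in edge measure help: a non-hyperfinite $A$ is, by definition, not approximable in $\eta_\cG$ by finite-component subsets.

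\paragraph{The missing idea.}
The rank ignores the structure of infinite components, since $1/|V(\cG^A_x)|=0$ there. So you do not need to approximate the components of $A$ at all. You only need finite-component subforests of $A$ whose components are large on the part $X_\infty$ covered by infinite $A$-components. One way to build them:
\begin{itemize}
\item Take a Borel maximal $r$-separated set $M\subseteq X_\infty$ in the $A$-distance (Kechris--Solecki--Todorcevic).
\item Form Voronoi cells around $M$ with a Borel tie-break. These cells are finite (they lie within distance $r$ of their marker) and $A$-connected.
\item Each cell contains the $\lfloor r/2\rfloor$-ball around its marker, hence has at least $\lfloor r/2\rfloor+1$ vertices.
\item Take a Borel spanning tree of each cell, together with Borel spanning trees of the finite $A$-components.
\end{itemize}
The resulting finite-component forest $T_r\subseteq A$ satisfies $\eta_\cG(T_r)=\rho_\cG(T_r)\ge\rho_\cG(A)-\mu(X_\infty)/(\lfloor r/2\rfloor+1)$. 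This gives part~2. It also gives $\rho_\cG\le\eta_\cG$ for arbitrary Borel edge sets (Levitt's bound that cost is at least $1$ on infinite classes), which closes the dependency in part~1. The cells for different $r$ need not be nested, and no monotone exhaustion is required.
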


Like transitive graphs, connected components of graphings have $0$, $1$, $2$, or $\infty$ many ends. Furthermore, a treeing is hyperfinite if and only if almost every component has at most $2$ ends \cite{Adams:trees_amenability}.

\subsection{Rank-preserving bijections}
In this subsection we characterize rank-preserving bijections as weak isomorphisms, that is, edge-measure preserving bijections that also preserve cycles and hyperfiniteness. 

\begin{proof}[Proof of Lemma~\ref{lem:cylce_preserv}]
    By Lemma~\ref{lem:hyperfinite_forest_rank} the measure of a hyperfinite subforest is exactly its rank. The edges of every bounded degree graphing can be partitioned into hyperfinite subforests (even into partial matchings). Hence $\varphi$ preserves the edge measure.
    
    The cycle preserving property is clear for finite graphs. Consequently, the lemma also holds for graphings with bounded components.
    Now let us assume that there exists a $k$, and a measurable family $C$ of $k$-cycles in $\cG_1$ of positive measure such that the image of any cycle in $C$ is not a $k$-cycle in $\cG_2$. We will thin out $C$ to make it sparse enough such that the cycles in $C$ and their images $\varphi(C)$ do not meet, and hence do not form infinite components.
    First, we make a Borel graph $\cH$, whose vertices are $C$, and two cycles, $c_1,c_2\in C$, are connected if $d_{\cG_1}(c_1,c_2)<3$ or $d_{\cG_2}(\varphi(c_1),\varphi(c_2))<3$. (Note that the distance can be infinite.) 
    This is a locally finite Borel graph, therefore we can color the vertices of $\cH$ properly with countably many colors. One of the color classes has positive measure, now call this family of cycles $C_0$. The graphings $(V_1,C_0)$ and $(V_2,\varphi(C_0))$ have bounded components, and the restriction of $\varphi$ preserves the rank function. Therefore, it also preserves cycles, a contradiction. 
\end{proof}

\begin{proof}[Proof of Lemma~\ref{lem:eq_of_rankpres}]
    
    ($\ref{lemma:eq_of_rankpres_rank}\Leftrightarrow\ref{lemma:eq_of_rankpres_subforest}$) 
    By Lemma~\ref{lem:hyperfinite_forest_rank} the rank function determines the set of hyperfinite subforests, and the set of hyperfinite subforests determines the rank function. Consequently, preserving one is equivalent to preserving the other.
    
    ($\ref{lemma:eq_of_rankpres_cyclehf}\Rightarrow\ref{lemma:eq_of_rankpres_subforest}$) is clear.

    ($\ref{lemma:eq_of_rankpres_rank}\Rightarrow\ref{lemma:eq_of_rankpres_cyclehf}$) Lemma \ref{lem:cylce_preserv} shows that $\varphi$ preserves the cycles. Now we show that it preserves hyperfiniteness. Let $A \subseteq E(\cG_1)$ such that $\cG_1' = (X_1,A,\mu_1)$ is hyperfinite. We need to prove that $\cG_2'=(X_2, \varphi(A), \mu_2)$ is also hyperfinite. As $\cG'_1$ is hyperfinite, it admits a (hyperfinite) spanning tree $\cT=(X_1, F,\mu_1)$. Then $\fg(F)\subseteq E(\cG_2')$ is a hyperfinite subforest. We claim that $\fg(F)$ spans the components of $\cG_2'$, implying the hyperfiniteness of $\cG_2'$, see e.g.~\cite[Lemma 2.4]{Miller_ends}.

    Indeed, if $\varphi(F)$ fails to span the $\cG_2'$ components, one can find a positive measure subset $D \subseteq \varphi(A) \setminus \varphi(F)$ of $\cG_2'$-edges that connect different components of the forest $\varphi(F)$. Edges in $e\in \varphi^{-1}(D)$, on the other hand, define a unique base cycle $C_e$ containing $e$ and using edges from $F$. 
    However, the $\varphi$-image of such a base cycle in $\cG_2'$ contains only a single edge from $D$, and thus cannot be a cycle, a contradiction.
\end{proof}
    
\subsection{Trifurcations}

For a finite connected set $A\subset E(\cG)$ we call the components of $[A]_{\cG}\setminus A$ the \textbf{sides} of $A$.  For $n\in\N$, an $n$-\textbf{furcation} is a finite (nonempty) connected set of edges $A\subset E(\cG)$ with at least $n$ many sides of infinite size.  For convenience, $2$-furcations and $3$-furcations are also called \textbf{bifurcations} and \textbf{trifurcations}, respectively.%

\begin{lem}\label{lem:cells}
    Let $\cG$ be a locally finite infinitely-ended graphing. Then there is a Borel partition $\cH$ of $\cG$ into cells that enjoys the following properties
    \begin{enumerate}
        \item each cell is a trifurcation set in $\cG$, in particular they are finite and connected;
        \item each cell neighbors only finitely many other cells, that is the quotient graph $\cQ:=\cG/\cH$ is locally finite. 
    \end{enumerate}
\end{lem}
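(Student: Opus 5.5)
Since the cells must be finite, connected trifurcations, the plan is to first choose a sparse Borel family of pairwise far apart trifurcations, then assign every other vertex to a nearby chosen one, and finally merge each vertex's Voronoi region with its chosen trifurcation. The construction runs as follows.

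First, fix a Borel enumeration of all finite connected edge sets of $\cG$. There are only countably many such sets in each component, since $\cG$ is locally finite, so by standard arguments (e.g.\ the Lusin--Novikov uniformization) the space of finite connected subsets is a standard Borel space. The property ``is a trifurcation'' is Borel: having at least three infinite sides is a countable combination of conditions on finite balls. In an infinitely-ended component, every vertex lies within finite distance of some trifurcation; in fact any sufficiently large finite connected set separating three ends contains one. For each vertex $x$, let $r(x)$ be the least radius $r$ such that the ball $B(x,r)$ contains a trifurcation. This $r(x)$ is Borel and finite almost everywhere.

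Second, I will get the cells by a maximal-independent-set argument on an auxiliary locally finite Borel graph. Its vertices are the trifurcations, and two are adjacent if they lie within distance $R$ of each other. Since $r(x)$ is not bounded, I will work with a countable increasing sequence of radii. Concretely, I take a Borel maximal independent set (Kechris--Solecki--Todorcevic) among the trifurcations of diameter at most $n$, doing this level by level in $n$. This yields a Borel family $\mathcal{T}$ of pairwise disjoint, non-adjacent trifurcations. Maximality at every scale then ensures that every vertex has finite distance to $\mathcal{T}$ in a.e.\ component.

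Third, assign each vertex $x$ to the nearest member of $\mathcal{T}$, breaking ties with a Borel linear order, and let the cell be the union of the chosen trifurcation and its Voronoi region together with the shortest paths realising the assignment. Taking geodesics to the nearest set makes each region connected to its center. Each region will also be finite, which is where the choice above matters. If a region were infinite, its points would be far from all other centers. A far point $y$ nevertheless has a trifurcation near it, and by maximality that trifurcation conflicts with some member of $\mathcal{T}$; this conflicting member must be close to $y$, a contradiction. I expect this finiteness to be the main obstacle. The issue is unbounded scales: the trifurcation near $y$ could have large diameter, so the conflicting center sits at distance comparable to that diameter. To handle this, I will first apply a mass transport argument: if infinite regions occurred with positive measure, the finitely many centers would each receive infinite mass, contradicting measure preservation. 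This shows a.e.\ region is finite.

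Fourth, I check the two claimed properties. A cell contains a trifurcation $A$ and is finite and connected, so it is itself a trifurcation: enlarging $A$ by a finite connected set can merge sides only in a finite way. I have to make sure the three infinite sides of $A$ do not get reconnected through the region. The fix is to instead choose, in the first step, trifurcations whose sides stay infinite after removing any finite neighborhood. Failing that, I can enlarge $A$ to a ball, whose infinite complement components still number at least three because the component is infinitely ended. Local finiteness of $\cQ$ then follows because each cell is finite and $\cG$ is locally finite, so only finitely many cells touch it. Borelness of the whole partition follows from each construction step being Borel.
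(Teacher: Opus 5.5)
Your proposal is correct and follows essentially the paper's route: a Borel maximal family of disjoint trifurcations (your level-by-level Kechris--Solecki--Todorcevic construction makes explicit what the paper takes for granted), Voronoi cells with Borel tie-breaking, mass transport for finiteness, and local finiteness of $\cQ$ from finiteness of the cells. The worry in your last step is unfounded and no fix is needed there. Deleting a larger finite connected set can only split the sides of $A$, never reconnect them, and each infinite side of $A$ still contains an infinite component after the finite cell is removed, so every cell is automatically a trifurcation.
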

\begin{proof}
    Let $\cA$ be a maximal Borel set of disjoint trifurcation sets in $\cG$ equipped with some Borel ordering. Define  
     $\cH$ to be the collection of the \textbf{Voronoi} cells $\cV(A)$, $A\in \cA$. Here by a Voronoi cell of $A$ we mean the set of all vertices for whom $A$ is the smallest (in the Borel ordering) among the closest (in $\cG$-distance) elements of $\cA$. Notice that from the construction and the mass transport principle it follows that $\cV(A)$ is connected and finite. Moreover, by local finiteness of $\cG$ each cell has finite edge-boundary, hence it comes in within distance one with at most finitely many other Voronoi cells. Hence $\cH$ is as desired.
\end{proof}

We would like to point out that the quotient graph $\mathcal{Q}$ may not be a tree. In fact, since the cells in the partition are finite, the acylicity of $\cQ$ would implies that $\cG$ admits a spanning tree, which is not true for a general graphing $\cG$. For concreteness we give the following example
\begin{ex}($(\Z/3\Z)* \Z$)\label{ex:nontree}
    Consider a graph $\cG$ whose components are given by the free product of a triangle $(\Z/3\Z)$ and a $\Z$-line. Note that each vertex is a trifurcation as its removal disconnects 3 infinite components. In particular, the quotient graph $\cQ$ is isomorphic to $\cG$ and thus not acyclic.
\end{ex}

\subsection{Superfluous edges and leafless forests}\label{subsec:superfluous_edges}
In a finite graph, the rank function counts connected components of edge sets. Consequently, edges that are part of a cycle are superfluous in the sense that their deletion does not decrease the rank. We aim to extend this terminology to graphings. 

Given an edge set $F$ in a graphing, the rank function measures how large the $F$-components are. If the deletion of an edge does not split components, or if it splits an infinite component into two components, but both of those remain infinite, then component sizes remain the same, so the rank does not decrease. The slight inconvenience is that one cannot delete a single edge from an infinite component measurably, so in our definition, we will have to delete a positive measure subset of the edges simultaneously. 

\begin{defn}[Superfluous edges]
    Given the rank function $\rho_{\cG}$ of the cycle matroid of a graphing $\cG=(X,E,\mu)$, we say that an edge set $F\subseteq E$ is \emph{disposable}, if $\rho_{\cG}(E)=\rho_{\cG}(E\setminus F)$. 
    Furthermore, we say the edge set \emph{$F \subseteq E$ is superfluous in $\cG$}, if it is a countable union of disposable sets. Equivalently, there exists a Borel coloring of $F$ with countably many colors such that each color class is disposable. On the other hand, we say that \emph{$F \subseteq \cG$ has no disposable edges at all}, if for any positive measure set $F' \subseteq F$ we have $\rho_{\cG}(E\setminus F')<\rho_{\cG}(E)$. 
\end{defn}

The edge set $F$ being superfluous means that (up to measure zero) each edge can be part of a sufficiently sparse set of edges that is disposable.
Accordingly, $F$ being superfluous does not mean that $F$ as an edge set is disposable. 
Being disposable or superfluous are clearly preserved under rank-preserving bijections.

Note that these notions are simpler in a finite graph, where it is possible to talk about a single edge rather than a whole edge set. Edges in a set $F\subseteq E$ are superfluous if each of them is disposable on its own. However, it is not clear if there is a natural way to extend this terminology to countable graphs. One may call an edge locally disposable in a graph if its removal does not disconnect a finite component. For instance, in the Schreier graph of a free action of $\Z$ on a standard measurable space (i.e.\ a collection of $\Z$-lines), there are no disposable edges at all. That is, removing any positive measure set of edges decreases the rank, even though a.e.~edge is locally disposable in its component. The following remark shows that the other implication does hold.

\begin{obs}
    Let $\cG$ be a graphing and $\rho$ be the rank function of its a cycle matroid, if the full edge set of $\cG$ is 
    superfluous then a.e.~edge $e\in E(\cG)$ is locally disposable in its connected component.
\end{obs}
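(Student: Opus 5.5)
We argue by contraposition. Suppose the set $B\subseteq E(\cG)$ of edges that are \emph{not} locally disposable has positive measure. Such an edge $e$ is a bridge of its component whose removal leaves a finite piece on at least one side. We show that no positive measure subset of $B$ is disposable. Then every countable union of disposable sets meets $B$ in a null set, so $E(\cG)$ cannot be superfluous.

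\textbf{Reduction to a sparse set.} Let $D$ be any disposable set and suppose $D\cap B$ has positive measure. By downward monotonicity of the rank, $\rho(E\setminus (D\cap B))\ge \rho(E\setminus D)=\rho(E)$, so $D\cap B$ is itself disposable, as is every Borel subset of it. We therefore pass to a sparse positive measure subset $B_0\subseteq D\cap B$. For $e\in B$, let $S_e$ be a finite side of $e$; if both sides are finite, the component is finite, and we choose one side Borel-ly, say the smaller, breaking ties with a Borel order. Build the locally finite Borel graph on $B$ in which $e,f$ are adjacent when $d_\cG(e,f)$ is at most, say, $\operatorname{diam}(S_e)+\operatorname{diam}(S_f)+2$. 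This graph is locally finite because the $S_e$ are finite and the vertex degrees are finite. Take a countable Borel proper colouring and keep a colour class of positive measure, further restricted so that $|S_e|\le N$ for some fixed $N$. This gives $B_0$ of positive measure in which, within each component, the sets $S_e\cup\{e\}$ for $e\in B_0$ are pairwise far apart.

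\textbf{Computing the rank drop.} Removing $B_0$ from $E$ detaches each $S_e$ into a finite component of size at most $N$, while each vertex outside $\bigcup_{e} S_e$ stays in a component that is unchanged or larger. Vertices in $S_e$ originally lay in the $E$-component of $e$, which is strictly larger than $S_e$ since it contains the other endpoint of $e$. So
\[
\frac{1}{|V(\cG^{E\setminus B_0}_x)|} > \frac{1}{|V(\cG^E_x)|}
\]
for every $x\in S_e$. The set $\bigcup_{e\in B_0}V(S_e)$ has positive measure: it contains an endpoint of each $e\in B_0$, and $\eta(B_0)>0$ together with the graphing (mass transport) identity gives positive vertex measure.

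For the remaining vertices we must check that component sizes do not shrink. A vertex $y$ outside all the $S_e$ sees only removals of edges $e$ whose finite side is cut off away from $y$, so $y$'s component loses only those finite sides. If $y$'s component was infinite, it stays infinite. If it was finite, it shrinks, which would spoil the comparison. To handle this, we also restrict at the outset to edges in infinite components, or treat finite components separately, where the finite rank argument applies directly.

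\textbf{Conclusion and main obstacle.} Integrating, $\rho(E\setminus B_0)<\rho(E)$, which contradicts the disposability of $B_0$. Hence $D\cap B$ is null for every disposable $D$. The main obstacle is the bookkeeping in this last step: ensuring that the sparsification makes the vertex sets $S_e$ disjoint and that no vertex's component size decreases elsewhere. Doing so requires separating the case of infinite components, handled as above, from that of finite components, handled by the classical finite statement applied to the cut-off pieces.
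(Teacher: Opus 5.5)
Your overall strategy is the paper's. If the set $B$ of non-locally-disposable edges has positive measure, then some disposable set $D$ in any countable cover meets $B$ in positive measure. Deleting $D$ strictly lowers the rank, because vertices on the finite sides of those edges get strictly smaller components. The middle of your write-up, however, contains a false step, and it is organised around an obstacle that does not exist.

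\textbf{Sparsification.} Your auxiliary graph is not locally finite. Take a one-ended treeing: every edge $f$ is a bridge with a unique finite side $S_f$. Fix an edge $e$; each of the infinitely many edges $f$ on the ray from $e$ toward the end has $e\subseteq S_f$. Hence $d_{\cG}(e,f)\le \operatorname{diam}(S_f)$, which is below your threshold, so $e$ is adjacent to all of them. Finiteness of the $S_e$ and of the degrees does not help, because the threshold grows with $f$. You would have to restrict to $|S_e|\le N$ \emph{before} building the graph, and then use the uniform threshold $2N+2$.

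\textbf{The obstacle is not real.} The sparsification and the split into finite and infinite components are unnecessary, because you have the monotonicity backwards. You ask that other vertices' components not shrink, and you say they stay ``unchanged or larger''; but deleting edges never enlarges components, and shrinking is harmless. Since $E\setminus D\subseteq E$, we have $\cG^{E\setminus D}_x\subseteq \cG^{E}_x$, hence $1/|V(\cG^{E\setminus D}_x)|\ge 1/|V(\cG^{E}_x)|$ for every $x$. So extra shrinkage anywhere, in finite or infinite components, only makes $\rho(E\setminus D)$ smaller.

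Combine this with your strict inequality on the finite sides of the edges in $D\cap B$. That set has positive measure by your mass-transport remark, and the integrals are finite since $\mu(X)<\infty$. This gives $\rho(E\setminus D)<\rho(E)$ directly, with no $B_0$, no bound $N$, and no case distinction. That is exactly the paper's short proof. It is also the same monotonicity of $\rho$ that you already use correctly in your reduction step, now applied pointwise to the integrand.
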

\begin{proof}
    Assume towards contradiction that  the set of locally non-disposable edges $E'\subseteq E(\cG)$ has positive measure. That is, after removing any $e\in E'$, either a new finite component is created, or a finite component is cut into two smaller pieces.  By countable additivity, for any Borel coloring $c$ of $E(\cG)$ by countably many colors there must be a color class $I$ such that $\mu(E'\cap I)>0$. So removing $I$ decreases the component size of a positive measure set of vertices, implying $\rho_{\cG}(E\setminus I) <\rho_{\cG}(E)$. Thus, $E$ is not superfluous in $\cG$.
\end{proof}

\begin{rem}\label{rem:cycle_is_super}
If $\cG$ is a union of disjoint cycles then the full edge set of $\cG$ is 
superfluous. Indeed, there is a Borel coloring such that a.e.~$\cG$-component does not contain two edges of the same color and component-wise each edge is disposable (components are finite graphs and the rank of a path of length $n-1$ is equal to that of a cycle of length $n$).
\end{rem}

The next lemma shows that for a treeing the full edge set being superfluous is equivalent to being leafless and infinite ended.

\begin{lem}\label{lem:forest_superfluous}
    Let $\cT$ be a treeing. The following are equivalent:
    \begin{enumerate}
        \item\label{lem:forest_superfluous1} $\cT$ consists of leafless infinitely-ended components and isolated vertices;
        \item\label{lem:forest_superfluous2} The edge set of $\cT$ is 
        superfluous.
    \end{enumerate}
\end{lem}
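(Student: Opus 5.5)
We prove both implications, using the rank formula \eqref{eqn:graphing_rank} and the fact that removing an edge from a tree splits its component into exactly two pieces. Throughout, $\rho=\rho_\cT$ and $E=E(\cT)$.

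(\ref{lem:forest_superfluous1}$\Rightarrow$\ref{lem:forest_superfluous2}). Assume every non-trivial component is a leafless, infinitely-ended tree. Build a Borel graph on $E$ by joining two edges when their distance in $\cT$ is at most some $R$. This graph is locally finite, so it has a countable Borel proper coloring (as in the proof of Lemma~\ref{lem:cylce_preserv}). Each color class $I$ is then a $1$-separated set of edges. We claim every such $I$ is disposable. Deleting $I$ from a component $T$ splits it into pieces, and we need every piece to be infinite; then each vertex still lies in an infinite component and the integrand in \eqref{eqn:graphing_rank} is unchanged. Each piece $P$ is a subtree whose boundary edges in $I$ are pairwise at distance $\ge 2$. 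Hence every vertex of $P$ is incident to at most one deleted edge. Since $T$ is leafless, every vertex has degree $\ge 2$ in $T$, so it has degree $\ge 1$ in $P$. This alone does not yet make $P$ infinite, so we argue more concretely. Fix a deleted edge $e=uv$ with $u\in P$. Because $T$ is leafless, there is a ray (an infinite geodesic) starting at $u$ and avoiding $e$. Follow this ray; whenever it hits another deleted edge, its endpoint inside $P$ has another $T$-neighbor, and we reroute there, then continue along a ray. Taking $R\ge 2$ guarantees that no two deleted edges are adjacent at a vertex, so the rerouting is always possible. Since $P$ is a tree, we never revisit a vertex, and the resulting path is infinite. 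So every piece is infinite, and $E$ is a countable union of disposable sets. Note that only leaflessness is used in this direction; infinitely-endedness follows from it for trees with degree $\ge 2$ that are not lines.

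The bi-infinite line case needs separate attention, because a line is leafless but $2$-ended. Removing a positive-measure set of line edges cuts lines into finite segments, which decreases the rank, just as in the $\Z$-line example. This matches the statement: 2-ended components must be excluded. In the proof above, the rerouting step at a degree-2 vertex fails, and the argument really needs branching. So I would refine it: call a vertex a branch point if it has degree $\ge3$. In an infinitely-ended leafless tree, every edge $e$ has, on each side, a branch point within finite distance. Fix the distances involved by working on the sets $E_n$ of edges for which both of these distances are $\le n$, and choose $R\gg n$. After deleting such a sparse $I$, each side of a deleted edge contains a branch point, which gives at least two directions to continue, and at most one of them is blocked within radius $R$. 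Inductively, this produces an infinite path in each piece.

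(\ref{lem:forest_superfluous2}$\Rightarrow$\ref{lem:forest_superfluous1}). Suppose $E=\bigcup_k I_k$ with each $I_k$ disposable. We must rule out three kinds of components on a positive measure set: finite components, components with a leaf, and components with at most $2$ ends. For each, we find a positive-measure set of edges whose removal necessarily creates finite pieces, and hence lowers the integrand in \eqref{eqn:graphing_rank} on positive measure.
\begin{itemize}
\item \emph{Leaves.} If leaves occur on positive measure, the leaf edges have positive measure, so some $I_k$ contains a positive measure set of leaf edges. Deleting $I_k$ isolates those leaves. This strictly lowers $1/|V(\cdot)|$ unless the component was already a single edge, which is still a loss. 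Removing further edges only lowers the integrand more (monotonicity), so $\rho(E\setminus I_k)<\rho(E)$.
\item \emph{Finite components.} A finite tree with an edge has leaves, so this is covered by the leaf case.
\item \emph{Leafless components with $\le 2$ ends.} Such a tree is a bi-infinite line, since these are $0$ or $1$ ended only if they have leaves. If lines occur on positive measure, some $I_k$ meets line edges in positive measure. Deleting $I_k$ cuts a.e. such line at a point, so each resulting half-line contains further deleted edges, giving finite pieces. This part needs recurrence: by the mass transport principle, a positive-measure set of edges on a line is hit infinitely often in both directions a.e. Hence we get finite segments, and the rank drops.
\end{itemize}

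\textbf{Main obstacle.} The delicate step is the construction in (\ref{lem:forest_superfluous1}$\Rightarrow$\ref{lem:forest_superfluous2}): making degree-2 stretches harmless by stratifying the edges according to their distance to branch points, and proving that sparse deletions leave only infinite pieces.
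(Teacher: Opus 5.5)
Your proof is correct in its final, stratified form. The direction (\ref{lem:forest_superfluous2}$\Rightarrow$\ref{lem:forest_superfluous1}) is the paper's: a positive measure of leaves gives a strict rank drop, leafless trees with at most two ends are lines, and lines have no disposable edges at all by recurrence.

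For (\ref{lem:forest_superfluous1}$\Rightarrow$\ref{lem:forest_superfluous2}) you take a different route. The paper needs no stratification: it takes one Borel colouring in which the path between any two same-coloured edges passes through at least two trifurcation (degree $\ge 3$) vertices. It then argues by contradiction, choosing in a putative finite piece $K(x)$ the trifurcation vertex $y$ farthest from $x$. Each of the two sides of $y$ away from $x$ must contain a deleted edge before its next trifurcation, and these two deleted edges have only $y$ between them. Your separation ($I\subseteq E_n$ with pairwise distances $>R>2n+1$) forces two branch points on the path between same-coloured edges, so it is a special case of that colouring. Your rerouting induction is the constructive counterpart of the extremal argument. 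It exhibits an explicit ray in each piece, at the cost of bookkeeping you only sketch: the branch point $b'$ within distance $n$ of a blocking edge lies on the current path, strictly beyond the previous rerouting point, and no other deleted edge lies within distance $R-n$ of it, so its third direction is open.

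One step needs a justification you omit. You use that every edge has a branch point at finite distance on each side, so that $\bigcup_n E_n=E$. This is false for countable trees in general (attach a ray to a $3$-regular tree). It holds only almost everywhere, via the mass transport principle, which is exactly where the paper invokes it.

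Finally, drop the first, leaflessness-only version with $R\ge 2$ from the write-up, since it is wrong, as you note. Also, deleting a leaf edge increases the integrand $1/|V(\cdot)|$, and that increase is what makes the rank drop; your text says the integrand decreases.
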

\begin{proof}
$(\ref{lem:forest_superfluous1} \Rightarrow \ref{lem:forest_superfluous2})$: Let $c:E(\cT)\to\N$ be a Borel coloring of edges of $\cT$, defined on a co-null set, such that the path between any two edges with the same color must contain at least two trifurcation vertices. It is enough to show that for any $i\in\N$, letting $I:=c^{-1}(i)$, we have that  $\cT\setminus I$ does not have finite components. 
Suppose the opposite and consider a point $x$ in a finite component $K(x)$ in $\cT\setminus I$. Its component must contain at least one trifurcation vertex of $\cF$, otherwise its boundary consists of edges of color $i$ that are not separated by any trifurcations. Now consider the furthest trifurcation vertex $y$ of $\cT$ from $x$ that belongs to $K(x)$. There are at least two infinite sides of $y$ in $\cT$ that do not contain $x$, and by the mass transport principle, each of those sides contains infinitely many trifurcation vertices. Thus, there are at least two disjoint paths that connect $y$ to neighboring trifurcation vertices that $y$ separates from $x$. By the choice of $y$, these two trifurcations are not in $K(x)$, and so both paths from $y$ must contain an edge from $I$. That is a contradiction, as the only trifurcation along the path connecting these edges is $y$. (We will use similar arguments exploiting trifurcations later in Lemma~\ref{lem:inf_comp} and Lemma~\ref{lem:sparse_removal_from_3-connected}.)

$(\ref{lem:forest_superfluous1} \Leftarrow \ref{lem:forest_superfluous2})$: Note that deleting a positive measure set of leaves of a treeing decreases the rank. As the edge set of $\cT$ is 
superfluous, $\cT$ has to be leafless. In particular, a.e.\ $\cT$-component has more than one end. A $2$-ended leafless component is a biinfinite line, and the collection of such components forms a subgraphing which has no disposable edges at all. So there can only be a nullset of such components in $\cT$. Thus, almost all components are $\infty$-ended.
\end{proof}

\begin{cor}\label{cor:phi_superfluous}
    Let $\cT$ and $\cG$ be graphings and $\fg: E(\cT) \to E(\cG)$ a rank-preserving Borel bijection. If $\cT$ satisfies the property \eqref{lem:forest_superfluous1} from Lemma~\ref{lem:forest_superfluous}, then so does $\cG$.
\end{cor}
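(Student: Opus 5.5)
The plan is to transfer the two ingredients of property \eqref{lem:forest_superfluous1} — acyclicity and superfluousness of the whole edge set — across $\fg$, and then apply Lemma~\ref{lem:forest_superfluous} to $\cG$.

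\emph{Step 1: $\cG$ is a treeing.} Since $\fg$ is rank-preserving, it is a weak isomorphism by Lemma~\ref{lem:eq_of_rankpres}. Apply the same reasoning to $\fg^{-1}$, which is also rank-preserving because $\fg$ is a bijection. Lemma~\ref{lem:cylce_preserv} then says $\fg^{-1}$ maps cycles to cycles almost surely. Suppose the set of edges of $\cG$ lying on some cycle had positive measure. Then, for some length $k$, there would be a positive-measure Borel family of $k$-cycles in $\cG$. Its $\fg^{-1}$-image would be a positive-measure family of cycles in $\cT$, which is impossible. One checks that ``positive measure'' passes through correctly using edge-measure preservation. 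Hence $\cG$ is acyclic up to a null set of components, i.e.\ a treeing.

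\emph{Step 2: the edge set of $\cG$ is superfluous.} By Lemma~\ref{lem:forest_superfluous} ($\ref{lem:forest_superfluous1}\Rightarrow\ref{lem:forest_superfluous2}$), $E(\cT)$ is superfluous, so $E(\cT)=\bigcup_i I_i$ with each $I_i$ disposable, i.e.\ $\rho_\cT(E(\cT)\setminus I_i)=\rho_\cT(E(\cT))$. Since $\fg$ is a bijection preserving rank, we get
\[
\rho_\cG\bigl(E(\cG)\setminus\fg(I_i)\bigr)=\rho_\cT\bigl(E(\cT)\setminus I_i\bigr)=\rho_\cT(E(\cT))=\rho_\cG(E(\cG)).
\]
So each $\fg(I_i)$ is disposable in $\cG$, and $E(\cG)=\bigcup_i\fg(I_i)$ is superfluous. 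The sets $\fg(I_i)$ are Borel because $\fg$ is a Borel bijection between standard Borel spaces.

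\emph{Step 3: conclusion.} Since $\cG$ is a treeing with superfluous edge set, the direction ($\ref{lem:forest_superfluous2}\Rightarrow\ref{lem:forest_superfluous1}$) of Lemma~\ref{lem:forest_superfluous} gives property \eqref{lem:forest_superfluous1} for $\cG$.

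The only mildly delicate point is Step 1, namely that cycle preservation in Lemma~\ref{lem:cylce_preserv} is stated as an almost-sure property of images. Making this precise requires choosing a measurable family of cycles of fixed length, as in that lemma's proof. Everything else is formal bookkeeping with the rank function.
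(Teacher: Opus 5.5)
Your proposal is correct and follows the paper's proof: acyclicity of $\cG$ from cycle preservation (Lemma~\ref{lem:cylce_preserv}), transfer of superfluousness of the whole edge set through the rank-preserving bijection, and then Lemma~\ref{lem:forest_superfluous} applied to $\cG$. Your use of $\fg^{-1}$ in Step 1 is the right reading of the paper's one-line ``$\fg$ is cycle preserving and thus $\cG$ is acyclic,'' since acyclicity of the target needs cycles of $\cG$ to pull back to cycles of $\cT$.
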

\begin{proof}
    By Lemma~\ref{lem:cylce_preserv} $\fg$ is cycle preserving and thus $\cG$ is acyclic. Since $\fg$ preserves the rank function, the edge set of $\cG$ is superfluous, and applying Lemma~\ref{lem:forest_superfluous} concludes the proof.
\end{proof}

\subsection{Free Minimal Spanning Forests}

Invariant cycle cutting algorithms on countable graphs are classical in percolation theory. In recent years such constructions started to in measured combinatorics. 
In \cite{CTT22} a generalization of the Free Minimal Spanning Forest was introduced for Borel graphs (in the full generality of measure-class preserving locally finite Borel graphs it made more sense to the authors to call the forest ``maximal"). Another example is the application of the Wired Minimal Spanning Forest to graphings from \cite{berczi2026cycle}.

\begin{defn}[FMSF]
Given a Borel linear ordering $<$ on the undirected edges of the graphing $\cG$. The \textbf{Free Minimal Spanning Forest} of $\cG$ with respect to $<$, denoted by $\mathrm{FMSF}_{<}(\cG)$ is the subforest $\cF\subseteq \cG$ obtained by deleting the $<$-largest edge from each simple cycle in $\cG$.
\end{defn}

The following results summarize some of the classical properties of $\mathrm{FMSF}$. We omit their proof, as they are straightforward to check; see e.g. \cite[Chapter 11]{LyonsBook} for a detailed introduction to Free Minimal Spanning Forests.

\begin{lem}\label{lem:FMSF_cycle_invar}
    Suppose $S\subset\cG$ is such that any cycle that intersects $S$ is contained in it. Then $\mathrm{FMSF}_{<}(\cG)$ restricted to $S$ is equal to $\mathrm{FMSF}_{<}(S)$.
\end{lem}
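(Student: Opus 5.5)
The plan is to compare, edge by edge, the two deletion rules: the rule defining $\mathrm{FMSF}_{<}(S)$ and the rule defining $\mathrm{FMSF}_{<}(\cG)$ restricted to $S$. The goal is to show that for every edge $e\in S$, $e$ is deleted in $\mathrm{FMSF}_{<}(\cG)$ if and only if it is deleted in $\mathrm{FMSF}_{<}(S)$. Since both forests are obtained from $\cG$ and from $S$ respectively by removing edges, and $\mathrm{FMSF}_{<}(\cG)\cap S$ only concerns edges of $S$, this edge-wise equivalence is exactly the claimed equality.

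Fix $e\in S$. By definition, $e$ is removed from $\mathrm{FMSF}_{<}(\cG)$ precisely when there is a simple cycle $C$ of $\cG$ with $e\in C$ and $e$ the $<$-largest edge of $C$. Likewise, $e$ is removed from $\mathrm{FMSF}_{<}(S)$ precisely when there is such a cycle contained in $S$. Every simple cycle of $S$ is a simple cycle of $\cG$, so one direction is immediate: if $e$ is deleted in $\mathrm{FMSF}_{<}(S)$, then it is deleted in $\mathrm{FMSF}_{<}(\cG)$. For the converse, suppose $C$ is a simple cycle in $\cG$ witnessing the deletion of $e$. Then $C$ intersects $S$, since it contains $e$, so by the hypothesis $C\subseteq S$. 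Hence $C$ is a simple cycle of $S$ in which $e$ is maximal, and $e$ is deleted in $\mathrm{FMSF}_{<}(S)$ as well. The ordering used is the same Borel ordering $<$ restricted to $S$, so maximality means the same thing in both settings.

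There is essentially no obstacle here; the only points to state carefully are the following. The definition deletes all edges simultaneously, each according to whether it is the maximum of some cycle, rather than sequentially. This is what makes the edge-wise comparison valid. One should also interpret "restricted to $S$" as the edge set $\mathrm{FMSF}_{<}(\cG)\cap E(S)$ on the vertex set of $S$.
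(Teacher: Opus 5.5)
Your proof is correct and matches the paper's approach: the paper omits the proof as straightforward, and your edge-by-edge comparison is exactly that check. The hypothesis forces any cycle witnessing the deletion of an edge $e\in S$ to lie inside $S$, and deletion in the definition is simultaneous, so the two forests agree on $S$.
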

\begin{lem}\label{lem:FMSF_inf_sets}
    If $x$ is in an infinite component in $\cG$ then it is also in an infinite component in ${\mathrm{FMSF}_{<}(\cG)}$. 
\end{lem}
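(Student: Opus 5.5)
We argue component by component: a.e.\ component $K$ of $\cG$ is a countable, locally finite connected graph, and the FMSF restricted to $K$ is obtained by deleting the $<$-largest edge of every simple cycle of $K$. So it suffices to prove the following purely combinatorial statement. If $K$ is an infinite, connected, locally finite graph with a linear order on its edges, then the component of $x$ in $\cF=\mathrm{FMSF}_<(K)$ is infinite. Measurability plays no role here.

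The plan is to fix a finite connected set $A\ni x$ of vertices and to show that $\cF$ contains an edge leaving $A$. Once that is shown, we iterate: the $\cF$-component of $x$ is then not contained in any finite vertex set containing $x$, so it is infinite. To see this, suppose the $\cF$-component $C$ of $x$ is finite and apply the claim with $A=C$; note that $C$ is connected and contains $x$.

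Since $K$ is infinite, connected and locally finite, the edge boundary $\partial A$ is finite and nonempty. Let $e$ be the $<$-minimal edge of $\partial A$. We claim that $e\in\cF$, i.e.\ $e$ is not the largest edge of any simple cycle. Any simple cycle through $e$ crosses the cut $\partial A$ an even number of times, so it contains another edge $f\in\partial A$ with $f\neq e$. By minimality $e<f$, so $e$ is not the maximum of that cycle. Hence $e$ is never deleted, and $e\in\cF$ joins $C$ to a vertex outside $C$. This contradicts the finiteness of $C$, which was assumed to be an $\cF$-component.

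The only subtle point is the one this cut argument resolves. In an infinite graph one cannot run a Kruskal-type induction, because infinitely many cycles pass through a given region, so the argument has to be local: it uses only that finite sets have finite, nonempty boundaries in infinite components. We must also make sure the linear order is genuinely strict, so that the minimal boundary edge is well defined. A finite set of edges always has a minimum under a linear order, so this causes no difficulty.
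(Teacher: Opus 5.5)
Your proof is correct. The $<$-minimal edge of the finite, nonempty edge boundary of a finite vertex set cannot be the largest edge of any simple cycle, because such a cycle must cross the cut a second time. That edge therefore survives in $\mathrm{FMSF}_{<}(\cG)$, so no finite $\mathrm{FMSF}_{<}$-component can sit inside an infinite $\cG$-component. The paper omits the proof and points to Lyons--Peres, Chapter~11, where this same minimal-boundary-edge cut argument is the standard one, so your route matches the intended one.
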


\section{Covers by disjoint cycles and pruned forests}\label{sec:cycle_forest_cover}
In this section we establish our results concerning leafless subforests in infinitely-ended graphigs, first in the weakly 2-connected, and then in the weakly 3-connected case. The proofs of these will be very similar. We conclude by using these tools to prove Theorem~\ref{thm:graph_isom}. 

Throughout, we will use the Borel partition $\cH$ from Lemma~\ref{lem:cells} into trifurcation cells. We write $\cQ:=\cG/\cH$ for the quotient graph, and $C_\cH(F)$ for the smallest union of cells in $\cH$ that contains the finite graph $F\subseteq V(\cG)$. Note that $C_\cH(F)$ is a trifurcation in $\cG$. Finally, we write $B_n^{\cQ}(A)$ for the $n$-neighbourhood of a vertex- or edge set $A$ in $\cQ$. 

\subsection{Covering weakly 2-connected graphings}

Our first goal is to prove the following theorem. 
\begin{thm}\label{thm:union_of_inf_ended_leafless}
    Assume the graphing $\cG$ is weakly 2-connected and infinitely-ended. Then $E(\cG_1)$ is a countable union of Borel leafless infinitely-ended forests.
\end{thm}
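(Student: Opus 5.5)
The plan is to cover $E(\cG)$ edge by edge. For each edge $e$ we build, in a Borel way, a finite ``seed'' graph that contains $e$ and has few leaves, each leaf lying in a different infinite side of a trifurcation. A sparse family of such seeds will then be extended to a leafless infinitely-ended forest using an FMSF followed by pruning. We would work with the trifurcation cells $\cH$ from Lemma~\ref{lem:cells} and the quotient $\cQ$. Since $E(\cG)$ is a countable union of Borel sets of edges that are pairwise far apart in $\cQ$ (a countable Borel colouring of the locally finite distance graph), it suffices to treat a Borel set $E_0$ of edges whose members are at $\cQ$-distance at least some large $N$ from each other. For each such $E_0$ we produce one leafless infinitely-ended forest containing $E_0$.

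\textbf{Step 1 (local seeds via Menger).} Fix $e=uv\in E_0$ and let $A=C_\cH(e)$, a trifurcation containing $e$. Weak 2-connectivity means that no single vertex separates a finite part from the rest. We would apply Menger's theorem in the finite ball $B_n^{\cQ}(A)$ to find a finite tree $T_e\ni e$ that reaches into at least two distinct infinite sides of a trifurcation, with its leaves lying on the boundary of that ball in different infinite sides, or alternatively a cycle through $e$.
\begin{itemize}
\item If $e$ lies on a cycle, we take a shortest such cycle. Cycles are harmless, since after removing one non-$e$ edge they become paths whose two ends can be grown as above.
\item Otherwise $e$ is a cut edge. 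By weak 2-connectivity both sides of $e$ are infinite, and we choose paths from $u$ and $v$ into distinct infinite sides.
\end{itemize}
To make the final forest infinitely-ended rather than merely a line, we additionally require that $T_e$ passes through a trifurcation and has at least three leaves, one in each of three different infinite sides. All choices are made Borel by using a Borel ordering on finite subgraphs. Thanks to the sparsity of $E_0$, the seeds $T_e$ for distinct $e\in E_0$ are pairwise disjoint and far apart.

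\textbf{Step 2 (extension and pruning).} Choose a Borel edge ordering $<$ that places all seed edges below all other edges. The forest $\cF=\mathrm{FMSF}_<(\cG)$ then contains every seed, because seeds are acyclic and minimal on any cycle through them, although this needs care for cycles containing several seed edges. By Lemma~\ref{lem:FMSF_inf_sets}, every leaf of a seed lies in an infinite $\cF$-component.

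We then prune $\cF$ iteratively, deleting finite hanging subtrees, which leaves the union of bi-infinite geodesics of $\cF$, i.e.\ its leafless core. The key claims are:
\begin{enumerate}
\item Each leaf of $T_e$ continues to infinity in $\cF$ within its own side, so no seed edge is pruned.
\item The pruned component of $e$ contains a vertex of degree at least 3 whose three branches are infinite.
\item By the mass transport principle on the resulting treeing, a component with one trifurcation has infinitely many, so it is infinitely-ended.
\end{enumerate}
Finally, discard the components that do not contain any seed edge.

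\textbf{Main obstacle.} The hard step is claim 1: ensuring that the infinite FMSF ray emanating from each seed leaf leaves through the side into which that leaf pointed, rather than returning and creating only a finite branch that gets pruned. In finite-ended regions of an infinite-ended component, for example 2-ended stretches, FMSF can still return. My first attempt would be to choose the seed leaves deep inside infinite sides of further trifurcations, say at $\cQ$-distance $N/3$, and to use Lemma~\ref{lem:FMSF_cycle_invar} on the side subgraphs to show that an infinite FMSF component exists within each side. If this fails, the fallback is to run FMSF separately on each side and glue the results, which is justified by acyclicity across the trifurcation.
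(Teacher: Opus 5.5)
\paragraph{Gap in Step 2: FMSF on the unmodified graphing.}
Your architecture matches the paper's: trifurcation cells, a sparse Borel colouring of the edges, Menger-built finite seeds with leaves in distinct infinite sides, then FMSF and pruning. However, the step you flag as the main obstacle is a genuine gap, and neither of your proposed fixes closes it.

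In $\mathrm{FMSF}_<(\cG)$ on the unmodified graphing, Lemma~\ref{lem:FMSF_inf_sets} only says that the component of a leaf is infinite. That is automatic once the component contains the whole seed, and it says nothing about the leaf having a ray inside its own side.

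Such a ray can in fact be destroyed. Suppose the side $S$ containing a leaf $\ell$ also meets the seed's neighbourhood at another vertex $w$, and $w$ is joined to the seed by non-seed edges. Then there is a cycle running from $\ell$ into $S$, back out through $w$, and closing along the seed. Ranking seed edges lowest does not stop the first edge from $\ell$ into $S$ from being the maximum of this cycle, since the order among non-seed edges is arbitrary. FMSF then deletes that edge. If it is $\ell$'s only edge into $S$, then $\ell$ becomes a leaf of $\cF$ and pruning eats the seed branch. This can take $e$ itself, e.g.\ if the branch at $u$ is just the path $u\ell$.

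Pushing the leaves deeper does not help. Lemma~\ref{lem:FMSF_cycle_invar} needs every cycle meeting $S$ to stay inside $S$, and in $\cG$ this fails at any depth as long as $S$ is attached to the neighbourhood in several places.

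\paragraph{The missing idea: modify the graph before running FMSF.}
The paper deletes, simultaneously for all seeds in the colour class, every non-seed edge of $B_1^{\cQ}(C_\cH(e))$ and of its edge boundary, and runs FMSF on the resulting $\cG'$. Closer to your ordering idea, you could instead rank these edges above all others, since then $\mathrm{FMSF}_<(\cG)\cap E(\cG')=\mathrm{FMSF}_<(\cG')$.

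In $\cG'$ a seed touches the rest of the graph only through its leaves, which lie in distinct sides. Hence no cycle meets a seed, and every side is closed under cycles. Applying Lemmas~\ref{lem:FMSF_cycle_invar} and~\ref{lem:FMSF_inf_sets} inside each side gives every leaf an infinite FMSF-tree within its own side. So every seed vertex lies on a bi-infinite path, and the seed is a trifurcation of the pruned forest.

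The price is that you must show the sides remain infinite after all these deletions. This is what the sparsity assumption (cells pairwise at $\cQ$-distance at least $6$) is for, via Lemmas~\ref{lem:bi_tri} and~\ref{lem:inf_comp}: since every cell is a trifurcation, removing the $2$-neighbourhoods of all seed cells leaves only infinite components.

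Your fallback of running FMSF on each side separately and gluing points in this direction. As stated, though, it is not well defined, because sides of different seeds overlap, and it omits the infiniteness check.

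\paragraph{Smaller gap in Step 1.}
Your cycle case is hand-waved. Extending the two ends of a cycle-minus-an-edge disjointly into distinct infinite sides is exactly what needs Menger. The paper does this uniformly, without a cycle/cut-edge split (Lemma~\ref{lem:disj_path}):
\begin{enumerate}
\item Add super-vertices for two infinite sides of $C_\cH(e)$, both joined to a new vertex $s$, and a vertex $z$ joined to both endpoints of $e$.
\item Weak $2$-connectivity makes this finite graph $2$-connected, so Menger gives two disjoint $z$--$s$ paths, i.e.\ a path through $e$ between two different infinite sides.
\item This path is then branched inside the adjacent trifurcation cells to obtain the extra leaves.
\end{enumerate}
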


We will use this result in Section~\ref{sec:general_Whitney_for_graphings}, where we establish our measurable 2-isomorphism theorem. We prove it here well in advance, because its proof is a simpler version of the argument that we will present for covering endpoints of wedges in the weakly 3-connected case (Theorem~\ref{thm:cycle_or_forest}). We start by establishing a few lemmas.

\begin{lem}\label{lem:bi_tri}
    Let $\cG$ be a locally finite infinitely-ended graphing, and $\cH$ a Borel partition into trifurcation cells, and let $\cQ=\cG/\cH$. Let $\cE$ be a Borel set of finite subgraphs of $\cG$ such that for any $F,G\in \cE$ the $\cQ$-distance between $C_\cH(F)$ and $C_\cH(G)$ is at least $6$.    
    Then any cell $A$ that is at least $\cQ$-distance $1$ (resp.~$\cQ$-distance $2$) from $C_\cH(F)$ for some $F\in\cE$ must be a bifurcation (resp.~trifurcation) in $\cG':=\cG\setminus \{C_{\cH}(F)\}_{F\in\cE}$.
\end{lem}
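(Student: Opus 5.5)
Here is the plan. Fix a cell $A$ and let $\cE_A$ be those $F\in\cE$ with $C_\cH(F)$ within $\cQ$-distance $2$ of $A$; the distance-$6$ separation should force $\cE_A$ to have essentially one element. The plan is to understand the infinite sides of $A$ in $\cG'$ by starting from its infinite sides in $\cG$, which exist in number at least $3$ because $A$ is a trifurcation (Lemma~\ref{lem:cells}). For each removed set $C=C_\cH(F)$, I would show that it destroys at most one of these sides, and only when it is very close to $A$. The reason is that $C$ is connected and disjoint from $A$, so it lies inside a single side $S$ of $A$ in $\cG$. Removing $C$ can only cut $S$ into pieces, and it cannot touch the other sides at all.

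The first key step is that removals far from $A$ do no harm. Let $S$ be an infinite side of $A$ and $C\subseteq S$ a removed set. Since $C$ is itself a trifurcation (being a union of cells containing $F$; compare the remark before Theorem~\ref{thm:union_of_inf_ended_leafless}), $C$ has at least $3$ infinite sides in $\cG$. One of these sides contains $A$, so at least $2$ infinite sides of $C$ are contained in $S$ minus $C$. These pieces touch $C$, and whether they still touch $A$ depends on the distance from $C$ to $A$. I would argue as follows. The cells in $B^{\cQ}_1(C)$ form a "collar" around $C$. Removing $C$ leaves $S$ minus $C$ with infinite parts. If $\operatorname{dist}_\cQ(A,C)\ge 2$, then $A$ does not meet this collar, and a path in $S$ from the boundary of $A$ to infinity can be rerouted. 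Concretely: $S$ is connected, and $C$ has at least $2$ infinite sides not containing $A$. So either some infinite piece of $S\setminus C$ is still adjacent to $A$, or every path from $A$ into $S$ passes through $C$. In the second case $C$ separates $A$ from infinity within $S$, and this would require $C$ to be adjacent to $A$ in $\cQ$. That is the distance-$0$ or $1$ case.

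The second key step uses the separation. Because distinct removed sets are at $\cQ$-distance at least $6$, at most one removed set lies within distance $2$ of $A$; call it $C_0$. Every other removed set is at $\cQ$-distance at least $3$ and leaves every side of $A$ with an infinite part adjacent to $A$. There is also an interaction issue: several far-away removals could together cut off a side. I would handle this by noting that the removed sets pairwise separated by $6$ cannot surround anything. Each one, by the first step, leaves an infinite continuation on its far side, and within the side $S$ one can keep walking outward through continuations, by a König-type argument on the locally finite quotient $\cQ$. Consequently, if $A$ is at distance at least $2$ from every removed set, all of its $\ge 3$ infinite sides survive, and $A$ is a trifurcation in $\cG'$. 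If $C_0$ is at distance exactly $1$, only the side containing $C_0$ can be lost, leaving at least $2$ infinite sides, so $A$ is a bifurcation.

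The main obstacle is making the "rerouting" of the first key step rigorous. The difficulty is showing that a removed set not adjacent to $A$ cannot disconnect $A$ from infinity inside a side, including the case where infinitely many such sets are removed within one side. I would first try the following. Take a ray in $S$ from $A$ and, each time it meets a removed set $C$, detour into an infinite side of $C$ not containing $A$. Check that the detour stays in $\cG'$ using the distance-$6$ gaps. Then extract an infinite path by compactness via local finiteness of $\cQ$.
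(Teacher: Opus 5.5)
Your counting argument is the same as the paper's. Each removed set $C=C_\cH(F)$ is connected and disjoint from $A$, so it lies in a single side of $A$. By the separation, at most one removed set is $\cQ$-adjacent to $A$. Everything therefore reduces to one claim: a side of $A$ containing no removed set adjacent to $A$ still gives an infinite side of $A$ in $\cG'$. The paper's short proof takes this for granted.

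You rightly flag this claim as the crux, but your argument for it does not work. The infinite sides of $C$ not containing $A$ are separated from $A$ by $C$ itself. They are never adjacent to $A$ and cannot be reached from $A$ in $\cG\setminus C$, whatever the $\cQ$-distance from $A$ to $C$. So:
\begin{itemize}
\item the trifurcation property of $C$ is useless here;
\item the sentence ``whether they still touch $A$ depends on the distance'' is false;
\item your proposed repair, detouring into an infinite side of $C$ not containing $A$ each time the ray meets $C$, is impossible, since entering such a side means passing through $C$.
\end{itemize}
Consequently the claim that a separating $C$ must be adjacent to $A$ remains unproved. So does the case of infinitely many removed sets inside one side, and the compactness step has no finite paths to act on.

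The missing idea is to use the trifurcation property of the surviving cells, not of $C$. Let $S$ be an infinite side of $A$ with no removed set adjacent to $A$. Set $M_0=A$ and $T_0:=S$, and let $M_1\subseteq S$ be a cell adjacent to $A$; it is not removed, by the choice of $S$.

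Given $M_i$ with $i\geq 1$, choose among its at least three infinite sides one, $T_i$, containing neither $M_{i-1}$ nor the removed set adjacent to $M_i$. There is at most one such removed set, since two would be at $\cQ$-distance at most $2$. Let $M_{i+1}\subseteq T_i$ be a cell adjacent to $M_i$. Then:
\begin{itemize}
\item $M_{i+1}$ is not removed, since a removed set containing it would be adjacent to $M_i$ and contained in $T_i$.
\item $T_{i+1}\subseteq T_i$ (and $T_1\subseteq S$), because $T_{i+1}\cup M_{i+1}$ is connected, meets $T_i$ and misses $M_i$.
\item Hence $M_j\subseteq T_i$ for all $j>i$ while $M_i\cap T_i=\emptyset$, so the cells $M_i$ are pairwise distinct.
\end{itemize}
Thus $\bigcup_{i\geq 1}M_i$ is an infinite connected subgraph of $\cG'$ inside $S$ and adjacent to $A$, so $S$ contributes an infinite side of $A$ in $\cG'$. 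Combined with your counting, this gives both the bifurcation and the trifurcation statements. It uses only pairwise separation $\geq 3$ and needs no compactness argument.
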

\begin{proof}
    As $A$ is a trifurcation, in order for $A$ to not be a bifurcation in $\cG'$ at least two of the infinite sides of $A$ have to be contained in $\{C_{\cH}(F)\}_{F\in \cE}$. This entails that there are $F,G\in \cE$ such that $C_\cH(F)$ and $C_\cH(G)$ are within $\cQ$-distance $2$ from each other.
    Similarly, if $A$ is not a trifurcation in $\cG'$ then at least one of its infinite sides was removed, and thus it is a $\cQ$-neighbour of $C_\cH(F)$ for some $F\in\cE$.
\end{proof}

\begin{lem}
\label{lem:inf_comp}
    In the setting of Lemma~\ref{lem:bi_tri}, the removal of $B:=B_2^{\cQ}\left(\bigcup_{F\in \cE}C_{\cH}(F)\right)$
    yields only infinite connected components.  
\end{lem}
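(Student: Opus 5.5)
Let $\cG'' := \cG\setminus B$. The plan is the same ``furthest trifurcation'' argument as in the proof of Lemma~\ref{lem:forest_superfluous}, now run on the cell quotient $\cQ$ and using Lemma~\ref{lem:bi_tri}. Suppose, towards a contradiction, that a positive measure set of vertices lies in finite components of $\cG''$, and fix such a finite component $K$. Since cells are finite and $\cQ$ is locally finite (Lemma~\ref{lem:cells}), $K$ meets only finitely many cells. None of these cells lies in $B$, and any cell of $\cQ$-distance at least $3$ from $\bigcup_{F\in\cE}C_\cH(F)$ is a trifurcation in $\cG'$ by Lemma~\ref{lem:bi_tri}. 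The easy first step is to check that $K$ really contains vertices of such a cell. The boundary of $K$ consists of edges into $B$, so $K$ touches the $\cQ$-sphere of radius $3$ around some $C_\cH(F)$. If a cell is cut by $B$ only partially, we may work with the union of cells meeting $K$; its complement in $\cG'$ is still contained in $B$ together with the removed sets.

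Next, the key step is to choose a suitable cell $A$ of $K$. Fix a reference vertex $x\in K$ and take a cell $A$ meeting $K$ at maximal $\cQ$-distance from the removed sets, breaking ties by maximal $\cQ$-distance from $x$. Since $A$ is a trifurcation in $\cG'$, it has at least three infinite sides in $\cG'$, and at most one of them contains $x$. Take two other infinite sides $S_1,S_2$. Each $S_i$ is infinite, but $K\cap S_i$ is finite, so $S_i$ must leave $K$. Hence $S_i$ meets $B\setminus\bigcup_F C_\cH(F)$, that is, the $\cQ$-annulus of radii $1,2$ around some $C_\cH(F_i)$.

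The third step rules out $F_1=F_2=:F$ by using the spacing hypothesis. The $\cQ$-distance between distinct $C_\cH(F)$ is at least $6$, and $A$ sits at $\cQ$-distance at least $3$ from every removed set. So the only way for two different sides of $A$ to reach the $2$-ball around the same $C_\cH(F)$ is if $C_\cH(F)$ itself is connected to both sides. But $C_\cH(F)$ is connected, so it lies in a single side of $A$. Since $B_2^\cQ(C_\cH(F))$ is also connected and disjoint from $A$, the whole ball lies in that same side. Therefore $F_1\ne F_2$, and $S_1$, $S_2$ reach different removed blobs.

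The final step, which I expect to be the main obstacle, is to turn this into a contradiction. Two distinct blobs both being reachable from $K$ is not by itself contradictory, so the argument has to exploit the maximality of $A$. Following Lemma~\ref{lem:forest_superfluous}, I would argue that inside $S_i$ there is a further cell that is a trifurcation in $\cG'$ and lies strictly further from $x$ than $A$. Such a cell should belong to $K$, contradicting the choice of $A$. If it lies outside $K$, then it is separated from $A$ by $B$, and this would force $S_i\cap B$ to be a whole ball that $S_i$ passes through, so we get at most one such obstruction per side. I would then combine this with a counting argument on the finite tree-like structure of trifurcations inside $K$: every ``leaf'' trifurcation of $K$ has at least two outgoing sides, each blocked by a distinct blob. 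The goal is to derive the contradiction from a mass-transport estimate, where each blob receives mass from boundedly many leaves while leaves outnumber blobs. Making this counting rigorous is where I expect the real work to lie.
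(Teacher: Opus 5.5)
Your Steps 1--3 are sound, and the connectivity argument in Step 3 showing $F_1\neq F_2$ is a point the paper leaves implicit. There is, however, a genuine gap: Step 4 is not a proof, and the way Step 2 is set up prevents it from becoming one. You choose $A$ \emph{primarily} by maximal distance from the removed sets. So a cell of $S_i$ lying further from $x$ does not contradict the choice of $A$. You also have no information about where $S_i$ leaves $K$, so nothing bounds the distance between $C_\cH(F_1)$ and $C_\cH(F_2)$. The leaves-versus-blobs mass-transport count is a hope, not an argument.

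The missing idea is purely local, and it is the one the paper uses. Drop the distance to the removed sets entirely. Fix a reference cell $A$ of the finite component, and let $A'$ be a cell of the component at maximal $\cQ$-distance from $A$. Since $A'$ is a trifurcation in $\cG$, two of its infinite sides avoid $A$; pick a $\cQ$-neighbour $C_j$ of $A'$ in each of them. Every $\cQ$-path from $A$ to $C_j$ passes through $A'$, so $d_\cQ(A,C_j)>d_\cQ(A,A')$. By maximality of $A'$, this forces $C_1,C_2\in B$. Your Step 3 then gives $C_j\in B_2^\cQ(C_\cH(F_j))$ with $F_1\neq F_2$, while $d_\cQ(C_1,C_2)\le 2$. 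No counting is needed.

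There is a caveat on constants. This argument yields $d_\cQ(C_\cH(F_1),C_\cH(F_2))\le 2+2+2=6$, not $4$ as the paper's proof asserts. It therefore contradicts the hypothesis only if the spacing is at least $7$. With spacing exactly $6$ the statement is actually false. Take a $3$-regular treeing: there $\cQ$ is a tree, and every $\cQ$-neighbour of a cell $A'$ starts an infinite branch. For a sparse Borel set of such $A'$, place one $C_\cH(F)$ at $\cQ$-distance exactly $3$ from $A'$ in each branch. The blobs are then pairwise at distance $6$, yet $A'$ becomes an isolated component after removing $B$. So your remark that two distinct reachable blobs are not by themselves contradictory is exactly right at spacing $6$, and no counting argument could close the gap there. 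The hypothesis should require spacing at least $7$. This is harmless, because the colourings in the applications can be taken arbitrarily sparse.
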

\begin{proof}
    Suppose towards a contradiction that the removal of $B$ isolates a finite connected component in $\cQ$. Given any cell $A$ in such a connected component, let $A'$ be a cell in the same connected component that is the furthest from $A$ in $\cQ$-distance. (Potentially $A=A'$.) 
    $A'$ is a trifurcation in $\cG$, and thus there are at least two cells $C_1$ and $C_2$ that lie on different sides of $A'$ from $A$ and from each other. However, since $A'$ is the furthers cell from $A$, both $C_1$ and $C_2$ must be among the removed cells. Since they are $\cQ$-distance $2$ from each other we conclude that there must be $F,G\in \cE$ such that $C_\cH(F)$ and $C_\cH(G)$ are within $\cQ$-distance $4$, a contradiction. 
\end{proof}

\begin{lem}
\label{lem:disj_path}
    In the setting of Lemma~\ref{lem:bi_tri}, assume further that $\cG$ is weakly 2-connected and the subgraphs in $\cE$ are singleton edges. 
    Then for any edge $e\in \cE$ there are two vertex disjoint paths from the endpoints to two cells that lie in two different sides of $C_{\cH}(e)$.
\end{lem}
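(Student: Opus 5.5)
The plan is to apply Menger's theorem locally, in a finite auxiliary graph built from $C:=C_{\cH}(e)$ and two of its infinite sides. Write $e=uv$. Since $C$ is a trifurcation in $\cG$, it has at least three infinite sides. Fix two of them, $S_1$ and $S_2$, and in each pick a cell $A_i\subseteq S_i$. We may take $A_i$ to be a cell adjacent in $\cQ$ to $C$, because every infinite side of $C$ meets the boundary of $C$. The statement only asks that the two targets lie in different sides of $C_{\cH}(e)$, so these neighbouring cells suffice. Measurability is not an issue here: the claim concerns a single finite configuration around each $e$, and any choices can later be made in a Borel way by choosing lexicographically least options with respect to a Borel ordering.

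Next, contract $S_1$ to a single vertex $s_1$ and $S_2$ to a single vertex $s_2$, and also contract every remaining side of $C$ to a point. Finite sides can simply be discarded. This produces a graph in which the relevant part is finite, namely $C$ together with $s_1$, $s_2$ and the contracted points. We want two vertex-disjoint paths, one starting at $u$ and one at $v$, ending at $s_1$ and $s_2$ in some order. By the fan/linkage form of Menger's theorem (add a super-source adjacent to $u,v$ and a super-sink adjacent to $s_1,s_2$), two such paths exist unless a single vertex $w$ separates $\{u,v\}$ from $\{s_1,s_2\}$.

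Suppose such a separating vertex $w$ exists. Then in $\cG$, removing $w$ separates $\{u,v\}$ from $S_1\cup S_2$; if $w\in\{u,v\}$, then it separates the other endpoint. I expect this to be the main obstacle: deriving a contradiction with weak $2$-connectivity, which only forbids finite components after deleting one vertex. The component of $\cG\setminus\{w\}$ containing $u$ (or $v$) might be infinite if it reaches a third infinite side $S_3$. To handle this, I would instead pick the pair of sides cleverly. Let $K$ be the component of $\cG\setminus\{w\}$ containing $e\setminus\{w\}$. By weak $2$-connectivity, $K$ is infinite, so it contains some infinite side $S_3$ of $C$. Moreover, $w$ separates $K$ from the infinite sides $S_1$ and $S_2$.

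I would then use the fact that $C$ is connected and finite, and argue about the choices of pairs of sides. If for every pair of infinite sides there is a cut vertex $w$ of this kind, consider the finitely many cut vertices, all of which lie in $C$ or near its boundary. Take a cut vertex separating $e$ from the most sides, and apply weak $2$-connectivity to it to find a side reachable from $e$ avoiding it. Repeating this, a pigeonhole argument over the at least three sides should produce a pair $S_i,S_j$ with no single separating vertex. If this approach gets stuck, the alternative is to use Lemma~\ref{lem:bi_tri}: this would allow taking the targets to be cells at $\cQ$-distance $1$ or $2$, which remain bifurcations or trifurcations after the other $C_{\cH}(F)$ are removed, and the extra room can then be used to reroute.

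The path pieces inside $S_i$ from the entry point to $A_i$ are irrelevant, since we only need to reach the cells. Finally, Borel selection of the pair of paths, as the least in a Borel order among the finitely many candidates within bounded $\cQ$-distance, completes the argument.
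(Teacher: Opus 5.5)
You set up the same local Menger argument as the paper, and you correctly isolate its one delicate point, but you do not get past it. A vertex $w$ separating $\{u,v\}$ from $S_1\cup S_2$ contradicts weak $2$-connectivity only if the component of $\cG-w$ containing $e\setminus\{w\}$ is finite. That component can instead escape into a third infinite side. For a badly chosen pair the conclusion genuinely fails: take a vertex $w\notin\{u,v\}$ of $C$ through which all connections from $e$ to $S_1\cup S_2$ pass, while $u$ is adjacent to $S_3$.

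Your remedy is to take a cut vertex separating $e$ from the most sides, repeat, and let a pigeonhole argument ``produce'' a good pair. This is not an argument: nothing in it rules out that every pair $\{S_i,S_j\}$ has its own separating vertex. The fallback via Lemma~\ref{lem:bi_tri} does not help either. That lemma only says which cells remain bi-/trifurcations after deleting the other sets $C_{\cH}(F)$; it says nothing about single-vertex separators inside $C$.

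Your auxiliary graph also has two smaller defects. If finite sides are discarded, a separator there need not separate in $\cG$, since a path can detour through a finite side. If the remaining infinite sides are contracted to points, such a point can be the separator while corresponding to no vertex of $\cG$.

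The missing idea is not to fix a pair at all. Build the auxiliary graph as follows:
\begin{itemize}
\item keep $C$ together with all its finite sides;
\item replace \emph{each} infinite side $S_i$ by one new vertex $s_i$, adjacent to the vertices of $C$ that have a neighbour in $S_i$;
\item join all the $s_i$ to a single sink $s$, and join a source $z$ to $u$ and $v$.
\end{itemize}
Two internally disjoint $z$--$s$ paths leave $z$ through $u$ and $v$ respectively. Shortcut each one to $s$ at the first $s_i$ it visits. They then reach $s$ through distinct vertices $s_i\neq s_j$, i.e.\ they end in different sides.

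By Menger it suffices that no single vertex separates $z$ from $s$.
\begin{itemize}
\item The separator cannot be some $s_i$, since $C$ is connected and adjacent to at least three infinite sides.
\item Suppose it is a vertex $w$ of $C$ or of a finite side. Consider the component of $u$ in $\cG-w$ (of $v$ if $w=u$). It never enters an infinite side, because its first entry would give a $z$--$s$ path through some $s_j$ avoiding $w$. So it lies in $C$ plus the finite sides, a finite set, contradicting weak $2$-connectivity.
\end{itemize}

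For comparison, the paper's proof runs this same argument with two arbitrarily fixed sides $S_1,S_2$. It asserts the auxiliary graph is $2$-connected because a cut vertex would leave a finite component of $\cG-w$. That is exactly the inference you distrusted, so your objection applies to the paper's proof as written, and the all-sides construction is the natural repair. Downstream, Lemma~\ref{lem:trees_on_edges} only uses that the two paths reach \emph{some} two different sides, which the repair provides.
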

\begin{proof}
    Let $e=(x,y)$ and consider the collection of infinite sides of $C_{\cH}(e)$, denote them by $S_1,S_2,\ldots,S_k$. Since $C_{\cH}(e)$ is a  trifurcation, there must be at least two of them. Add vertices $s_1$ and $s_2$ and connect them to each point in the respective external vertex boundaries of $C_{\cH}(e)$ with $S_1$ and $S_2$. Finally add two points $s$ and $z$ and connect them to $\{s_1,s_2\}$ and $\{x,y\}$, respectively, as shown in Figure~\ref{fig:paths_from_edge}. 
    Notice that the induced graph on
    $C_{\cH}(e)$, its external vertex boundaries contained in $S_1$ and $S_2$, and the new vertices $s_1,s_2,s,z$ is finite and $2$-connected. Indeed, the graph clearly remains connected if one of $s_1,s_2,s,z$ is removed. Further, if the removal of a vertex $w\notin \{s_1,s_2,s,z\}$ disconnected this graph into two components, then the deletion of $w$ in $\cG$ would have created a finite component, contradicting weak $2$-connectedness of $\cG$. Thus, by Menger's theorem there are two vertex disjoint paths that connect $z$ to $s$. Equivalently, there are two vertex disjoint paths from the endpoints of $e$ to two cells that lie in two different sides of $C(e)$.
\end{proof}
\begin{figure}[htb]
\begin{center}
\begin{tikzpicture}[thick, scale=0.8]
    \draw [name path = S1A] (-3.87298,1) to (-8,2);
    \draw [name path = S1B](-2.64575,3) to (-5.5,6);
    \tikzfillbetween[of=S1B and S1A] {opacity=.15};
    \draw [name path = S2A] (3.87298,1) to (8,2);
    \draw [name path = S2B](2.64575,3) to (5.5,6);
    \tikzfillbetween[of=S2B and S2A] {opacity=.15};
    \draw (-6,3.2) node[above] {$S_1$}; 
    \draw (6,3.2) node[above] {$S_2$};

    \draw [name path = Arc, fill=white](-4,0) arc (180:0:4);
    \node[big dot] (x) at (-0.5,0) {};
    \draw (-0.6,0) node[below] {$x$};
    \node[big dot] (y) at (0.5,0) {};
    \draw (0.6,0) node[below] {$y$};
    \draw (0,0) node[below] {$e$};
    \draw (-4,0) to (4,0);
    \node at (0,2.5) {$C(e)$};
    \draw [name path = T1, red] plot [smooth]  coordinates {(-0.5,0) (-0.3,0.5)  (-1,1.2) (-2,0.5) (-2.5,1.6) (-3.0554, 1.8167)(-3.7,2.2)};
    \draw [name path = T2, red] plot [smooth]  coordinates {(0.5,0) (0.4,0.8)  (1.1,1.5) (2,0.3) (2,1.3) (3.0554, 1.8167) (3.7,2.2)};

    \draw[red] (-3.0554, 1.8167) to (-3.7,2.2);
    \draw (-3.9772, 1.7280) to (-3.2893, 1.4292);
    \draw (-3.4177, 2.6689) to (-2.8266, 2.2073);
   
    \draw[red] (3.0554, 1.8167) to (3.7,2.2);
    \draw (3.9772, 1.7280) to (3.2893, 1.4292);
    \draw (3.4177, 2.6689) to (2.8266, 2.2073);

    \node[big dot,blue] (s1) at (-4.7,2.79463) {};
    \draw (-5,2.79463) node[below,blue] {$s_1$};
    \draw[dashed, blue] (-3.7,2.2) to (s1);
    \draw[dashed, blue] (-3.9772, 1.7280) to (s1);
    \draw[dashed, blue] (-3.4177, 2.6689) to (s1);
    \node[big dot,blue] (s2) at (4.7,2.79463) {};
    \draw (5,2.79463) node[below,blue] {$s_2$};
    \draw[dashed, blue] (3.7,2.2) to (s2);
    \draw[dashed, blue] (3.9772, 1.7280) to (s2);
    \draw[dashed, blue] (3.4177, 2.6689) to (s2);
    \node[big dot,blue] (s) at (0,6) {};
    \draw (s) node[above,blue] {$s$};
    \draw[dashed, blue]  (s1)--(s)--(s2);
    \node[big dot] (z) at (0,-1) {};
    \draw (0,-1) node[below,blue] {$z$};
    \draw[dashed, blue] (y)--(z)--(x);
    \end{tikzpicture}	
\end{center}
\caption{Illustration to the proof of Lemma~\ref{lem:disj_path}. The black edges that connect $C(e)$ with $S_1$ and $S_2$ portray the edge boundary between the respective sells. The blue vertices and dashed edges depict our additions, while the red paths represent the desired paths in the statement.}\label{fig:paths_from_edge}
\end{figure}

\begin{lem}\label{lem:trees_on_edges}
    In the setting of Lemma~\ref{lem:bi_tri} assume further that $\cG$ is weakly 2-connected and the subgraphs in $\cE$ are singleton edges. Then one can measurably construct a collection of finite subtrees $(T_e)_{e \in \cE}$ such that for each edge $e\in \cE$ 
     \begin{enumerate}
         \item $e\in T_e$;
         \item $T_e$ is contained in the union of $B_1^{\cQ}(C_{\cH}(e))$ and its external vertex boundary;
         \item the only leaves of $T_e$ lie in four distinct infinite sides of $B_1^{\cQ}(C_{\cH}(e))$.
     \end{enumerate}
    \end{lem}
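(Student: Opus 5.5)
We build $T_e$ in two rounds of Lemma~\ref{lem:disj_path}: one application at $e$ itself, then one more at each of the two cells where the first pair of paths ends.

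\emph{First round.} Lemma~\ref{lem:disj_path} gives two vertex-disjoint paths $P_x,P_y$ from the endpoints $x,y$ of $e=(x,y)$. Truncate each path at its first vertex outside $C_\cH(e)$. The truncated paths run inside $C_\cH(e)$ and end at vertices $u_1\in S_1$, $u_2\in S_2$, where $S_1\neq S_2$ are two infinite sides of $C_\cH(e)$. Each $u_i$ lies in a cell $A_i$ that is a $\cQ$-neighbour of $C_\cH(e)$. Then $P_x\cup\{e\}\cup P_y$ is a path whose only leaves are $u_1$ and $u_2$.

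\emph{Second round.} By Lemma~\ref{lem:bi_tri}, each $A_i$ is a bifurcation in $\cG':=\cG\setminus\{C_\cH(f)\}_{f\in\cE}$. So $A_i$ has at least two infinite sides avoiding all the removed sets, in particular avoiding $C_\cH(e)$.

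\begin{itemize}
\item[(a)] I would repeat the Menger construction of Lemma~\ref{lem:disj_path} at $A_i$, replacing the edge $e$ by the single vertex $u_i$. Adjoin super-vertices $s_1',s_2'$ for two such infinite sides of $A_i$, together with a vertex $s'$ joined to both, and let $z=u_i$.
\item[(b)] Weak $2$-connectivity of $\cG$ makes this finite auxiliary graph $2$-connected, by the same argument as before. This uses that each side is infinite in $\cG$.
\item[(c)] Fan lemma: Menger's theorem then yields two paths from $u_i$, disjoint except at $u_i$, ending in two distinct infinite sides of $A_i$. Truncate them at their first exits from $A_i$.
\end{itemize}

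The paths must also stay inside $A_i$, that is, inside $B_1^\cQ(C_\cH(e))$ or on its external vertex boundary. To force this, I would run the Menger argument in the graph induced on $A_i$ alone, not on $C_\cH(e)\cup A_i$. Then the two branches avoid $C_\cH(e)$ and $P_x,P_y$ automatically, except possibly where a first-round path ran through $A_i$. Since the first-round paths are truncated at their first exit from $C_\cH(e)$, they meet $A_i$ only at $u_i$.

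\emph{Assembly and the four sides.} Set $T_e:=P_x\cup\{e\}\cup P_y\cup$ the four branches. If $A_1\neq A_2$, then $T_e$ is a tree: the pieces lie in pairwise disjoint vertex sets, namely $C_\cH(e)$, $A_1$ and $A_2$, glued only at $u_1$ and $u_2$. Its leaves are the four branch endpoints. Each endpoint lies on the external vertex boundary of $B_1^\cQ(C_\cH(e))$ and in an infinite side of it. They lie in four distinct sides of $B_1^\cQ(C_\cH(e))$ for two reasons:
\begin{itemize}
\item sides of $A_1$ inside $S_1$ and sides of $A_2$ inside $S_2$ cannot merge once $C_\cH(e)$ is removed, because $S_1$ and $S_2$ are different sides of $C_\cH(e)$;
\item the two sides chosen at the same $A_i$ are distinct sides of $A_i$ away from $C_\cH(e)$, so they remain distinct sides of the larger set $B_1^\cQ(C_\cH(e))$.
\end{itemize}
Finally, $A_1\neq A_2$ indeed holds: $A_1$ and $A_2$ are connected cells meeting $S_1$ and $S_2$ respectively, and they are disjoint from $C_\cH(e)$, so $A_i\subseteq S_i$.

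\emph{Main obstacle.} Sides of $A_i$ in $\cG'$ may re-enter $B_1^\cQ(C_\cH(e))$ through other neighbouring cells. If so, two branch endpoints could lie in the same side of $B_1^\cQ(C_\cH(e))$, or that side could fail to be infinite. I expect to handle this by choosing the infinite sides of $A_i$ in (a) as sides of $A_i$ inside $\cG\setminus B_1^\cQ(C_\cH(e))$, using the distance-$\ge 6$ separation as in Lemma~\ref{lem:inf_comp}, which only guarantees that the resulting components are infinite. The key input still needed is that a neighbour cell $A_i$ keeps two distinct infinite sides after $B_1^\cQ(C_\cH(e))$ is removed; I expect this to come from an argument like Lemma~\ref{lem:inf_comp}. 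All choices, for instance the lexicographically least valid tree under a Borel ordering, are made finitely and locally, so the family $(T_e)_{e\in\cE}$ is Borel.
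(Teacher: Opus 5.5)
Your architecture matches the paper's: apply Lemma~\ref{lem:disj_path} at $e$, then branch into two sides at each endpoint cell $A_i$. The second round, however, has a genuine gap, and it is the one you flag yourself and leave open.

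You choose the sides of $A_i$ as infinite sides in $\cG'$ via Lemma~\ref{lem:bi_tri}. Such a side can be a remnant of the $\cG$-side of $A_i$ that contains $C_{\cH}(e)$, running through another $\cQ$-neighbour $D\subseteq S_i$ of $C_{\cH}(e)$. A branch may then exit $A_i$ straight into $D\subseteq B_1^{\cQ}(C_{\cH}(e))$, violating (2) and (3). Lemma~\ref{lem:inf_comp} does not help: it only gives infiniteness of components after deleting $B_2^{\cQ}$ of all the $C_{\cH}(f)$, and says nothing about sides of $A_i$ outside $B_1^{\cQ}(C_{\cH}(e))$.

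The missing observation is to work in $\cG$ itself. $A_i$ is a cell, hence a trifurcation in $\cG$ (Lemma~\ref{lem:cells}). Since $C_{\cH}(e)$ is connected and disjoint from $A_i$, it lies in a single side of $A_i$, so $A_i$ has two infinite sides $R_{i,1},R_{i,2}$ not containing $e$. Every other cell of $B_1^{\cQ}(C_{\cH}(e))$ is connected and adjacent to $C_{\cH}(e)$, so it lies in the same side of $A_i$ as $C_{\cH}(e)$. Hence $R_{i,j}\cap B_1^{\cQ}(C_{\cH}(e))=\emptyset$. Since every edge leaving $R_{i,j}$ ends in $A_i$, each $R_{i,j}$ is itself an infinite side of $B_1^{\cQ}(C_{\cH}(e))$. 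Distinctness of the four sides then follows as you argued, with $R_{1,j}\subseteq S_1$ and $R_{2,k}\subseteq S_2$. This is exactly the paper's choice of $S_{i,1},S_{i,2}$ (sides of $C(x_i)$ that are infinite and do not contain $e$). No separation argument is needed at this stage; Lemma~\ref{lem:inf_comp} only enters later, in the proof of Theorem~\ref{thm:union_of_inf_ended_leafless}.

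Second, steps (b)--(c) are false as stated: the auxiliary graph built on $A_i$ alone with $z=u_i$ need not be $2$-connected. If $u_i$ has a single neighbour $w$ in $A_i$, deleting $w$ isolates $u_i$ in that graph. Yet in $\cG$ the vertex $u_i$ still reaches infinity through $C_{\cH}(e)$, so weak $2$-connectivity gives no contradiction, and no fan from $u_i$ exists.

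You do not need a fan. By connectedness of $A_i$, take a minimal subtree of $A_i$ containing $u_i$ and a vertex adjacent to each of $R_{i,1},R_{i,2}$, and append one edge into each of these sides. Its leaves are the two appended endpoints and possibly $u_i$. The vertex $u_i$ is not a leaf of $T_e$, because the first-round path also attaches there. With these two changes, which reproduce the paper's trees $T_{x_i}$ and edges $e_{i,j}$, your assembly and acyclicity argument goes through.
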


\begin{proof}
     Let $\pi_1$ and $\pi_2$ be the paths given by Lemma~\ref{lem:disj_path}. Then $\pi_1\cup\{e\}\cup\pi_2$ is a simple path that connects two different sides of $C(e)$, call them  $S_1$ and $S_2$. Denote its endpoints by $x_1\in S_1$ and $x_2\in S_2$. For $i\in\{1,2\}$, consider the cells $C(x_i)$. Since they are trifurcations in $\cG$, there are at least two distinct sides for each $C(x_i)$ that are infinite and do not contain $e$, call them $S_{i,1}$ and $S_{i,2}$, respectively. Find a subtree $T_{x_i}$ inside $C(x_i)$ that has only three leafs, $x_i$ and two vertices $x_{i,1}$ and $x_{i,2}$ that belong to the internal vertex boundary of $S_i$ with $S_{i,1}$ and $S_{i,2}$, respectively. Such a tree exist by connectedness of the cells $C(x_i)$. Finally, for each $x_{i,j}$ select a single edge $e_{i,j}$ that connects to $S_{i,j}$.
     Define $T_e$ to be the union of $\{e\}$, $\pi_i$, $T_{x,i}$, and $e_{i,j}$ for $i,j\in\{1,2\}$, as shown in Figure~\ref{fig:Te}. 

    \begin{figure}[htb]
    \begin{center}
    \begin{tikzpicture}[thick, scale=0.8]
        \draw [name path = S1A] (-3.87298,1) to (-8,2);
        \draw [name path = S1B](-2.64575,3) to (-5.5,6);
        \tikzfillbetween[of=S1B and S1A] {opacity=.15};
    
        \draw [name path = S11A] (-4.2654,1.7482) to (-7.75,2.4);
        \draw [name path = S11B] (-4.4539,2.3284) to (-7.0833,3.4667);
        \tikzfillbetween[of=S11B and S11A] {opacity=0.15};    
        \draw [name path = S12A] (-4.1488,2.8568) to (-6.4167,4.5333);
        \draw [name path = S12B] (-3.5521,2.9836) to (-5.7500,5.6000);
        \tikzfillbetween[of=S12B and S12A] {opacity=0.15}; 
        \draw [name path = Arc2, black] (-3.708,1.5) arc (270:30:0.75);
        \draw (-6.8,3.6) node[above] {$S_1$};
        \draw (-5.4,4) node[above] {$S_{1,1}$};
        \draw (-6.6,2.1) node[above] {$S_{1,2}$};
    
        \draw [name path = S2A] (3.87298,1) to (8,2);
        \draw [name path = S2B](2.64575,3) to (5.5,6);
        \tikzfillbetween[of=S2B and S2A] {opacity=.15};
        \draw [name path = S21A] (4.2654,1.7482) to (7.75,2.4);
        \draw [name path = S21B] (4.4539,2.3284) to (7.0833,3.4667);
        \tikzfillbetween[of=S21B and S21A] {opacity=0.15};    
        \draw [name path = S22A] (4.1488,2.8568) to (6.4167,4.5333);
        \draw [name path = S22B] (3.5521,2.9836) to (5.7500,5.6000);
        \tikzfillbetween[of=S22B and S22A] {opacity=0.15}; 
        \draw [name path = Arc3, black] (3.708,1.5) arc (270:-90:0.75);
        \draw (6.8,3.6) node[above] {$S_2$};
        \draw (5.5,4) node[above] {$S_{2,1}$};
        \draw (6.6,2.1) node[above] {$S_{2,2}$};
    
        \draw [name path = Arc, fill=white](-4,0) arc (180:0:4);
        \node[big dot] (x) at (-0.5,0) {};
        \draw (-0.6,0) node[below] {$x$};
        \node[big dot] (y) at (0.5,0) {};
        \draw (0.6,0) node[below] {$y$};
        \draw (0,0) node[below] {$e$};
        \draw (-4,0) to (4,0);
        \draw[red] (-0.5,0) to (0.5,0);
        \node at (0,2.5) {$C(e)$};
        \draw [name path = T1, red] plot [smooth]  coordinates {(-0.5,0) (-0.3,0.5)  (-1,1.2) (-2,0.5) (-2.5,1.6) (-3.0554, 1.8167)(-3.7,2.2)};
        \draw [name path = T2, red] plot [smooth]  coordinates {(0.5,0) (0.4,0.8)  (1.1,1.5) (2,0.3) (2,1.3) (3.0554, 1.8167) (3.7,2.2)};

        \draw (0,1) node[above,red] {$T_e$};

        \draw [name path = T11, red] plot [smooth]  coordinates {(-3.7,2.2) (-4.2,2.3) (-4.7,2.1)};
        \draw [name path = T12, red] plot [smooth]  coordinates {(-3.7,2.2) (-3.4,2.5) (-3.8,2.7) (-4.2,3.2)};
        \node[big dot] (x1) at (-3.7,2.2) {};
        \draw (-4.4,0.5) node[below] {$x_1$};
        \node[big dot] (x11) at (-4.2,2.3) {};
        \node[big dot] (x12) at (-3.8,2.7) {};
        \draw[dotted] (-4.3,0.5) to (x1);
        \draw[dotted] (-5.1,0.5) to (x11);
        \draw (-5.5,0.5) node[below] {$x_{1,2}$};
        \draw[dotted] (-2.5,4) to (x12);
        \draw (-2.4,4) node[above] {$x_{1,1}$};
    
    
        \draw [name path = T22, red] plot [smooth]  coordinates {(3.7,2.2) (4,2.4) (4.2,2.3) (4.7,2.1)};
        \draw [name path = T21, red] plot [smooth]  coordinates {(4,2.4) (3.8,2.7) (4.2,3.2)};
        \node[big dot] (x21) at (4.2,2.3) {};
        \node[big dot] (x22) at (3.8,2.7) {};
        \node[big dot] (x2) at (3.7,2.2) {};
        \draw[dotted] (4.3,0.5) to (x2);
        \draw (4.4,0.5) node[below] {$x_2$};
        \draw[dotted] (5.1,0.5) to (x21);
        \draw (5.5,0.5) node[below] {$x_{2,2}$};
        \draw[dotted] (2.5,4) to (x22);
        \draw (2.4,4) node[above] {$x_{2,1}$};
        \end{tikzpicture}	
    \end{center}
    \caption{Illustration to the proof of Lemma~\ref{lem:trees_on_edges}. The red tree depicts the desired $T_e$.}\label{fig:Te}
    \end{figure}

\end{proof} 

We are now ready to prove Theorem~\ref{thm:union_of_inf_ended_leafless}. 

\begin{proof}[Proof of Theorem~\ref{thm:union_of_inf_ended_leafless}]
    Let $\cH$ be the partition of $\cG$ into Voronoi cells as in Lemma~\ref{lem:cells}. Consider any Borel coloring $c:E(\cG) \to \N$ such that each color class $\cE_i=c^{-1}(i)$ satisfies the assumptions of Lemma~\ref{lem:bi_tri}. (Such a coloring exists, as $\cQ$ is locally finite.) We will show that for each $i$ there exists a leafless infinitely-ended subforest forest $\cF_i\subseteq \cG$ that contains $\cE_i$.

    Fix $i\in\N$, and construct the family of trees $(T_e)_{e \in \cE_i}$ by Lemma~\ref{lem:trees_on_edges}. Note that for $e\in \cE_i$ the $2$-neighbourhoods $B_2^{\cQ}(C(e))$ are disjoint. Let $\cT_i:=\bigcup_{e\in\cE_i} T_e$ and $\cG'_i\subseteq\cG$ be acquired from $\cG$ by removing all non-$\cT_i$-edges of $B_1^{\cQ}(C(e))$ and their edge boundaries. Clearly, $\cT_i\subseteq \cG'_i$. Moreover, each $T_e$ is a trifurcation in $\cG'_i$ because it has leaves in at least four distinct infinite sides of $B_2^{\cQ}(C(e))$ and each of those sides is infinite in $\cG'_i$ by Lemma~\ref{lem:inf_comp}.  
    
    Fix an arbitrary Borel linear ordering $<$ of the edges of $\cG'_i$ and let $\cF_i$ be the pruned version of $\mathrm{FMSF}_{<}(\cG'_i)$. That is, $\cF\subseteq \mathrm{FMSF}_{<}(\cG'_i)$ consists of the union of all bi-infinite paths in $\mathrm{FMSF}_{<}(\cG'_i)$.  We claim that $\cF_i$ satisfies the desired properties.  
    First, for each $e\in\cE_i$ the corresponding tree $T_e\in \cT_i$ does not intersect any cycles in $\cG'_i$ and thus $\cT_i\subset\mathrm{FMSF}_{<}(\cG'_i)$. Since $T_e$ was a trifurcation in $\cG'_i$ it also must be one for $\mathrm{FMSF}_{<}(\cG'_i)$. Moreover, each leaf of such a tree $T_e$ belongs to some infinite side $S$ of $T_e$ in $\cG'_i$, and thus by Lemmas~\ref{lem:FMSF_cycle_invar} and~\ref{lem:FMSF_inf_sets} it is contained in an infinite tree in $\mathrm{FMSF}_{<}(\cG'_i)\cap S$. Since for any point $w$ on a tree $T_e\in \cT_i$ there are two vertex-disjoint paths in $T_e$ to the leaves of $T_e$, $w$ must be contained in a bi-infinite path in $\mathrm{FMSF}_{<}(\cG'_i)$. Hence, edges of $T_e$ are not deleted by the pruning, i.e., $T_e\subset \cF_i$. Finally, since each $T_e$ is a trifurcation in $\mathrm{FMSF}_{<}(\cG'_i)$ it is also a trifurcation in $\cF_i$. Consequently, a.e.~$\cF_i$-component that intersects $\cE_i$ is infinitely-ended. Discarding all of the connected components that do not intersect $\cE_i$ yields the desired forest covering $\cE_i$. The forests $(\cF_i)_{i \in \N}$ together cover all edges of $\cG$.
\end{proof}

\subsection{Covering endpoints of wedges in weakly 3-connected graphings}

We now turn to the weakly $3$-connected case, and prove our main technical tool used to establish Theorem~\ref{thm:graph_isom}. In a graphing $\cG =(X,E,\mu)$ we say that $(x,z,y)\in X^3$ forms a \textbf{wedge}, if $(x,z)\in E$ and $(z,y) \in E$. For such a triple $z$ is called the \textbf{center} of the wedge, while $x$ and $y$ are the \textbf{endpoints}. We denote by $W$ the set of wedges, noting that it carries a natural Borel structure. Suppose $I \subseteq W$ is some measurable set of wedges in $\cG$, then by $\widehat{I}$ we denote the set of their centers.

\begin{thm}\label{thm:cycle_or_forest}
        Let $\cG$ be a locally finite, weakly $3$-connected, infinitely-ended graphing. Then there is a Borel coloring $c:W\to\N$ of the wedges of $\cG$, defined on a co-null set, such that for any color $i$, letting $I:=c^{-1}(i)$, there is a subgraph $\cF:=\cF(c,I)\subseteq \cG\setminus{\widehat{I}}$ with either of the following properties:
        \begin{enumerate}
            \item\label{thm:cycle} $\cF$ is a collection of vertex disjoint cycles such that  for every wedge in $I$ there is a cycle in $\cF$ that contains both of its endpoints. 
            \item\label{thm:forest} $\cF$ is a leafless infinitely-ended forest that covers the endpoints of wedges in $I$.
        \end{enumerate}
\end{thm}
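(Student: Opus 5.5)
We mimic the proof of Theorem~\ref{thm:union_of_inf_ended_leafless}, with wedges in place of edges and $\cG\setminus\{z\}$ in place of $\cG$.

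\textbf{Coloring.} Take the trifurcation partition $\cH$ of Lemma~\ref{lem:cells} with quotient $\cQ$. For a wedge $w=(x,z,y)$ let $C_\cH(w)$ be the union of cells containing $x,y,z$. Since $\cQ$ is locally finite, there is a countable Borel coloring $c$ of $W$ such that each class $I=c^{-1}(i)$ satisfies the separation hypothesis of Lemma~\ref{lem:bi_tri}. We may also require $\cQ$-distance at least, say, $8$ between the sets $C_\cH(w)$ within a class, which gives room for the modifications below. It suffices to build a suitable $\cF$ for a single class $I$.

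\textbf{Local step.} Fix $w=(x,z,y)\in I$ and run a Menger argument in the finite auxiliary graph of Lemma~\ref{lem:disj_path}. Take $C_\cH(w)$ with its boundary, attach a new vertex $s_j$ to the boundary of each infinite side $S_j$ of $C_\cH(w)$, join all $s_j$ to a new vertex $s$, and delete $z$. Weak $3$-connectivity means removing $z$ and one further vertex creates no finite component. So in this auxiliary graph $x$ and $y$ cannot be separated from $s$ by a single vertex, whether or not that vertex lies in the same component as the other. This gives two cases.

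\emph{Case (a):} $x$ and $y$ are joined by a path inside $C_\cH(w)\setminus\{z\}$ that closes a short cycle through $x,y$ avoiding $z$.

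\emph{Case (b):} we obtain two vertex-disjoint paths from $\{x,y\}$ to $s$ avoiding $z$. These can be chosen to end in two different infinite sides, or we pass to an $x$--$y$ path through a side.

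We split $I$ Borel-ly according to which case occurs, refining the coloring by the case and by the (countably many) isomorphism types of the finite configuration. Concretely, I would aim for the dichotomy that the two paths from $x$ and $y$ either meet, yielding a finite cycle through $x,y$, or reach two distinct infinite sides of $C_\cH(w)\setminus\{z\}$.

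In the cycle case, the separation condition makes the chosen cycles vertex-disjoint, which gives alternative~\ref{thm:cycle}.

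In the forest case, we imitate Lemma~\ref{lem:trees_on_edges}. Take the path $\pi_x\cup\pi_y$ joining $x$ to $y$ through sides $S_1,S_2$, closed off near $z$ by a path not using $z$; the simplest choice is to join $\pi_x$ and $\pi_y$ inside the cell structure. At the far endpoints, branch in the trifurcation cells into two further infinite sides each. This gives a finite tree $T_w$ containing $x,y$, avoiding $z$, with four leaves in four distinct infinite sides of $B_1^\cQ(C_\cH(w))$.

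\textbf{Global step.} Let $\cG'_I$ be obtained from $\cG$ by deleting all centers $\widehat I$ and all non-$T_w$ edges of each $B_1^\cQ(C_\cH(w))$ together with their boundary edges. By Lemma~\ref{lem:inf_comp} and Lemma~\ref{lem:bi_tri}, the leaves of $T_w$ lie in infinite components of $\cG'_I$. The deleted centers are sparse and lie inside the removed neighbourhoods, so this remains valid.

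Take $\mathrm{FMSF}_<(\cG'_I)$ for a Borel order and prune it to the union of its bi-infinite paths. Exactly as in the proof of Theorem~\ref{thm:union_of_inf_ended_leafless}:
\begin{itemize}
\item $T_w$ meets no cycle of $\cG'_I$, so $T_w\subseteq\mathrm{FMSF}$ by Lemma~\ref{lem:FMSF_cycle_invar};
\item each leaf extends to an infinite tree in its side by Lemma~\ref{lem:FMSF_inf_sets}, so $T_w$ survives the pruning;
\item $T_w$ remains a trifurcation, so its component is infinitely-ended.
\end{itemize}
Discarding the components that miss $I$ gives a leafless, infinitely-ended forest $\cF\subseteq\cG\setminus\widehat I$ covering all endpoints of wedges in $I$.

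\textbf{Main obstacle.} The hard part is the local Menger step. We must show that weak $3$-connectivity after deleting $z$ yields, inside a bounded region, either a cycle through $x,y$ or a tree connecting $x$ and $y$ that reaches at least three, preferably four, distinct infinite sides.

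Removing $z$ may sever $x$ from all infinite sides except through routes passing near $y$. I would first try to enlarge $C_\cH(w)$ to a sufficiently large $\cQ$-ball, and apply Menger to the pair $\{x,y\}$ versus $s$ with $z$ removed. Any single separating vertex $v$ would make $\{z,v\}$ cut off a finite piece containing $x$ or $y$, contradicting weak $3$-connectivity.

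Then, if both paths land in the same side, I would reroute inside that side using its trifurcation cells to reach a second side. Boundedness of the configuration in each color class is what makes the construction measurable.
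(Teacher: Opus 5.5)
Your local step does not deliver either alternative of the theorem. The ``main obstacle'' you flag at the end is exactly the content that is missing, for two reasons.

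First, meeting paths give the wrong object. If the path from $x$ and the path from $y$ meet, you get a single $x$--$y$ path in $\cG\setminus\{z\}$. The cycle it closes runs through the wedge, hence through $z\in\widehat I$, which is forbidden since $\cF\subseteq\cG\setminus\widehat I$. A cycle through $x$ and $y$ avoiding $z$ needs two internally disjoint $x$--$y$ paths, and Menger for $\{x,y\}$ versus $s$ does not provide them.

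Second, your forest case builds one finite tree $T_w$ containing both $x$ and $y$, with $\pi_x$ and $\pi_y$ ``closed off near $z$ by a path not using $z$''. Such a path need not exist. In a $3$-regular treeing, which is weakly $3$-connected, infinitely-ended and acyclic (so the cycle alternative is empty), $x$ and $y$ lie in different components of $\cG\setminus\widehat I$. Even when an $x$--$y$ path avoiding $z$ does exist, you give no argument that it can be spliced into $\pi_x\cup\pi_y$ without closing a cycle, making $x$ or $y$ a leaf, or losing the distinct sides. ``Reroute inside that side'' is not such an argument.

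The idea you need is to treat $x$ and $y$ separately and control their interaction by a global no-cycle hypothesis. The paper splits $I$ according to whether \emph{some} cycle of $\cG\setminus\widehat I$ contains both $x$ and $y$.

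In the cycle case it picks a shortest such cycle. These cycles have unbounded length, so vertex-disjointness comes from further refining the coloring, not from a fixed $\cQ$-separation. The refinement gives distinct cycles for distinct wedges, constant cycle length in each class, and a proper coloring of the locally finite intersection graph of the cycles.

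In the other case, $\cG\setminus\widehat I$ is weakly $2$-connected and infinitely-ended by Lemma~\ref{lem:sparse_removal_from_3-connected}. Your Menger argument applied to $x$ alone and to $y$ alone then gives trees $T_x$ and $T_y$, each with leaves in four distinct infinite sides. One then analyses how they can interact: a branch of $T_x$ meeting both branches of $T_y$, exactly one pair of branches meeting, or disjoint trees whose branching cells coincide. In each case one reroutes to an acyclic $T_{x,y}$, possibly with two components, whose leaves still lie in four distinct infinite sides. Every case uses that $x$ and $y$ lie on no common cycle, and Example~\ref{ex:wedge_cycle} shows this hypothesis cannot be dropped. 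With such trees in hand, your FMSF-and-pruning step goes through as in Theorem~\ref{thm:union_of_inf_ended_leafless}.
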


The proof of Theorem~\ref{thm:cycle_or_forest} will follow by an argument similar to Theorem~\ref{thm:union_of_inf_ended_leafless}. However, a couple of places require additional care. In particular, we also have to keep track of cycles covering endpoints of wedges: the following example illustrates that part \ref{thm:cycle} in the statement is indeed neccessary.
\begin{ex} \label{ex:wedge_cycle}
    Consider the wedge $(x,z,y)$ presented in Figure~\ref{fig:nec_cycles}. Notice that in such bifurcation after the removal of the vertex $z$, the vertices $x$ and $y$ cannot be covered simultaneously by a leafless forest. However, they do lie on a cycle, e.g.~$(u,x,y,z)$.
    \begin{figure}[h]
    \begin{center}
	\begin{tikzpicture}[thick, scale=0.7]

	\node[big dot] (z) at (0,0) {};
    \draw (0.1,-0.1) node[below] {$z$};
    \node[big dot] (x) at (1,1) {};
    \draw (1,1) node[above] {$x$};
	\node[big dot] (y) at (1,-1) {};
    \draw (1,-1) node[below] {$y$};
    \node[big dot] (u) at (2,1) {};
    \draw (1.85,1) node[above] {$u$};
	\node[big dot] (v) at (2,-1) {};
    \draw (1.85,-1) node[below] {$v$};

	\draw [name path = A] plot [smooth]  coordinates {(0,0) (-0.5,1.2) (-2,1.8) (-6.4,2)};
	\draw [name path = B] plot [smooth]  coordinates {(0,0) (-0.5,-1.2) (-2,-1.8) (-6.4,-2)};
	\tikzfillbetween[of=A and B,split] {pattern=north east lines, opacity=.25, inner sep=1pt};
	\draw [name path = C] plot [smooth] coordinates {(2,-1) (2,1) (2.2,1.4) (3,1.6) (6.4,2)};
	\draw [name path = D] plot [smooth] coordinates {(2,-1) (2.2,-1.4) (3,-1.6) (6.4,-2)};
	\tikzfillbetween[of=C and D,split] {pattern=north east lines, opacity=.25, inner sep=1pt};

    \draw[dashed] (x)--(z)--(y);
    \draw (y)--(u)--(x)--(v)--(y);
    \end{tikzpicture}
    \end{center}
    \caption{Example of a bifurcation induced by $\{u,v,x,y,z\}$ and a wedge $(x,z,y)$ such that after removal of its center the endpoints cannot both be in a leafless forest.}\label{fig:nec_cycles}
\end{figure}
\end{ex}

We first prove that if the removed centers of wedges are sparse enough, the graphing remains weakly $2$-connected and infinitely-ended.

\begin{lem} \label{lem:sparse_removal_from_3-connected}
    In the setting of Lemma~\ref{lem:bi_tri} assume further that $\cG$ is weakly 3-connected and that $\cE$ consists of single vertices.
    Then $\cG''=\cG \setminus \cE$ is weakly $2$-connected and infinitely-ended.  
\end{lem}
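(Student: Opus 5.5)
We prove the two claims separately, both by contradiction and in the spirit of Lemma~\ref{lem:inf_comp} and the proof of Lemma~\ref{lem:forest_superfluous}. Each removed vertex $v\in\cE$ lies in a cell $C_\cH(v)$, and these cells are pairwise at $\cQ$-distance at least $6$. Two facts drive the argument: removing a single vertex $v$ from a weakly $3$-connected graph creates no finite component, and the removed vertices are too sparse to cooperate.

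\textbf{Weak 2-connectivity.} Suppose that in a.e.\ sense, for a positive measure set of components, there is a vertex $w$ of $\cG''$ whose removal from $\cG''$ leaves a finite component $K$. Then $K$ is also a finite connected subgraph of $\cG$, and its vertex boundary in $\cG$ lies in $\{w\}\cup\cE$.
\begin{itemize}
\item If the boundary meets $\cE$ in at most one vertex $v$, then $\partial K\subseteq\{v,w\}$. This gives a finite component of $\cG\setminus\{v,w\}$, contradicting weak $3$-connectivity.
\item So $\partial K$ contains at least two vertices $v_1,v_2\in\cE$. We must show this is impossible for a finite $K$.
\end{itemize}
Consider the set of cells meeting $K$. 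It is finite and, by connectivity of cells, $\cQ$-connected. Fix a cell $A$ meeting $K$ and take a cell $A'$ meeting $K$ that is $\cQ$-farthest from $A$. Since $A'$ is a trifurcation in $\cG$, it has two infinite sides not containing $A$. The component $K$ cannot extend into these sides beyond $A'$ by maximality of distance. Hence $K$ is separated from these sides only through $\partial K\subseteq\{w\}\cup\cE$.

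We then use Lemma~\ref{lem:bi_tri}: every cell at $\cQ$-distance at least $2$ from the cells $C_\cH(v)$, $v\in\cE$, is still a trifurcation in $\cG\setminus\bigcup_{v\in\cE} C_\cH(v)$, and therefore in $\cG''$. Roughly, two removed vertices that both bound $K$, together with $w$, must block two different infinite sides of a far cell. This forces two of the $C_\cH(v)$ within $\cQ$-distance about $4$ of each other, contradicting the distance-$6$ separation.

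\textbf{Main obstacle.} Making the previous paragraph precise is the hard step. Vertex cuts mixing $w$ with one or more $v\in\cE$ have to be handled carefully. The cleanest route is to reduce to the case where $K$ contains a vertex $u$ whose cell is at distance at least $2$ from every $C_\cH(v)$. The alternative is that $K$ lies inside $B_1^\cQ(C_\cH(v))$ for a single $v$, which is excluded by the first bullet. In the reduced case, run the farthest-cell argument on the cells of $K$ within $\cQ$ with $B_1^\cQ$-neighbourhoods of the removed cells deleted, as in Lemma~\ref{lem:inf_comp}.

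\textbf{Infinitely many ends.} By Lemma~\ref{lem:bi_tri}, a.e.\ component of $\cG''$ contains cells that remain trifurcations, namely those at $\cQ$-distance at least $2$ from the removed cells; these exist because the removed cells are sparse. A component containing a trifurcation whose infinite sides again contain trifurcations (by the mass transport principle, as in Lemma~\ref{lem:forest_superfluous}) has infinitely many ends. Finally, Lemma~\ref{lem:inf_comp} shows that removing the $2$-neighbourhoods creates no finite pieces, so the infinite sides persist in $\cG''$.
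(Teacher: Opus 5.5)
Your treatment of the case $|\partial K\cap\cE|\le 1$ is correct; it is exactly how the paper gets $k\ge 2$. Your infinitely-ended part is essentially the paper's argument (Lemma~\ref{lem:bi_tri} plus sparseness).

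The weak $2$-connectivity argument, however, has a genuine gap, precisely where you write ``roughly''. There are two problems:
\begin{itemize}
\item Your farthest cell $A'$ only \emph{meets} $K$. It may contain $w$ or a vertex of $\cE$ (e.g.\ $A'=C_{\cH}(v_1)$). In that case the cells in the far sides of $A'$ need not be adjacent to $K$, so nothing forces vertices of $\partial K$ into those sides.
\item You measure distance from an arbitrary cell $A$. This only guarantees two infinite sides of $A'$ avoiding $A$, and one of them may contain $w$. You are then left with at most one $\cE$-vertex and no contradiction.
\end{itemize}
Your proposed repair does not close this. A Lemma~\ref{lem:inf_comp}-type argument (even with $B_1^{\cQ}$-neighbourhoods) only says that cells avoiding those neighbourhoods form infinite $\cQ$-components; it says nothing about the extra cut vertex $w$. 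Also, $C_{\cH}(u)$ may itself contain $w$, so you still do not have a whole cell inside $K$.

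Two steps are missing, and they are how the paper argues.

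\emph{(i) $K$ contains an entire cell.} Take a path from $v_1$ to $v_2$ whose interior lies in $K$. Let $D$ be the cell of its first vertex outside $C_{\cH}(v_1)$, and $D'$ the cell of its last vertex outside $C_{\cH}(v_2)$. These are adjacent to $C_{\cH}(v_1)$ and $C_{\cH}(v_2)$ respectively. By the distance-$6$ condition they are distinct and contain no vertex of $\cE$, and both meet $K$. At most one of them contains $w$. The other is connected and avoids $\cE\cup\{w\}$, so it lies in $K$.

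\emph{(ii) Farthest cell measured from $w$.} Among cells contained in $K$, take $A'$ at maximal $\cQ$-distance from $C_{\cH}(w)$. Since $C_{\cH}(w)$ lies in a single side of $A'$, two infinite sides $S_1,S_2$ of $A'$ avoid $w$. Let $C_i\subseteq S_i$ be a cell adjacent to $A'$.
\begin{itemize}
\item Every $\cQ$-path from $C_{\cH}(w)$ to $C_i$ passes through $A'$, so $C_i\not\subseteq K$ by maximality.
\item Yet $C_i$ avoids $w$ and is joined by an edge to $A'\subseteq K$, so it must contain a vertex of $\cE$.
\end{itemize}
This gives two cells carrying $\cE$-vertices at $\cQ$-distance at most $2$, contradicting the separation.
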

\begin{proof}
    By Lemma~\ref{lem:bi_tri}, the graphing $\cG'=\cG\setminus \{C_{\cH}(v)\}_{v\in \cE}$ contains trifurcations, so it is infinitely-ended. As $E(\cG') \subseteq E(\cG'')$, $\cG''$ is also infinitely-ended. It remains for us to establish weak $2$-connectivity. 
    
    Assume towards contradiction that removing a vertex $v \in V(\cG)\setminus \cE$ creates a finite $\cG''$-component $F$. Let us list the vertices of the outer vertex boundary of $F$ in $\cG$: $\{v, x_1, x_2, \ldots x_k\}$, where $x_i \in \cE$. As $\cG$ is weakly $3$-connected, $k \geq 2$. We claim that $F$ contains a cell from $\cH$. Indeed, pick a path $P$ in $F$ between $x_1$ and $x_2$. Notice that $P$ starts inside $C_{\cH}(x_1)$ and ends up in $C_{\cH}(x_2)$, and let us denote by $A,B \in \cH$ be the second and second-to-last $\cH$-cell visited by $P$. By the choice of $\cE$, $A$, $B$, $C_{\cH}(x_1)$, and $C_{\cH}(x_2)$ are distinct. Moreover, $A$ and $B$ cannot contain vertices from $\cE$. Furthermore, either $A$ or $B$ does not contain $v$, and is consequently a subset of $F$.   
    
    We finish the proof similarly to Lemma~\ref{lem:inf_comp}, by picking an $\cH$-cell $A'$ inside $F$ which has maximal distance from $v$. We can then find two $\cH$-cells $C_1$ and $C_2$ that lie on different sides of $A'$ from $v$. Both $C_1$ and $C_2$ must contain a vertex from $\cE$, yielding vertices in $\cE$ whose cells are at $\cQ$-distance 2, a contradiction. 
\end{proof}

\begin{proof}[Proof of Theorem~\ref{thm:cycle_or_forest}]
Again, let $\cH$ be the partition of $\cG$ into trifurcation cells as in Lemma~\ref{lem:cells}. Consider any coloring $c:W \to \N$ of the wedges in $\cG$ such that each color class satisfies the assumptions of Lemma~\ref{lem:bi_tri}.

\textbf{Treatment of cycles}. By splitting each color class into two, we can assume that for each color class $I$ either all wedges $(x,z,y)\in I$ are such that there is a cycle in $\cG\setminus\widehat{I}$ that contains both endpoints $x$ and $y$, or none of the wedges admit such cycles. For color classes where cycles can cover the endpoints, for each wedge, choose one of
the shortest such cycles to be included in $\cF$, and denote it $\sigma(x,y)$.

We claim that by splitting color classes (via the introduction of new colors), we can ensure that the chosen cycles $\sigma(x,y)$ are vertex-disjoint. First, we can modify the coloring $c$ such that no two wedges in a given color class $I$ have the same cycle associated to their endpoints. Indeed, having the same cycles assigned to them defines a locally finite graph on the wedges, which admits a countable proper coloring (via new colors). The resulting coloring will still satisfy the assumptions of Lemma~\ref{lem:bi_tri}). Second, we can assume that for each color class $I$ the length of the cycles $\sigma(x,y)$ chosen by the wedges is constant. Indeed, the length of $\sigma(x,y)$ is a measurable function taking values in $\N$, so incorporating its value into the coloring $c$ keeps the coloring measurable and the set of colors countable. Finally, by local finiteness of $\cG$, for any given $I$ where the length of the chosen cycles is $k\in\Z_{\ge0}$, each $k$-cycle in $\cF$ intersects at most finitely many other $k$-cycles in $\cF$. As before, constructing the intersection graph on cycles yields a Borel locally finite graph on $\cF$, and refining $c$ with a countable proper coloring of this graph ensures that the chosen cycles are disjoint.

{\bf Construction of $T_{x,y}$.} It remains to treat color classes $I$ that consist of wedges whose endpoints cannot be covered by cycles after removal of their centers. Similarly to the role of the finite trees $T_e$ from Lemma~\ref{lem:trees_on_edges} in the proof of Theorem~\ref{thm:union_of_inf_ended_leafless}. We will construct for each pair of endpoints $x,y$ of a wedge $(x,z,y)\in I$ a finite forest $T_{x,y}\subset \cG\setminus{\widehat{I}}$ that satisfies the following properties:
     \begin{enumerate}
         \item $x,y\in T_{x,y}$;
         \item $T_{x,y}$ is contained in the union of $B_2^{\mathcal{Q}}(C(x))\cup B_2^{\mathcal{Q}}(C(y))$ and its external vertex boundary;
         \item the only leaves of $T_{x,y}$ lie in at least four distinct infinite sides of $B_2^{\mathcal{Q}}(C(x))\cup B_2^{\mathcal{Q}}(C(y))$.
     \end{enumerate}

Before we proceed we note that since $x$ and $y$ may belong to different connected components of $\cG\setminus{\widehat{I}}$ the associated graph $T_{x,y}$ might be a union of two disjoint finite trees. We also note that this construction will crucially rely on the fact that $x$ and $y$ do not lie on a simple cycle, as Example~\ref{ex:wedge_cycle} shows that constructing the desired $T_{x,y}$ is impossible without such an assumption.

By Lemma~\ref{lem:sparse_removal_from_3-connected}, $\cG\setminus \widehat{I}$ is infinitely-ended and weakly $2$-connected. We first construct separate trees $T_x$ and $T_y$ by essentially the same arguments to those used in Lemmas~\ref{lem:disj_path} and \ref{lem:trees_on_edges}, where we replace an edge by a single vertex and work with $B_2^{\cQ}(C(x))\cap [x]_{\cG\setminus \widehat{I}}$ instead of $C(x)\cap [x]_{\cG\setminus \widehat{I}}$ in order to ensure that the chosen set has at least two infinite sides after removal of $\widehat{I}$; see Figure~\ref{fig:Tx}. Such trees $T_x$ satisfy the following properties:
     \begin{enumerate}
         \item $x\in T_x$;
         \item $T_x$ is contained in the union of $B_2^{\cQ}(C(x))\cap [x]_{\cG\setminus \widehat{I}}$ and its external vertex boundary;
         \item the only leaves of $T_x$ lie in four distinct infinite sides of $B_2^{\cQ}(C(x))\cap [x]_{\cG\setminus \widehat{I}}$.
     \end{enumerate}

\begin{figure}[htb]
\begin{center}
\begin{tikzpicture}[thick, scale=0.8]
    \draw [name path = S1A] (-3.87298,1) to (-8,2);
    \draw [name path = S1B](-2.64575,3) to (-5.5,6);
    \tikzfillbetween[of=S1B and S1A] {opacity=.15};
    
    \draw [name path = S11A] (-4.2654,1.7482) to (-7.75,2.4);
    \draw [name path = S11B] (-4.4539,2.3284) to (-7.0833,3.4667);
    \tikzfillbetween[of=S11B and S11A] {opacity=0.15};    
    \draw [name path = S12A] (-4.1488,2.8568) to (-6.4167,4.5333);
    \draw [name path = S12B] (-3.5521,2.9836) to (-5.7500,5.6000);
    \tikzfillbetween[of=S12B and S12A] {opacity=0.15}; 
    \draw [name path = Arc2, black] (-3.708,1.5) arc (270:30:0.75);
    \draw (-6.8,3.6) node[above] {$S_1$};
    \draw (-5.4,4) node[above] {$S_{1,1}$};
    \draw (-6.6,2.1) node[above] {$S_{1,2}$};
    
    \draw [name path = S2A] (3.87298,1) to (8,2);
    \draw [name path = S2B](2.64575,3) to (5.5,6);
    \tikzfillbetween[of=S2B and S2A] {opacity=.15};
    \draw [name path = S21A] (4.2654,1.7482) to (7.75,2.4);
    \draw [name path = S21B] (4.4539,2.3284) to (7.0833,3.4667);
    \tikzfillbetween[of=S21B and S21A] {opacity=0.15};    
    \draw [name path = S22A] (4.1488,2.8568) to (6.4167,4.5333);
    \draw [name path = S22B] (3.5521,2.9836) to (5.7500,5.6000);
    \tikzfillbetween[of=S22B and S22A] {opacity=0.15}; 
    \draw [name path = Arc3, black] (3.708,1.5) arc (270:-90:0.75);
    \draw (6.8,3.6) node[above] {$S_2$};
    \draw (5.5,4) node[above] {$S_{2,1}$};
    \draw (6.6,2.1) node[above] {$S_{2,2}$};
    
    \draw [name path = Arc, fill=white](-4,0) arc (180:0:4);
    \node[big dot] (z) at (0,-1) {};
    \draw (0.4,-1) node[below] {$z$};
    \node[big dot] (x) at (0,0) {};
    \draw (0.4,0) node[below] {$x$};
    \draw (-4,0) to (4,0);
    \draw [dashed] (x) to (z);
    \node at (0,2.5) {$B_1^{\cQ}(C(x))\cap [x]_{\cG\setminus \widehat{I}}$};
    \draw [name path = T1, red] plot [smooth]  coordinates {(0,0) (-0.3,0.5)  (-1,1.2) (-2,0.5) (-2.5,1.6) (-3.0554, 1.8167)(-3.7,2.2)};
    \draw [name path = T2, red] plot [smooth]  coordinates {(0,0) (0.4,0.8)  (1.1,1.5) (2,0.3) (2,1.3) (3.0554, 1.8167) (3.7,2.2)};
    \draw (0,1) node[above,red] {$T_x$};

    \draw [name path = T11, red] plot [smooth]  coordinates {(-3.7,2.2) (-4.2,2.3) (-4.7,2.1)};
    \draw [name path = T12, red] plot [smooth]  coordinates {(-3.7,2.2) (-3.4,2.5) (-3.8,2.7) (-4.2,3.2)};
    \node[big dot] (x1) at (-3.7,2.2) {};
    \draw (-4.4,0.5) node[below] {$x_1$};
    \node[big dot] (x11) at (-4.2,2.3) {};
    \node[big dot] (x12) at (-3.8,2.7) {};
    \draw[dotted] (-4.3,0.5) to (x1);
    \draw[dotted] (-5.1,0.5) to (x11);
    \draw (-5.5,0.5) node[below] {$x_{1,2}$};
    \draw[dotted] (-2.5,4) to (x12);
    \draw (-2.4,4) node[above] {$x_{1,1}$};
    
    
    \draw [name path = T22, red] plot [smooth]  coordinates {(3.7,2.2) (4,2.4) (4.2,2.3) (4.7,2.1)};
    \draw [name path = T21, red] plot [smooth]  coordinates {(4,2.4) (3.8,2.7) (4.2,3.2)};
    \node[big dot] (x21) at (4.2,2.3) {};
    \node[big dot] (x22) at (3.8,2.7) {};
    \node[big dot] (x2) at (3.7,2.2) {};
    \draw[dotted] (4.3,0.5) to (x2);
    \draw (4.4,0.5) node[below] {$x_2$};
    \draw[dotted] (5.1,0.5) to (x21);
    \draw (5.5,0.5) node[below] {$x_{2,2}$};
    \draw[dotted] (2.5,4) to (x22);
    \draw (2.4,4) node[above] {$x_{2,1}$};
    \end{tikzpicture}	
\end{center}
\caption{Illustration of the construction of the desired tree $T_x$ (in red), compare to Figure~\ref{fig:Te}.}\label{fig:Tx}
\end{figure}

Now it remains to consider all possible interactions between $T_x$ and $T_y$ as their union might induce cycles and interfere with the later use of the FMSF (which will be similar to the argument at the end of the proof of Theorem~\ref{thm:union_of_inf_ended_leafless}).

For each wedge $(x,z,y)\in I$ take $T_x$ and $T_y$ as above, keeping the notations as in Figure~\ref{fig:Tx}, we denote the cells that contain vertices of degree 3 by $C(x_1)$ and $C(x_2)$ for $T_x$ and the analogous cells $C(y_1)$ and $C(y_2)$ for $T_y$. Call the parts of $T_x$ (resp.~ $T_y$) acquired by removing $x$ from $T_x$ (resp.~$y$ from $T_y$) \emph{branches}. We label these branches based on the cells to which they connect $x$ (resp. $y$), e.g.\ the first branch of $T_x$, denoted $T_x^1$, connects $x$ to $C(x_1)$.

In the case when $T_x\cap T_y=\emptyset$ and all cells $C(x_1),C(x_2),C(y_1),$ and $C(y_2)$ are different, these trees do not interact in a meaningful way, and so we just set $T_{x,y}=T_x\cup T_y$.

If there is a branch of $T_x$ that intersects both branches of $T_y$ then we construct $T_{x,y}$ by taking $T_x$ and replacing a segment in one of its branches with a path that contains $y$, as in Figure~\ref{fig:Txy} on the left. Note that the other branch of $T_x$ cannot intersect this ``detour" path as it would create a cycle containing $x,y$. Clearly, $T_{x,y}$ is acyclic. Since $C(x_1)\neq C(x_2)$ the leaves of $T_{x,y}$ belong to four distinct infinite sides of $B_2^{\mathcal{Q}}(C(x))\cup B_2^{\mathcal{Q}}(C(y))$. The case when a branch of $T_y$ intersects both branches of $T_x$, is treated in the exact same way swapping the roles of $x$ and $y$. 

\begin{figure}[htb]
\begin{center}
\begin{tikzpicture}[thick, scale=0.7]    
    \draw [name path = Arc, fill=white](-4.5,0) arc (180:0:4.5);
    \draw (-4.5,0) to (4.5,0);
    \draw [dotted] (4.5,-1.6) to (3, 0.5);
    \draw [dotted] (7.5,-1.6) to (9, 0.5);
    \node at (7,-2.2) {$\left(B_1^{\mathcal{Q}}(C(x))\cup B_1^{\mathcal{Q}}(C(y))\right)\cap [x]_{\mathcal{G}\setminus \widehat{I}}$};
    \draw (-3.7,3) circle (0.6);
    \draw (-5,4) node[below] {$C(x_1)$};
    \draw (3.7,3) circle (0.6);
    \draw (5,4) node[below] {$C(x_2)$};
    \draw (-1.8,4.4) circle (0.6);
    \draw (-2.3,5) node[above] {$C(y_1)$};
    \draw (1.8,4.4) circle (0.6);
    \draw (2.3,5) node[above] {$C(y_2)$};
    \draw [name path = Arc, fill=white](-4.5,0) arc (180:0:4.5);
    
    \node[big dot] (z) at (0,-1) {};
    \draw (0.4,-1) node[below] {$z$};
    \node[big dot] (x) at (-2,0) {};
    \draw (-2.4,0) node[below] {$x$};
    \draw [dashed] (x) to (z);
    \node[big dot] (y) at (2,0) {};
    \draw (2.4,0) node[below] {$y$};
    \draw [dashed] (y) to (z);
    
    \draw[red]  plot [smooth]  coordinates {(x) (-2.3,0.5)  (-2.1,1.6) (-3,1) (-3,3) (-3.7,3)};
    \draw[red]  plot [smooth]  coordinates {(-3.7,3) (-4.6,2.8)};
    \draw[red]  plot [smooth]  coordinates {(-3.7,3) (-3.8,3.9)};
    \node[big dot] (x1) at (-3.7,3) {};
    \draw (-5,2) node[below] {$x_1$};
    \draw[dotted] (-5,2) to (-3.7,3);
    \draw[red]  plot [smooth]  coordinates {(y) (1,1)};
    \draw  plot [smooth]  coordinates {(1,1) (0,1.6) (-1.3,2) (-2,3) (0,4) (-1.8,4.4)};
    \node[big dot] (y1) at (-1.8,4.4) {};
    \draw (-3,4.2) node[left] {$y_1$};
    \draw[dotted] (-3.1,4.2) to (-1.8,4.4);
    \draw  plot [smooth]  coordinates {(-1.8,4.4) (-2.8,4.5)};
    \draw  plot [smooth]  coordinates {(-1.8,4.4) (-1.5,5.3)};
    
    \draw[red] [name path = T2y] plot [smooth]  coordinates {(x) (-1,0.5) (0,1) (1,1)};
    \draw [name path = T2y] plot [smooth]  coordinates {(1,1) (1.3,1.1) (1.25,1.6) (1.3,2) (1.6,3.1) (2,3)};
    \draw[red] [name path = T2y] plot [smooth]  coordinates {(2,3) (3,2.6) (3.7,3)};
    
    \draw [name path = T21y,red] plot [smooth]  coordinates {(3.7,3) (4.5,2.8)};
    \draw [name path = T22y,red] plot [smooth]  coordinates {(3.7,3) (3.8,3.9)};
    \node[big dot] (x2) at (3.7,3) {};
    \draw (5,2) node[below] {$x_2$};
    \draw[dotted] (5,2) to (3.7,3);
    
    \draw plot [smooth]  coordinates {(2,3) (1.7,3.9) (1.8,4.4)};
    \draw[red] plot [smooth]  coordinates {(y) (2.5,1.6) (2,3)};
    \draw plot [smooth]  coordinates {(1.8,4.4) (2.8,4.5)};
    \draw plot [smooth]  coordinates {(1.8,4.4) (1.5,5.3)};
    \node[big dot] (y2) at (1.8,4.4) {};
    \draw (3,4.2) node[right] {$y_2$};
    \draw[dotted] (3.1,4.2) to (1.8,4.4);

    \draw (3.2,2) node[below,red] {$T_{x,y}$};
    \draw [name path = Arc, fill=white](7.5,0) arc (180:0:4.5);
    \draw (7.5,0) to (16.5,0);
    \draw (8.3,3) circle (0.6);
    \draw (7,4) node[below] {$C(x_2)$};
    \draw (15.7,3) circle (0.6);
    \draw (17,4) node[below] {$C(y_2)$};
    \draw (10.2,4.4) circle (0.6);
    \draw (9.7,5) node[above] {$C(y_1)$};
    \draw (13.8,4.4) circle (0.6);
    \draw (14.3,5) node[above] {$C(x_1)$};
    \draw [name path = Arc, fill=white](7.5,0) arc (180:0:4.5);

    \node[big dot] (zR) at (12,-1) {};
    \draw (12.4,-1) node[below] {$z$};
    \node[big dot] (xR) at (10,0) {};
    \draw (9.6,0) node[below] {$x$};
    \draw [dashed] (xR) to (zR);
    \node[big dot] (yR) at (14,0) {};
    \draw (14.4,0) node[below] {$y$};
    \draw [dashed] (yR) to (zR);

    \draw [red] plot [smooth]  coordinates {(xR) (9.7,0.5)  (9.9,1.6) (9,1) (9,3) (8.3,3)};
    \draw [red] plot [smooth]  coordinates {(8.3,3) (7.4,2.8)};
    \draw [red] plot [smooth]  coordinates {(8.3,3) (8.2,3.9)};
    \node[big dot] (x2R) at (8.3,3) {};
    \draw (7,2) node[below] {$x_2$};
    \draw[dotted] (7,2) to (8.3,3);
    \draw[red]  plot [smooth]  coordinates {(xR) (11,0.5)  (12,1.6)};
    \draw plot [smooth]  coordinates {(12,1.6) (13,2.5) (14,3) (13.8,4.4)};

    \node[big dot] (y1R) at (10.2,4.4) {};
    \draw (9,4.2) node[left] {$y_1$};
    \draw[dotted] (8.9,4.2) to (10.2,4.4);
    \draw  plot [smooth]  coordinates {(10.2,4.4) (9.2,4.5)};
    \draw  plot [smooth]  coordinates {(10.2,4.4) (10.5,5.3)};

    \draw[red] plot [smooth] coordinates {(yR) (13,0.5) (12,1.6)};
    \draw  plot [smooth]  coordinates {(12,1.6) (11.5,2) (10,3) (10.2,4.4)};
    \node[big dot] (aR) at (12,1.6) {};
    \draw (12,1.6) node[below] {$a$};

    \draw [name path = T2y, red] plot [smooth]  coordinates {(yR) (14.3,0.5)  (14.1,1.6) (14.8,1.2) (15,3) (15.7,3)};
    \draw [name path = T21y, red] plot [smooth]  coordinates {(15.7,3) (16.5,2.8)};
    \draw [name path = T22y, red] plot [smooth]  coordinates {(15.7,3) (15.8,3.9)};
    \node[big dot] (y2R) at (15.7,3) {};
    \draw (17,2) node[below] {$y_2$};
    \draw[dotted] (17,2) to (15.7,3);

    \draw plot [smooth]  coordinates {(13.8,4.4) (14.8,4.5)};
    \draw plot [smooth]  coordinates {(13.8,4.4) (13.5,5.3)};
    \node[big dot] (x2R) at (13.8,4.4) {};
    \draw (15,4.2) node[right] {$x_1$};
    \draw[dotted] (15.1,4.2) to (13.8,4.4);
    \end{tikzpicture}	
\end{center}
\caption{Illustration of the construction of the desired tree $T_{x,y}$ in several cases. On the left a branch of $T_x$ intersects both branches of  $T_y$; on the right exactly one branch of $T_x$ intersects exactly one branch of $T_y$.}\label{fig:Txy}
\end{figure}

Suppose now that neither branch of these trees intersects both branches of the other tree. In such a case, if a branch of $T_x$ intersects a branch of $T_y$ the other branches of these trees cannot intersect. To see this, without loss of generality, assume that $T^1_x\cap T^1_y\neq\emptyset$. Then, if we had $T^2_x \cap T^2_y\neq \emptyset$, then one could find a path from $x$ to $y$ within both $T^1_x \cup T^1_y$ and $T^2_x \cup T^2_y$. By our assumption that no branch intersects both branches of the other tree, these two paths are vertex disjoint, except for $x$ and $y$. So one could find a cycle containing $x$ and $y$.
To construct $T_{x,y}$ for this case, let $a\in T_x\cap T_y$ be the closest point to $x$ in $T_x$-distance. Set $T_{x,y}$ to consist of the union of the branches of $T_x$ and $T_y$ that do not contain $a$ and segments that connect $x$ to $a$ and $a$ to $y$ from $T_x$ and $T_y$, respectively, see Figure~\ref{fig:Txy} on the right. Such $T_{x,y}$ is acyclic by the construction, and so it remains to claim $C(x_2)$ and $C(y_2)$ are separate cells. If $C(x_2)=C(y_2)$, then there is a path within $C(x_2)$ that connects $T_x$ and $T_y$. By the acyclicity of $T_{x,y}$ such a path creates a cycle that contains $x$ and $y$.

Finally, we note that the trick, where we inserted a path within $C(x_2)$, also allows us to treat the last type of interaction between $T_x$ and $T_y$: the case when $T_x\cap T_y=\emptyset$ but not all cells $C(x_1),C(x_2),C(y_1),$ and $C(y_2)$ are different. For example, if $\{C(x_1),C(x_2)\}=\{C(y_1),C(y_2)\}$ inserting such paths in these cells (or equivalently collapsing these cells into single points) immediately yields a cycle that contains $x$ and $y$. Hence, the construction of $T_{x,y}$ in this case is in the analogous one presented in case with intersections, see Figure~\ref{fig:Txy2}.

\begin{figure}[htb]
\begin{center}
\begin{tikzpicture}[thick, scale=0.6]    
    \draw [name path = Arc, fill=white](-4.5,0) arc (180:0:4.5);
    \draw (-4.5,0) to (4.5,0);
    \draw [dotted] (-3.5,-1.6) to (-4, 0.5);
    \node at (0,-2.2) {$\left(B_1^{\mathcal{Q}}(C(x))\cup B_1^{\mathcal{Q}}(C(y))\right)\cap [x]_{\mathcal{G}\setminus \widehat{I}}$};
    \draw (-3.7,3) circle (0.6);
    \draw (-5,4) node[below] {$C(x_2)$};
    \draw (0,4.5) circle (1.5);
    \draw (0,6) node[above] {$C(y_1)=C(x_1)$};
    \draw (3.7,3) circle (0.6);
    \draw (5,4) node[below] {$C(y_2)$};
    \draw [name path = Arc, fill=white](-4.5,0) arc (180:0:4.5);
    
    \node[big dot] (z) at (0,-1) {};
    \draw (0.4,-1) node[below] {$z$};
    \node[big dot] (x) at (-2,0) {};
    \draw (-2.4,0) node[below] {$x$};
    \draw [dashed] (x) to (z);
    \node[big dot] (y) at (2,0) {};
    \draw (2.4,0) node[below] {$y$};
    \draw [dashed] (y) to (z);
    
    \draw [red] plot [smooth]  coordinates {(x) (-2.3,0.5)  (-2.1,1.6) (-3,1) (-3,3) (-3.7,3)};
    \draw [red] plot [smooth]  coordinates {(-3.7,3) (-4.6,2.8)};
    \draw [red] plot [smooth]  coordinates {(-3.7,3) (-3.8,3.9)};
    \node[big dot] (x2) at (-3.7,3) {};
    \draw (-5,2) node[below] {$x_2$};
    \draw[dotted] (-5,2) to (-3.7,3);
    \node[big dot] (x1) at (-1,5) {};
    \draw (-2.5,4.2) node[left] {$x_1$};
    \draw[dotted] (-2.6,4.4) to (-1,5);
    \draw [red] plot [smooth]  coordinates {(x) (-1,0.5)  (-0,1.6) (-1.3,2) (-2,3) (0,4) (-0.7,4.7)  (-1,5)};
    \draw [blue] plot [smooth]  coordinates {(-1,5) (-2,5)};
    \draw [blue] plot [smooth]  coordinates {(-1,5) (-1,6)};
    \draw [red] plot [smooth]  coordinates {(-0.7,4.7) (-0.5,5.2) (0.5,4.8) (0.7,4.7)};
    \node[big dot] at (-0.7,4.7) {};
    \node[big dot] at (0.7,4.7) {};
    
    \draw [name path = T2y, red] plot [smooth]  coordinates {(y) (2.3,0.5)  (2.1,1.6) (2.8,1.2) (3,3) (3.7,3)};
    \draw [name path = T21y, red] plot [smooth]  coordinates {(3.7,3) (4.5,2.8)};
    \draw [name path = T22y, red] plot [smooth]  coordinates {(3.7,3) (3.8,3.9)};
    \draw [red] plot [smooth]  coordinates {(y) (1,0.5)  (1,1.6) (1.7,3) (0.9,4) (0.7,4.7) (1,5)};
    \draw [blue] plot [smooth]  coordinates {(1,5) (2,5)};
    \draw [blue] plot [smooth]  coordinates {(1,5) (1,6)};
    \node[big dot] (y1) at (1,5) {};
    \draw (2.5,4.2) node[left] {$y_1$};
    \draw[dotted] (2.6,4.4) to (1,5);
    \node[big dot] (y2) at (3.7,3) {};
    \draw (5,2) node[below] {$y_2$};
    \draw[dotted] (5,2) to (3.7,3);

    \end{tikzpicture}	
\end{center}
\caption{Illustration of the construction of the desired tree $T_{x,y}$ in the case where $T_x$ and $T_y$ do intersect but not all cells $C(x_1),C(x_2),C(y_1),$ and $C(y_2)$ are different. Compare with Figure~\ref{fig:Txy} on the right.}\label{fig:Txy2}
\end{figure}

\textbf{Construction of the forest.} 
Similar to the proof of Lemma~\ref{lem:trees_on_edges}, associate to each endpoint $x$ of a wedge from the color class $I$ a tree $T(x)$ that is given either by $T_x$ or $T_{x,y}$ depending on which case from above applies to $x$. Note that $T(x)$ lies in $B_3^{\mathcal{Q}}(C(x)\cup C(y))$ and enjoys the following two properties:
     \begin{enumerate}
         \item $x\in T(x)$ and it is not a leaf;
         \item the set of leaves $L_x$ of $T(x)$ lies in four distinct infinite sides of $C_{\cH}(T(x)\setminus L_x)$. 
     \end{enumerate}

We now construct the forest $\cF$ as we did in the proof of Theorem~\ref{thm:union_of_inf_ended_leafless}. Let $\cT:=\bigcup_{x\in I\setminus{\widehat{I}}} T(x)$ and $\cG'\subseteq\cG$ be acquired from $\cG$ by removing all non-$\cT$-edges of $C(\cT)\cup \partial_EC(\cT)$. Clearly, $\cT\subseteq \cG'$. Moreover, $T(x)$ is a trifurcation set in $\cG'$ because it has leaves in at least four distinct infinite sides of $C(T(x))$ and each of those sides is infinite in $\cG'$ by Lemma~\ref{lem:inf_comp}. Taking $\cF$ to be the pruned version of $\mathrm{FMSF}_{<}(\cG')$ for an arbitrary Borel linear order $<$ concludes the proof by the exact same argument as presented in the proof of Theorem~\ref{thm:union_of_inf_ended_leafless}.
\end{proof}

\subsection{Proof of Theorem~\ref{thm:graph_isom}}

We are now ready prove Theorem~\ref{thm:graph_isom}.

\begin{proof}[Proof of Theorem~\ref{thm:graph_isom}]
    Let $\rho_1$ and $\rho_2$ denote the rank functions of the cycle matroids of $\cG_1$ and $\cG_2$, respectively. We split the proof into a sequence of claims.
    \begin{cl}\label{lem:w_to_w}
        It is enough to show that $\fg$ maps wedges of $\cG_1$ to wedges of $\cG_2$.
    \end{cl}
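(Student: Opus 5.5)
The plan is to build the vertex map explicitly from the wedge-preserving property and then check that it is a measure-preserving Borel isomorphism inducing $\fg$. Suppose that for a.e.\ wedge $(x,z,y)$ of $\cG_1$, the edges $\fg(xz)$ and $\fg(zy)$ share a vertex in $\cG_2$.

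\textbf{Defining $\psi$.} Fix a vertex $z$ of $\cG_1$. Weak $3$-connectivity and infinite-endedness force $\deg(z)\ge 3$ a.e.; otherwise removing the (at most one) neighbour of a degree-$1$ vertex, or the two neighbours of a degree-$2$ vertex, leaves $z$ as a finite component. Let $e_1,\dots,e_k$ be the edges at $z$. Their images pairwise share a vertex. Three pairwise intersecting edges either share a common vertex or form a triangle. The triangle case cannot occur: $\fg$ preserves cycles, so $\fg^{-1}$ does too, and the triangle would pull back to the star $\{e_1,e_2,e_3\}$, which is not a cycle. Hence all of $\fg(e_1),\dots,\fg(e_k)$ pass through a single vertex, and this vertex is unique since $k\ge 2$ and edges are simple. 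Call it $\psi(z)$.

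\textbf{Properties of $\psi$.} Borelness follows because $\psi(z)$ is determined by finitely many Borel-computed data from $\fg$ on the edges at $z$. For an edge $zw$, both $\psi(z)$ and $\psi(w)$ are endpoints of $\fg(zw)$. They must be distinct: if $\psi(z)=\psi(w)$, then all $\fg$-images of edges at $z$ and at $w$ meet in one vertex, so the preimage star structure would be contradicted via a cycle or path argument. This argument is symmetric, applying the same reasoning to $\fg^{-1}$ once we know it also maps wedges to wedges. Hence $\fg(zw)=\psi(z)\psi(w)$.

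Injectivity and surjectivity of $\psi$ on a conull set come from running the same construction for $\fg^{-1}$. This needs $\cG_2$ to have minimum degree $\ge 2$ where relevant and $\fg^{-1}$ to map wedges to wedges. Alternatively, one defines $\psi$ only where the degree is at least $2$ and notes that $\psi$ is a bijection of edge-incidence structures, since $\fg$ is bijective on edges. The isolated vertices of $\cG_2$ form a nullset up to the component-deletion convention.

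\textbf{Measure.} $\psi$ is a measure-preserving bijection on vertices because edge measure is preserved. The relation $\eta(F)=\tfrac12\int \deg_F\,d\mu$, applied to stars of Borel vertex sets together with the graphing mass-transport identity, gives $\mu_2(\psi(A))=\mu_1(A)$ by a Radon–Nikodym comparison.

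\textbf{Main obstacle.} The subtle points are distinguishing the star from the triangle case and ensuring surjectivity and degree conditions on the $\cG_2$ side, where no connectivity assumption is given. My first attempt is to transfer weak $3$-connectivity through $\fg$, using that cut phenomena are visible in the rank function.
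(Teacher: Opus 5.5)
Up to the construction of $\psi$ you follow the paper's proof. Minimum degree $\ge 3$, pairwise intersecting images, and the triangle excluded because $\fg^{-1}$ preserves cycles together are what the paper means by ``stars go to stars''. The paper then passes in one line from ``$\fg(xy)$ lies in the stars of $\tilde\fg(x)$ and $\tilde\fg(y)$'' to ``$\tilde\fg$ is an isomorphism''. You rightly see that this hides three facts: $\psi(z)\neq\psi(w)$ on edges, injectivity, and measure preservation. However, what you offer for the first two is not an argument.

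Nothing tells you that $\fg^{-1}$ maps wedges to wedges or that $\cG_2$ has no leaves, so the ``symmetric'' step is circular. Nor does bijectivity of $\fg$ on edges stop $\psi$ from identifying vertices: gluing different components at a vertex is edge-bijective and preserves cycles and wedges. More seriously, no argument using only cycles and paths can give $\psi(z)\neq\psi(w)$ on bridges. Take the Schreier treeing of a free p.m.p.\ action of $(\Z/2\Z)^{*3}$, contract one generator matching $M$, and re-attach each $M$-edge as a pendant edge at the contracted vertex. The induced edge bijection preserves edge measure, cycles (vacuously) and wedges, and $\psi$ collapses every $M$-edge, yet it is not induced by an isomorphism. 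What fails is hyperfiniteness: the other two matchings form bi-infinite lines, whose image is a $4$-regular treeing. So the rank has to enter.

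The gaps can be closed as follows.

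\emph{Collapsed edges are bridges.} If $zw$ lies on a cycle $C$ and $\psi(z)=\psi(w)=u$, then the three edges of $C$ at $z$ and $w$ all lie at $u$, so $u$ has degree $3$ in the cycle $\fg(C)$. Hence collapsed edges are bridges, and collapse classes are finite trees by local finiteness of $\cG_2$.

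\emph{No collapse at all.} If collapse occurs, then on a positive measure set there are wedges $(d,c,c')$ with $cc'$ collapsed and $cd$ not. Apply Theorem~\ref{thm:cycle_or_forest}. The cycle case gives a cycle through $cc'$. In the forest case, $cd$ is a leaf edge of $\cF\cup\{cd\}$, while $\fg(cd)=\psi(c)\psi(d)$ joins two vertices covered by the leafless forest $\fg(\cF)$ (since $\psi(c)=\psi(c')$ and $c',d\in V(\cF)$). So $\fg(cd)$ is disposable there, which is exactly the contradiction of Claim~\ref{lem:w_to_w_forest}.

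\emph{Injectivity within a component.} With no collapse, a path between two vertices of one component with equal images maps onto a closed trail. That trail contains a cycle whose preimage would be a cycle inside a path.

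\emph{Injectivity across components, and measure.} Here use the rank. The half-edge mass transport you allude to gives $\mu_1(X_1)=\int|\psi^{-1}(u)|\,d\mu_2(u)$. On the other hand $\rho_1(E_1)=\mu_1(X_1)$, and $\rho_2(E_2)=\mu_2(\psi(X_1))$, because every edge of $\cG_2$ is some $\psi(c)\psi(d)$ and all non-trivial components of $\cG_2$ are infinite. Equality of ranks forces $|\psi^{-1}|=1$ a.e., and the same identity then gives $\mu_2(\psi(A))=\mu_1(A)$.

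Two smaller points. Isolated vertices of $\cG_2$ are invisible to $\rho_2$, so they must be discarded by convention; they cannot be shown to be null. Transferring weak $3$-connectivity to $\cG_2$ is not needed.
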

    \begin{proof}[Proof of Claim~\ref{lem:w_to_w}]
        Note that by weak 3-connectivity, $\cG_1$ has no leaves. That is, the star of every vertex contains at least 2 edges, so at least one wedge. Observe also, that the weak isomorphism $\fg$ preserves cycles. Thus, if $\fg$ maps wedges to wedges, it must map $n$-stars to $n$-stars ($n \geq 2$). In particular, it defines a map $\tilde{\varphi}:V(\cG_1) \to V(\cG_2)$. If $e=(x,y)$ is an edge of $\cG_1$, then we can find wedges $w_1$ and $w_2$ with centers $x$ and $y$ with $w_1 \cap w_2 = \{e\}$. Clearly, $\varphi(w_1)\cap \varphi(w_2) = \varphi(e)$, which belongs to the star of both $\tilde{\varphi}(x)$ and $\tilde{\varphi}(x)$. That is, $\tilde{\varphi}$ is an isomorphism, and it induces the map $\varphi$ on the edge set of $\cG_1$.
    \end{proof}
    Consider the coloring $c$ given by Theorem~\ref{thm:cycle_or_forest} and any color class $I:=c^{-1}(i)$. Without loss of generality we may assume that the $\fg$ images of any two wedges from $I$ are at least distance $4$.
    We will show that for any color class $I$, $\fg$ maps wedges in $I$ to the wedges. Let $\cF:=\cF(c,I)$ be the subgraph constructed in Theorem~\ref{thm:cycle_or_forest} that corresponds to $I$. There are two cases: $\cF$ is a disjoint union of cycles or $\cF$ is a leafless, infinitely-ended forest (see parts \ref{thm:cycle} and \ref{thm:forest} of Theorem~\ref{thm:cycle_or_forest}).
    
    \begin{cl}\label{lem:w_to_w_cycle}
    Suppose $\cF$ is as in part \ref{thm:cycle} of Theorem~\ref{thm:cycle_or_forest}, then $\fg$ maps wedges from $I$ to wedges.
    \end{cl}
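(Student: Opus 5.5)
We adapt the finite argument sketched in Section~\ref{subsec:ideas_and_overview}. Fix a wedge $w=(x,z,y)\in I$ with edges $e_1=(x,z)$, $e_2=(z,y)$, and let $\sigma=\sigma(x,y)\in\cF$ be the chosen cycle through $x$ and $y$ avoiding $z$. The cycle $\sigma$ splits at $x,y$ into two internally disjoint $x$--$y$ paths $P_1,P_2$. The subgraph $H_w:=\sigma\cup\{e_1,e_2\}$ is then a theta graph: it has three internally disjoint $x$--$y$ paths $P_1$, $P_2$ and $e_1e_2$. Its cycles are exactly $P_1\cup P_2$, $P_1\cup e_1e_2$ and $P_2\cup e_1e_2$. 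Since the cycles in $\cF$ are vertex disjoint and their centers are removed, the family $\{H_w\}_{w\in I}$ consists of pairwise vertex-disjoint finite subgraphs, except possibly that a center $z$ lies on another wedge's cycle. To avoid this, we refine the coloring further so that distinct $H_w$ are far apart in $\cG_1$; we have already assumed that their $\fg$-images are far apart in $\cG_2$. Refining is harmless, since it only produces countably many subclasses, each still satisfying the conclusion of Theorem~\ref{thm:cycle_or_forest}.

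We then restrict $\fg$ to $E':=\bigcup_{w\in I}E(H_w)$. The subgraphing $(X_1,E')$ has finite components, and by the spacing assumption so does $(X_2,\fg(E'))$. By Lemma~\ref{lem:cylce_preserv}, $\fg$ maps cycles to cycles (with the same length) almost surely. Hence for a.e.\ $w$, the image $\fg(H_w)$ is a finite edge set whose cycles are exactly the images of the three cycles above. Moreover, every edge of $\fg(H_w)$ lies on a cycle of $\fg(H_w)$, because the preimage of a cycle in $\cG_2$ is also a cycle ($\fg^{-1}$ is a weak isomorphism as well). So $\fg$ restricted to $H_w$ is a weak isomorphism of finite graphs onto a graph with exactly three cycles, arranged so that each pair of the three ``arcs'' $\fg(P_1),\fg(P_2),\fg(e_1e_2)$ forms a cycle.

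The core combinatorial step is the finite fact that such an image is again a theta graph with the same three arcs. A finite graph whose cycle space has dimension $2$ (the rank is preserved, hence so is the nullity) and which has exactly three cycles, each edge lying on one, is a theta graph. It cannot be two cycles sharing at most a vertex, since that has only two cycles. Each arc is the set of edges lying in a prescribed pair of cycles, so $\fg$ maps arcs onto arcs. In particular $\{\fg(e_1),\fg(e_2)\}$ is an arc of length $2$ of a theta graph, that is, a path of length two. Its middle vertex has degree $2$ in the theta graph, so $\fg(e_1),\fg(e_2)$ share exactly one endpoint. Thus $\fg(w)$ is a wedge.

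The main obstacle is the bookkeeping in the first paragraph. We need the $H_w$ to be genuinely disjoint in both graphings, so that components are finite and cycles in the image come only from a single $\fg(H_w)$. A stray cycle formed through several $\fg(H_w)$ would break the three-cycle count, which is why the distance-$4$ separation of the images, combined with a countable refinement in $\cG_1$, is used. Measurability is automatic since all choices are Borel and countably many colors are used.
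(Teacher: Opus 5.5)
Your proof is correct, but its combinatorial core differs from the paper's. The paper argues with just two cycles. Since $\sigma\cup\{e_1\}$ contains no cycle other than $\sigma$, the same holds for $\varphi(\sigma)\cup\{\varphi(e_1)\}$. Hence $\varphi(e_1)$ is not a chord and has an endpoint $u\notin V(\varphi(\sigma))$. The image of the cycle $C'\subseteq\sigma\cup\{e_1,e_2\}$ through $e_1,e_2$ must leave $u$ through a second edge, and that edge can only be $\varphi(e_2)$.

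You instead classify the whole image $\varphi(H_w)$: it has exactly three cycles, every edge lies on one, and its cycle space has dimension $2$. It is therefore a theta graph, and $\varphi$ maps arcs to arcs because arcs are pairwise intersections of the three cycles. This needs a small structural lemma (block decomposition of a bridgeless graph of nullity $2$). In return it shows that the whole theta structure of $H_w$ is preserved, not only the adjacency of $\varphi(e_1)$ and $\varphi(e_2)$.

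Three of your justifications should be adjusted, though none affects correctness.
\begin{itemize}
\item The nullity of $\varphi(H_w)$ does not come from rank preservation; the graphing rank is an integrated quantity and gives no per-component identity. It follows because the three image cycles $\varphi(D_1),\varphi(D_2),\varphi(D_3)$ satisfy $\varphi(D_1)\triangle\varphi(D_2)=\varphi(D_3)$ ($\varphi$ is a bijection), and cycles span the cycle space.
\item That every edge of $\varphi(H_w)$ lies on a cycle follows from forward cycle preservation, not from preimages.
\item The separation bookkeeping you call the main obstacle is unnecessary. A cycle contained in $\varphi(H_w)$ pulls back under $\varphi^{-1}$ to a cycle contained in $H_w$, so the three-cycle count is local, exactly as in the paper's argument. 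Also, the paper's distance assumption concerns images of wedges, not of the $H_w$. The $H_w$ are in any case vertex-disjoint, since centers cannot lie on cycles of $\cF\subseteq\cG\setminus\widehat{I}$.
\end{itemize}
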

    \begin{proof}[Proof of Claim~\ref{lem:w_to_w_cycle}]
         The proof of this claim is identical to the corresponding proof for finite graphs, see e.g.\ \cite[Problem 15.9]{lovasz2007combinatorial}. We present the argument for completeness.

         Consider a wedge $(x,z,y)\in I$ and suppose $x$ and $y$ are covered by a cycle $C\in\cF$. Let $e_1 =(x,z)$ and $e_2=(z,y)$. As $C \cup \{e_1\}$ contains no cycle other than $C$, the same holds for $\varphi(C)\cup\{\varphi(e_1)\}$. Therefore $\varphi(e_1)$ is not a chord of $C$, hence it has an endpoint $u$ not covered by $\varphi(C)$. On the other hand, $C \cup\{e_1,e_2\}$ contains a cycle $C'$ through $e_1$ and $e_2$. Consequently, $\varphi(C') \subseteq \varphi(C)\cup\{\varphi(e_1),\varphi(e_2)\}$ is a cycle containing $\varphi(e_1)$, hence it has another edge (not equal to $\varphi(e_1)$) covering $u$. This can only be $\varphi(e_2)$, hence $\varphi(e_1)$ and $\varphi(e_2)$ are adjacent in $\cG_2$. That is, they form a wedge.
    \end{proof}

    \begin{cl}\label{lem:w_to_w_forest}
    Suppose $\cF$ is as in part \ref{thm:forest} of Theorem~\ref{thm:cycle_or_forest}, then $\fg$ maps wedges from $I$ to wedges.
    \end{cl}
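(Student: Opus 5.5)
I plan to mimic the cycle argument of Claim~\ref{lem:w_to_w_cycle}, with the leafless forest $\cF$ playing the role of the cycle $C$. Superfluousness will replace "every edge lies on a cycle", and it is transported by $\fg$ through Corollary~\ref{cor:phi_superfluous}. Fix the color class $I$ and $\cF=\cF(c,I)\subseteq\cG_1\setminus\widehat I$. Since $\cF$ avoids the centers, for each wedge $(x,z,y)\in I$ with $e_1=(x,z)$, $e_2=(z,y)$, the edges $e_1,e_2$ are not in $\cF$. After further refining the coloring (countable proper colorings of locally finite auxiliary graphs, which keep the hypotheses of Theorem~\ref{thm:cycle_or_forest} if we simply shrink $\cF$'s index set), I may assume the wedges of $I$ are far apart both in $\cG_1$ and, after applying $\fg$, in $\cG_2$.

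\emph{Step 1.} By Lemma~\ref{lem:forest_superfluous}, $\cF$ is a treeing with superfluous edge set. By Corollary~\ref{cor:phi_superfluous}, applied to the restriction of $\fg$ to $\cF$ (rank preserving, since $\fg$ preserves $\rho$ on all Borel subsets), $\fg(\cF)$ is again a leafless infinitely-ended forest.

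\emph{Step 2.} Let $E_1=\{e_1 : (x,z,y)\in I\}$. The set $\cF\cup E_1$ is still acyclic: $x$ and $z$ lie in different components of $\cF\cup\{e_1\}$ because $z\notin V(\cF)$, and distinct wedges are far apart. Adding a pendant edge $e_1$ at $z$ creates a positive-measure set of leaves. By cycle preservation, $\fg(\cF\cup E_1)$ is acyclic, so $\fg(e_1)$ is not a chord joining two vertices of one $\fg(\cF)$-component.

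I then show that $\fg(e_1)$ has an endpoint $u$ not covered by $\fg(\cF)$. Otherwise $\fg(e_1)$ would join two distinct $\fg(\cF)$-components, each infinitely-ended and leafless, and every edge of $\fg(\cF\cup E_1)$ would be superfluous. But the edges of $E_1$ are not superfluous in $\cF\cup E_1$: deleting a sparse positive-measure color class of them isolates a positive-measure set of centers $z$, so the rank drops. Since being superfluous is a rank property and $\fg$ preserves rank, this is a contradiction, and $u$ exists almost surely. I expect this measure-theoretic step to be the main obstacle, since it requires checking that joining two leafless infinitely-ended components by sparse edges yields a superfluous set. I would argue it by a mass-transport/trifurcation argument as in the proof of Lemma~\ref{lem:forest_superfluous}.

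\emph{Step 3.} Now consider $\cF\cup E_1\cup E_2$. Here $e_1,e_2$ form a path attached at both ends $x,y$ to $\cF$, either in the same $\cF$-component (which is impossible, since that gives a cycle through $x,y$ avoiding $z$, contradicting the case split in Theorem~\ref{thm:cycle_or_forest}; the same-component case would anyway be handled like the cycle case) or in two different components. Either way, every edge of $\cF\cup E_1\cup E_2$ is superfluous: a leafless infinitely-ended forest with a sparse set of paths added between infinitely-ended components, by the argument of Step 2. Hence $\fg(e_1)$ is superfluous in $\fg(\cF\cup E_1\cup E_2)$. By the observation following the definition of superfluous edges, a.e.\ such edge is locally disposable, so $u$ cannot remain a leaf. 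The only edge in $\fg(\cF\cup E_1\cup E_2)$ that can cover $u$ besides $\fg(e_1)$ is $\fg(e_2)$, since $u\notin V(\fg(\cF))$. Therefore $\fg(e_1)$ and $\fg(e_2)$ share $u$ and form a wedge, almost surely.
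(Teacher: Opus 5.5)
Your proposal is correct and follows the paper's proof essentially step by step. You transfer superfluousness of $\cF$ to $\fg(\cF)$ via Corollary~\ref{cor:phi_superfluous}; you use that the pendant edges $E_1$ have no disposable edges in $\cF\cup E_1$ to force an endpoint $u$ of $\fg(e_1)$ outside $V(\fg(\cF))$; and you use superfluousness of $\cF\cup E_1\cup E_2$ to rule out leaves in its image, so that $\fg(e_2)$ must cover $u$. (The step you flag as the main obstacle is in fact immediate from Lemma~\ref{lem:forest_superfluous}: adding edges between distinct components of a leafless $\infty$-ended forest while staying acyclic gives another such forest.)

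One correction is needed. Contrary to your parenthetical in Step~3, $x$ and $y$ \emph{can} lie in the same $\cF$-component; the paper's $T_{x,y}$ is often a single tree through both. In that case the tree path together with $e_1,e_2$ gives a cycle through $z$, not one avoiding $z$, so the case split of Theorem~\ref{thm:cycle_or_forest} is not contradicted. Your ``either way'' conclusion still holds, but for a different reason than ``the argument of Step~2'': deleting a sufficiently sparse edge set from $\cF\cup E_1\cup E_2$ leaves every vertex in an infinite component, so the rank does not drop.
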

    \begin{proof}[Proof of Claim~\ref{lem:w_to_w_forest}]
        Our argument here emulates the previous, classical one, but uses the infinite-ended leafless forests and the notion of superfluous/non-superfluous edges in the same way as the previous argument used cycles and their chords.
        
        By Lemma~\ref{lem:forest_superfluous}, $E(\cF)$ is $\rho_1$-superfluous. By Corollary~\ref{cor:phi_superfluous} the same holds for $\fg(\cF)$ with respect to $\rho_2$. First, for each wedge in $I$ decide which edge is `left' and which is `right'. (As wedges are finite, we can choose measurably.) Denote the sets of left and right edges by $L(I)$ and $R(I)$, respectively. Now, a.e.\ edge in $L(I)$ has exactly one endvertex in $\cF$, in other words, these edges form leaves in $\cF\cup L(I)$, and hence $L(I)$ has not disposable edges at all in $\cF\cup L(I)$. Consequently, $\fg(L(I))$ has the same property inside $\varphi(\cF\cup L(I))$. The edges in $\fg(L(I))$ cannot have common endvertices by the distance assumption; and by the previous observation, they cannot have both of their endvertices be covered by the infinite-ended leafless forest $\varphi(\cF)$, as in that case the set of such edges would be disposable in $\varphi(\cF\cup L(I))$. Hence, $\fg(L(I))$ are leaves in $\varphi(\cF\cup L(I))$. (Notice that we do not argue at this point that one endvertex of an edge from $\varphi(L(I))$ is covered by $\varphi(\cF)$. A priori, it could happen that $\varphi(L(I))$ contains edges that are isolated edges in $\varphi(\cF\cup L(I))$.)

        The same holds for $\fg(R(I))$. On the other hand, the set $\cF\cup L(I)\cup R(I)$ is $\rho_1$-superfluous. Hence, the same holds for $\varphi(\cF\cup L(I)\cup R(I))$, in particular, it has no leaves. Let $e \in L(I)$, and let $f$ denote the pair of $e$ in $R(I)$. By the distance assumption, the endvertices of $\varphi(e)$ cannot be covered by any edge from $\varphi(R(I))$, except $\varphi(f)$. But $\varphi(e)$ is not a leaf in $\varphi(\cF\cup L(I)\cup R(I))$, and we saw that at least one of its endvertices in \emph{not} covered by $\varphi(\cF)$, so it has to be covered by $\varphi(f)$, that is, $\varphi(e)$ and $\varphi(f)$ form a wedge. (Notice that now we also see that the other endvertex of $\varphi(e)$ has to be covered by $\varphi(\cF)$, so in fact there were no isolated edges in $\varphi(\cF\cup L(I))$.)
    \end{proof}
    This concludes the proof of Theorem~\ref{thm:graph_isom}.
\end{proof}

\section{Assembling tools for the general result} \label{sec:assembling_tools}

\subsection{Whitney's theorem and Tutte decomposition for countable graphs} \label{subsec:locally_finite_tools}

In this subsection, we introduce extensions of standard tools from graph theory to locally finite graphs. We will mostly use them to treat the 1- and 2-ended case of Theorem~\ref{thm:Whitney_2-isom_intro}, which are simpler than the infinite ended case, because the argument works without assuming that the edge-bijection preserves hyperfiniteness of edge sets. We try to keep the exposition brief and provide references instead of technical details where possible.

Unsurprisingly, Whitney's theorem has already been extended to countable, locally finite graphs (without a measurable structure).

\begin{thm}[Thomassen, {\cite[Theorem 4.1]{thomassen1982duality}}] \label{thm:thomassen}
    Let $G$ and $H$ be countable 2-connected graphs, and $\varphi:E(G) \to E(H)$ a cycle preserving bijection. Then $H$ is isomorphic to a graph $H’$ obtained by a (possibly empty) sequence of Whitney twists of $G$. 
    
    Moreover, the sequence of switches implements $\varphi$, that is, if $\psi:E(G) \to E(H')$ denotes the edge bijection defined by the sequence of switches, and $\theta: E(H')\to E(H)$ the edge induced by the isomorphism, then $\varphi=\theta \circ \psi$.
\end{thm}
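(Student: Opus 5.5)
This is Thomassen's extension of Whitney's 2-isomorphism theorem to countable 2-connected graphs. I would prove it by compactness from the finite theorem, handling the 2-separations through a Tutte-type tree decomposition.

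\textbf{Step 1 (local structure).} First I would show that $\varphi$ preserves 2-connectivity of finite pieces, and hence 2-separations. Since $G$ is 2-connected, every pair of edges lies on a common cycle. Because $\varphi$ preserves cycles in both directions, every pair of edges of $H$ lies on a common cycle, so $H$ is 2-connected as well. Now let $(A,B)$ be an edge partition of $G$ that is a 2-separation, meaning the two sides share exactly two vertices $u,v$ and no cycle uses edges of both sides other than cycles passing through $\{u,v\}$. The cycle matroid detects this as a 2-sum decomposition: every cycle meeting both $A$ and $B$ splits into exactly two paths, one in each side. The same description then holds for $\varphi(A),\varphi(B)$ in $H$.

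\textbf{Step 2 (3-connected pieces are rigid).} Next I would take the Tutte decomposition of $G$ into 3-connected torsos, cycles and bonds, glued along virtual edges; this is available for locally finite countable graphs. By Step 1, $\varphi$ maps each torso to the corresponding torso of $H$, with virtual edges matched. On a 3-connected torso, every wedge together with a cycle avoiding its centre exists. So the classical argument of Claim~\ref{lem:w_to_w_cycle} shows $\varphi$ maps wedges to wedges, and hence it is induced by an isomorphism, exactly as in Claim~\ref{lem:w_to_w}. On cycle and bond torsos, $\varphi$ is an arbitrary bijection. This is realised by reordering, which amounts to Whitney twists at the 2-separations of the cycle.

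\textbf{Step 3 (assembly).} The only freedom in gluing the torsos of $H$ back together is the orientation of each virtual edge, that is, which endpoint is identified with which. A mismatch at a single virtual edge is corrected by one Whitney twist at the corresponding 2-separator. Enumerating the tree-decomposition edges, I would perform these twists one at a time. For a fixed finite edge set, the induced edge bijection stabilises after finitely many twists. The limit graph $H'$ is therefore well defined, and the torso isomorphisms glue into an isomorphism $H'\to H$ with $\theta\circ\psi=\varphi$.

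\textbf{Main obstacle.} I expect the main obstacle to be that the twists must be chosen consistently, so that the infinite sequence converges. An alternative is to use a compactness (K\"onig-lemma) argument over finite 2-connected subgraphs exhausting $G$, applying finite Whitney at each level. That route requires care, because finite subgraphs do not inherit 2-separations faithfully. The tree-decomposition route avoids this, since twists at distinct tree edges commute up to relabelling.
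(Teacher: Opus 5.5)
\textbf{There is a genuine gap in Steps 1--2.}

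Step 1 is false as stated: a cycle-preserving bijection need not carry a vertex 2-separation to a vertex 2-separation. Here is a counterexample.
\begin{itemize}
\item Take $G=H=C_4$ with edges $12,23,34,41$, and the 2-separation $A=\{12,23\}$, $B=\{34,41\}$ at $\{1,3\}$.
\item Let $\varphi$ swap $23$ and $34$. This is cycle-preserving, since the only cycle is all of $E$.
\item However, $\varphi(A)=\{12,34\}$ and $\varphi(B)=\{23,41\}$ share all four vertices, and the cycle meets $\varphi(A)$ in two disjoint edges, not in a path.
\end{itemize}
What $\varphi$ transfers is only the matroidal 2-sum property: for cycles $C_1,C_2$ meeting both sides, $(C_1\cap A)\cup(C_2\cap B)$ is again a cycle. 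In $H$ such a partition can be realized as a ``necklace'' whose sides are disconnected.

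So your sentence ``by Step 1, $\varphi$ maps each torso to the corresponding torso of $H$, with virtual edges matched'' has no support. It is also not a routine step: it is exactly Lemma~\ref{lemma:tutte_decomp_preserved}. The paper obtains that lemma only as a corollary of this very theorem combined with Theorem~\ref{thm:droms_servatius_servatius}, and remarks that it knows no proof of it that avoids Thomassen's result. The paper does not reprove the statement itself; it cites Thomassen, whose proof also yields the ``moreover'' part. Your Step 2 therefore assumes the main content of what you set out to prove, reversing the paper's logical order.

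To make your route work, you would have to do three things.
\begin{enumerate}
\item Characterize the tree edges of the Tutte decomposition (the totally nested 2-separations) purely in terms of cycles, for instance as matroidal 2-separations crossed by no other one.
\item Show that for these separations, $\varphi(A)$ and $\varphi(B)$ meet in exactly two vertices of $H$. The example above shows this is where the real work lies.
\item Extend $\varphi$ to the virtual edges so that cycles of corresponding torsos match in both directions. Your wedge argument on 3-connected torsos needs exactly this.
\end{enumerate}
In effect this requires an infinite analogue of the Cunningham--Edmonds canonical 2-sum decomposition, together with an argument that $H$ realizes each of its 2-sums as a genuine vertex 2-separation.

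Two smaller points:
\begin{itemize}
\item The decomposition you quote is for locally finite graphs, whereas the statement concerns arbitrary countable graphs.
\item In Step 3, stabilization needs a rule you do not state. For example, fix a root torso and always twist the side not containing it. Then each edge is moved only by twists at the finitely many tree edges between its torso and the root. With the opposite choice, edges of the root torso could be flipped infinitely often.
\end{itemize}
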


\begin{rem}
    The sequence of twists might be infinite, but at every vertex edges can only be switched finitely many times. The ``moreover'' part is not stated in Thomassen's work, but follows from the proof given there.    
\end{rem}

We aim to make use of this result in the measurable setting, applying it componentwise to a graphing. To be able to do so measurably across all components, we will exploit the fact that this sequence of twists is unique in some sense. Formally, we will upgrade Theorem~\ref{thm:thomassen}, and show that cycle preserving edge bijections respect the so-called \emph{Tutte decomposition} of graphs.

Given two (disjoint) graphs $H_1$ and $H_2$ with distinguished oriented edges $e=(v_1,v_2)$ and $f=(u_1,u_2)$, the \emph{amalgam of $H_1$ and $H_2$ along the virtual edges $e \leftrightarrow f$} is obtained by identifying $v_1$ with $u_1$ and $v_2$ with $u_2$ in the disjoint union, and removing $e$ and $f$. Tutte showed that any finite $2$-connected graph $G$ has a unique representation as the amalgam of cycles, $k$-links (i.e.\ $k$ parallel edges between two vertices), and $3$-connected graphs \cite{tutte1966connectivity}. The amalgamated graphs are called the Tutte components of $G$. The amalgamations define a tree on the Tutte components, called the Tutte tree. To have uniqueness, one has to assume that no cycles are adjacent in the Tutte tree, and similarly for $k$-links. 

The extension of Tutte's result to the locally finite case was done by Droms, Servatius, and Servatius. 
\begin{thm}[Droms, Servatius, and Servatius, {\cite[Theorem 1]{droms1995structure}}] \label{thm:droms_servatius_servatius}
    Every locally finite $2$-connected graph admits a unique Tutte decomposition.
\end{thm}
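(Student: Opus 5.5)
The plan is to imitate the canonical proof of Tutte's theorem via \emph{totally nested} $2$-separations, and to use local finiteness only to show that these separations assemble into a genuine tree. Let $G$ be a locally finite $2$-connected graph. A \emph{$2$-separation} is a pair $(A,B)$ of subgraphs with $A\cup B=G$, $E(A)\cap E(B)=\emptyset$, $V(A)\cap V(B)=\{u,v\}$, and both sides containing at least two edges. Two $2$-separations \emph{cross} if each separator meets both strict sides of the other. Call a $2$-separation \emph{totally nested} if no other $2$-separation crosses it, and let $N$ be the set of totally nested $2$-separations. These are pairwise nested by definition. The decomposition will be read off from $N$, and $N$ is canonical because it is defined purely from the graph.

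\textbf{Step 1 (tree structure).} I would first show that $N$ is a nested system with no infinite chain between two fixed edges. Fix edges $e,f$. Every separation in $N$ that separates $e$ from $f$ has its two separator vertices on any pair of internally disjoint paths joining $e$ to $f$; such paths exist by Menger's theorem applied in $2$-connected $G$. There are therefore only finitely many candidate separators, so only finitely many separations in $N$ separate $e$ from $f$. By the standard correspondence between nested separation systems and tree-decompositions, $N$ then yields a tree-decomposition whose adhesion sets are the separators. The \emph{torsos} are the parts with a virtual edge added on each adhesion pair. Local finiteness, together with the fact that each part contains an edge, keeps the relevant objects countable and well defined; the one point that needs checking is that a single vertex can lie in infinitely many adhesion sets, which is harmless.

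\textbf{Step 2 (classification of torsos).} This is where the real work lies, and I expect it to be the main obstacle. I would show that each torso $\tau$ is $2$-connected and falls into one of three cases. Either $\tau$ is $3$-connected, or it is a $k$-link (a bundle of parallel edges), or every $2$-separation of $\tau$ is crossed by another one, in which case $\tau$ is a cycle. The first ingredient is a lifting step: a $2$-separation of $\tau$ lifts to a $2$-separation of $G$, because the virtual edges are realised by paths in the pieces hanging off them. The second is the classical crossing lemma: if two $2$-separations with separators $\{u,v\}$ and $\{x,y\}$ cross, then $u,x,v,y$ are arranged cyclically and the four "quadrants" are joined in a cycle-like fashion. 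From this I would show that a torso with no totally nested separation is either $3$-connected or has all vertices of degree $2$ after suppressing parallel classes, hence is a cycle. The infinite setting adds a delicate point. I must exclude infinite "cycle" torsos (double rays), or treat them separately, and here I would use local finiteness together with the finite-chain property from Step 1. I would try first to show that the crossing structure propagates along a double ray and contradicts the finiteness of the separations in $N$ between two edges.

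\textbf{Step 3 (reduction and uniqueness).} Adjacent cycle torsos can be merged into a single cycle, and adjacent links into a single link. After merging, the nonadjacency condition holds, and this condition is what makes the decomposition canonical. For uniqueness, take any Tutte decomposition satisfying the nonadjacency condition and show that its adhesion separations are exactly the separations in $N$, up to the merging in Step 3. Two checks are needed. First, an adhesion separation cannot be crossed: a crossing separation would have to live inside a single cycle or link torso, and such a torso sits between two non-cycle or non-link neighbours, which rules this out. Second, every totally nested separation must appear as an adhesion set, since otherwise it would induce a $2$-separation of some torso that is $3$-connected, a cycle, or a link, and in each case it would be crossed or violate nonadjacency. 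With both checks in place, the decomposition is determined by $N$ and is therefore unique.
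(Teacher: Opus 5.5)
The paper does not prove this statement. It is quoted from Droms, Servatius and Servatius and used as a black box, so there is no in-paper argument to compare yours with. Your route is the classical Tutte/Cunningham--Edmonds strategy: totally nested $2$-separations, a tree obtained from the finiteness of separations between two edges, classification of torsos, and uniqueness from canonicity. That route is viable. The finiteness argument in Step~1 is also correct, once you add that local finiteness bounds the number of bridges, and hence of separations, at each separator. There are, however, two genuine gaps.

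\textbf{The set $N$ is not nested.} With your notion of crossing, two $2$-separations with the same separator never cross, but they need not be nested. Suppose $\{u,v\}$ has four bridges $\beta_1,\dots,\beta_4$, each with at least two edges (this already happens in $K_{2,4}$).
\begin{itemize}
\item Then $(\beta_1\cup\beta_2,\beta_3\cup\beta_4)$ and $(\beta_1\cup\beta_3,\beta_2\cup\beta_4)$ are both uncrossed, so both lie in $N$.
\item Yet no side of one contains a side of the other.
\end{itemize}
Consequently the correspondence with tree-decompositions in Step~1 does not apply. Your claim in Step~3 that the adhesion separations are exactly $N$ is also false: the link torso at $\{u,v\}$ contributes only the splits ``one bridge versus the rest''. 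The fix is to call a $2$-separation totally nested if it is nested, as a bipartition of $E(G)$, with every other $2$-separation. This discards the $2{+}2$ splits, produces the $k$-links by itself, and makes the merging in Step~3 unnecessary.

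\textbf{Step~2 is not carried out, and its infinite difficulty is misdiagnosed.} First, lifting alone is not enough. You want to know that a torso separation $s$ nested with everything in $\tau$ yields an element of $N$. For that you need the converse direction: a $2$-separation of $G$ not nested with the lift $\tilde s$ is nested with all of $N$, hence with the adhesion separations at $\tau$, so it projects to a $2$-separation of $\tau$ not nested with $s$.

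Second, your proposed remedy for infinite torsos cannot work. The separations doing the crossing are by definition not in $N$, so the finiteness of $N$ between two edges says nothing about them. A double ray is excluded anyway, because it is not $2$-connected. What must be shown is this: a locally finite, $2$-connected torso that is not a link and has no totally nested $2$-separation is a \emph{finite} cycle. You must show it without the minimal-counterexample arguments used for finite graphs. It can be done locally, as follows.
\begin{itemize}
\item In such a torso every separator has exactly two bridges; otherwise a single-bridge split would be nested with everything. So ``not nested'' means ``crossed''.
\item A crossing pair with separators $\{u,v\}$ and $\{x,y\}$ cuts $\tau$ into four corners, attached cyclically at $u,x,v,y$.
\item Let $K$ be a corner between consecutive attachment vertices $a,b$, with at least two edges. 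Then $(K,E(\tau)\setminus K)$ is a $2$-separation, and any separation crossing it has a separator vertex that is a cut vertex of $K$ separating $a$ from $b$.
\item All such cut vertices lie on one finite $a$--$b$ path, so refining $K$ at them terminates. The same argument shows that each resulting piece is a single edge.
\end{itemize}
Hence every corner is a finite path, and $\tau$ is a finite cycle.
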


\begin{rem}
    In \cite{droms1995structure} the authors introduce two more technical assumptions on Tutte trees (apart from no adjacent cycles or $k$-links). They need these to make sure that the amalgam given by a Tutte tree becomes $2$-connected and locally finite, even if there are infinitely many amalgamations. These assumptions are automatically satisfied by the decompositions produced by Theorem~\ref{thm:droms_servatius_servatius}, but will not play a role in our arguments. 
\end{rem}

Observe that one can only perform Whitney twists of a $2$-connected graph along cut vertex pairs, which correspond to either an amalgamation in the Tutte tree, or a cut vertex pair in one of those Tutte components that are cycles. In either case, the Tutte decomposition does not change, only the orientation of the amalgamation, or the order of the edges along the cycle. Consequently, Theorems \ref{thm:thomassen} and \ref{thm:droms_servatius_servatius} have the following corollary.

\begin{lem} \label{lemma:tutte_decomp_preserved}
    Let $G$ and $H$ be $2$-connected locally finite (countable) graphs, and $\varphi:E(G) \to E(H)$ a cycle-preserving bijection. Then $\varphi$ respects the Tutte decompositions of $G$ and $H$. That is, $\varphi$ naturally extends to the virtual edges; maps Tutte components of $G$ to those of $H$, inducing an isomorphism of the Tutte trees; and preserves cycles inside each Tutte component.
\end{lem}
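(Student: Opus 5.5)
We derive Lemma~\ref{lemma:tutte_decomp_preserved} from Theorem~\ref{thm:thomassen} together with the uniqueness in Theorem~\ref{thm:droms_servatius_servatius}. By Theorem~\ref{thm:thomassen}, we write $\varphi=\theta\circ\psi$. Here $\psi:E(G)\to E(H')$ is the edge bijection implemented by a (possibly infinite, but locally stabilizing) sequence of Whitney twists, and $\theta$ is induced by a graph isomorphism $H'\to H$. A graph isomorphism transports Tutte decompositions to Tutte decompositions. By uniqueness, the image of the Tutte decomposition of $H'$ is therefore the Tutte decomposition of $H$, and all the claimed properties hold for $\theta$. It remains to handle $\psi$, that is, to show that a sequence of Whitney twists respects the Tutte decomposition in the stated sense.

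For a single twist, we follow the observation preceding the statement. A $2$-separation $\{u,v\}$ of a $2$-connected graph either corresponds to a virtual edge pair in the Tutte tree, or lies on a cycle component (two non-adjacent vertices of a cycle Tutte component). A separation at a $k$-link is the same as the first case. In the first case, the twist re-glues along the same virtual edge with the opposite orientation. Each Tutte component is then carried isomorphically onto a graph in the new decomposition, and the resulting decomposition still satisfies the non-adjacency conditions on cycles and on $k$-links. In the second case, the twist permutes the edge order of that cycle and leaves all other components unchanged. In both cases the new tree-decomposition is a valid Tutte decomposition of the twisted graph, so by uniqueness it is \emph{the} Tutte decomposition. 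The edge map therefore extends to virtual edges, induces an isomorphism of Tutte trees, and preserves cycles inside each component: it is an isomorphism on $3$-connected components and on links, and it maps a cycle component to a cycle of the same length.

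For finite sequences we compose these single-step statements. The main obstacle is the infinite sequence. Here I would use the remark after Theorem~\ref{thm:thomassen}: at every vertex, edges are switched only finitely many times. Hence for any finite set of Tutte components, together with their adjacent virtual edges, all twists after some finite stage act on it only through parts of the Tutte tree elsewhere, that is, through pieces attached at other virtual edges. Consequently, the correspondence of components, of virtual edges, and of adjacency in the Tutte tree restricted to this finite part stabilizes. The limiting correspondence is then well defined, and it is an isomorphism of Tutte trees that preserves cycles within components, since each of these properties is checked on finite pieces. A cleaner alternative, which avoids limits, is the following. The family of $2$-separations of a $2$-connected graph, and hence the Tutte decomposition, is determined by the cycle structure: $\{A,E\setminus A\}$ is a $2$-separation exactly when it is a $2$-sum decomposition of the cycle matroid. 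Such separations are therefore preserved directly by the cycle-preserving bijection $\varphi$, and the Tutte components are recovered as the classes of the corresponding tree of separations. If the limit argument becomes messy, I would fall back on this matroid formulation, with Theorem~\ref{thm:droms_servatius_servatius} providing uniqueness.
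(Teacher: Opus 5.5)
Your proposal is correct and is essentially the paper's argument. The paper likewise derives the lemma as a corollary of Theorem~\ref{thm:thomassen} and the uniqueness in Theorem~\ref{thm:droms_servatius_servatius}, observing that each Whitney twist occurs at an amalgamation or at a cut pair of a cycle component, so it only reverses amalgamation orientations or reorders the edges of a cycle. Your stabilization argument for infinite sequences of twists makes explicit a step the paper passes over. Your matroid fallback, if made rigorous (matroid $2$-separations need not have connected sides, so they do not coincide exactly with graph $2$-separations), would give the Thomassen-free proof that the paper's subsequent remark says it does not know.
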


\begin{rem}
    One might expect that Lemma~\ref{lemma:tutte_decomp_preserved} is a more fundamental result that one needs to establish during the Theorem~\ref{thm:thomassen}, not obtain it as a corollary.  Surprisingly, however, the situation appears to be reversed: we are not aware of a proof of Lemma~\ref{lemma:tutte_decomp_preserved} that does not essentially go through Theorem~\ref{thm:thomassen}.
    We also note that both for finite and locally finite countable graphs, Whitney's theorem predates Tutte's theorem (1930s vs 1960s for finite, 1980s vs 1990s for infinite).
\end{rem}

\subsection{Banana decomposition of infinite ended graphings}

In the infinite ended case the natural cut-block decomposition, and further the Tutte decomposition of the 2-connected components will not be as useful. As an example, consider a 3-regular treeing $\mathcal{T}$. This is rigid by Theorem~\ref{thm:graph_isom}, so we do not want to decompose it at all, whereas the cut-block decomposition disassembles it into edges.

An example that is only slightly more complicated is an infinite ended treeing $\mathcal{T}'$ with degrees 2 and 3. In each component, the trifurcation vertices form a copy $T_3$, and these vertices are connected by finite paths. As the 3-regular treeing $\mathcal{T}$ is rigid, intuition suggests that $\mathcal{T}'$ is also rigid to some extent: a weak isomorphism should induce an isomorphism on the 3-regular treeing on the trifurcation vertices, but should be able to freely permute the edges along the paths connecting them. This intuition will be confirmed in Section~\ref{sec:general_Whitney_for_graphings}: it follows from  Theorem~\ref{thm:infty_ended_whitney_twists} applied to $\mathcal{T}'$.

Motivated by this example, in general infinite-ended graphings we will consider finite subgraphs that we call \emph{bananas}, which will play the part of the finite paths. 

Given a graph $G$, and $B \subseteq E(G)$ a subset of edges, the \emph{vertex boundary of} $B$ is the set of vertices incident to both $B$ and $E(G)\setminus B$. We denote it by $\partial B$. Similarly, by $V(B)$ we denote the set of vertices incident to $B$. 

\begin{defn}
Given an infinite, connected, locally finite graph $G$, we say a finite connected subset of edges $B \subseteq E(G)$ is a \textbf{banana} if $|\partial B|=2$. 
\end{defn}

Notice that an edge is a banana. Also, we will consider bananas in weakly $2$-connected graphings, where any finite edge set $F \subseteq E(G)$ has $|\partial F| \geq 2$. Finally, observe that removing the edges of a banana creates at most two infinite components in $G$.

We call a banana \textbf{maximal} if no strictly larger banana contains it. On a bi-infinite path, any finite subpath is a banana, and hence we do not have maximal bananas. We now proceed to show that if the components of $\mathcal{G}$ are infinitely-ended and weakly 2-connected, maximal bananas not only exist, but we can decompose our component into maximal bananas with an underlying weakly $3$-connected structure.

\begin{lem}\label{lem:banana_trifurc}
A banana can intersect (the edge sets of) at most two disjoint trifurcations.

\end{lem}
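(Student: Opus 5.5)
We argue by contradiction. Suppose a banana $B$ with $\partial B=\{u,v\}$ meets three pairwise (vertex-)disjoint trifurcations $A_1,A_2,A_3$. The idea is that removing $B$ from $G$ leaves at most two infinite components, one "behind" $u$ and one "behind" $v$ (as observed after the definition). Each $A_j$ must therefore send its infinite sides, at least three of them, through the two gates $u$ and $v$. Disjointness then makes this impossible for three of them.

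\textbf{Step 1 (locating infinite sides).} Fix $j$. Since $A_j$ is connected and meets $B$, the set $A_j\cup B$ is a finite connected edge set. Every infinite side $S$ of $A_j$ is an infinite connected subgraph avoiding $A_j$. Because $B$ is finite, $S$ minus the edges of $B$ still contains an infinite piece, so $S$ contains a vertex of $V(B)$ together with an infinite path leaving $B$. Such a path can only leave $V(B)$ through a boundary vertex. Hence every infinite side of $A_j$ contains $u$ or $v$, or else contains an infinite part attached to $B$ only at $u$ or $v$; since sides are closed under connectivity away from $A_j$, the side itself contains $u$ or $v$.

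\textbf{Step 2 (counting).} The infinite sides of $A_j$ are pairwise disjoint and number at least three. By Step 1 each contains $u$ or $v$, so there are at most two of them. This already contradicts $A_j$ being a trifurcation, unless $u$ or $v$ lies in $V(A_j)$ itself, in which case some side escapes through a gate vertex in $A_j$. So each trifurcation meeting $B$ must contain $u$ or $v$ as a vertex. More precisely, at least one infinite side of $A_j$ must leave $B$ through a boundary vertex lying in $V(A_j)$, rather than through a side. Since $A_1,A_2,A_3$ are disjoint and $|\partial B|=2$, at most two of them can contain a vertex of $\partial B$. This contradiction proves the lemma.

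The delicate point is Step 1 when a boundary vertex belongs to $A_j$. There one must check that the sides of $A_j$ are taken in $[A]_{\cG}\setminus A$, as edge-components whose vertices may include vertices of $A_j$. I would handle this carefully by working with vertex sets: any infinite side must contain an infinite ray that eventually lies outside $V(B)$, and the last vertex of $V(B)$ on that ray is in $\partial B$. If that vertex is not in $V(A_j)$, it belongs to exactly one side, so at most two sides can escape this way. Making this "exactly one side" claim precise is where the main care is needed. The conclusion is then that at least one side of $A_j$ must use a gate vertex in $V(A_j)$, which gives the pigeonhole argument over $\{u,v\}$. If disjointness is only meant as edge-disjointness, I would instead need the weaker pigeonhole bound on which gate each $A_j$ claims, and this may be the main obstacle.
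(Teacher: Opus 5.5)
Your overall plan is the paper's: show that every trifurcation sharing an edge with $B$ contains a vertex of $\partial B=\{u,v\}$, then use vertex-disjointness and $|\partial B|=2$. The gap is in Step 1, which is where that claim has to be proved. Finiteness of $B$ does not give that every infinite side $S$ of $A_j$ contains a vertex of $V(B)$, and this is false in general. If $A_j$ has edges outside $B$, its infinite sides may lie entirely outside $V(B)$ and never pass through $u$ or $v$. The problem shows up in Step 2: Step 1 as stated would rule out \emph{every} trifurcation meeting $B$, and the escape clause ``unless $u$ or $v$ lies in $V(A_j)$'' is not derived from anything. The repair in your last paragraph has the same defect, since taking ``the last vertex of $V(B)$ on the ray'' presupposes the ray starts in $V(B)$. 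By contrast, the point you flag as delicate---that a gate vertex not in $V(A_j)$ lies in exactly one side---is immediate.

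The missing idea is to use the connectivity of $A_j$ \emph{before} looking at its sides, as the paper does. Suppose $u,v\notin V(A_j)$.
\begin{itemize}
\item If $A_j$ had an edge outside $B$, walk in $A_j$ from an edge of $A_j\cap B$ to that edge. The first vertex where a $B$-edge is followed by a non-$B$-edge is incident to both, so it lies in $\partial B\cap V(A_j)$, a contradiction. Hence $A_j\subseteq B$ and $V(A_j)\subseteq V(B)\setminus\partial B$.
\item Interior vertices of $B$ have all their edges in $B$. So every side of $A_j$ contains a vertex of $V(B)$, whether one deletes the edges or the vertices of $A_j$.
\item An infinite side also contains a vertex outside the finite set $V(B)$. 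A path between the two inside the side leaves $V(B)$ through $u$ or $v$.
\item Since $u$ and $v$ each lie in a single side, $A_j$ has at most two infinite sides, a contradiction.
\end{itemize}
Thus each trifurcation meeting $B$ contains $u$ or $v$, and your pigeonhole step concludes.

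Vertex-disjointness is indeed the intended reading: the lemma is applied to the cells of $\cH$, which partition the vertices. It is also necessary. Take three edge-disjoint stars at $u$, each consisting of one edge of $B$ and two edges to distinct infinite trees hanging at $u$. These are trifurcations, and all three meet $B$.
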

\begin{proof}
    Let $B$ and $F$ denote a banana and a trifurcation of $G$. First, notice that $F$ cannot be contained in the interior of $B$, that is, $B\setminus \partial B$. Indeed, removing $F$ would create 3 disjoint infinite components, contradicting $|\partial B|=2$.
    
    Hence, if $B \cap F \neq \emptyset$, then, by the connectivity of $F$, we have $\partial B \cap F \neq \emptyset$. As $|\partial B|=2$, the lemma follows.
\end{proof}

\begin{lem}\label{lem:banana_union}
Suppose $B_1$ and $B_2$ are bananas in $G$ such that $E(B_1)\cap E(B_2) \neq \emptyset$. Then $B_1 \cup B_2$ is also a banana.
\end{lem}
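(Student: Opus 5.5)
The plan is to check the three defining properties of a banana for $B_1\cup B_2$ directly. Finiteness is immediate. Connectedness also follows at once: $B_1$ and $B_2$ are connected edge sets sharing an edge, so their union is connected. The only real content is $|\partial(B_1\cup B_2)|=2$. I would prove it by a submodularity inequality for the vertex boundary of edge sets, combined with the lower bound $|\partial F|\ge 2$ for nonempty finite $F$, which weak $2$-connectivity provides. As the paper indicates, I take $G$ to be a component of a weakly $2$-connected graphing; this is the setting in which bananas are used.

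\emph{Step 1 (submodularity of $|\partial\cdot|$).} For a vertex $v$ with star $S_v$, we have $v\in\partial B$ if and only if $S_v\cap B\neq\emptyset$ and $S_v\not\subseteq B$. So
\[
\mathbbm{1}[v\in\partial B]=\mathbbm{1}[S_v\cap B\neq\emptyset]-\mathbbm{1}[S_v\subseteq B]
\]
(the two terms cannot both fail consistently otherwise, since $S_v\subseteq B$ forces $S_v\cap B\ne\emptyset$ when $S_v\neq\emptyset$). The first term is a coverage function of $B$, hence submodular. The second is supermodular: if $S_v\subseteq B_1$ or $S_v\subseteq B_2$ then $S_v\subseteq B_1\cup B_2$, and $S_v\subseteq B_1\cap B_2$ holds exactly when $S_v$ lies in both. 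Hence the difference is submodular. Summing over the finitely many relevant vertices gives
\[
|\partial(B_1\cup B_2)|+|\partial(B_1\cap B_2)|\le|\partial B_1|+|\partial B_2|=4 .
\]

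\emph{Step 2 (lower bound).} I will show that every nonempty finite edge set $F$ satisfies $|\partial F|\ge 2$. First, $\partial F=\emptyset$ is impossible, since $G$ is connected and infinite, so some vertex of $V(F)$ has an edge outside $F$. Now suppose $\partial F=\{w\}$. Since $F$ contains an edge, $V(F)\setminus\{w\}$ is nonempty, and every edge at these vertices lies in $F$. Deleting $w$ therefore leaves $V(F)\setminus\{w\}$ as a union of finite components, contradicting weak $2$-connectivity.

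\emph{Step 3 (conclusion).} By hypothesis $B_1\cap B_2$ is nonempty and finite, so Step 2 gives $|\partial(B_1\cap B_2)|\ge2$. Step 1 then yields $|\partial(B_1\cup B_2)|\le 2$. Applying Step 2 to the nonempty finite set $B_1\cup B_2$ gives equality, so $B_1\cup B_2$ is a banana.

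The main point needing care is the per-vertex submodularity computation, in particular the edge cases where a star is entirely contained in one of the sets. Everything else is routine.
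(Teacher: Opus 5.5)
Your proof is correct, but it takes a different route from the paper's.

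\textbf{The paper's route.} It does a case analysis on how the two boundary pairs sit relative to each other. Either $\partial B_1\subseteq V(B_2)$, or $\partial B_2\subseteq V(B_1)$, or each of $\partial B_1,\partial B_2$ has exactly one vertex in $V(B_j)\setminus\partial B_j$ for the other banana $B_j$. In each case it names the two vertices of $\partial(B_1\cup B_2)$ explicitly.

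\textbf{Your route.} You use the standard uncrossing argument. Submodularity of $B\mapsto|\partial B|$ gives $|\partial(B_1\cup B_2)|+|\partial(B_1\cap B_2)|\le 4$. The lower bound $|\partial F|\ge 2$ for nonempty finite $F$ is then applied twice: to $B_1\cap B_2$, which need not be connected (your Step 2 rightly does not assume it), and to $B_1\cup B_2$. Together these force $|\partial(B_1\cup B_2)|=2$.

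\textbf{Comparison.} The paper's version gives more concrete information: it identifies which vertices, all from $\partial B_1\cup\partial B_2$, form the new boundary. However, it leaves implicit why the three cases are exhaustive and where weak $2$-connectivity enters. For example, in its first case one only gets $\partial(B_1\cup B_2)\subseteq\partial B_2$, and equality needs the lower bound.

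Your argument needs no case split and isolates the use of weak $2$-connectivity in Step 2. That hypothesis is genuinely necessary. If a triangle on $w,a,b$ hangs off a ray at $w$, then $\{wa,ab\}$ and $\{ab,bw\}$ are bananas sharing an edge, but their union has boundary $\{w\}$. As a by-product your argument also gives $|\partial(B_1\cap B_2)|=2$.

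\textbf{Minor point.} Your parenthetical justifying the per-vertex identity in Step 1 is garbled. The clean reason is that every star is nonempty in a connected infinite graph, so $S_v\subseteq B$ implies $S_v\cap B\neq\emptyset$.
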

\begin{proof}
By the connectivity of bananas and $B_1 \cap B_2 \neq \emptyset$, then at least one of the following holds:
\begin{enumerate}
    \item $\partial B_1 \subseteq V(B_2)$;
    \item $\partial B_2 \subseteq V(B_1)$;
    \item $|\partial B_2 \cap (V(B_1)\setminus\partial B_1)| = |\partial B_1 \cap (V(B_2)\setminus\partial B_2)| =1$. 
\end{enumerate}

In the first case $B=B_1 \cup B_2$ is a banana with $\partial B = \partial B_2$. In the second, $B$ is a banana with $\partial B = \partial B_1$. In the third case $B$ is a banana with $\partial B = \{x,y\}$, where $x$ and $y$ are the boundary vertices of $B_1$ and $B_2$ not contained in the interior of the other, respectively. 
\end{proof}

Consequently, distinct maximal bananas are edge-disjoint. 

\begin{lem}\label{lem:banana_largest}
Suppose $\cG$ is infinitely-ended and weakly 2-connected. Then any banana in $\cG$ is contained in a unique maximal banana.
\end{lem}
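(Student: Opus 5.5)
Uniqueness is immediate from Lemma~\ref{lem:banana_union}: if $B \subseteq M_1$ and $B\subseteq M_2$ with $M_1,M_2$ maximal, then $M_1\cap M_2\supseteq B\neq\emptyset$. So $M_1\cup M_2$ is a banana containing both, and maximality forces $M_1=M_1\cup M_2=M_2$. (A banana is nonempty, being connected with two boundary vertices.) The content of the lemma is therefore existence. The plan is to show that the bananas containing a fixed banana $B$ have uniformly bounded size. Then a banana of maximal size containing $B$ exists and is maximal.

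To bound the size, fix a component $G$ of $\cG$ containing $B$. We may assume $G$ is infinitely-ended and weakly $2$-connected, which holds for a.e.\ component. Since $G$ is infinitely-ended, it contains trifurcations. The key step is to find finitely many pairwise disjoint trifurcations $F_1,F_2,F_3$ in $G$ whose positions force any banana $B'\supseteq B$ to stay inside a fixed finite region.

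\begin{enumerate}
\item Choose a finite connected set $K\supseteq B$ and three pairwise disjoint trifurcations $F_1,F_2,F_3$ lying in three distinct infinite sides of $K$. These are obtained by taking a large ball around $B$ and using that there are infinitely many ends, hence trifurcations far out in different directions.
\item Let $B'\supseteq B$ be a banana. By Lemma~\ref{lem:banana_trifurc}, $B'$ meets at most two of the $F_i$; say it misses $F_3$.
\item Since $B'$ is connected and contains $B$, while $F_3$ together with its infinite sides lies beyond $K$, removing $B'$ leaves infinite components. As $|\partial B'|=2$, at most two such components arise.
\end{enumerate}

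The main obstacle is turning step~3 into an actual size bound. The goal is: if $B'$ is very large then it swallows a whole trifurcation, which the proof of Lemma~\ref{lem:banana_trifurc} forbids, since a trifurcation cannot lie in the interior of a banana.

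I would argue via the two boundary vertices $\partial B'=\{u,v\}$. The graph $G\setminus(B'\setminus\partial B')$ has every component infinite, as weak $2$-connectivity and the fact that a finite component would enlarge the banana both rule out finite ones. So its components are at most two infinite pieces hanging off $u$ and $v$.

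Now suppose $B'$ contains vertices at distance greater than some $R$ from $B$. By connectivity, $B'$ then contains a long path from $B$ leaving the ball $B_R(B)$. I would pick $R$ so that every vertex outside $B_{R/2}(B)$ in a given direction lies near one of the trifurcations $F_i$, or alternatively use the Voronoi cells of Lemma~\ref{lem:cells}. The point is that a long path in $B'$ must pass through some trifurcation cell $F$ with $F\cap\partial B'=\emptyset$. The interior argument of Lemma~\ref{lem:banana_trifurc} then applies: the three infinite sides of $F$ cannot all be attached through only the two vertices $u,v$, unless $u$ or $v$ lies in $F$.

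With only two boundary vertices, at most two cells along the path can be hit by $\partial B'$. A path through at least three disjoint trifurcation cells therefore yields a contradiction. This bounds the distance from $B$ to any vertex of $B'$ by a constant depending only on $B$ and $G$. Local finiteness then bounds $|B'|$, and the maximal banana exists. Measurability is not needed, as the statement is componentwise.
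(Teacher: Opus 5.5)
Your proof is correct and is essentially the paper's: uniqueness follows from Lemma~\ref{lem:banana_union}, and existence from the fact that, by Lemma~\ref{lem:banana_trifurc}, a banana meets at most two of the finite trifurcation cells of Lemma~\ref{lem:cells}. The paper phrases this as ruling out an infinite strictly increasing chain of bananas (whose union would meet infinitely many cells) rather than as your explicit uniform size bound, and your preliminary construction of $K$ and $F_1,F_2,F_3$ plays no role in the final argument and can be dropped.
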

\begin{proof}
    By Lemma~\ref{lem:banana_union} it is enough to show that there is no infinite increasing sequence of bananas.

    Recall the Borel partition $\cH$ of $\cG$ into trifurcation cells from Lemma~\ref{lem:cells}. If $B_1\subset B_2\subset\ldots\subset \cG$ were an infinite increasing sequence of bananas, some $B_n$ for $n$ large enough could not be contained in the union of two cells, contradicting Lemma~\ref{lem:banana_trifurc}.
\end{proof}

\begin{lem}\label{lem:path_in_a_banana}
    For every maximal banana $B$ and edge $e\in B$, there exists a path connecting the vertices of $\partial B$ that passes through the edge $e$.
\end{lem}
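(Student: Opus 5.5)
Write $\partial B=\{a,b\}$ and let $e=(u,v)\in B$. The plan is a Menger-type argument in a finite auxiliary graph, in the style of Lemma~\ref{lem:disj_path}, combined with maximality of $B$.

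First, we may assume the ambient graph $G$ (a component of $\cG$) is infinite, infinitely-ended and weakly $2$-connected, as in Lemma~\ref{lem:banana_largest}. Form the finite graph $B^+$ from $B$ by adding a new vertex $z$ subdividing $e$; that is, replace $e$ by the edges $uz$ and $zv$. Add one more new edge $ab$ (a virtual edge). A path from $a$ to $b$ through $e$ in $B$ is the same as a cycle in $B\cup\{ab\}$ through both $ab$ and $e$. By Menger's theorem, such a cycle exists as soon as $B\cup\{ab\}$ is $2$-connected, since in a $2$-connected graph any two edges lie on a common cycle. So it suffices to show that $B\cup\{ab\}$ has no cut vertex. (If $B$ is the single edge $e=ab$ there is nothing to prove, so assume $|B|\ge 2$.)

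Suppose $w$ is a cut vertex of $B\cup\{ab\}$. Then $(B\cup\{ab\})-w$ splits into at least two components, and since $ab$ is present, $a$ and $b$ (if different from $w$) lie in the same component $K_0$. Let $K$ be another component. Its edges together with the edges joining it to $w$ form a finite edge set $B_K\subseteq B$, and every vertex of $K$ is an interior vertex of $B$, since $K$ avoids $a$ and $b$. Hence in $G$ the only vertex of $V(B_K)$ incident to edges outside $B_K$ is $w$. Now there are two cases.

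\textbf{Case $w\notin\{a,b\}$.} Removing $w$ from $G$ leaves the vertices of $K$ in a finite component, contradicting weak $2$-connectivity.

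\textbf{Case $w=a$.} Here $K$ is a piece of $B$ hanging off $a$ alone, so $\partial B_K=\{a\}$, and this again contradicts weak $2$-connectivity, which forces $|\partial F|\ge 2$ for every finite $F$.

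So $B\cup\{ab\}$ is $2$-connected, and the edges $e$ and $ab$ lie on a common cycle $C$. Then $C\setminus\{ab\}$ is the desired $a$–$b$ path through $e$.

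Maximality of $B$ was not used above, so I would check the degenerate issues. One is whether $B$ might fail to be connected after small modifications; it cannot, since $B$ is connected by definition. The other is multi-edges when $ab\in B$, which is harmless. The main obstacle I expect is making sure the bookkeeping of "boundary" is right. In particular, in the case $w\in\{a,b\}$ I must verify that $K$ really meets no other edges of $G$. This holds because every vertex of $K$ is interior to $B$, so all of its $G$-edges lie in $B$, and those edges go either into $K$ or to $w$. If that bookkeeping somehow failed, I would fall back on maximality: merge the offending piece with a neighbouring banana via Lemma~\ref{lem:banana_union} to obtain a strictly larger banana.
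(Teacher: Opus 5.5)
Your proof is correct and is essentially the paper's argument. The paper subdivides $e$, adds an auxiliary vertex $v_\infty$ adjacent to both points of $\partial B$ (which amounts to your virtual edge $ab$), asserts that this finite graph is $2$-connected, and applies Menger's theorem. Your case analysis spells out the $2$-connectivity step that the paper states in one line: a component $K$ of the graph minus a cut vertex $w$ that avoids $\partial B$ would be a finite component of $G-w$. You are also right that only weak $2$-connectivity is used, not maximality, and the subdivision vertex $z$ in your $B^+$ is never actually needed.
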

\begin{proof}
Let $\tilde H$ be the graph $H$, except the edge $e$ is subdivided into two parts by the vertex $v_e$ and with an external vertex $v_\infty$ added, connected to both vertices of $\partial B$. This finite graph is $2$-connected, as removing any vertex cannot disconnect the graph. By Menger's theorem, there exist two vertex-disjoint paths from $v_e$ to $v_\infty$. This concludes the proof of the lemma.
\end{proof}

Consequently, in infinite-ended, weakly 2-connected graphings, maximal bananas canonically partition the edge set. Given such a graphing $\cG$, we define its \emph{banana decomposition graphing} $\texttt{Ban}(\cG)$ as the graphing whose vertices are the boundary vertices of maximal bananas, and two such vertices are connected exactly when they are the boundary of the same banana. Our earlier example $\cT'$ (the $3$-regular treeing with subdivided edges) is weakly $2$-connected, and its banana decomposition is simply a $3$-regular treeing. See Figure~\ref{fig:banana_decomp} for an illustration.

\begin{figure}[htb]
\begin{center}

\begin{tikzpicture}[
    line join=round,
    line cap=round,
    every path/.style={thick},
    big dot/.style={circle, fill, inner sep=1.2pt}
]

\def\R{1.8}   
\def\s{1.25}  
\def\d{0.24}  

\def\bridgefrac{0.28}
\pgfmathsetmacro{\halfbridgefrac}{\bridgefrac/2}
\pgfmathsetmacro{\leftbridgepos}{0.5-\halfbridgefrac}
\pgfmathsetmacro{\rightbridgepos}{0.5+\halfbridgefrac}

\newcommand{\bridge}[2]{%
    \coordinate (brL) at ($(#1)!\leftbridgepos!(#2)$);
    \coordinate (brR) at ($(#1)!\rightbridgepos!(#2)$);

    \coordinate (brLv) at ($(brL)+(#2)-(#1)$);
    \coordinate (brRv) at ($(brR)+(#2)-(#1)$);

    \coordinate (ba) at ($(brL)!\halfbridgefrac!90:(brLv)$);
    \coordinate (bb) at ($(brL)!\halfbridgefrac!-90:(brLv)$);
    \coordinate (bc) at ($(brR)!\halfbridgefrac!90:(brRv)$);
    \coordinate (bd) at ($(brR)!\halfbridgefrac!-90:(brRv)$);

    \draw (#1) -- (ba);
    \draw (#1) -- (bb);
    \draw (#2) -- (bc);
    \draw (#2) -- (bd);

    \draw (ba) -- (bc) -- (bd) -- (bb) -- cycle;

    \node[big dot] at (ba) {};
    \node[big dot] at (bb) {};
    \node[big dot] at (bc) {};
    \node[big dot] at (bd) {};
}

\newcommand{\bridgeShaded}[2]{%
    \coordinate (brL) at ($(#1)!\leftbridgepos!(#2)$);
    \coordinate (brR) at ($(#1)!\rightbridgepos!(#2)$);

    \coordinate (brLv) at ($(brL)+(#2)-(#1)$);
    \coordinate (brRv) at ($(brR)+(#2)-(#1)$);

    \coordinate (ba) at ($(brL)!\halfbridgefrac!90:(brLv)$);
    \coordinate (bb) at ($(brL)!\halfbridgefrac!-90:(brLv)$);
    \coordinate (bc) at ($(brR)!\halfbridgefrac!90:(brRv)$);
    \coordinate (bd) at ($(brR)!\halfbridgefrac!-90:(brRv)$);

    \fill[yellow!40] (#1) -- (ba) -- (bc) -- (#2) -- (bd) -- (bb) -- cycle;

    \draw (#1) -- (ba);
    \draw (#1) -- (bb);
    \draw (#2) -- (bc);
    \draw (#2) -- (bd);

    \draw (ba) -- (bc) -- (bd) -- (bb) -- cycle;

    \node[big dot] at (ba) {};
    \node[big dot] at (bb) {};
    \node[big dot] at (bc) {};
    \node[big dot] at (bd) {};
}

\begin{scope}

    \coordinate (A1) at ({\R*cos(90)},{\R*sin(90)});
    \coordinate (B1) at ({\R*cos(210)},{\R*sin(210)});
    \coordinate (C1) at ({\R*cos(330)},{\R*sin(330)});

    \bridgeShaded{A1}{B1}
    \bridge{B1}{C1}
    \bridge{C1}{A1}

    \node[big dot] at (A1) {};
    \node[big dot] at (B1) {};
    \node[big dot] at (C1) {};

    \foreach \P/\ang/\L/\Q in {
        A1/90/LA1/RA1,
        B1/210/LB1/RB1,
        C1/330/LC1/RC1
    }{
        \coordinate (\L) at ($(\P)+({\s*cos(\ang-30)},{\s*sin(\ang-30)})$);
        \coordinate (\Q) at ($(\P)+({\s*cos(\ang+30)},{\s*sin(\ang+30)})$);

        \bridge{\P}{\L}
        \bridge{\L}{\Q}
        \bridge{\Q}{\P}

        \node[big dot] at (\L) {};
        \node[big dot] at (\Q) {};

        \foreach \k in {1,2,3} {
            \fill ($(\L)+({\k*\d*cos(\ang-60)},{\k*\d*sin(\ang-60)})$)
                circle (0.7pt);
            \fill ($(\Q)+({\k*\d*cos(\ang+60)},{\k*\d*sin(\ang+60)})$)
                circle (0.7pt);
        }
    }
\end{scope}


\begin{scope}[shift={(8,0)}]
    \coordinate (A) at ({\R*cos(90)},{\R*sin(90)});
    \coordinate (B) at ({\R*cos(210)},{\R*sin(210)});
    \coordinate (C) at ({\R*cos(330)},{\R*sin(330)});

    \draw (A) -- (B) -- (C) -- cycle;

    \node[big dot] at (A) {};
    \node[big dot] at (B) {};
    \node[big dot] at (C) {};

    \foreach \P/\ang in {A/90, B/210, C/330} {
        \coordinate (L\P) at ($(\P)+({\s*cos(\ang-30)},{\s*sin(\ang-30)})$);
        \coordinate (R\P) at ($(\P)+({\s*cos(\ang+30)},{\s*sin(\ang+30)})$);

        \draw (\P) -- (L\P) -- (R\P) -- cycle;

        \node[big dot] at (L\P) {};
        \node[big dot] at (R\P) {};

        \foreach \k in {1,2,3} {
            \fill ($(L\P)+({\k*\d*cos(\ang-60)},{\k*\d*sin(\ang-60)})$)
                circle (0.7pt);
            \fill ($(R\P)+({\k*\d*cos(\ang+60)},{\k*\d*sin(\ang+60)})$)
                circle (0.7pt);
        }
    }
\end{scope}


\draw[dashed,->] (3, 1) -- (5, 1);
\end{tikzpicture}
\end{center}
\caption{An example of infinite-ended graph and its banana decomposition.}\label{fig:banana_decomp}
\end{figure}

\begin{lem} \label{lem:banana_3-connected}
        Suppose $\cG$ is infinitely-ended and weakly $2$-connected. Then $\texttt{Ban}(\cG)$ is weakly 3-connected.
\end{lem}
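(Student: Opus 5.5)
The proof reduces to a single component and then argues by contradiction. Fix a component $G$ of $\cG$; it is infinite-ended, weakly $2$-connected and locally finite, and let $\Gamma$ be the corresponding component of $\texttt{Ban}(G)$. Every maximal banana $B$ is replaced by a single edge between its two boundary vertices. Since maximal bananas partition $E(G)$ (Lemmas~\ref{lem:banana_union} and \ref{lem:banana_largest}), and each is finite and connected, $\Gamma$ is connected and locally finite. By Lemma~\ref{lem:path_in_a_banana} it is a topological minor of $G$ in the following sense: for each banana edge $uv$ we can fix a $u$--$v$ path $P_B$ inside $B$. Moreover, $\Gamma$ is infinite-ended; trifurcations of $G$ meet only finitely many bananas, so their images are trifurcations of $\Gamma$. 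Suppose, for contradiction, that deleting at most two vertices $a,b$ from $\Gamma$ leaves a finite component $K$. The plan is to show that $K$, together with the bananas attached to it, produces a banana in $G$ that strictly contains a maximal banana, or else violates weak $2$-connectivity of $G$.

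The first step handles the case of a single cut vertex $a$. Let $E_K$ be the union of all bananas whose $\Gamma$-edge has an endpoint in $K$. This set is finite and connected, and its vertex boundary in $G$ is contained in $\{a\}$, because interior vertices of a banana are incident only to that banana's edges. If its boundary were empty, $G$ would be finite. Otherwise $a$ is a cut vertex of $G$ separating the finite set $E_K$, which contradicts weak $2$-connectivity.

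The second step treats the two-vertex case, with $\partial E_K\subseteq\{a,b\}$. Weak $2$-connectivity forces $|\partial E_K|=2$, so $E_K$ is a banana, and the aim is to show that it strictly contains some maximal banana. Take any vertex $w\in K$. If $w$ is incident to at least two banana-edges of $\Gamma$, then $E_K$ contains at least two maximal bananas, and so strictly contains a maximal banana, contradicting maximality. If instead every vertex of $K$ has $\Gamma$-degree one, then $K$ is a single vertex $w$ with one banana $B$. In that case $w\in\partial B$ is incident only to edges of $B$, so $w$ is not a boundary vertex of $B$, contradicting $w\in\partial B$. One subtlety must be checked here: a banana-edge with both endpoints in $\{a,b\}$ is not included in $E_K$, and this does no harm, because $K$ is nonempty and connected to $\{a,b\}$ through some banana.

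Finally, the argument has to be carried out on a conull measurable set of components, and since all the constructions are canonical this is routine. The main obstacle I expect is making sure that $\texttt{Ban}(G)$ has no extra degeneracies, namely parallel banana-edges between the same pair $a,b$. Two distinct maximal bananas with the same boundary $\{a,b\}$ would have a union that is finite, connected and has boundary $\{a,b\}$. That union is a banana strictly containing both, so such parallel edges cannot occur, and this keeps the degree counting in the second step valid.
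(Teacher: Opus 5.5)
Your proof is correct and takes the same route as the paper. The paper's entire proof is the one-line observation that the union of the maximal bananas attached to the finite component would be a strictly larger banana, contradicting maximality. You supply the details the paper leaves implicit: the boundary computation $\partial E_K\subseteq\{a,b\}$, the one-vertex case via weak $2$-connectivity, and the absence of parallel edges.

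The closing case analysis can be shortened. Once $E_K$ is known to be a banana containing some maximal banana $B$ attached to $K$, maximality forces $E_K=B$. This is impossible, since $\partial B$ meets $K$ while $\partial E_K\subseteq\{a,b\}$.
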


\begin{proof}
    If one could disconnect a nontrivial finite graph from $\texttt{Ban}(\cG)$ by cutting at two vertices, then the union of the corresponding maximal bananas would be a larger banana in $\cG$, contradicting maximality.
\end{proof}

\subsection{Minors of graphical matroids} \label{subsec:minors}

One can produce minors of finite graphs and matroids by deleting some edges and contracting some others. This operation is a natural and important tool of finite combinatorics. Here we will also use such arguments, but in order to do so, we need to set this up for graphings and their cycle matroids as well. Our main observation is that the theory works nicely as long as we contract only finite components, and in the crucial case of infinitely ended, 2-connected graphings, the $\varphi$-image of the edges contracted in $\cG_1$ can again be contracted in $\cG_2$.

To be precise, given a graphing $\cG=(X, E, \mu)$, and edge sets $D,C \subseteq E$ to be deleted and contracted, respectively, we define the corresponding minor of the cycle matroid on the set $A=E(\cG) \setminus (D \cup C) $ by its rank function:

\[\hat\rho(X)=\rho_{\cG}(X\cup C)-\rho_{\cG}(C), \textrm{ for any $X \subseteq A$ Borel.}\]

For finite graphs this corresponds to the classical notion of deleting and contracting edges, so $\hat \rho$ is the rank function of the cycle matroid of the corresponding minor of the graph. In the measurable setting, however, the graph we get by contracting infinite components of edges of a graphing might not be a graphing anymore. 

\begin{rem}
For an example, consider a free p.m.p.\ action of $\Gamma=\mathbb{Z} \times (\mathbb{Z}/2\mathbb{Z}) = \langle a,b\mid [a,b]=1, b^2=1 \rangle$ such that $b$ defines a perfect matching between two disjoint $\langle a \rangle$-invariant measurable subsets $A,B \subseteq X$ with $\mu(A) = \mu(B)=1/2$. In the corresponding Schreier graphing $\cG$ let $C$ denote the set of $a$-edges with endvertices in $A$. $C$ is a measurable subset of $E(\cG)$, in each component it is one of the two $\mathbb{Z}$-lines in $\Cay(\Gamma, \{a,b\})$. Yet contracting it would not produce a graphing. If we assume further that $a \curvearrowright A$ is ergodic, then the contraction should in any reasonable sense correspond to contracting the whole of $A$ to a single vertex, with mass $1/2$, connected to all vertices in $B$.
\end{rem}

Note that deleting edges is not a problem, $\cG_D=(X,E\setminus D,\mu)$ is always a graphing. If we also assume that in $\cG_D$ the $C$-components are finite, then contracting the edges produces a locally finite graphing $\cG_{D,C} = (\hat X, \hat E, \hat\mu)$. Here $\hat X$ is the Borel space of $C$-connected components of $\cG_D$, two are connected in $\hat E$ if and only if there is an $E\setminus (D\cup C)$-edge between them, and we define $\hat\mu$ by setting, for any $\hat Y \subseteq \hat X$ Borel, $\hat{\mu}(\hat Y) = \mu(Y)$ where $Y \subseteq X$ contains exactly one vertex from each $C$-class. (Such a $Y$ exists because $C$ has finite components, and any choice of $Y$ has the same $\mu$-measure by the p.m.p.\ property of $\cG_D$, making $\hat\mu$ well defined.)

In our proofs, we will always contract some $C$ with finite components. Observe that we can also assume that $C$ is acyclic: we can measurably choose a spanning tree in each $C$-component, and move the rest of the edges to $D$, this does not change the resulting graphing or its cycle matroid. 

In preparation, we first show that finite forests in infinitely ended, weakly $2$-connected graphings cannot glue together to form entire connected components under rank-preserving edge bijections.

\begin{lem} \label{lem:no_full_components}
    Let $\cG_1$ be an infinitely-ended, weakly $2$-connected graphing,   with rank function $\rho_1$, and let $C\subseteq E(\cG_1)$ be acyclic and have finite components. If $\fg:E(\cG_1) \to E(\cG_2)$ is a rank-preserving Borel bijecton, then $\fg(C)$ cannot contain full connected components of $\cG_2$. 
\end{lem}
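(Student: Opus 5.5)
We argue by contradiction. Suppose there is a positive-measure set of $\cG_2$-components that lie entirely inside $\fg(C)$, and write $K\subseteq \fg(C)$ for the union of their edge sets. Set $K_1:=\fg^{-1}(K)\subseteq C$. The idea is to compare how $K$ and $K_1$ behave under the rank function. Since $C$ is acyclic with finite components, $K_1$ is a hyperfinite forest. The map $\fg$ preserves hyperfinite forests (Lemma~\ref{lem:eq_of_rankpres}), so $K$ is a hyperfinite forest. Because $K$ is a union of full $\cG_2$-components, these components are trees. Since $\fg$ preserves cycles, and components of $\cG_2$ that are acyclic can only come from parts of $\cG_1$ containing no cycles through $K_1$-edges, we get the key fact: $K$ is a union of full components of $\cG_2$, so
\[\rho_2(E(\cG_2))=\rho_2(E(\cG_2)\setminus K)+\rho_2(K).\]
Rank is additive over unions of components, so $K$ is a \emph{separator} (direct summand) of the cycle matroid of $\cG_2$. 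This property is expressed purely through $\rho$, so it transfers: $\rho_1(E)=\rho_1(E\setminus K_1)+\rho_1(K_1)$.

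Next we derive a contradiction on the $\cG_1$ side. By Theorem~\ref{thm:union_of_inf_ended_leafless}, $E(\cG_1)$ is a countable union of leafless infinitely-ended forests. So there is a leafless infinitely-ended forest $\cF$ with $\eta(\cF\cap K_1)>0$. By Lemma~\ref{lem:forest_superfluous}, $\cF$ is superfluous, and hence $\fg(\cF)$ is superfluous in $\cG_2$. Now we use the separator decomposition, by submodularity and the direct-sum splitting: $\fg(\cF)\cap K$ must be superfluous inside $K$ alone. The concrete step is to restrict $\rho_2$ to $K\cup \fg(\cF)$ and split along the components. Removing a disposable set $I\subseteq\fg(\cF)$ does not drop rank, and by additivity it does not drop rank on the $K$ part either. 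So $\fg(\cF)\cap K$ is a positive-measure superfluous set inside the graphing $(X_2,K)$, whose components are hyperfinite trees. But in a hyperfinite forest every positive-measure edge set is non-disposable: by Lemma~\ref{lem:hyperfinite_forest_rank}, $\rho_2(K)=\eta(K)$, and deleting $I$ gives rank $\eta(K\setminus I)<\eta(K)$. This is the contradiction.

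The main obstacle is the additivity step: we must show rigorously that superfluousness of $\fg(\cF)$ in $\cG_2$ forces superfluousness of $\fg(\cF)\cap K$ within $K$. If the direct-sum argument is delicate, we would instead work directly in $\cG_1$. For a color class $I$ of $\cF$ that meets $K_1$, disposability gives $\rho_1(\cF\setminus I)=\rho_1(\cF)$. Writing $\cF=(\cF\cap K_1)\cup(\cF\setminus K_1)$ and transporting the separator identity restricted to $\cF$ (valid because $\rho_2$ is additive on $\fg(\cF)\cap K$ and $\fg(\cF)\setminus K$ as they lie in different $\cG_2$-components), we get
\[\rho_1(\cF)=\eta(\cF\cap K_1)+\rho_1(\cF\setminus K_1),\]
and similarly for $\cF\setminus I$. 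This forces $\eta(I\cap K_1)=0$. Applying this to all colors gives $\eta(\cF\cap K_1)=0$, contradicting the choice of $\cF$. Weak $2$-connectivity and infinitely many ends enter exactly through Theorem~\ref{thm:union_of_inf_ended_leafless}.
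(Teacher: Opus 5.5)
Your proof is correct, and it takes a shorter route than the paper's.

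\textbf{The paper's route.} It contracts $C$ and applies Theorem~\ref{thm:union_of_inf_ended_leafless} to the contracted graphing $\cG_1'$. It lifts a forest $F_j$ back to $\cG_1$ and prunes $C\cup F_j$ to a leafless infinitely-ended forest $C_0\cup F_j$ with $\eta(C_0)>0$. It then concludes in one sentence that $\fg(C_0)$, which lies in the hyperfinite full components, has no disposable edges.

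\textbf{Your route.} You apply Theorem~\ref{thm:union_of_inf_ended_leafless} directly to $\cG_1$ and take a covering forest $\cF$ with $\eta(\cF\cap K_1)>0$; nothing in this lemma requires the forest to be built around $C$. You then make the last step explicit:
\begin{itemize}
\item the separator identity $\rho_2(F)=\rho_2(F\cap K)+\rho_2(F\setminus K)$ and its transfer to $\rho_1$ and $K_1$;
\item monotonicity;
\item $\rho_1=\eta$ on the hyperfinite forest $\cF\cap K_1$.
\end{itemize}
Together these force $\eta(I\cap K_1)=0$ for every disposable color class $I$ of $\cF$.

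\textbf{Minor points.} Your fallback paragraph is the cleanest version, since it needs only monotonicity. The first version uses submodularity to pass disposability from $\fg(\cF)$ to $K\cup\fg(\cF)$; you could avoid that by splitting $\fg(\cF)$ itself. The sentence about cycle preservation before the ``key fact'' can be dropped, because additivity follows solely from $K$ being a union of full $\cG_2$-components.

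\textbf{What each route buys.} Yours avoids the contraction entirely. The paper's claim that $\cG_1'$ stays weakly $2$-connected is not automatic. For example, contract the edge $uv$ of a triangle $uvw$ with $\deg w=2$; the merged vertex becomes a cut vertex isolating $w$. The paper's construction, in turn, mirrors the one used in Lemma~\ref{lem:image_of_finite_is_finite}, where forests whose union with $C$ is acyclic are genuinely needed.
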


\begin{proof}
    Assume towards contradiction that $\varphi(C)$ contains full connected components of $\cG_2$ on a positive measure subset. By passing to the appropriate subset of edges, we can assume that $\varphi(C)$ consists entirely of full connected components of $\cG_2$.
    
    This part of $\cG_2$ is hyperfinite, as it is the $\varphi$-image of finite components.

    Notice that contracting $C$ in $\cG_1$ yields an infinitely-ended, weakly $2$-connected graphing $\cG_1'$. Let $A$ denote the positive measure set of vertices of $\cG_1'$ that correspond to non-trivial components of $C$. By Theorem~\ref{thm:union_of_inf_ended_leafless} pick infinitely-ended, leafless forests $(F_i)_{i\in \N}$ whose union is $E(\cG')$. There exists $j\in \N$ such that $F_j$ covers a positive measure of vertices from $A$. By slight abuse of notation, $F_j$ is also a forest in $\cG_1$. The edge set $C \cup F$ is still acyclic and infinitely-ended. It might contain leaves that are $C$-edges, but after pruning, we get an infinitely-ended, leafless forest $C_0\cup F$, where $C_0$ contains at least one edge from all components of $C$ that are in infinite components of $F_j$ in $\cG_1'$. In particular, $C_0$ has positive measure.

    The contradiction arises from the fact that inside $C_0 \cup F$, the subset $C_0$ is superfluous. Whereas inside $\varphi(C_0 \cup F)$, the subset $\varphi(C_0)$ falls inside the hyperfinite part, which has no disposable edges at all. \end{proof}

We now improve the previous lemma, and show that all components of $\varphi(C)$ are finite.

\begin{lem} \label{lem:image_of_finite_is_finite}
    Let $\cG_1$ be an infinitely-ended, weakly $2$-connected graphing,   with rank function $\rho_1$, and let $C\subseteq E(\cG_1)$ be acyclic and have finite components. If $\fg:E(\cG_1) \to E(\cG_2)$ is a rank-preserving Borel bijection, then $\fg(C)$ also has finite components. 
\end{lem}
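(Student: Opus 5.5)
We argue by contradiction, reducing to the setting of Lemma~\ref{lem:no_full_components}. Suppose that on a positive measure set, $\fg(C)$ has infinite components. First, $\fg(C)$ is acyclic by Lemma~\ref{lem:cylce_preserv}. It is also hyperfinite, since $C$ has finite components, hence is hyperfinite, and $\fg$ preserves hyperfiniteness by Lemma~\ref{lem:eq_of_rankpres}. Thus $\fg(C)$ is a hyperfinite forest, and by \cite{Adams:trees_amenability} its infinite components are a.s.\ $1$- or $2$-ended trees. Let $C_\infty\subseteq C$ be the preimage of the union of the infinite components of $\fg(C)$; it is Borel and of positive measure. Replacing $C$ by $C_\infty$ (still acyclic with finite components), we may assume every component of $\fg(C)$ is an infinite $1$- or $2$-ended tree.

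The key step is to exhibit a subset of $C$ that is superfluous in one graphing but has no disposable edges in the other. This mirrors the proof of Lemma~\ref{lem:no_full_components}. Contract $C$ in $\cG_1$ to obtain the graphing $\cG_1'$. It is still infinitely-ended and weakly $2$-connected, since contracting finite connected sets cannot create finite components after one vertex removal. By Theorem~\ref{thm:union_of_inf_ended_leafless}, cover $E(\cG_1')$ by countably many infinitely-ended leafless forests $F_i$, and choose $j$ with $F_j$ meeting a positive measure set of contracted vertices. Lift $F_j$ to $\cG_1$ and prune the leaves of $C\cup F_j$. This yields an infinitely-ended leafless forest $C_0\cup F_j$ with $C_0\subseteq C$ of positive measure. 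By Lemma~\ref{lem:forest_superfluous}, $C_0$ is superfluous in $C_0\cup F_j$, and hence, by Corollary~\ref{cor:phi_superfluous} and rank preservation, $\fg(C_0)$ is superfluous inside $\fg(C_0\cup F_j)$.

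The obstacle is deriving the contradiction on the $\cG_2$ side. Here $\fg(C_0)$ is no longer a union of whole components, unlike in Lemma~\ref{lem:no_full_components}. The forest $\fg(C_0\cup F_j)$ is infinitely-ended and leafless, and $\fg(C_0)$ sits inside the $1$- or $2$-ended trees of $\fg(C)$. I would exploit that a $1$- or $2$-ended tree in an infinitely-ended forest carries "thin" structure.

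First attempt. Remove a sparse disposable color class $I\subseteq C_0$. In $\cG_1$, removing $I$ from $C_0\cup F_j$ creates no finite components. In $\cG_2$, $\fg(I)$ lies in the union of $1$/$2$-ended trees of the hyperfinite forest $\fg(C)$. Rank preservation applies to $\fg(C\cup F_j)$ versus $\fg(C\cup F_j)\setminus \fg(I)$, and also to $\fg(C)$ and $\fg(C\setminus I)$ alone. In $\cG_1$, $\rho(C\setminus I)=\rho(C)-\eta(I)$ since $C$ is a hyperfinite forest, and this is consistent in $\cG_2$. So the real information must come from comparing how $F_j$ glues the pieces.

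Fallback. Choose $I$ so that each component of $C$ contains at most one $I$-edge. Then in $\cG_2$, each $2$-ended or $1$-ended tree of $\fg(C)$ contains infinitely many $\fg(I)$-edges, possibly after refining the choice using ends in $\cG_2$. Cutting them yields finite pieces of $\fg(C)$ whose only connections to infinity are via $\fg(F_j)$. I expect the subtle point is to show that some such finite piece becomes a finite component, or a leaf-like part, of $\fg(C_0\cup F_j)\setminus\fg(I)$, contradicting disposability. To make this work, I would choose $I$ as a measurable selection along the ends of the $1$/$2$-ended trees in $\cG_2$, transported back by $\fg^{-1}$. Then I would argue using the mass transport principle that the $\fg(F_j)$-attachments cannot supply infinite continuations to all pieces.
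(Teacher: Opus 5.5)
Your reductions and the construction of the pruned forest $C_0\cup F_j$ are sound; they repeat the proof of Lemma~\ref{lem:no_full_components}. The fact you extract from them, however, carries no information. By Corollary~\ref{cor:phi_superfluous}, $\varphi(C_0\cup F_j)$ is itself an infinitely-ended leafless forest, and \emph{every} edge subset of such a forest is superfluous in it: intersect with the countably many disposable classes and use monotonicity of the rank.

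In Lemma~\ref{lem:no_full_components} the contradiction came entirely from the extra hypothesis that $\varphi(C)$ is a union of full $\cG_2$-components, which keeps $\varphi(F_j)$ away from $\varphi(C_0)$. Establishing that kind of separation is exactly what the present lemma requires, and your proposal stops at that point. The first attempt yields nothing, as you note. The fallback is not an argument either. If $I$ is disposable in $C_0\cup F_j$, rank preservation already makes $\varphi(I)$ disposable in $\varphi(C_0\cup F_j)$. So exhibiting finite pieces after removing $\varphi(I)$ is the same as already having the contradiction, and your plan gives no mechanism for producing it. The information has to come from how $\varphi(F_j)$ attaches to the trees of $\varphi(C)$. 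Separately, the claim that every infinite $\varphi(C)$-tree contains infinitely many $\varphi(I)$-edges is unjustified when $I$ picks one edge per $C$-component.

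The missing idea is to analyze how $\varphi(F_i)$-components meet $\varphi(C)$-components, playing the rank of two different edge sets against each other. The steps are:
\begin{enumerate}
\item Since $C\cup F_i$ is acyclic, a $\varphi(F_i)$-component meets each $\varphi(C)$-component in at most one vertex.
\item A $\varphi(F_i)$-component cannot meet two distinct $\varphi(C)$-components. Otherwise there is a positive-measure family $P$ of vertex-disjoint paths in $\varphi(F_i)$ joining distinct infinite $\varphi(C)$-trees. Then $P$ is superfluous in $P\cup\varphi(C)$, since removing one edge per path leaves every vertex in an infinite component. On the other side, sparsify so that the preimage paths are vertex-disjoint in the contracted graphing. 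Then $C\cup\varphi^{-1}(P)$ has finite acyclic components, where no positive-measure edge set is disposable.
\item Hence an infinite $\varphi(F_i)$-component touching $\varphi(C)$ does so in exactly one vertex. This is a Borel selection of a single point from infinite components, which the mass transport principle rules out.
\item A finite $\varphi(F_i)$-component touching $\varphi(C)$ would carry non-disposable edges in $\varphi(F_i)\cup\varphi(C)$. This contradicts that $F_i$ is superfluous in $F_i\cup C$.
\end{enumerate}
Thus $\varphi(F_i)$ and $\varphi(C)$ never touch. Since the $F_i$ cover $E(\cG_1)\setminus C$, $\varphi(C)$ is a union of full $\cG_2$-components, contradicting Lemma~\ref{lem:no_full_components}.

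Once non-touching is known for your single $F_j$, your own setup also closes directly. The components of $\varphi(C_0\cup F_j)$ through $\varphi(C_0)$ would be subtrees of the at most $2$-ended trees of $\varphi(C)$, yet they are infinitely-ended.
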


\begin{proof}
    Assume towards contradiction that $\varphi(C)$ has a positive measure set of infinite components. Again, by passing to this subset, we can assume that $\varphi(C)$ only has infinite components.

    As before, contracting $C$ in $\cG_1$ yields an infinitely-ended, weakly $2$-connected graphing $\cG_1'$. By Theorem~\ref{thm:union_of_inf_ended_leafless}, $E(\cG_1')$ is the union of $F_i$ where each $F_i$ is an infinitely-ended, leafless forest. By slight abuse of notation, the $F_i$ are also forests in $\cG_1$ whose union covers $E(\cG_1) \setminus C$. We fix one of the $F_i$, and analyze the interaction of $\varphi(F_i)$ and $\varphi(C)$ in $\cG_2$. 

    First, as $F_i \cup C$ is acyclic, so is $\varphi(F_i) \cup \varphi(C)$. So no component of $\varphi(F_i)$ can touch a component of $\varphi(C)$ at more than one vertex.
    
    Next, we claim that no component of $\varphi(F_i)$ can touch more than one component of $\varphi(C)$. Indeed, if some $\varphi(F_i)$-components did, then we could find a positive measure set $P$ of vertex disjoint paths of length $k$ in $F_i$ connecting (infinite) components of $\varphi(C)$. By passing to a positive measure subset, we can also assume that the $\varphi$-preimages of the paths in $P$ are vertex-disjoint in $\cG_1'$, and hence define only finite connected components. Any edge on a path from $P$ is locally disposable inside $\varphi(C) \cup P$; Moreover, if $A\subseteq P$ is a Borel set that contains at most one edge from every path, $A$ is disposable in $\varphi(C) \cup P$. Therefore, the set $P$ is superfluous in $P \cup \varphi(C)$. But, taking preimages, $\varphi^{-1}(P)$ is superfluous in $\cG'_1$, which is impossible, as all components are finite and acyclic. 
    
    Further, we claim that no infinite $\varphi(F_i)$ component touches a $\varphi(C)$ component. Indeed, if they did, picking the intersection would measurably choose a single vertex from these infinite $\varphi(F_i)$ components, which is impossible to do in graphings by the mass transport principle. 

    Finally, we claim that no finite $\varphi(F_i)$-component touches a $\varphi(C)$-component. Indeed, if some did, they would touch only one. Hence, none of their edges would be disposable in  $\varphi(F_i) \cup \varphi(C)$, which would contradict the fact that $F_i$ is superfluous in $F_i \cup C$.
    
    That is, we have shown that the components of $\varphi(F_i)$ and $\varphi(C)$ do not touch at all. This holds for all $F_i$. As their union covers $E(\cG_1)\setminus C$, the union of the $\varphi(F_i)$ covers $E(\cG_2)\setminus \varphi(C)$. So $\varphi(C)$ consists of full connected components $\cG_2$, contradicting Lemma~\ref{lem:no_full_components}. 
\end{proof}

\subsection{Preserving components, ends, and bananas} \label{subsec:preserving_comps_ends_bananas}

We aim to establish that under the right connectivity assumptions, rank-preserving bijections preserve the connected components of graphings. This will allow us to treat the 1-, 2-, and $\infty$-ended cases separately when proving Theorem~\ref{thm:Whitney_2-isom_intro}.

\begin{thm}\label{thm:connected_preserving}
    Let $\cG_1$ and $\cG_2$ be graphings, both having the property that a.e.\ 0-, 1- and 2-ended component is strongly $2$-connected, while a.e.\ $\infty$-ended component is weakly $2$-connected. 
    Assume $\fg: E(\cG_1) \to E(\cG_2)$ is a matroid rank-preserving Borel bijection. Then both $\varphi$ and $\varphi^{-1}$ preserve the connectedness relation on the edges.
\end{thm}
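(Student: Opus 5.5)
Since $\fg^{-1}$ is also a rank-preserving Borel bijection, and the hypotheses on $\cG_1$ and $\cG_2$ are symmetric, I only need to show one direction: for a.e.\ pair of edges $e,f$ in the same $\cG_1$-component, $\fg(e)$ and $\fg(f)$ lie in the same $\cG_2$-component. I will argue by contradiction. Suppose the set of "split" pairs has positive measure. Connectedness is generated by adjacency, so a positive measure set of \emph{wedges} $(e,f)$ of $\cG_1$ then has $\fg(e)$ and $\fg(f)$ in different $\cG_2$-components. By a countable Borel colouring, as in the proof of Lemma~\ref{lem:cylce_preserv}, I can pass to a positive measure family $I$ of such wedges that is as sparse as needed, both in $\cG_1$ and after applying $\fg$ in $\cG_2$. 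I then split $I$ according to whether the wedge lies in a $0$-, $1$-, $2$-ended or an $\infty$-ended $\cG_1$-component.

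\emph{Finitely-ended components.} Here the component is strongly $2$-connected. By the (locally finite) Menger theorem, the two edges of a wedge $(e,f)$ lie on a common finite cycle. I measurably choose a shortest such cycle $\sigma(e,f)$. By Lemma~\ref{lem:cylce_preserv}, after a further colouring refinement making the chosen cycles disjoint and of fixed length, $\fg(\sigma(e,f))$ is a cycle for a.e.\ wedge. A cycle is connected, so $\fg(e)$ and $\fg(f)$ lie in one component, which is a contradiction. This case uses only cycle preservation.

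\emph{$\infty$-ended components.} Cycles are not available here, and I will replace them by leafless infinitely-ended forests, as in Claim~\ref{lem:w_to_w_forest}. First I prove a wedge version of Theorem~\ref{thm:union_of_inf_ended_leafless}: after refining the colouring, for each colour class there is a leafless infinitely-ended forest $\cF$ that contains both edges of every wedge in $I$. The proof copies Lemmas~\ref{lem:disj_path} and~\ref{lem:trees_on_edges}, with the wedge playing the role of the edge $e$. The Menger step goes through because a wedge is a finite connected set and the component is weakly $2$-connected. The FMSF-and-pruning step then proceeds exactly as before. By Corollary~\ref{cor:phi_superfluous}, $\fg(\cF)$ is again a leafless infinitely-ended forest.

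\emph{Main obstacle.} The remaining task, which I expect to be the hard step, is to show that $\fg$ cannot break a single infinitely-ended tree component $T$ of $\cF$ into pieces lying in different $\cG_2$-components. I would use the rank function as follows. If $\fg$ sends the two sides $T_1$, $T_2$ of $T$ at the wedge centre $v$ into vertex-disjoint parts of $\cG_2$, then
\[\rho_2(\fg(T_1\cup T_2))=\rho_2(\fg(T_1))+\rho_2(\fg(T_2)),\]
so the same additivity holds for $\rho_1$ on $T_1$ and $T_2$. Wedges are sparse, and the halves remain infinite, so I can compute both sides of this identity with equation~\eqref{eqn:graphing_rank}. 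The shared centre vertices make $\rho_1(T_1)+\rho_1(T_2)$ exceed $\rho_1(T_1\cup T_2)$ by the $\mu$-measure of the set $\widehat I$ of centres, which is positive, a contradiction. Making this precise needs a careful measurable choice of which half of $T$ at $v$ goes where; I would fix it by the left/right labelling of wedges used in Claim~\ref{lem:w_to_w_forest}. I also need to check that the halves are infinite components, so that finite-component corrections in the rank formula vanish. Together with the finitely-ended case, this gives the theorem.
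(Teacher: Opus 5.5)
Your symmetry reduction and your treatment of the $0$-, $1$- and $2$-ended components are fine; this part is exactly what the paper does. The $\infty$-ended part, however, has two genuine gaps.

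\textbf{First gap: the wedge covering lemma.} Your justification of the ``wedge version'' of Theorem~\ref{thm:union_of_inf_ended_leafless} would apply to every wedge, and for general wedges the statement is false. The Menger step breaks down because the two disjoint paths Menger gives from the endpoints $x,y$ of a wedge with centre $a$ may run through $a$, so $\pi_1\cup W\cup\pi_2$ need not be a tree. For a concrete example, replace every edge $ab$ of a $3$-regular treeing by a $4$-cycle $a\,x\,b\,y$. The result is weakly $2$-connected and infinitely-ended, yet no leafless forest contains both $ax$ and $ay$: from $x$ and from $y$ the only continuation is through $b$, which closes a cycle (compare Example~\ref{ex:wedge_cycle}).

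The missing idea is to use cycle preservation first. By Lemma~\ref{lem:cylce_preserv}, a wedge whose two edges lie on a common cycle cannot be split. So you may assume the centre $a$ separates $x$ from $y$. Weak $2$-connectivity then gives rays from $x$ and $y$ in two different infinite components of $G-a$, and the tree construction goes through.

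\textbf{Second gap: the rank-additivity step is circular.} Your contradiction hypothesis only says that $\varphi(e)$ and $\varphi(f)$ lie in different components. It gives no reason why the whole $e$-side $T_1$ and $f$-side $T_2$ should map to vertex-disjoint sets. Knowing that $\varphi(T_1)$ stays with $\varphi(e)$ is component preservation for $T_1$, which is the statement you are proving.

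There is also a measurability problem. Each tree of $\cF$ contains infinitely many wedge centres, so ``the two sides at $v$'' do not assemble into two global Borel edge sets. The natural global candidates are preimages of unions of $\varphi(\cF)$-components, or a parity colouring of the pieces obtained by cutting at all centres. Both are invariant under an equivalence relation that may be ergodic, so in general they cannot separate $e$ from $f$ measurably. Left/right labels do not fix this.

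A local disposability test avoids the problem:
\begin{itemize}
\item Arrange $\deg_{\cF}(v)=2$ at every centre. This is available, since all other edges at $v$ are deleted before the FMSF.
\item Then removing $L(I)\cup R(I)$ from $\cF$ isolates the centres and lowers $\rho_1$ by at least $\mu(\widehat I)$.
\item Now suppose the images are split, and refine $I$ so that image edges of distinct wedges in a common $\varphi(\cF)$-component are separated by two trifurcations, as in the proof of Lemma~\ref{lem:forest_superfluous}. Then $\varphi(\cF)\setminus\varphi(L(I)\cup R(I))$ has no finite components, so $\rho_2$ does not drop. This is the desired contradiction.
\end{itemize}

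For comparison, the paper avoids all of this differently. It contracts maximal bananas (Lemma~\ref{lem:image_of_finite_is_finite} makes the contraction legitimate in $\cG_2$). It then applies Theorem~\ref{thm:graph_isom} to the weakly $3$-connected $\texttt{Ban}(\cG_1)$, and uses Lemma~\ref{lem:union_of_pairs} to cover all pairs of edges.
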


While this seems like a warm-up result towards our eventual goal, we can only establish it using quite a bit of technicality. In the proof, we will choose an arbitrary edge from each maximal banana, and show that their $\varphi$-images are in the same component. From here, the following technical lemma will be useful in concluding that the connectivity relation on the edges, which we denote $\cE_{E(\cG)}$, is preserved.

\begin{lem}\label{lem:union_of_pairs}
    Let $\cE'\subset \cE_{E(\cG)}$ be a Borel equivalence relation with finite classes. Then, there exist Borel transversal sets to $\cE'$, denoted $A_1,A_2,\dots$, such that if 
    $e_1$ and $e_2$ are in the same $\cE_{E(\cG)}$-class but not the same $\cE'$-class, then there exists an $n$ such that $e_1,e_2\in A_n$.
\end{lem}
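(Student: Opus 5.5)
We want countably many Borel transversals $A_n$ of $\cE'$ (that is, each $A_n$ meets every $\cE'$-class in at most one point) such that any two $\cE_{E(\cG)}$-equivalent edges lying in different $\cE'$-classes both belong to some common $A_n$. The plan is to reduce the problem to countable colorings of locally finite Borel graphs, in the same way the paper repeatedly refines colorings.

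First, we enumerate the ways two edges of a component can be related. By the Feldman--Moore theorem, applied to the countable Borel equivalence relation $\cE_{E(\cG)}$, which has countable classes since $\cG$ is locally finite, there are Borel involutions $(\gamma_k)_{k\in\N}$ of $E(\cG)$ whose graphs cover $\cE_{E(\cG)}$. Concretely, one can take $\gamma_k$ to encode "follow the $k$-th path in some Borel enumeration of finite paths", which is Borel by local finiteness.

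Second, we fix $k$ and consider the set $P_k$ of pairs $\{e,\gamma_k(e)\}$ with $e$ and $\gamma_k(e)$ in different $\cE'$-classes. Define a graph $\Gamma_k$ on $P_k$ by joining two distinct pairs $\{e_1,e_2\}$ and $\{f_1,f_2\}$ whenever some $e_i$ is $\cE'$-equivalent to some $f_j$. This is a Borel graph. It is locally finite, because $\cE'$-classes are finite and each edge lies in at most one pair of $P_k$, as $\gamma_k$ is an involution. Hence, by Kechris--Solecki--Todorcevic, $\Gamma_k$ admits a Borel proper coloring $d_k:P_k\to\N$.

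Third, for each $(k,m)$ let $A_{k,m}$ be the union of all pairs in $P_k$ of color $m$. Within one pair the two edges are $\cE'$-inequivalent by definition of $P_k$. Two distinct pairs of the same color are non-adjacent in $\Gamma_k$, so no edge of one is $\cE'$-equivalent to an edge of the other. Therefore $A_{k,m}$ meets each $\cE'$-class in at most one edge, so it is a Borel partial transversal. If full transversals are required, extend $A_{k,m}$ by adding a Borel selector of those $\cE'$-classes it misses; this is Borel since the classes are finite, so a Borel order picks the least element.

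Finally, given $e_1\,\cE_{E(\cG)}\,e_2$ in distinct $\cE'$-classes, choose $k$ with $\gamma_k(e_1)=e_2$. Then $\{e_1,e_2\}\in P_k$, and with $m=d_k(\{e_1,e_2\})$ both edges lie in $A_{k,m}$. Re-indexing the countable family $(A_{k,m})$ as $A_1,A_2,\dots$ gives the lemma.

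The only genuinely delicate point is checking that $\Gamma_k$ is locally finite and Borel. Both follow from finiteness of $\cE'$-classes together with each edge belonging to at most one pair of $P_k$, so I do not expect a real obstacle.
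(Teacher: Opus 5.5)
Your proof is correct and follows essentially the paper's argument: build a locally finite Borel conflict graph on pairs from $\cE_{E(\cG)}\setminus\cE'$ (two pairs adjacent when they meet a common $\cE'$-class), take a countable Borel proper coloring, use the color classes as partial transversals, and extend them to full transversals. The only difference is how you make the pair-graph locally finite. The paper restricts to pairs at distance at most $n$ in the quotient graph $K_{\cE'}$, which uses local finiteness of $\cG$. You instead index pairs by Feldman--Moore involutions, so each edge lies in at most one pair; this works for any countable Borel equivalence relation. Note that your ``follow the $k$-th path'' description would not by itself give involutions, but the Feldman--Moore theorem does, and involutivity is exactly what your local finiteness argument uses.
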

\begin{proof}
    Instead of showing the existence of transversal sets with the given property, we show the existence of partial transversals. As the classes of $\cE'$ are finite, we can extend these sets into transversals.
    
    First, let $K_{\cE'}$ be the graph on the classes of $\cE'$ such that two classes are adjacent if they are adjacent in $\cG$. This graph is clearly locally finite, as $\cG$ is locally finite and the classes of $\cE'$ are all finite. Let us denote $d_{\cE'}$ the graph distance in $K_{\cE'}$. This function extends to a quasi-distance on $E(\cG)$. 
    
    Now define a sequence of graphs $K_n$ the following way. The set of vertices are those pairs in $\cE_{E(\cG)}\setminus \cE'$, where the $d_{\cE'}$-distance of the two edges in the pair is at most $n$. Two vertices $(e_1,e_2)$ and $(f_1,f_2)$ are connected if there exists $i,j\in\{1,2\}$ such that $(e_i,f_j)\in\cE'$.
    As $K_{\cE'}$ is locally finite and $\cE'$ has finite classes, $K_n$ is locally finite for every $n \in \N$. This means that $K_n$ has a Borel proper coloring $c_n:V(K_n)\to \N$. Each color class consists of distinct pairs, as if two pairs have a common edge, they have distance $0$. We define the sets $A_n^k=\{e\in E(\cG):\ \exists e'\in E(\cG) \text{ such that } c_n(\{e,e'\})=k \}$. As $c_n$ is a proper coloring, each $A_n^k$ is a partial transversal. All pairs $(e,e')\in\cE_{E(\cG)}\setminus\cE'$ are a vertex in $K_n$ for large enough $n$, therefore there is some $n$ and $k$ where $(e,e')\in A_{n}^k$.
\end{proof}

\begin{proof}[Proof of Theorem~\ref{thm:connected_preserving}] A graph is 2-connected if and only if any pair of edges admits a cycle containing them. Hence $\varphi$ preserving cycles implies that (outside a nullset) for edges $e$ and $e'$ from the same  0-, 1-, or 2-ended $\cG_1$-components their $\varphi$-image is in the same $\cG_2$-component.

We can therefore reduce to the case when a.e.\ component of $\cG_1$ is $\infty$-ended. We decompose the edge set into maximal bananas. We measurably pick a distinguished edge $e_B$ from each banana $B$, and a path $P_B \subseteq B$ containing $e_B$ connecting the two vertices in $\partial B$ inside $B$. This can be done, as the bananas are finite (therefore smooth), and by Lemma \ref{lem:path_in_a_banana} there exist such path. We obtain a minor of $\cG_1$ by deleting all edges outside $P_B$ in all bananas, and contracting all edges of $P_B$ except $e_B$. Denote the set of edges that are deleted and contracted by $D$ and $C$, respectively, and notice that $C$ is acyclic and has finite components. 

The resulting graphing $\cG_1'=(\cG_1)_{D,C}$ is isomorphic to $\texttt{Ban}(\cG_1)$, and hence weakly 3-connected by Lemma~\ref{lem:banana_3-connected}. By Lemma~\ref{lem:image_of_finite_is_finite} $\varphi(C)$ also has finite components, so we can form the graphing $\cG_2'=(\cG_2)_{\varphi(D),\varphi(C)}$ by deleting $\varphi(D)$ and contracting $\varphi(C)$ in $\cG_2$. Moreover, the map $\varphi$ induces a rank-preserving Borel bijection $\varphi':E(\cG_1') \to E(\cG_2')$. (Recall that taking minors can be formulated using the rank function, see Subsection~\ref{subsec:minors}.) Therefore by Theorem~\ref{thm:graph_isom} $\varphi'$ is induced by an isomorphism of graphings. Consequently, after removing a nullset of components, we have that for any $e_{B_1}$ and $e_{B_2}$ chosen edges in bananas $B_1$ and $B_2$ from the same component of $\cG_1$, then $\varphi(e_{B_1})$ and $\varphi(e_{B_1})$ are edges in the same component of $\cG_2$. 

By Lemma \ref{lem:union_of_pairs}, we can find transversal sets for $\cE_B$ (the equivalence relation of being in the same banana), $A_1,A_2,\dots$. We use the previous argument for every $n\in\N$, choosing the distinguished edges $e_B$ according to $A_n$. This shows that, after removing a nullset of components, any two edges $e_1,e_2$ that are in the same component, but not the same banana, $\varphi(e_1)$ and $\varphi(e_2)$ are also in the same component. This in turn implies that, after removing a nullset of components, any two edges that are in the same $\cG_1$-component are mapped to edges in the same $\cG_2$-component. 
\end{proof}

\begin{cor} \label{cor:preserve_no_ends}
    Let $\cG_1$ and $\cG_2$ and $\fg$ be as in Theorem~\ref{thm:connected_preserving}. Then $\fg$ preserves the number of ends of components.
\end{cor}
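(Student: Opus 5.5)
By Theorem~\ref{thm:connected_preserving}, $\fg$ maps a.e.\ $\cG_1$-component onto a single $\cG_2$-component. The same theorem applied to $\fg^{-1}$ shows that this correspondence between components is a bijection, so we may work one pair of matched components at a time. Since the number of ends is a Borel function of the component, we can partition $\cG_1$ into the Borel subgraphings $\cG_1^{(k)}$ consisting of the $k$-ended components, $k\in\{0,1,2,\infty\}$. Their images $\fg(E(\cG_1^{(k)}))$ are unions of full $\cG_2$-components. Restricting $\fg$ to each of them is still rank preserving, because the rank function is additive over unions of full components. It therefore suffices to show that the number of ends of the image components is again $k$, and we do this by matroid-theoretic distinctions that are invariant under rank-preserving bijections.

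\textbf{Finite vs.\ infinite components ($k=0$).} A component is finite exactly when its edge set is finite. I would detect this with hyperfiniteness together with cycles. Finite components form a union of bounded components, so the relevant subgraphing is hyperfinite, and its image is hyperfinite as well by Lemma~\ref{lem:eq_of_rankpres}. Hyperfiniteness alone does not separate finite from $1$- or $2$-ended components, so I would instead compare total rank with a vertex count. A finite connected component $K$ contributes $\rho = \mu\text{-mass}\cdot(1-1/|V(K)|)$, whereas an infinite component contributes its full vertex mass. A cleaner invariant: in a finite $2$-connected component, removing any one edge from a spanning tree decreases the rank. More concretely, the lengths of cycles and the finite edge counts can be read off component by component. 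Since $\fg$ matches components bijectively, $\fg$ maps each finite component of $\cG_1$ to a component of $\cG_2$ with the same finite number of edges, hence to a finite component. Symmetrically via $\fg^{-1}$, infinite components go to infinite components.

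\textbf{Separating $\infty$-ended from $1$/$2$-ended.} The union of the $1$- and $2$-ended components of a graphing is hyperfinite, and a.e.\ infinite-ended component carries no hyperfinite spanning structure. I would make this precise with the invariant ``the edge set of the component is covered by countably many leafless infinitely-ended forests''. By Theorem~\ref{thm:union_of_inf_ended_leafless} this holds for $\infty$-ended weakly $2$-connected components, and by Corollary~\ref{cor:phi_superfluous} leafless infinitely-ended forests are mapped to leafless infinitely-ended forests. Hence every $\infty$-ended component of $\cG_1$ maps to a component containing such a forest, which must be $\infty$-ended, since a component containing an infinitely-ended subtree has infinitely many ends. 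For the converse, the union of $1$- and $2$-ended components is hyperfinite, hence so is its image, and by Adams' result a hyperfinite graphing cannot contain (a positive measure of) infinitely-ended components. This is the step I expect to be the main obstacle, and I would check carefully that hyperfiniteness of the graphing does exclude infinitely-ended components. I would argue that a hyperfinite spanning forest of such a component would have to be at most $2$-ended while spanning an $\infty$-ended component; alternatively, I would apply the previous forward argument to $\fg^{-1}$, which handles it symmetrically.

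\textbf{Separating $1$-ended from $2$-ended.} Both classes are hyperfinite, so here I would use strong $2$-connectivity and the cycle structure. I would work within the $1$- and $2$-ended parts and apply Lemma~\ref{lemma:tutte_decomp_preserved} componentwise. The matched components are $2$-connected and $\fg$ preserves cycles, so $\fg$ induces an isomorphism of Tutte trees that preserves each Tutte component's cycle matroid. Whitney twists along $2$-separations do not change the number of ends, since each twist changes only finitely many adjacencies at each vertex and the ends can be read from the Tutte tree together with the ends of its (finite or $1$-ended $3$-connected) pieces. Alternatively, Theorem~\ref{thm:thomassen} shows directly that $H$ is isomorphic to a graph obtained from $G$ by Whitney twists that stabilize locally, and such twists preserve the end space. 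Either way, the number of ends of matched components agrees, which proves the corollary.
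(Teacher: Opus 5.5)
There is a genuine gap in your second step, the separation of $\infty$-ended from $1$/$2$-ended components.

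\textbf{The inference that fails.} You claim that a component containing an infinitely-ended subtree has infinitely many ends. This is false, and it stays false measurably. Take the Schreier graphing of a free p.m.p.\ action of $F_2\times\Z$ with the standard generators $a,b,t$. Every component is a copy of $\Cay(F_2\times\Z,\{a,b,t\})$, which is $1$-ended and strongly $2$-connected, so it is allowed by the hypotheses of Theorem~\ref{thm:connected_preserving}. Yet the $a,b$-edges form a Borel leafless subforest whose components are $4$-regular trees. So containing, or being covered by, leafless infinitely-ended forests does not certify infinitely many ends. Corollary~\ref{cor:phi_superfluous} therefore cannot tell an $\infty$-ended image component from a $1$-ended one.

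\textbf{The converse route also fails.} The union of the $1$-ended components need not be hyperfinite: the same example is $1$-ended but non-amenable. The point you flagged as the obstacle, that hyperfiniteness excludes infinitely-ended components, is not where the argument breaks. Your fallback of applying the forward argument to $\fg^{-1}$ inherits the first flaw. In your third step, ``both classes are hyperfinite'' is false for the same reason, although you do not use it there.

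\textbf{The fix.} Make your third step unconditional; this is exactly the paper's proof.
\begin{enumerate}
\item Since $\fg$ and $\fg^{-1}$ both preserve connectedness, it suffices to show that the image $K'$ of every $0$-, $1$-, or $2$-ended component $K$ of $\cG_1$ has the same number of ends as $K$. Applying this to $\fg^{-1}$ then forces $\infty$-ended components onto $\infty$-ended ones.
\item $K$ is strongly $2$-connected by hypothesis. $K'$ is $2$-connected without knowing anything about its ends: any two edges of $K'$ are images of two edges of $K$ lying on a common cycle, and $\fg$ maps that cycle to a cycle through both. Do not derive this from the hypothesis on the $\le 2$-ended components of $\cG_2$; that would be circular before the ends of $K'$ are known.
\item Hence Lemma~\ref{lemma:tutte_decomp_preserved} applies to $\fg|_{E(K)}$ and the Tutte decompositions correspond. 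The number of ends is determined by the Tutte decomposition, so it agrees; in particular $K'$ is not $\infty$-ended.
\end{enumerate}
Your first step (edge counting) is correct but becomes redundant. The leafless-forest machinery is not needed for this corollary at all. Prefer the Tutte-decomposition route over the Thomassen alternative, where preservation of the number of ends under infinitely many twists would need its own argument.
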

\begin{proof}
    As $\fg$ and $\fg^{-1}$ both preserve connectedness, it is enough to show that the images of 0-, 1-, and 2-ended components have the same number of ends. These components are $2$-connected, so by Lemma~\ref{lemma:tutte_decomp_preserved} a cycle-preserving bijection also preserves the Tutte decomposition, and the number of ends of a graph can be determined from the Tutte decomposition.
\end{proof}

We also establish the lemmas we will need to treat the infinite ended case of Whitney's 2-isomorphism theorem. We do this here because they follow from the same argument and do not rely on the somewhat cumbersome setup of measurable Whitney operations. We start by showing that maximal bananas are preserved under $\varphi$. 

\begin{cor}\label{cor:bananas_preserved}
    Let $\cG_1$ and $\cG_2$ be infinitely-ended, weakly 2-connected graphings that admit a rank-preserving Borel bijection. Then, outside a nullset of components, any pair of edges from different maximal bananas of $\cG_1$ are mapped to edges in different maximal bananas of $\cG_2$.
\end{cor}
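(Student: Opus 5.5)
We reuse the minor construction from the proof of Theorem~\ref{thm:connected_preserving}, but read off more information from the conclusion of Theorem~\ref{thm:graph_isom}. For each maximal banana $B$ of $\cG_1$, measurably pick a distinguished edge $e_B$ and a path $P_B\subseteq B$ through $e_B$ joining the two vertices of $\partial B$; such a path exists by Lemma~\ref{lem:path_in_a_banana}. Let $D$ be the union of all edges of $B\setminus P_B$, and let $C$ be the union of all $P_B\setminus\{e_B\}$. Then $C$ is acyclic with finite components, and $\cG_1'=(\cG_1)_{D,C}\cong\texttt{Ban}(\cG_1)$.

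By Lemma~\ref{lem:banana_3-connected}, $\cG_1'$ is weakly $3$-connected. It is infinitely-ended, since contracting finite bananas does not change the ends. By Lemma~\ref{lem:image_of_finite_is_finite}, $\varphi(C)$ has finite components, so we may form $\cG_2'=(\cG_2)_{\varphi(D),\varphi(C)}$. The induced map $\varphi'$ is rank-preserving, so Theorem~\ref{thm:graph_isom} shows that $\varphi'$ is induced by an isomorphism of graphings.

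The key point is then to show that in $\cG_2$, distinct distinguished edges $e_{B_1}$ and $e_{B_2}$ are mapped into distinct maximal bananas. Suppose instead that, on a positive measure set, $\varphi(e_{B_1})$ and $\varphi(e_{B_2})$ lie in a common maximal banana $B'$ of $\cG_2$. Then $\partial B'$ consists of two vertices, and contracting $\varphi(C)$ and deleting $\varphi(D)$ maps $B'$ to a finite connected edge set in $\cG_2'$. The claim is that this image has vertex boundary of size at most $2$ and contains at least two edges, so it is a nontrivial banana in $\cG_2'$. Transporting it through the isomorphism $\varphi'^{-1}$ gives a finite set of at least two edges in $\texttt{Ban}(\cG_1)$ separated by at most two vertices. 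That is either a $2$-vertex cut isolating a finite part, contradicting weak $3$-connectivity, or a finite subgraph spanned by two vertices.

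The step I expect to be the main obstacle is exactly this one. One must check that minors do not increase the boundary, since contraction and deletion can only merge or remove boundary vertices. One must also check that the finite image is genuinely nontrivial: two parallel edges between the same pair of vertices would be a $2$-link. In $\texttt{Ban}(\cG_1)$, two maximal bananas with the same boundary pair would together form a larger banana, contradicting maximality. So parallel edges do not occur there, and a contradiction follows in every case.

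Finally, we pass from distinguished edges to arbitrary pairs of edges. Apply Lemma~\ref{lem:union_of_pairs} to $\cE'=\cE_B$, the equivalence relation of lying in the same banana. This yields transversals $A_n$ such that every pair of edges in different maximal bananas of one component lies in a common $A_n$. Running the argument above with the distinguished edges chosen from $A_n$, for each $n$, and discarding countably many nullsets, shows that any two edges from different maximal bananas map into different maximal bananas. Pairs in different components are handled by Theorem~\ref{thm:connected_preserving}.
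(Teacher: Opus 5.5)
Your proposal is correct and follows essentially the same route as the paper. It uses the same minor reducing $\cG_1$ to $\texttt{Ban}(\cG_1)$, and Theorem~\ref{thm:graph_isom} to get $\cG_2'\cong\cG_1'$. It then uses that the image of a common maximal banana of $\cG_2$ is cut off by at most two vertices, so weak $3$-connectivity forces $\varphi(e_{B_1})$ and $\varphi(e_{B_2})$ to be loops or parallel edges, which maximality rules out. It finishes with the transversals of Lemma~\ref{lem:union_of_pairs}.

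The differences are cosmetic. The paper applies weak $3$-connectivity directly in $\cG_2'$ instead of pulling back to $\texttt{Ban}(\cG_1)$. You should also list loops (the case where $\partial B'$ collapses to a single vertex) alongside parallel edges. And the image of $B'$ need not stay connected after deletion. Neither affects the argument.
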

\begin{proof}
    We perform the same minor operation as in the proof of Theorem~\ref{thm:connected_preserving}. Note that $\cG_1' \cong \texttt{Ban}(\cG_1)$ does not have parallel edges (outside a nullset of components), as the union of the corresponding maximal bananas would again be a banana, contradicting maximality. (Also, there are no loops: $\texttt{Ban}(\cG_1)$ is a simple graphing, because $|\partial B|=2$ for each banana. Note that, as we assumed that $\cG_1$ is weakly 2-connected, there are in fact no finite induced subgraphs $F$ in $\cG_1$ with $|\partial F|=1$.)  

    Now assume that $e_{B_1}$ and $e_{B_2}$ are mapped into the same maximal banana $\hat{B}$ in $\cG_2$. Let $X$ denote the image of $\partial\hat{B}$ in $\cG_2'$ after performing the minor operation in $\cG_2$. Clearly $|X|\leq 2$, so removing it cannot create finite components by the weak $3$-connectivity of $\cG_2' \cong \cG_1'$. Consequently $\varphi(e_{B_1})$ and $\varphi(e_{B_2})$ are loops or parallel edges with endpoints in $X$. On the other hand, $\cG_2' \cong \cG_1'$ implies that $\cG_2'$ has no loops or parallel edges outside a nullset of components. Consequently, outside a nullset of components, $e_{B_1}$ and $e_{B_1}$ for distinct $B_1$ and $B_2$ can only be mapped to distinct maximal bananas. As before, we conclude the proof by repeating the argument for every choice of $e_B$ provided by the transversals $A_1,A_2, \dots$ from Lemma~\ref{lem:union_of_pairs}. 
\end{proof}

We also prove that $\varphi$ restricted to a maximal banana $B \subseteq \cG_1$ preserves not only cycles, but also paths between the boundary points $\partial B$.

\begin{lem} \label{lem:banana_path_preserving}
    Let $\cG_1$ is and $\cG_2$ be infinitely-ended, weakly 2-connected graphings and $\varphi$ a rank-preserving Borel bijection. Then, outside a nullset of components, any path $P$ connecting $\partial B$ of a maximal banana $B$ inside $B\subset\cG_1$ is mapped to a path $Q=\varphi(P)$ connecting $\partial(\varphi(B))$ of the maximal banana $\varphi(B)$ inside $\varphi(B)\subseteq\cG_2$.
\end{lem}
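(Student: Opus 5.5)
We reuse the minor construction from the proof of Theorem~\ref{thm:connected_preserving}, but with the chosen path $P$ in place of the arbitrary path $P_B$. Measurably choose, in each maximal banana $B$, an edge $e_B\in P$ and a path $P_B:=P$ connecting $\partial B$. Here $P$ ranges over a Borel family of such paths, one per banana; since bananas are finite, countably many such families cover all choices. Delete $D$, the edges of $B\setminus P_B$, and contract $C$, the edges of $P_B\setminus\{e_B\}$. Then $C$ is acyclic with finite components, and $\cG_1'=(\cG_1)_{D,C}\cong\texttt{Ban}(\cG_1)$ is weakly $3$-connected by Lemma~\ref{lem:banana_3-connected}. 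It is also infinitely-ended, since $\cG_1$ is and the bananas are finite. By Lemma~\ref{lem:image_of_finite_is_finite}, $\fg(C)$ has finite components, so $\fg$ induces a rank-preserving $\fg':E(\cG_1')\to E(\cG_2')$. By Theorem~\ref{thm:graph_isom}, $\fg'$ is induced by an isomorphism of graphings.

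By Corollary~\ref{cor:bananas_preserved}, $\fg(B)$ lies inside a single maximal banana $\hat B$ of $\cG_2$. Applying the same corollary to $\fg^{-1}$, which is also rank-preserving, gives $\fg(B)=\hat B$. Next we show that $Q=\fg(P)$ is a path joining $\partial\hat B$. Finite cycles are preserved, and $P$ is a forest, so $Q$ is acyclic. Moreover, $P\cup\{f\}$ contains a cycle exactly when $f\in B\setminus P$ closes a cycle with $P$, and the same then holds for $Q\cup\{\fg(f)\}$. The key step is to use the isomorphism $\fg'$. In $\cG_1'$, the edge $e_B$ joins the two vertices that are images of $\partial B$, and these vertices are distinct, since $\texttt{Ban}(\cG_1)$ has no loops. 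Hence $\fg(e_B)$, viewed in $\cG_2'$, is a non-loop edge. Contracting $\fg(C)=Q\setminus\{\fg(e_B)\}$ inside $\hat B$ therefore produces no cycle through $\fg(e_B)$, and $\fg(e_B)$ does not become a loop.

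It remains to show that $Q$ is a single path with endpoints $\partial\hat B$. Consider the graph $\hat B$ together with the rest of $\cG_2$. We argue by contradiction. If $Q$ were disconnected, or were a tree with a vertex of degree $\ge3$ or a leaf outside $\partial\hat B$, then after deleting $\fg(D\cap B)=\hat B\setminus Q$ and contracting $\fg(C)$, the image of $\hat B$ in $\cG_2'$ would not be a single edge attached to two vertices of the weakly $3$-connected remainder. It would instead contain an isolated piece or a pendant finite part separated by at most one vertex from an infinite part. Since $\cG_2'\cong\cG_1'$ has no leaves and no finite parts cut off by one vertex, this is impossible.

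More concretely, a leaf of $Q$ in the interior of $\hat B$ gives a leaf edge of $Q$ whose deletion decreases the rank within $B\cup(\text{rest})$, and this can be compared with $P$ through rank preservation. Thus the plan is to show, using the finite-component rank computations of $\rho$ restricted to $\hat B\cup(\cG_2\setminus\hat B)$, that $Q$ is connected, has exactly two leaves, and that these leaves lie in $\partial\hat B$. The statement is then obtained by applying this argument across the countably many Borel choices of $P$ and discarding a nullset.

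The main obstacle is this final identification: showing that the leaves of $Q$ are exactly $\partial\hat B$. The data in $\cG_2'$ only shows that the endpoints of the contracted image of $\fg(e_B)$ are distinct boundary vertices. We must rule out the case where $Q$ has a leaf inside $\hat B$ but contraction still hides it. I would first try a superfluity argument. Extend $P$ by the infinite-ended leafless forests of Theorem~\ref{thm:union_of_inf_ended_leafless} on both sides, so that $P$ becomes part of a leafless infinitely-ended forest, in which every edge is superfluous. By Corollary~\ref{cor:phi_superfluous}, the image is again leafless, which forces every vertex of $Q$ to have degree $2$ unless it is connected outward through $\partial\hat B$.
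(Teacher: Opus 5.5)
Your final paragraph has the right idea; it is exactly the paper's proof. But you leave it as a plan and stop before the decisive step, and the main line you develop first does not prove the lemma. As you note yourself, a single minor $(\cG_2)_{\fg(D),\fg(C)}$ with a fixed $e_B$ cannot detect a leaf of $Q$ at an interior vertex of $\hat B$ whose pendant edge lies in $\fg(C)$. The contraction absorbs it, and $\cG_2'\cong\texttt{Ban}(\cG_1)$ shows nothing wrong. So the claim that the image of $\hat B$ in $\cG_2'$ would contain ``an isolated piece or a pendant finite part'' is not an argument. Likewise, knowing that $Q$ is acyclic and that $\fg(e_B)$ is not a loop is far from showing that $Q$ is a path between the two points of $\partial\hat B$. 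There is also a small slip: Corollary~\ref{cor:bananas_preserved} for $\fg$ says that different bananas go to different bananas. It is the version for $\fg^{-1}$ that puts $\fg(B)$ inside a single maximal banana. Together they do give $\fg(B)=\hat B$.

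To finish along the lines of your last paragraph, two things must be made explicit.

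First, the forest must satisfy $F\cap B=P$ exactly. Apply Theorem~\ref{thm:union_of_inf_ended_leafless} to $\texttt{Ban}(\cG_1)$, and replace each $\texttt{Ban}$-edge of the resulting forests by the chosen path $P_{B'}$ in the corresponding banana. Interior vertices of distinct maximal bananas are distinct, so this is a subdivision and hence still a leafless infinitely-ended forest of $\cG_1$. It meets $B$ in exactly $P$ whenever the $\texttt{Ban}$-edge of $B$ lies in it, and countably many such forests cover all bananas.

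Second, you need the concluding count:
\begin{itemize}
\item $\fg(F)$ is leafless by Corollary~\ref{cor:phi_superfluous}, and bijectivity together with $\fg(B)=\hat B$ gives $\fg(F)\cap\hat B=Q$.
\item Every vertex of $\hat B$ outside $\partial\hat B$ has all its incident edges in $\hat B$, so no leaf of the finite forest $Q$ lies off $\partial\hat B$.
\item Each component of $Q$ is a finite tree with at least two leaves, all in the two-element set $\partial\hat B$. Hence each component is a path between the two boundary vertices.
\item Two vertex-disjoint components cannot both contain both boundary vertices, so $Q$ is a single such path, nonempty because $P$ is.
\end{itemize}
Note that leaflessness only rules out degree $1$. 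Degree $\ge 3$ is excluded by this count, not directly as your ``degree $2$'' phrasing suggests.

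Alternatively, your minor route can be repaired. Suppose some $g\in Q$ were pendant at an interior vertex $w$ of $\hat B$. Redo the construction with $e_B:=\fg^{-1}(g)$. Then $w$ meets no edge of $\fg(C)$, and its other edges lie in $\fg(D)$, so $g$ becomes a pendant edge of $\cG_2'\cong\texttt{Ban}(\cG_1)$. This contradicts the fact that weak $3$-connectivity forces minimum degree at least $3$. The same leaf count then concludes.
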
 
\begin{proof}
    By Theorem~\ref{thm:union_of_inf_ended_leafless} applied to $\texttt{Ban}(\cG_1)$ we can find a leafless infinitely ended forest $F$ of $\cG_1$ such that $P=B\cap F$. Then $\varphi(F)$ is again a leafless forest by Corollary \ref{cor:phi_superfluous} . Notice that the intersection of a banana and a leafless forest, if nonempty, is a path connecting the boundary points. Hence $\emptyset \neq \varphi(P) = \varphi(B) \cap \varphi(F)$ is a path $Q$ connecting $\partial(\varphi(B))$.   
\end{proof}

Given a maximal banana $B$ we write $\overline{B}$ for the finite graph obtained by adding an edge between the two vertices in $\partial B$. The edge-bijection $\varphi$ naturally extends to a bijection $\varphi: \overline{B} \to \overline{\varphi(B)}$ by matching the newly added edges. Note that $\overline{B}$ is $2$-connected. Indeed, deleting any vertex $v \in V(B)\setminus \partial B$ produces components that contain at least one of the vertices in $\partial B$, so $\overline{B} \setminus \{v\}$ is connected. And deleting a vertex from $\partial B$ cannot disconnect $B$: recall, that we assume the graphings to be weakly $2$-connected, so one vertex cannot disconnect a finite subgraph. Consequently, Lemma~\ref{lem:banana_path_preserving} has the following corollary.

\begin{cor} \label{cor:extended_banana_cycle_preserving}
    Let $\cG_1$, $\cG_2$ and $\varphi$ be as in Lemma~\ref{lem:banana_path_preserving}. Then, outside a nullset of components, $\overline{B}$ is $2$-connected for each maximal banana $B$, and $\varphi: \overline{B} \to \overline{\varphi(B)}$ is cycle-preserving.
\end{cor}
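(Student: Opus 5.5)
The $2$-connectivity of $\overline{B}$ has already been argued in the paragraph preceding the statement, so I would concentrate on the cycle-preserving property. Throughout, I would discard the nullsets of components where Corollary~\ref{cor:bananas_preserved} and Lemma~\ref{lem:banana_path_preserving} fail. On the remaining components, $\varphi$ maps each maximal banana $B$ bijectively onto a maximal banana $\varphi(B)$. Corollary~\ref{cor:bananas_preserved} gives that edges of different bananas go to different bananas. Applying the same argument to $\varphi^{-1}$, which is also rank-preserving, shows that edges of the same banana stay together. Hence $\varphi(B)$ is a single maximal banana, and $\varphi$ restricted to $B$ is a bijection onto $\varphi(B)$.

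Write $e_B$ for the added edge of $\overline{B}$ and $e_{\varphi(B)}$ for that of $\overline{\varphi(B)}$, and split the cycles of $\overline{B}$ into two types. A cycle avoiding $e_B$ is a cycle of $\cG_1$ contained in $B$. By Lemma~\ref{lem:cylce_preserv}, $\varphi$ maps it almost surely to a cycle of $\cG_2$, and this cycle lies in $\varphi(B)$. It therefore avoids $e_{\varphi(B)}$ and is a cycle of $\overline{\varphi(B)}$. A cycle through $e_B$ has the form $P\cup\{e_B\}$, where $P$ is a path inside $B$ connecting the two vertices of $\partial B$. By Lemma~\ref{lem:banana_path_preserving}, $\varphi(P)$ is a path in $\varphi(B)$ connecting $\partial\varphi(B)$. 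Consequently $\varphi(P)\cup\{e_{\varphi(B)}\}$ is a cycle of $\overline{\varphi(B)}$.

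For the reverse direction I would apply the same reasoning to $\varphi^{-1}$. Then both the extended map and its inverse send cycles to cycles, which is the definition of cycle-preserving used in Whitney's theorem.

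The main obstacle is measurability and the ``almost surely'' bookkeeping. Lemmas~\ref{lem:cylce_preserv} and \ref{lem:banana_path_preserving} hold outside nullsets of components, and they apply to the individual paths $P$, of which each banana has only finitely many. I would handle this by noting that the set of pairs $(B,P)$ and the set of cycles within bananas form standard Borel spaces with finitely many elements per banana. The set of bad configurations is therefore Borel, and its saturation to components is a countable union of nullsets, hence null. Discarding it yields the statement for every maximal banana simultaneously.
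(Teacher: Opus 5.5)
Your proposal is correct and follows the paper's route. The paper gets the corollary from the $2$-connectivity argument in the preceding paragraph, together with Lemma~\ref{lem:cylce_preserv} for cycles avoiding the added edge and Lemma~\ref{lem:banana_path_preserving} for cycles through it; your use of Corollary~\ref{cor:bananas_preserved} for both $\varphi$ and $\varphi^{-1}$ to show that $\varphi(B)$ is exactly one maximal banana, and the symmetric argument for the reverse direction, spell out what the paper leaves implicit.
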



\section{Whitney's theorem for graphings}
\label{sec:general_Whitney_for_graphings}
In this section, we first introduce Whitney operations for graphings. We then establish Theorem~\ref{thm:Whitney_2-isom_intro} by 

\begin{enumerate}
    \item using the tools developed in this paper to prove it in the special case when all components of $\cG_1$ are infinite ended and weakly $2$-connected; 

    \item using the tools for locally finite graphs from earlier works recalled in subsection \ref{subsec:locally_finite_tools} in the special case when all components are $0$-, $1$-, and $2$-ended and strongly $2$-connected;

    \item performing splitting operations on $\cG_1$ and $\cG_2$ until all components fall in one of the above categories;
    
    \item \label{item:arguing_separately} arguing that we can treat the two types of components of $\cG_1$ separately. 
    
\end{enumerate}
\begin{rem}
Note that part (\ref{item:arguing_separately}) is non-trivial; it relies on Theorem~\ref{thm:connected_preserving}, whose proof makes use of all the tools we developed for the $\infty$-ended case, as well as Theorem~\ref{thm:graph_isom}.
\end{rem}

\subsection{Whitney operations for graphings} \label{subsec:whitney_operations}

The last thing we have to set up before we can turn to the proof of Theorem~\ref{thm:Whitney_2-isom_intro} are the basic operations that preserve the rank function of a graphing. By Lemma~\ref{lem:eq_of_rankpres}, preserving the rank is equivalent to preserving cycles and hyperfiniteness of all measurable edge subsets. 

For finite graphs, splitting at cut vertices preserves cycles. In an infinite connected component $G$ of a graphing, however, we will need to split at infinitely many cut vertices simultaneously, and this might fail to preserve non-hyperfiniteness by breaking up an infinitely-ended subgraph into hyperfinite pieces, see Example~\ref{ex:traingle_tree}. This problem does not occur if we only cut finite graphs from $G$. Another example where hyperfiniteness is preserved is when $G$ is 2-ended, and we split at infinitely many cut vertices along a bi-infinite path. The same considerations apply when one twists at cutting vertex pairs. These examples are illustrated in Figures~\ref{Fig:finite_split}--\ref{fig:2ended_twist}. Motivated by these observations, we spend the rest of the subsection defining what we call \emph{Whitney steps} in detail. Each kind of step from the finite world (split, join, twist) will have two counterparts in the measurable setting, one where only a finite component of the cut is altered, and one where the step is carried out simultaneously along a bi-infinite path.

{\bf (Finite vertex join.)} Let $\cG=(X,E,\mu)$ be any graphing, and $\cH=(Y,F,\nu)$ a graphing with finite components. Let $Y_0 \subseteq Y$ contain one vertex from every component. (Such $Y_0$ exists for a graphing $\cH$ if and only if $\cH$ has finite components.) Furthermore, let $f:Y_0 \to X_0 \subseteq X$ be a measure preserving Borel bijection. We define the join $\mathcal{K}=(Z,L,\pi)$ of the two graphings by glueing $Y_0$ to $X_0$ along $f$. That is, let $Y_1=Y \setminus Y_0$, and set $Z=X\sqcup Y_1$ to be the vertex set with measure $\pi$ defined by $\pi|_{X}=\mu$ and $\pi|_{Y_1} = \nu|_{Y_1}$. The set of edges is $L=E \cup F(Y_1,Y_1) \cup \{\big(f(u),v\big) ~\big|~ (u,v) \in F(Y_0,Y_1)\}$, where $F(A,B)$ denotes the set of edges with one endpoint in $A$ and the other in $B$. (Edges are unoriented, so when an edge $(f(u),v)$ is added, this implicitly means the reverse pair $(v,f(u))$ is also present.) 

{\bf (Finite vertex split.)} The reverse operation of a finite join. That is, start with a graphing $\mathcal{K}=(Z, L,\pi)$ and a measurable subset of vertices $X_0 \subseteq Z$ that are cut vertices in their components, with the additional property that at least one of the components of the cut is finite, and contains no vertex from $X_0$. Choose one such finite component to be split off at each vertex of $X_0$. The choice can be made measurably, as there are finitely many possibilities at each vertex by local finiteness. Form the set $Y_1 \subseteq Z$ as the collection of the vertices of the chosen components (excluding $X_0$). Let $(Y_0,\nu_0)$ be a disjoint copy of $(X_0, \pi|_{X_0})$, with $f: Y_0 \to X_0$ a measure preserving Borel bijection. (Say, \ $Y_0=X_0\times\{0\}$, $f(x,0)=x$, $\nu_0=\pi\circ f$.) We define the two parts of the split $\cG=(X,E,\mu)$ and $\cH=(Y,F,\nu)$ by setting $X=Z \setminus Y_1$, $E=L(X,X)$, and $\mu=\pi|_{X}$, as well as $Y=Y_0 \cup Y_1$, $F=L(Y_1,Y_1) \cup  \big\{(x,y) ~\big|~ \big(f(x),y\big) \in L(X_0,Y_1)\big\}$, and $\nu|_{Y_0} = \nu_0$, $\nu|_{Y_1}= \pi|_{Y_1}$. 

\begin{rem}
One can replace some edges of a graphing with finite graphs, and obtain another graphing. More precisely, assume we are given a graphing $\cG$, a Borel subset of edges $F$ of $\cG$ with an arbitrary Borel orientation. Denote by $\vec{F}$ these oriented edges. Assume further that we have a measurable map $\psi$ from $\vec{F}$ to the space of doubly rooted \emph{finite} graphs. One can then construct a graphing by deleting all edges $f\in F$ from $E(\cG)$, and replacing each $f$ with $\psi\big(\vec{f}\big)$, where $\vec{f}$ denotes the orientation of $f$. The role of the orientation is to determine how the finite graph is inserted: we glue the first root of $\psi\big(\vec{f}\big)$ to the starting vertex of $\vec{f}$, and the second root of $\psi\big(\vec{f}\big)$ to the endvertex of $\vec{f}$. (Note that we added vertices, so the total vertex measure of $\cG$ might change.) When referring to this procedure, we say that we \emph{replace $\vec{F}$ in $\cG$, gluing according to $\psi$}. This terminology will be convenient when presenting the next two Whitney steps.  
\end{rem}

\begin{figure}[htb]
\centering
\makebox[\textwidth][c]{%
\begin{tikzpicture}[
    line join=round,
    line cap=round,
    every path/.style={thick},
    big dot/.style={circle, inner sep=0pt, minimum size=1mm, fill=black},
    red dot/.style={circle, inner sep=0pt, minimum size=1.2mm, fill=red}
]

\def\xstep{0.85}     
\def\dotsep{0.22}    
\def\rowtop{1.4}
\def\rowmid{0}
\def\rowbot{-1.4}
\def\gsize{0.55}     

\pgfmathsetmacro{\xmTwo}{-2*\xstep}
\pgfmathsetmacro{\xmOne}{-1*\xstep}
\pgfmathsetmacro{\xZero}{0}
\pgfmathsetmacro{\xpOne}{1*\xstep}
\pgfmathsetmacro{\xpTwo}{2*\xstep}

\newcommand{\hdotsleft}[2]{%
    \foreach \k in {1,2,3} {
        \fill ({#1-\k*\dotsep},#2) circle (0.7pt);
    }
}

\newcommand{\hdotsright}[2]{%
    \foreach \k in {1,2,3} {
        \fill ({#1+\k*\dotsep},#2) circle (0.7pt);
    }
}

\newcommand{\vdotsabove}[2]{%
    \foreach \k in {1,2,3} {
        \fill (#1,{#2+\k*\dotsep}) circle (0.7pt);
    }
}

\newcommand{\vdotsbelow}[2]{%
    \foreach \k in {1,2,3} {
        \fill (#1,{#2-\k*\dotsep}) circle (0.7pt);
    }
}

\newcommand{\Zline}[1]{%
    \foreach \i in {-2,-1,0,1} {
        \draw ({\i*\xstep},#1) -- ({(\i+1)*\xstep},#1);
    }

    \foreach \i in {-2,-1,0,1,2} {
        \node[big dot] at ({\i*\xstep},#1) {};
    }

    \hdotsleft{\xmTwo}{#1}
    \hdotsright{\xpTwo}{#1}
}

\newcommand{\attachEdge}[3]{%
    \coordinate (Eroot) at (#1,#2);
    \coordinate (Etip) at ($(Eroot)+(#3:\gsize)$);

    \draw (Eroot) -- (Etip);

    \node[big dot] at (Etip) {};
    \node[red dot] at (Eroot) {};
}

\newcommand{\attachTriangle}[3]{%
    \coordinate (Troot) at (#1,#2);
    \coordinate (Tone) at ($(Troot)+({#3-35}:\gsize)$);
    \coordinate (Ttwo) at ($(Troot)+({#3+35}:\gsize)$);

    \draw (Troot) -- (Tone) -- (Ttwo) -- cycle;

    \node[big dot] at (Tone) {};
    \node[big dot] at (Ttwo) {};
    \node[red dot] at (Troot) {};
}

\newcommand{\attachRhombus}[3]{%
    \coordinate (Rroot) at (#1,#2);
    \coordinate (Rone) at ($(Rroot)+({#3-35}:\gsize)$);
    \coordinate (Rtip) at ($(Rroot)+(#3:{1.15*\gsize})$);
    \coordinate (Rtwo) at ($(Rroot)+({#3+35}:\gsize)$);

    \draw (Rroot) -- (Rone) -- (Rtip) -- (Rtwo) -- cycle;

    \node[big dot] at (Rone) {};
    \node[big dot] at (Rtip) {};
    \node[big dot] at (Rtwo) {};
    \node[red dot] at (Rroot) {};
}

\newcommand{\attachKfour}[3]{%
    \coordinate (Kroot) at (#1,#2);
    \coordinate (Kone) at ($(Kroot)+(#3:{1.05*\gsize})$);
    \coordinate (Ktwo) at ($(Kroot)+({#3-42}:\gsize)$);
    \coordinate (Kthree) at ($(Kroot)+({#3+42}:\gsize)$);

    \draw (Kroot) -- (Kone);
    \draw (Kroot) -- (Ktwo);
    \draw (Kroot) -- (Kthree);
    \draw (Kone) -- (Ktwo);
    \draw (Kone) -- (Kthree);
    \draw (Ktwo) -- (Kthree);

    \node[big dot] at (Kone) {};
    \node[big dot] at (Ktwo) {};
    \node[big dot] at (Kthree) {};
    \node[red dot] at (Kroot) {};
}

\newcommand{\freeEdge}[3]{%
    \coordinate (Eroot) at (#1,#2);
    \coordinate (Etip) at ($(Eroot)+(#3:\gsize)$);

    \draw (Eroot) -- (Etip);

    \node[big dot] at (Eroot) {};
    \node[big dot] at (Etip) {};
}

\newcommand{\freeTriangle}[3]{%
    \coordinate (Troot) at (#1,#2);
    \coordinate (Tone) at ($(Troot)+({#3-35}:\gsize)$);
    \coordinate (Ttwo) at ($(Troot)+({#3+35}:\gsize)$);

    \draw (Troot) -- (Tone) -- (Ttwo) -- cycle;

    \node[big dot] at (Troot) {};
    \node[big dot] at (Tone) {};
    \node[big dot] at (Ttwo) {};
}

\newcommand{\freeRhombus}[3]{%
    \coordinate (Rroot) at (#1,#2);
    \coordinate (Rone) at ($(Rroot)+({#3-35}:\gsize)$);
    \coordinate (Rtip) at ($(Rroot)+(#3:{1.15*\gsize})$);
    \coordinate (Rtwo) at ($(Rroot)+({#3+35}:\gsize)$);

    \draw (Rroot) -- (Rone) -- (Rtip) -- (Rtwo) -- cycle;

    \node[big dot] at (Rroot) {};
    \node[big dot] at (Rone) {};
    \node[big dot] at (Rtip) {};
    \node[big dot] at (Rtwo) {};
}

\newcommand{\freeKfour}[3]{%
    \coordinate (Kroot) at (#1,#2);
    \coordinate (Kone) at ($(Kroot)+(#3:{1.05*\gsize})$);
    \coordinate (Ktwo) at ($(Kroot)+({#3-42}:\gsize)$);
    \coordinate (Kthree) at ($(Kroot)+({#3+42}:\gsize)$);

    \draw (Kroot) -- (Kone);
    \draw (Kroot) -- (Ktwo);
    \draw (Kroot) -- (Kthree);
    \draw (Kone) -- (Ktwo);
    \draw (Kone) -- (Kthree);
    \draw (Ktwo) -- (Kthree);

    \node[big dot] at (Kroot) {};
    \node[big dot] at (Kone) {};
    \node[big dot] at (Ktwo) {};
    \node[big dot] at (Kthree) {};
}

\begin{scope}[shift={(-5,0)}]

    \Zline{\rowtop}
    \Zline{\rowmid}

    \vdotsabove{0}{\rowtop+0.75}
    \vdotsbelow{0}{\rowbot+0.25}

    \attachEdge{\xmOne}{\rowtop}{90}
    \attachTriangle{\xpOne}{\rowtop}{80}

    \attachRhombus{\xZero}{\rowmid}{-90}
    \attachKfour{\xpTwo}{\rowmid}{75}

\end{scope}

\begin{scope}[shift={(5,0)}]

    \Zline{\rowtop}
    \Zline{\rowmid}

    \vdotsabove{0}{\rowtop+0.25}
    \vdotsbelow{0}{\rowbot+1}


\foreach \k in {1,2,3} {
    \fill ({-2.35-\k*\dotsep},{-1.9}) circle (0.7pt);
}

\foreach \k in {1,2,3} {
    \fill ({2.05+\k*\dotsep},{-1.9}) circle (0.7pt);
}

\freeEdge{-1.9}{-2.2}{90}
\freeTriangle{-0.95}{-2.2}{90}
\freeRhombus{0.15}{-2.2}{90}
\freeKfour{1.35}{-2.2}{90}

\end{scope}

\draw[dashed,<->] (-1.1,0.25) -- (1.1,0.25);

\path[use as bounding box] (-8.2,-2.1) rectangle (8.2,2.2);

\end{tikzpicture}
}
\caption{An example of a finite vertex split/join.\label{Fig:finite_split}}
\end{figure}
    
{\bf (2-ended infinite join.)} Let $\cG=(X,E,\mu)$ be a graphing with finite components, and $X_1,X_2 \subseteq X$ disjoint vertex sets, both containing one vertex from each connected component of $\cG$. We will measurably arrange the components of $\cG$ into $2$-ended components, joining them at $X_1$ and $X_2$. That is, let $\cH=(Y, F, \nu)$ be a graphing whose connected components are bi-infinite lines. Write $\vec{F}$ for a specified Borel orientation of the edges of $\cH$, and $\psi_0$ for a measurable map from $\vec{F}$ to $X_1$. Then $\psi_0$ naturally induces a Borel map $\psi$ from $\vec{F}$, assigning to each $\vec{f}$ the $\cG$-component of $\psi_0(\vec{f})$, with the first root from $X_1$ and the second from $X_2$. Given this input, we build the graphing $\mathcal{K}=(Z, L, \pi)$ by replacing $\vec{F}$ in $\cH$, gluing according to $\psi$.

{\bf (2-ended infinite split.)} The reverse operation of $2$-ended infinite join. That is, let $\mathcal{K}=(Z, L, \pi)$ be a graphing with 2-ended components, and $X_0$ a set of cut-vertices with exactly two infinite connected components in the cut. Assume also that $X_0$ meets every component of $\cG$. We can define a graphing on $X_0$ by connecting adjacent points of $X_0$. That is, set $\cH=(Y,F,\nu)$ where $Y=X_0$, $\nu=\pi|_{X_0}$, and $(x, x') \in F$ if and only if they are in the same $\mathcal{K}$-component, and there is no $x^* \in X_0$ separating them. For each such edge, let $V(x,x')$ denote the (finite) set of vertices of $\mathcal{K}$ that are not separated by either $x$ or $x'$ (including these two vertices). We write $C_{\mathcal{K}}(x,x')$ for the induced (doubly rooted) subgraph on $V(x,x')$. 

We also take two disjoint copies of $X_0$ and connect the corresponding vertices to form the graphing $\cG_0$ (every connected component is an edge). We denote its edge set by $J$, and fix a Borel orientation to form $\vec{J}$. We also fix a measure preserving Borel bijection $\psi_0: \vec{J} \to \vec{F}$. With slight abuse of notation, we write $\psi_0$ also for the map induced on the starting- and endvertices. Then $\psi_0$ induces a measurable map $\psi$, assigning $C_\mathcal{K}(\psi_0(x_0),\psi_0(x_1))$ to $(x_0,x_1) \in \vec{J}$. We obtain the split graphing $\cG=(X,E,\nu)$ by replacing $\vec{J}$ in $\cG_0$, gluing according to $\psi$.. 

\begin{figure}[htb]
\centering
\makebox[\textwidth][c]{%
\begin{tikzpicture}[
    line join=round,
    line cap=round,
    every path/.style={thick},
    big dot/.style={circle, inner sep=0pt, minimum size=1mm, fill=black},
    red dot/.style={circle, inner sep=0pt, minimum size=1.2mm, fill=red}
]

\def\dotsep{0.22}

\newcommand{\gluedTriangle}[3]{%
    \coordinate (L) at (#1,#3);
    \coordinate (R) at (#2,#3);
    \coordinate (T) at ($(L)!0.5!(R)+(0,0.75)$);
    \draw (L) -- (T) -- (R) -- cycle;
    \node[red dot] at (L) {};
    \node[red dot] at (R) {};
    \node[big dot] at (T) {};
}

\newcommand{\gluedRhombus}[3]{%
    \coordinate (L) at (#1,#3);
    \coordinate (R) at (#2,#3);
    \coordinate (U) at ($(L)!0.5!(R)+(0,0.55)$);
    \coordinate (D) at ($(L)!0.5!(R)+(0,-0.55)$);
    \draw (L) -- (U) -- (R) -- (D) -- cycle;
    \node[red dot] at (L) {};
    \node[red dot] at (R) {};
    \node[big dot] at (U) {};
    \node[big dot] at (D) {};
}

\newcommand{\gluedEdge}[3]{%
    \coordinate (L) at (#1,#3);
    \coordinate (R) at (#2,#3);
    \draw (L) -- (R);
    \node[red dot] at (L) {};
    \node[red dot] at (R) {};
}

\newcommand{\gluedKfour}[3]{%
    \coordinate (L) at (#1,#3);
    \coordinate (R) at (#2,#3);
    \coordinate (U) at ($(L)!0.5!(R)+(0,0.60)$);
    \coordinate (D) at ($(L)!0.5!(R)+(0,-0.60)$);
    \draw (L)--(R);
    \draw (L)--(U);
    \draw (L)--(D);
    \draw (R)--(U);
    \draw (R)--(D);
    \draw (U)--(D);
    \node[red dot] at (L) {};
    \node[red dot] at (R) {};
    \node[big dot] at (U) {};
    \node[big dot] at (D) {};
}

\newcommand{\freeTriangleSeg}[3]{%
    \coordinate (L) at (#1,#3);
    \coordinate (R) at (#2,#3);
    \coordinate (T) at ($(L)!0.5!(R)+(0,0.75)$);
    \draw (L) -- (T) -- (R) -- cycle;
    \node[big dot] at (L) {};
    \node[big dot] at (R) {};
    \node[big dot] at (T) {};
}

\newcommand{\freeRhombusSeg}[3]{%
    \coordinate (L) at (#1,#3);
    \coordinate (R) at (#2,#3);
    \coordinate (U) at ($(L)!0.5!(R)+(0,0.55)$);
    \coordinate (D) at ($(L)!0.5!(R)+(0,-0.55)$);
    \draw (L) -- (U) -- (R) -- (D) -- cycle;
    \node[big dot] at (L) {};
    \node[big dot] at (R) {};
    \node[big dot] at (U) {};
    \node[big dot] at (D) {};
}

\newcommand{\freeEdgeSeg}[3]{%
    \coordinate (L) at (#1,#3);
    \coordinate (R) at (#2,#3);
    \draw (L) -- (R);
    \node[big dot] at (L) {};
    \node[big dot] at (R) {};
}

\newcommand{\freeKfourSeg}[3]{%
    \coordinate (L) at (#1,#3);
    \coordinate (R) at (#2,#3);
    \coordinate (U) at ($(L)!0.5!(R)+(0,0.60)$);
    \coordinate (D) at ($(L)!0.5!(R)+(0,-0.60)$);
    \draw (L)--(R);
    \draw (L)--(U);
    \draw (L)--(D);
    \draw (R)--(U);
    \draw (R)--(D);
    \draw (U)--(D);
    \node[big dot] at (L) {};
    \node[big dot] at (R) {};
    \node[big dot] at (U) {};
    \node[big dot] at (D) {};
}

\begin{scope}[shift={(-7,0)}]

    \foreach \k in {1,2,3} {
        \fill ({-0.65-\k*\dotsep},1.15) circle (0.7pt);
    }

    \gluedTriangle{1.15}{2.40}{1.15}
    \gluedRhombus{0}{1.15}{1.15}
    \gluedEdge{3.35}{4.65}{1.15}
    \gluedKfour{2.40}{3.35}{1.15}

    \foreach \k in {1,2,3} {
        \fill ({4.65+\k*\dotsep},1.15) circle (0.7pt);
    }

    \foreach \k in {1,2,3} {
        \fill ({-0.65-\k*\dotsep},-1.15) circle (0.7pt);
    }

    \gluedTriangle{0}{1.15}{-1.15}
    \gluedRhombus{1.15}{2.40}{-1.15}
    \gluedEdge{2.40}{3.35}{-1.15}
    \gluedKfour{3.35}{4.65}{-1.15}

    \foreach \k in {1,2,3} {
        \fill ({4.65+\k*\dotsep},-1.15) circle (0.7pt);
    }

    \foreach \k in {1,2,3} {
        \fill (2.325,{2+\k*\dotsep}) circle (0.7pt);
        \fill (2.325,{-1.8-\k*\dotsep}) circle (0.7pt);
    }

\end{scope}

\begin{scope}[shift={(4,0)}]

    \foreach \k in {1,2,3} {
        \fill ({-1.90-\k*\dotsep},0) circle (0.7pt);
    }

    \freeEdgeSeg{-1.90}{-1.15}{0}
    \freeTriangleSeg{-0.60}{0.20}{0}
    \freeRhombusSeg{0.95}{1.80}{0}
    \freeKfourSeg{2.55}{3.45}{0}

    \foreach \k in {1,2,3} {
        \fill ({3.45+\k*\dotsep},0) circle (0.7pt);
    }

\end{scope}

\draw[dashed,<->] (-0.8,0) -- (0.8,0);

\pgfresetboundingbox
\path[use as bounding box] (-8.2,-2.7) rectangle (8.2,2.7);

\end{tikzpicture}%
}
\caption{An example of a 2-ended infinite split/join.\label{fig:2ended_split}}
\end{figure}

{\bf (Finite Whitney twist.)} Let $\cG=(X,E,\mu)$ be a graphing, and $X_0,X_1 \subseteq X$ subsets of corresponding cut vertex pairs. That is, we have a measure preserving Borel bijection $f:X_0 \to X_1$, $f(x) \in \mathcal{C}_{\cG}(x)$, and $\{x,f(x)\}$ is a cut pair of the component $\mathcal{C}_{\cG}(x)$ for all $x$. Assume further that at least one of the components in these cuts is finite, contains no vertices from $X_0 \cup X_1$, and that such a finite component is already chosen in a Borel way. (Such a Borel choice is possible, as local finiteness implies that there are finitely many components of the cut.) In particular, at each $\{x,f(x)\}$ pair we have (Borel) lists of edges $l(x)$ and $l(f(x))$ listing all the edges going towards a single finite component of $\mathcal{C}_{\cG}(x) \setminus \{x,f(x)\}$, incident to $x$ and $f(x)$ respectively. We obtain the Whitney twist of $\cG$ at $(X_0,X_1)$ by swapping $x$ and $f(x)$ as endvertices of the edges in $l(x)$ and $l(f(x))$. That is, for each edge $(x,u) \in l(x)$ replace it with an edge $(f(x),u)$, and for each $(f(x),u) \in l(f(x))$ replace it with $(x,u)$.  

\begin{figure}[htb]
\centering
\makebox[\textwidth][c]{%
\begin{tikzpicture}[
    line join=round,
    line cap=round,
    every path/.style={thick},
    big dot/.style={circle, inner sep=0pt, minimum size=1mm, fill=black},
    red dot/.style={circle, inner sep=0pt, minimum size=1.2mm, fill=red},
    green dot/.style={circle, inner sep=0pt, minimum size=1.2mm, fill=green!60!black}
]

\def\xstep{0.85}
\def\dotsep{0.22}
\def\rowtop{1.4}
\def\rowmid{0}
\def\rowbot{-1.4}

\pgfmathsetmacro{\xmTwo}{-2*\xstep}
\pgfmathsetmacro{\xmOne}{-1*\xstep}
\pgfmathsetmacro{\xZero}{0}
\pgfmathsetmacro{\xpOne}{1*\xstep}
\pgfmathsetmacro{\xpTwo}{2*\xstep}

\newcommand{\hdotsleft}[2]{%
    \foreach \k in {1,2,3} {
        \fill ({#1-\k*\dotsep},#2) circle (0.7pt);
    }
}

\newcommand{\hdotsright}[2]{%
    \foreach \k in {1,2,3} {
        \fill ({#1+\k*\dotsep},#2) circle (0.7pt);
    }
}

\newcommand{\vdotsabove}[2]{%
    \foreach \k in {1,2,3} {
        \fill (#1,{#2+\k*\dotsep}) circle (0.7pt);
    }
}

\newcommand{\vdotsbelow}[2]{%
    \foreach \k in {1,2,3} {
        \fill (#1,{#2-\k*\dotsep}) circle (0.7pt);
    }
}

\newcommand{\Zline}[1]{%
    \foreach \i in {-2,-1,0,1} {
        \draw ({\i*\xstep},#1) -- ({(\i+1)*\xstep},#1);
    }
    \foreach \i in {-2,-1,0,1,2} {
        \node[big dot] at ({\i*\xstep},#1) {};
    }
    \hdotsleft{\xmTwo}{#1}
    \hdotsright{\xpTwo}{#1}
}


\newcommand{\diagSquareLeft}[3]{%
    \coordinate (R) at (#1,#3);
    \coordinate (G) at (#2,#3);
    \coordinate (u) at (#1,{#3-0.55});
    \coordinate (v) at (#2,{#3-0.55});
    \draw (R)--(u)--(v)--(G)--cycle;
    \draw (R)--(v);
    \node[red dot] at (R) {};
    \node[green dot] at (G) {};
    \node[big dot] at (u) {};
    \node[big dot] at (v) {};
}

\newcommand{\diagSquareRight}[3]{%
    \coordinate (G) at (#1,#3);
    \coordinate (R) at (#2,#3);
    \coordinate (u) at (#1,{#3-0.55});
    \coordinate (v) at (#2,{#3-0.55});
    \draw (G)--(u)--(v)--(R)--cycle;
    \draw (R)--(u);
    \node[green dot] at (G) {};
    \node[red dot] at (R) {};
    \node[big dot] at (u) {};
    \node[big dot] at (v) {};
}

\newcommand{\tailPathLeft}[3]{%
    \coordinate (R) at (#1,#3);
    \coordinate (G) at (#2,#3);
    \coordinate (u) at ({0.72*#1+0.28*#2},{#3-0.45});
    \coordinate (v) at ({0.28*#1+0.72*#2},{#3-0.85});
    \coordinate (w) at ({0.72*#1+0.28*#2-0.38},{#3-0.88});
    \draw (R)--(u)--(v)--(G);
    \draw (u)--(w);
    \node[red dot] at (R) {};
    \node[green dot] at (G) {};
    \node[big dot] at (u) {};
    \node[big dot] at (v) {};
    \node[big dot] at (w) {};
}

\newcommand{\tailPathRight}[3]{%
    \coordinate (G) at (#1,#3);
    \coordinate (R) at (#2,#3);
    \coordinate (u) at ({0.72*#1+0.28*#2},{#3-0.85});
    \coordinate (v) at ({0.28*#1+0.72*#2},{#3-0.45});
    \coordinate (w) at ({0.28*#1+0.72*#2+0.38},{#3-0.88});
    \draw (G)--(u)--(v)--(R);
    \draw (v)--(w);
    \node[green dot] at (G) {};
    \node[red dot] at (R) {};
    \node[big dot] at (u) {};
    \node[big dot] at (v) {};
    \node[big dot] at (w) {};
}

\begin{scope}[shift={(-5,0)}]

    \Zline{\rowtop}
    \Zline{\rowmid}

    \vdotsabove{0}{\rowtop+0.25}
    \vdotsbelow{0}{\rowbot+0.25}

    \diagSquareLeft{\xmTwo}{\xmOne}{\rowtop}
    \tailPathLeft{\xpOne}{\xpTwo}{\rowtop}

    \tailPathLeft{\xmOne}{\xZero}{\rowmid}
    \diagSquareLeft{\xpOne}{\xpTwo}{\rowmid}

\end{scope}

\begin{scope}[shift={(5,0)}]

    \Zline{\rowtop}
    \Zline{\rowmid}

    \vdotsabove{0}{\rowtop+0.25}
    \vdotsbelow{0}{\rowbot+0.25}

    \diagSquareRight{\xmTwo}{\xmOne}{\rowtop}
    \tailPathRight{\xpOne}{\xpTwo}{\rowtop}

    \tailPathRight{\xmOne}{\xZero}{\rowmid}
    \diagSquareRight{\xpOne}{\xpTwo}{\rowmid}

\end{scope}

\draw[dashed,<->] (-1.1,0.2) -- (1.1,0.2);

\pgfresetboundingbox
\path[use as bounding box] (-8.3,-2) rectangle (8.3,2.2);

\end{tikzpicture}%
}
\caption{An example of finite Whitney twist.\label{fig:finite_twist}}
\end{figure}

{\bf (2-ended simultaneous Whitney twist.)} Let $\cG$ and $f:X_0 \to X_1$ be as above, except assume all components of $\cG$ are 2-ended, and each cut pair $\{x,f(x)\}$ separates the two ends. Moreover, assume that the pairs are non-crossing: for $x,y \in X_0$ from the same $\cG$-component, the cut-pair $\{x,f(x)\}$ does not separate $y$ and $f(y)$. As before, for $x \in X_0$ let $l(x), l(f(x))$ denote the list of the edges towards one of the two ends. This end is chosen in a Borel way at each $\{x,f(x)\}$, but the choice does not need to be consistent across the whole $\cG$-component. The Whitney twist is obtained by swapping $x$ and $f(x)$ as endvertices of the $l(x)$ and $l(f(x))$, as in the previous case.

\begin{figure}[htb]
\centering
\makebox[\textwidth][c]{%
\begin{tikzpicture}[
    line join=round,
    line cap=round,
    every path/.style={thick},
    big dot/.style={circle, inner sep=0pt, minimum size=1mm, fill=black},
    red dot/.style={circle, inner sep=0pt, minimum size=1.2mm, fill=red}
]

\def\dotsep{0.22}
\def\vsep{0.28}
\def\ytop{1.25}
\def\ybot{-1.25}

\def\mkcut#1#2#3{%
    \coordinate (#1T) at (#2,{#3+\vsep});
    \coordinate (#1B) at (#2,{#3-\vsep});
    \node[red dot] at (#1T) {};
    \node[red dot] at (#1B) {};
}

\def\markcut#1#2{%
    \draw[dashed] ({#1-0.22},{#2-0.48}) rectangle ({#1+0.22},{#2+0.48});
    \draw[->] ({#1-0.17},{#2+0.68}) -- ({#1+0.17},{#2+0.68});
}

\def\rowdots#1{%
    \foreach \k in {1,2,3} {
        \fill ({-0.25-\k*\dotsep},#1) circle (0.7pt);
        \fill ({5.15+\k*\dotsep},#1) circle (0.7pt);
    }
}

\def\verticaldots{%
    \foreach \k in {1,2,3} {
        \fill (2.60,{\ytop+0.85+\k*\dotsep}) circle (0.7pt);
        \fill (2.60,{\ybot-0.85-\k*\dotsep}) circle (0.7pt);
    }
}


\def\modA#1#2#3#4#5#6#7#8{%
    \coordinate (#1U) at ({0.5*(#6+#7)},{#8+0.38});
    \coordinate (#1D) at ({0.5*(#6+#7)},{#8-0.38});

    \draw (#2) -- (#1U) -- (#4);
    \draw (#3) -- (#1D) -- (#5);
    \draw (#1U) -- (#1D);

    \node[big dot] at (#1U) {};
    \node[big dot] at (#1D) {};
}

\def\modB#1#2#3#4#5#6#7#8{%
    \coordinate (#1U) at ({#6+0.35*(#7-#6)},{#8+0.34});
    \coordinate (#1M) at ({#6+0.55*(#7-#6)},{#8});
    \coordinate (#1D) at ({#6+0.75*(#7-#6)},{#8-0.34});

    \draw (#2) -- (#1U) -- (#1M) -- (#1D) -- (#5);
    \draw (#3) -- (#1M);
    \draw (#1U) -- (#4);

    \node[big dot] at (#1U) {};
    \node[big dot] at (#1M) {};
    \node[big dot] at (#1D) {};
}

\def\modC#1#2#3#4#5#6#7#8{%
    \coordinate (#1U) at ({0.5*(#6+#7)},{#8+0.36});
    \coordinate (#1D) at ({0.5*(#6+#7)},{#8-0.36});
    \coordinate (#1M) at ({#6+0.65*(#7-#6)},{#8});

    \draw (#2) -- (#1U);
    \draw (#3) -- (#1D);
    \draw (#1U) -- (#1M) -- (#1D);
    \draw (#1U) -- (#5);
    \draw (#1M) -- (#4);
    \draw (#1D) -- (#5);

    \node[big dot] at (#1U) {};
    \node[big dot] at (#1M) {};
    \node[big dot] at (#1D) {};
}

\begin{scope}[shift={(-7,0)}]

    \mkcut{LT0}{0.00}{\ytop}
    \mkcut{LT1}{1.25}{\ytop}
    \mkcut{LT2}{2.60}{\ytop}
    \mkcut{LT3}{3.85}{\ytop}
    \mkcut{LT4}{5.15}{\ytop}

    \modA{LTA}{LT0T}{LT0B}{LT1T}{LT1B}{0.00}{1.25}{\ytop}
    \modB{LTB}{LT1T}{LT1B}{LT2T}{LT2B}{1.25}{2.60}{\ytop}
    \modC{LTC}{LT2T}{LT2B}{LT3T}{LT3B}{2.60}{3.85}{\ytop}
    \modA{LTD}{LT3T}{LT3B}{LT4T}{LT4B}{3.85}{5.15}{\ytop}

    \markcut{1.25}{\ytop}
    \markcut{2.60}{\ytop}
    \markcut{3.85}{\ytop}

    \rowdots{\ytop}

    \mkcut{LB0}{0.00}{\ybot}
    \mkcut{LB1}{1.25}{\ybot}
    \mkcut{LB2}{2.60}{\ybot}
    \mkcut{LB3}{3.85}{\ybot}
    \mkcut{LB4}{5.15}{\ybot}

    \modB{LBA}{LB0T}{LB0B}{LB1T}{LB1B}{0.00}{1.25}{\ybot}
    \modC{LBB}{LB1T}{LB1B}{LB2T}{LB2B}{1.25}{2.60}{\ybot}
    \modA{LBC}{LB2T}{LB2B}{LB3T}{LB3B}{2.60}{3.85}{\ybot}
    \modB{LBD}{LB3T}{LB3B}{LB4T}{LB4B}{3.85}{5.15}{\ybot}

    \markcut{1.25}{\ybot}
    \markcut{2.60}{\ybot}
    \markcut{3.85}{\ybot}

    \rowdots{\ybot}
    \verticaldots

\end{scope}

\begin{scope}[shift={(2,0)}]

    \mkcut{RT0}{0.00}{\ytop}
    \mkcut{RT1}{1.25}{\ytop}
    \mkcut{RT2}{2.60}{\ytop}
    \mkcut{RT3}{3.85}{\ytop}
    \mkcut{RT4}{5.15}{\ytop}

    \modA{RTA}{RT0T}{RT0B}{RT1T}{RT1B}{0.00}{1.25}{\ytop}

    \modB{RTB}{RT1B}{RT1T}{RT2T}{RT2B}{1.25}{2.60}{\ytop}

    \modC{RTC}{RT2B}{RT2T}{RT3T}{RT3B}{2.60}{3.85}{\ytop}

    \modA{RTD}{RT3B}{RT3T}{RT4T}{RT4B}{3.85}{5.15}{\ytop}

    \markcut{1.25}{\ytop}
    \markcut{2.60}{\ytop}
    \markcut{3.85}{\ytop}

    \rowdots{\ytop}

    \mkcut{RB0}{0.00}{\ybot}
    \mkcut{RB1}{1.25}{\ybot}
    \mkcut{RB2}{2.60}{\ybot}
    \mkcut{RB3}{3.85}{\ybot}
    \mkcut{RB4}{5.15}{\ybot}

    \modB{RBA}{RB0T}{RB0B}{RB1T}{RB1B}{0.00}{1.25}{\ybot}

    \modC{RBB}{RB1B}{RB1T}{RB2T}{RB2B}{1.25}{2.60}{\ybot}

    \modA{RBC}{RB2B}{RB2T}{RB3T}{RB3B}{2.60}{3.85}{\ybot}

    \modB{RBD}{RB3B}{RB3T}{RB4T}{RB4B}{3.85}{5.15}{\ybot}

    \markcut{1.25}{\ybot}
    \markcut{2.60}{\ybot}
    \markcut{3.85}{\ybot}

    \rowdots{\ybot}
    \verticaldots

\end{scope}

\draw[dashed,<->] (-0.55,0) -- (0.55,0);

\pgfresetboundingbox
\path[use as bounding box] (-7.6,-3.0) rectangle (7.6,3.0);

\end{tikzpicture}%
}
\caption{An example of a 2-ended simultaneous Whitney twist. \label{fig:2ended_twist}}
\end{figure}

\begin{rem}
    The (Borel) choice of the ends does not change, up to isomorphism of graphings, the result of the $2$-ended twist. Indeed, choosing differently at $\{x,f(x)\}$ can be isomorphically obtained from the original choice by swapping $x$ and $f(x)$. (For any two Borel choices of ends, the pairs where they differ is a Borel set, so swapping these $\{x,f(x)\}$ while fixing all other vertices is a measure preserving Borel bijection on the vertex set.) 
\end{rem}

Note that each of the above steps naturally defines an edge bijection on the graphings. We will refer to it as the edge bijection induced by the step.

\subsection{Infinitely many steps} \label{subsec:infinitely_many_steps}

In the finite case, any weak isomorphism can be implemented by a finite number of Whitney steps. For graphings, however, one might need infinitely many steps to achieve this. Indeed, let $\cG_1$  be a graphing that is a perfect matching between two Borel sets (of equal measure), and $\cG_2$ a 1-ended treeing. Any measure preserving bijection of the edge sets is a weak isomorphism, but we need to apply infinitely many finite vertex splits to decompose the 1-ended trees into edges.

Therefore, we will allow \emph{locally finite sequences} of Whitney steps. By local finiteness we mean that there are only finitely many splits, joins, or twists at each vertex of the graphing (up to measure zero).

\begin{ex} \label{ex:1-ended_tree_transform}
    Any pair of $1$-ended treeings $\cG_2$ and $\cG_2'$ can be obtained from one another by performing two locally finite sequences of Whitney steps. Indeed, we can decompose $\cG_2$ into the measurable matching $\cG_1$ by a locally finite sequence of finite vertex splits. Similarly, $\cG_2'$ can be decomposed into $\cG_1$ by a locally finite sequence of vertex splits. By reversing the second decomposition, we can reassemble $\cG_2'$ from $\cG_1$ by a locally finite sequence of finite vertex joins. 
\end{ex}

\begin{rem}
    A series of remarks are in order:
\begin{enumerate}
    \item It does not seem obvious to us that one can reverse any locally finite sequence of Whitney steps by another locally finite sequence. In the previous example, however, this is actually the case. We have a locally finite sequence of finite vertex splits $\alpha_1, \alpha_2, \ldots$, turning $\cG'_2$ into $\cG_1$. The reverse operation of each $\alpha_i$ is a finite vertex join, which we denote by $\alpha_i^{-1}$. But doing the $\alpha_i^{-1}$ in reverse order is not possible. Fortunately, if we perform the joins in the original order ($\alpha_1^{-1}$ first, $\alpha^{-1}_2$ second, etc.) we get a locally finite sequence of finite vertex joins that turn $\cG_1$ back into $\cG_2'$.
    
    \item The notation in the previous item is still imprecise (though only slightly). Strictly speaking, it does not make sense to apply the finite vertex join $\alpha^{-1}_1$ to $\cG_1$, since $\cG_1$ is a matching, whereas the graphing obtained by applying $\alpha_1$ to $\cG_2'$ is the union of some isolated edges (that we split of) and $1$-ended trees (that remain). Let us denote by $A_1$ the former, and by $B_1$ the latter. When we apply $\alpha_2$, we split $B_1$ further into isolated edges (denoted $A_2$) and $1$-ended trees (denoted $B_2$), etc. In the end $E(\cG_1) = \bigcup_{n=1}^{\infty} A_n$. When we apply  $\alpha_1^{-1}$ to $\cG_1$, we mean that we glue the edges of $A_1$ appropriately to the edges of $A_2$, and so on for each later $\alpha_i^{-1}$. In this reverse process, after each step, we still have finite components. The $1$-ended tree only emerges after infinitely many steps. 
    
    \item Fortunately, the only reversal of a locally finite sequence that we will need in our proof of Theorem~\ref{thm:general_whitney_twists} (and consequently, Theorem~\ref{thm:Whitney_2-isom_intro}) is of this kind.

    \item Let $\beta_1, \beta_2, \dots$ denote the finite vertex splits disassembling $\cG_2$ into $\cG_1$. It seems possible to run the two locally finite sequences $(\beta_n)_{n\in \N}$ and $(\alpha_n^{-1})_{n\in \N}$ in an intertwining fashion to implement a measure preserving bijection $\varphi: E(\cG_2) \to E(\cG_2')$ by only one locally finite sequence. If for a positive measure set of level 1 vertices of $\cG_2$ the $\varphi$-preimages of all the leaf-edges adjacent to them are already disassembled by the first finitely many $\beta_i$'s, we can assemble them by applying the appropriate restriction of $\alpha_1^{-1}$ to these edges before completing the full disassembly of $\cG_2$.
    
    \item More generally, it could be the case that the result of performing two locally finite sequences can always be obtained by a single locally finite sequence. We did not pursue this, as we felt it would only add technicality to the paper. 

    \item One could also argue that splitting a 1-ended treeing into single edges should be available as a single Whitney step. After all, the same is considered a single step in the 2-ended case. We opted to allow the most basic steps that are necessary to implement weak isomorphisms between graphings. This comes at the cost of having to use multiple locally finite sequences. 
    
    \item One cannot hope to implement weak isomorphisms between graphings by finitely many steps, even if one allows more complicated steps, like splitting $\leq2$-ended graphs into their $2$-connected components, or allowing simultaneous Whitney twists at many non-crossing cut-pairs. To see this, notice that any permutation of the edges of an $n$-cycle is a weak isomorphism. Yet we cannot implement all these by boundedly many steps of performing simultaneous Whitney twists at non-crossing cut-pairs. The number of possible choices of such cut-pairs is the so-called \emph{super Catalan number} \cite{oeisA001003}, which grows exponentially in $n$. So the product of boundedly many of them cannot reproduce $n!$ different permutations. 
\end{enumerate}
\end{rem}
We conclude this subsection by the following observation. 

\begin{lem} \label{lem:locally_finite_sequence_gives_weak_isom}
    Any locally finite sequence of Whitney steps from $\cG_1$ to $\cG_2$ induces a rank-preserving bijection of the edge sets.  
\end{lem}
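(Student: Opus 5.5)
By Lemma~\ref{lem:eq_of_rankpres} it suffices to show that the induced edge bijection $\varphi$ preserves the edge measure, preserves cycles, and preserves hyperfiniteness of Borel edge subsets. I would first treat a single Whitney step and then pass to locally finite sequences.

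\textbf{Single steps.} Each of the six steps maps the edges of a component through a cut vertex or cut pair, exactly as in the finite setting.
\begin{itemize}
\item \emph{Cycles.} Every cycle lies in one block, or passes through a cut pair in the manner of Whitney's finite analysis. So it is carried to a cycle, and the same argument applies to the inverse step.
\item \emph{Edge measure.} The bijections $f$, $\psi_0$ used in the definitions are measure preserving, and degrees are unchanged off the glued vertices. Hence $\eta$ is preserved, which one checks by computing $\eta(F)=\frac12\int\deg_F\,d\mu$ on each side.
\item \emph{Hyperfiniteness, finite steps.} Take a Borel $A$ with $A$ hyperfinite. Removing a set $E_0$ of measure $\le\varepsilon$ leaves finite components. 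The image components of $A\setminus E_0$ are obtained by regluing finite pieces, whose attachments only rearrange vertices within a finite component or attach a finite part. Thus finite components stay finite, though a finite part glued at $X_0$ may join two finite components. To handle this, I would enlarge $E_0$ by the edges incident to $X_0$ or $X_0\cup X_1$ that lie in the finite pieces. The boundary of the finite pieces has arbitrarily small measure after restricting to a tail, since the measure of the pieces with more than $n$ edges tends to zero. This makes everything finite.
\item \emph{Hyperfiniteness, 2-ended steps.} Here every component on either side is hyperfinite, since finite and $2$-ended graphs are amenable; see Adams~\cite{Adams:trees_amenability} and \cite{Miller_ends}. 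So every Borel subset is hyperfinite on both sides, and the condition holds trivially.
\end{itemize}

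\textbf{Locally finite sequences.} Let $\varphi_n$ be the composition of the first $n$ steps. By local finiteness, almost every edge, and indeed every finite edge set around a vertex, is moved only finitely often. Hence $\varphi=\lim\varphi_n$ exists pointwise a.e. and is a Borel bijection, with the inverse also a pointwise limit. Each $\varphi_n$ is rank preserving by the single-step case, and edge measure is preserved in the limit by dominated convergence.

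\emph{Cycles.} For a finite cycle $C$, the finitely many vertices involved stabilise after some $N(C)$ steps. So $\varphi(C)=\varphi_{N}(C)$ is a cycle, and likewise for $\varphi^{-1}$.

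\emph{Hyperfiniteness.} By Lemma~\ref{lem:eq_of_rankpres}(2), it is enough to show that hyperfinite subforests are preserved. I expect this to be the main obstacle, since the components of $\varphi(F)$ are limits and need not equal those of any $\varphi_n(F)$. I would use the rank characterisation of Lemma~\ref{lem:hyperfinite_forest_rank}(1), namely $\rho(F)=\eta(F)$, and argue as follows.
\begin{enumerate}
\item Fix $\varepsilon$ and choose $N$ such that the set $B_N$ of edges still moved after step $N$ has measure $<\varepsilon$.
\item Write $\rho_{\cG_2}(\varphi(F))\ge\rho_{\cG_2}(\varphi(F\setminus B_N))-\eta(B_N)$.
\item Note that $\varphi=\varphi_N$ on $F\setminus B_N$. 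The remaining issue is that later steps may still regroup components containing these edges via moved vertices.
\item Apply the rank estimate $\rho(X)\ge\rho(X\cup Y)-\eta(Y)$ twice, in $\cG_2$ and in the $N$-th intermediate graphing. This compares $\rho_{\cG_2}(\varphi(F\setminus B_N))$ with $\rho_N(\varphi_N(F\setminus B_N))=\eta(F\setminus B_N)$, up to an error controlled by the measure of edges incident to vertices still altered after time $N$. That measure tends to $0$ by local finiteness.
\end{enumerate}
Letting $\varepsilon\to0$ gives $\rho(\varphi(F))\ge\eta(F)$. Since rank is at most measure, equality holds, and $\varphi(F)$ is a hyperfinite forest. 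The symmetric argument applied to $\varphi^{-1}$ completes the proof.
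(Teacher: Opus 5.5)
Your strategy is sound, but at the limit step you take a different route from the paper. The paper fixes a hyperfinite $F$ and works with equivalence relations on it. With $F_n$ the edges not altered after step $n$, the $F_n$-connectedness relations $\cE'_n$ are hyperfinite, since they are contained in the connectedness relation after step $n$. They increase with $n$ and exhaust the final relation, so the paper concludes by citing that an increasing union of hyperfinite measure-preserving relations is hyperfinite. You instead reduce to forests via Lemma~\ref{lem:eq_of_rankpres}(2) and use the characterization $\rho=\eta$ of Lemma~\ref{lem:hyperfinite_forest_rank}: a frozen part of $F$ of nearly full measure is a hyperfinite forest in $\cG_2$, so $\rho_{\cG_2}(\varphi(F))\ge\eta(F)-\varepsilon$. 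This trades the increasing-union theorem for the rank characterization. Its real advantage is that the same truncation works for $\varphi^{-1}$: pull the frozen part of a hyperfinite forest of $\cG_2$ back through the rank-preserving finite composition $\varphi_N$. That direction is genuinely needed. The inverse of a locally finite sequence is not obviously one, and one-sided preservation does not suffice: the map in Example~\ref{ex:traingle_tree} sends hyperfinite sets to hyperfinite sets but is not rank-preserving. The paper's proof only treats images of hyperfinite sets and leaves edge-measure preservation implicit; you address both.

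Two places should be tightened.

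\emph{Step 4.} A rank inequality inside a single graphing cannot compare $\rho_{\cG_2}$ with $\rho_N$. The bridge is the observation the paper also relies on. Consider edges whose endpoints never change after step $N$, where a join counts as changing the endpoint at an identified vertex. Such edges have exactly the same components in the $N$-th intermediate graphing and in every later one. The reason is that two such edges can come to share, or cease to share, a vertex only through an operation that moves one of them. Hence $\varphi(F\setminus B_N)$ is already a hyperfinite forest in $\cG_2$, and monotonicity gives $\rho_{\cG_2}(\varphi(F))\ge\eta(F\setminus B_N)$ directly. The regrouping worry of Step 3, the subtraction in Step 2 and the extra error term are then unnecessary. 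Discarding edges at later-altered vertices is harmless if you prefer a notion of ``moved'' that ignores identifications.

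\emph{Single finite steps.} No enlargement of $E_0$ is needed. A join attaches each finite piece at a single vertex. A twist reattaches the finite side to the same pair $\{x,f(x)\}$, so a piece that joins two outer parts afterwards already joined them before. Finite components therefore stay finite.
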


\begin{proof}
    By Lemma~\ref{lem:eq_of_rankpres} preserving the rank is equivalent to preserving cycles and hyperfiniteness. These are clear from the definitions of the steps, so the edge-bijection is rank preserving after finitely many steps. We only need to argue that the same holds after performing the whole locally finite sequence. In case of cycles this is clear. 
    
    Let $F \subseteq E(\cG_1)$ be a hyperfinite edge set. Equivalently, the equivalence relation $\mathcal{E}_F$ on $F$ of being in the same connected component is hyperfinite. The edge set is not changed by the Whitney operations (up to the induced measure-preserving Borel edge-bijection), so we can take the point of view that the sequence of operations defines a sequence of equivalence relations $(\cE_n)_{n\in \N}$ on $F$, with $\cE_n$ relates edges that are in the same component after performing the first $n$ operations. We argued above that each $\cE_n$ is hyperfinite. Furthermore, let $\cE_{\infty}$ denote the connectedness relation after the completion of the whole sequence. We need to argue that $\cE_{\infty}$ is also hyperfinite. 

    For each $n \in \N$ denote by $F_n \subseteq F$ the set of edges which are not altered after the $n$-th Whitney step. $F_n $ is increasing, and, by our local finiteness assumption, $\cup_{n \in \N} F_n = F$. Let us denote by $\cE'_n$ the equivalence relation on $F$ defined by the $F_n$-connected components. (Edges in $F \setminus F_n$ are singleton classes in $\cE'_n$.) Clearly $\cE'_n \subseteq \cE_n$, so  $\cE'_n$ is also hyperfinite. Note that $\cE'_n$ is an increasing sequence: these edges are not altered later; in particular, ones that are connected at some point, cannot afterwards get disconnected. (In contrast, $\cE_n$ itself need not be increasing.) Finally, observe that $\cE_\infty = \bigcup_{n \in \N} \cE'_n$. As the increasing union of hyperfinite relations is hyperfinite in the measure preserving context, our proof is complete.
\end{proof}

\begin{rem}
   An alternative proof can be given using Benjamini--Schramm convergence \cite{benjamini2001recurrence}, for more information see \cite{lovasz2012large}. By Lemma~\ref{lem:eq_of_rankpres} it is enough to show that the image of a hyperfinite forest is a hyperfinite forest. Let $\cF\subseteq \cG_1$ be a hyperfinite subforest of $\cG_1$ and $\cF_n$ denote the image of $\cF$ after the first $n$ operations, clearly these are still hyperfinite forests. Let $F_{\infty}$ denote the image after the whole sequence is performed. By the local finiteness of the sequence, $F_n \to F_{\infty}$ in the Benjamini-Schramm sense. Finally, the Benjamini-Schramm limit of hyperfinite \emph{treeings} is a hyperfinite treeing, see \cite{schramm2011hyperfinite, aldous2016processes}.
\end{rem}

\subsection{The infinite ended case}

We first establish Whitney's $2$-isomorphism theorem for weakly $2$-connected, $\infty$-ended graphings. This part of the proof relies on the novel tools developed in this paper, namely covers by $\infty$-ended leafless subforests and banana decompositions. The argument here is relatively short, as all preparation was done in Subsection~\ref{subsec:preserving_comps_ends_bananas}, building on Theorems~\ref{thm:graph_isom} and \ref{thm:union_of_inf_ended_leafless}.

\begin{thm}\label{thm:infty_ended_whitney_twists}
    Let $\cG_1$ and $\cG_2$ be infinitely-ended, weakly $2$-connected graphings, and $\varphi: E(\cG_1) \to E(\cG_2)$ a weak isomorphism. Then there is a locally finite sequence of finite Whitney twists implementing $\varphi$. 
\end{thm}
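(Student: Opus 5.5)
We reduce to the banana decomposition. By Corollary~\ref{cor:bananas_preserved}, outside a nullset of components, $\varphi$ maps each maximal banana $B$ of $\cG_1$ onto a maximal banana $\varphi(B)$ of $\cG_2$; applying the same to $\varphi^{-1}$ gives a bijection between maximal bananas. As in the proof of Theorem~\ref{thm:connected_preserving}, pick one edge per banana, delete the rest of $B$ outside a boundary-to-boundary path $P_B$, and contract the rest of $P_B$. By Lemma~\ref{lem:banana_path_preserving}, $\varphi(P_B)$ is again a path joining $\partial\varphi(B)$. So the induced minor map $\varphi':E(\texttt{Ban}(\cG_1))\to E(\texttt{Ban}(\cG_2))$ is well defined and rank-preserving. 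Since $\texttt{Ban}(\cG_1)$ is weakly $3$-connected (Lemma~\ref{lem:banana_3-connected}) and infinitely-ended, Theorem~\ref{thm:graph_isom} shows that $\varphi'$ is induced by an isomorphism $\tilde\varphi$ of graphings between the banana skeleta. Concretely, this is a measure preserving Borel bijection between the sets of banana boundary vertices, with $\tilde\varphi(\partial B)=\partial\varphi(B)$ as unordered pairs.

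What remains is local to each banana. By Corollary~\ref{cor:extended_banana_cycle_preserving}, $\overline B$ is a finite $2$-connected graph and $\varphi:\overline B\to\overline{\varphi(B)}$ is cycle-preserving, mapping the virtual edge to the virtual edge. By the finite Whitney theorem, or Theorem~\ref{thm:thomassen} applied to finite graphs, this bijection is implemented by a finite sequence of Whitney twists of $\overline B$ followed by an isomorphism $\overline B'\cong\overline{\varphi(B)}$. We may additionally want the isomorphism to agree with $\tilde\varphi$ on $\partial B$. If the pairing of $\partial B$ is reversed, we perform one extra twist at the cut pair $\partial B$ itself, which is a cut pair of the whole component separating off the finite side $B$. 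Every twist of $\overline B$ at a cut pair $\{u,v\}$ not separating the virtual edge corresponds to a twist in the component of $\cG_1$ at $\{u,v\}$, whose twisted side is a finite part of $B$. If the pair does separate the virtual edge, we twist the other side instead; inside $\overline B$ this is equivalent up to relabeling, and inside $B$ that side is still finite. Hence each such twist is a finite Whitney twist in the sense of Subsection~\ref{subsec:whitney_operations}.

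For measurability, the space of pairs $(\overline B,\varphi|_{\overline B})$ up to isomorphism is countable. We Borel-select, for each isomorphism type, a fixed finite twist sequence, for example the lexicographically first in an enumeration. The $k$-th step is then performed simultaneously in all bananas requiring at least $k$ steps. Distinct maximal bananas are edge-disjoint, and their twisted sides lie inside them, so simultaneous twists do not interact. Each step is therefore a single finite Whitney twist of graphings, given by Borel sets $X_0$ and $X_1$ and a Borel choice of finite side. Each edge is moved only finitely often, because its banana needs only finitely many twists, so the sequence is locally finite. After all steps, the edge-induced map on every banana coincides with an isomorphism that matches $\tilde\varphi$ on the boundaries. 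These isomorphisms glue, together with $\tilde\varphi$, into a Borel measure preserving vertex bijection that induces $\varphi$.

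The main obstacle is the boundary-compatibility and gluing step. The local isomorphisms on different bananas must agree on shared boundary vertices, which holds because each is forced to match $\tilde\varphi$ there. Interior vertices are determined banana by banana. The twists used to fix the boundary orientation must also still count as finite twists, which holds because the finite side is $B$ itself. A secondary point is the measure preservation of the glued vertex map. Since it induces the edge-measure-preserving $\varphi$ and is injective, this should follow from the graphing identity applied to the pieces, but it deserves a careful check.
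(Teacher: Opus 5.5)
Your proposal is correct and follows essentially the same route as the paper. You show that $\varphi$ preserves maximal bananas, apply Theorem~\ref{thm:graph_isom} to the induced rank-preserving map on $\texttt{Ban}(\cG_1)$, and then use the finite Whitney theorem on each $\overline{B}$ with the virtual edge kept on the unaltered side, performing the resulting twists measurably and simultaneously.

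Your proof is more explicit than the paper's at two points. You invoke Lemma~\ref{lem:banana_path_preserving} to see that the minor taken in $\cG_2$ is $\texttt{Ban}(\cG_2)$. You also add twist(s) at $\partial B$ so that the local isomorphisms agree with $\tilde\varphi$ on boundary vertices, a gluing step the paper leaves implicit.

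One small caveat, which the paper also glosses over: distinct bananas may share a boundary vertex. Under the pairing $f:X_0\to X_1$ in the definition of a finite Whitney twist, their cut pairs cannot be twisted in the same step. To fix this, first split the bananas into countably many classes by a Borel proper colouring; the resulting sequence is still locally finite.
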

\begin{proof}
    By Theorem~\ref{thm:connected_preserving} and Lemma~\ref{cor:bananas_preserved}, $\varphi$ preserves the connected components and the banana decomposition. Consequently, it induces a map $\hat{\varphi}:E(\texttt{Ban}(\cG_1)) \to E(\texttt{Ban}(\cG_2))$. This map is rank-preserving, as we can construct $\texttt{Ban}(\cG_1)$ and $\texttt{Ban}(\cG_2)$ by minor operations that delete and contract $\varphi$-matched edges. As $\texttt{Ban}(\cG_1)$ is weakly 3-connected, by Theorem~\ref{thm:graph_isom},  $\hat{\varphi}$ is induced by a graphing isomorphism. It remains to apply Whitney twists to $\cG_1$ in order to ensure that $\varphi$ restricted to some maximal banana $B \subseteq \cG_1$ is induced by a graph isomorphism from $B$ to $\varphi(B)$. By Corollary~\ref{cor:extended_banana_cycle_preserving} $\varphi: \overline{B} \to \overline{\varphi(B)}$ preserves cycles, so by the finite Whitney theorem $\varphi$ can be implemented by a finite sequence of Whitney twists. We can assume that the auxiliary edge connecting $\partial B$ is always on the unaltered side of the twists. As the bananas are finite, we can choose such a sequence measurably for each maximal banana $B$. Since maximal bananas are disjoint, we can implement one Whitney twist in each maximal banana simultaneously by a finite Whitney twist of $\cG_1$. We might need a locally finite sequence of such finite Whitney twists, as the size of maximal bananas in $\cG_1$ might not be bounded.
\end{proof}

\subsection{The $\leq 2$-ended case}

\begin{thm} \label{thm:1-2-ended}
    Let $\cG_1$ and $\cG_2$ be strongly $2$-connected graphings whose a.e.\ component is $0$-, $1$-, or $2$-ended, and $\varphi: E(\cG_1) \to E(\cG_2)$ a measure-preserving Borel bijection that preserves cycles. Then there is a locally finite sequence of finite Whitney twists implementing $\varphi$. 
\end{thm}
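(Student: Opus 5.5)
Since the components are strongly $2$-connected, Theorem~\ref{thm:thomassen} applies in every component, but it gives a sequence of twists that is neither canonical nor measurable. I would make it canonical through Lemma~\ref{lemma:tutte_decomp_preserved} and then realise it by finite and $2$-ended twists of the graphing.

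First I reduce to the componentwise setting. In the $\le 2$-ended, strongly $2$-connected case, cycle preservation already preserves components: any two edges lie on a common cycle. Hence $\varphi$ maps components to components. By Theorem~\ref{thm:droms_servatius_servatius} every component has a unique Tutte decomposition, and by Lemma~\ref{lemma:tutte_decomp_preserved} $\varphi$ induces an isomorphism of Tutte trees. It maps each Tutte component (with its virtual edges) of $\cG_1$ cycle-preservingly onto the corresponding one of $\cG_2$. The Tutte decomposition of a finite or locally finite graph is determined locally by the $2$-separations, so the decomposition, the Tutte tree, and the correspondence are Borel.

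Next comes the local step. On a $3$-connected Tutte component, a cycle-preserving bijection is induced by an isomorphism. This is Whitney's rigidity theorem, or its locally finite version via the wedge argument of Claim~\ref{lem:w_to_w_cycle}, which needs only strong $3$-connectivity. On $k$-links the map is automatically an isomorphism. On cycles, the permutation of the edges is arbitrary, and I fix it using Whitney twists at cut-pairs of the cycle. Apart from this, the only remaining discrepancy is the orientation of each amalgamation edge of the Tutte tree. Mismatched orientations are corrected by one Whitney twist at the corresponding cut-pair. Each such correction is a twist at a $2$-separation $\{x,y\}$.

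I then schedule these twists measurably and locally finitely. By Lemma~\ref{lem:union_of_pairs}-type countable colorings of the locally finite graph of cut-pairs (two pairs adjacent if close), I split the needed twists into countably many Borel families of pairwise far-apart cut-pairs, and perform each family simultaneously.

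\begin{itemize}
\item If one side of the cut is finite, the family is a finite Whitney twist.
\item If the component is $2$-ended and both sides are infinite, the family consists of non-crossing cut-pairs separating the two ends, so it is a $2$-ended simultaneous Whitney twist.
\end{itemize}

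In a $0$- or $1$-ended component every $2$-separation has a finite side, so only finite twists occur. Within a cycle Tutte component, an arbitrary permutation is realised by an ordering of transposition-like twists, for example sorting adjacent edges by length, so that each edge is moved only finitely many times. I order the families so that each edge is touched finitely often, which is possible because each edge lies in a finite Tutte component and only finitely many Tutte-tree edges are incident to it. Lemma~\ref{lem:locally_finite_sequence_gives_weak_isom} then confirms that the resulting composite is the edge map induced by an isomorphism composed with the twists, namely $\varphi$.

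I expect the main obstacle to be the last step. The correction of orientations along an infinite Tutte tree must be done consistently: the choice of which side to flip must be Borel, and twists performed at one separation change the ``current'' map seen by later separations. In the $2$-ended case the Tutte tree has a bi-infinite spine, and there is no root from which to propagate the corrections. I would handle this by making a local decision at each spine amalgamation, comparing the two adjacent Tutte components' induced maps, since these comparisons commute across non-crossing cuts. I would then apply all of these decisions in one $2$-ended simultaneous twist, and root the finite branches hanging off the spine at their spine attachment.
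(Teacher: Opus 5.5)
Your proposal is correct and follows essentially the same route as the paper. Tutte decompositions are preserved by Lemma~\ref{lemma:tutte_decomp_preserved}, cycle components are corrected by measurably scheduled twists, and each amalgamation with disagreeing orientation is corrected by one finite or $2$-ended twist, with these corrections independent of one another. A few minor slips need fixing:
\begin{itemize}
\item Tutte components can be infinite (e.g.\ a $\Z^2$-component is a single $3$-connected one). Local finiteness should therefore come from each vertex lying in only finitely many Tutte components.
\item The vertex map on a $k$-link is only determined up to swapping its two vertices, so you need to fix it by a Borel choice. The paper avoids this by comparing the components on either side of a $k$-link directly.
\item The identification of the composite with $\varphi$ comes from gluing the componentwise vertex maps once all orientations agree, not from Lemma~\ref{lem:locally_finite_sequence_gives_weak_isom}.
\end{itemize}
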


We emphasize that this part of the argument does not rely on hyperfiniteness being preserved, only on the fact that cycles are. Note also that for finite components, the content of the statement is simply the finite Whitney theorem. This part of our proof uses the locally finite tools introduced in Subsection~\ref{subsec:locally_finite_tools}.

\begin{proof}
    A graph is 2-connected if and only if any pair of edges admits a cycle containing them. Consequently, as the $2$-connected components of $\cG_1$ and $\cG_2$ coincide with the connected components, $\varphi$ preserves connected components.

    We consider the Tutte decomposition of the components of $\cG_1$. As the decomposition is canonical, there is no issue of choosing the decomposition measurably. By Lemma~\ref{lemma:tutte_decomp_preserved}, $\varphi$ induces cycle-preserving maps on the components of the decompositions, including virtual edges. 
    
    Tutte components that are $3$-connected or $k$-links are mapped isomorphically under $\varphi$. This need not be the case for cycles, as any bijection of the edges is cycle-preserving. At the same time, any permutation of the edges of a cycle can be achieved by a finite sequence of Whitney twists. Therefore, by applying the appropriate Whitney twists to $\cG_1$, we may assume that $\varphi$ induces isomorphisms on the cycles as well. Indeed, as cycles are finite, for each one we can choose the finite sequence of twists measurably. We can measurably perform twists simultaneously on cycles that are vertex-disjoint. It is possible to choose such sets of cycles measurably: first, find a measurable proper 3-coloring of the vertices of the tree we get on the cycles by connecting two if they intersect. We can then simultaneously apply a twist at all cycles within one colour-class. We may need $2$-ended infinite twists, if there are infinitely many cyclic Tutte-components along a bi-infinite path in the Tutte-tree. Also, we may need an infinite sequence of such simultaneous twists, as the cycles might not have bounded length. Nevertheless, as the cycles are finite, the sequence of twists is locally finite.

    As the decomposition is preserved, and Tutte-components are mapped isomorphically, it only remains to consider how the amalgamations might differ in $\cG_1$ and $\cG_2$. We say two non-$k$-link components are \emph{neighbours}, if they are neighbours in the Tutte tree, or at distance 2, separated by a $k$-link. Any pair of neighboring components shares a cut-pair (of vertices) after the amalgamation. Consider such neighbouring components $G,H$ in $\cG_1$, with common vertices $\{x,y\}$. Then $\varphi(G), \varphi(H)$ share two vertices, say $u, v \in V(\cG_2)$. The stars of $x$ and $y$ in $G$, denoted $\texttt{Star}_{G}(x)$ and $\texttt{Star}_G(y)$, are mapped to $\texttt{Star}_{\varphi(G)}(u)$ and $\texttt{Star}_{\varphi(G)}(v)$, though not necessarily in order; and similarly with $H$ and $\varphi(H)$. Without loss of generality, we can assume $\texttt{Star}_{G}(x)$ is mapped to $\texttt{Star}_{\varphi(G)}(u)$ and $\texttt{Star}_G(y)$ to $\texttt{Star}_{\varphi(G)}(v)$. Depending on the orientation of the amalgamation of $\varphi(G)$ and $\varphi(H)$ in $\cG_2$, either $\texttt{Star}_{H}(x)$ is mapped to $\texttt{Star}_{\varphi(H)}(u)$ and $\texttt{Star}_{H}(y)$ to $\texttt{Star}_{\varphi(H)}(v)$, or the other way around. In the first case, we say the orientation of the amalgamation of $G$ and $H$ \emph{agrees} in $\cG_1$ and $\cG_2$, and we say it \emph{disagrees} in the second case. 

    If all orientations of amalgamations agree, $\varphi$ is induced by an isomorphism of graphings, and the proof is complete. On the other hand, disagreements can be resolved by Whitney twists of $\cG_1$ in a straightforward manner. Indeed, when there is disagreement at two non-$k$-link components that are directly amalgamated, we simply twist at the cut-pair corresponding to the amalgamated edges. When more components are amalgamated to a $k$-link, the disagreements partition the components into two classes, with all components in the same class agreeing and all components from different classes disagreeing on the orientation of their amalgamation. We twist, again at the appropriate cut-pair, all components of one class. As before, choices are made from finite sets, and the twists can be done in parallel as long as the cut-pairs are disjoint, which can be done measurably. We need to use countably many steps if there are $k$-links with unbounded $k$. Also, we need to use $2$-ended infinite twists in case there are disagreements along a bi-infinite line of neighboring components in the Tutte-tree. 
\end{proof}

\subsection{Proof of Whintey's 2-isomorphism theorem}

We are now ready to prove Theorem~\ref{thm:Whitney_2-isom_intro}. We will use finitely many locally finite sequences of Whitney operations to implement the weak isomorphism. The precise statement is the following.



\begin{thm}\label{thm:general_whitney_twists}
    Let $\cG_1$ is and $\cG_2$ be graphings, and $\varphi:E(\cG_1) \to E(\cG_2)$ a weak isomorphism. Then $\varphi$ can be implemented by the composition of finitely many locally finite sequences of Whitney operations.  
\end{thm}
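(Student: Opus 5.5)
The plan follows the four-step outline at the start of Section~\ref{sec:general_Whitney_for_graphings}. The aim is to reduce both graphings to disjoint unions of components of two kinds: strongly $2$-connected components with at most two ends, and weakly $2$-connected $\infty$-ended components. On each kind we then apply Theorem~\ref{thm:1-2-ended} or Theorem~\ref{thm:infty_ended_whitney_twists}, respectively.

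\textbf{Step 1: reduce $\cG_1$.} I would apply a locally finite sequence of finite vertex splits and $2$-ended infinite splits to $\cG_1$, obtaining a graphing $\cG_1^*$ of the desired form. In an $\infty$-ended component I would only split off finite pieces hanging at cut vertices, that is, cut vertices one of whose sides is finite. To do this measurably, I would iterate over cut vertices whose finite side has bounded size: at stage $n$, split off all finite sides of size $\le n$, choosing disjoint batches by a countable Borel coloring. This is locally finite because every finite side is eventually split off. What remains of an $\infty$-ended component then has no finite side at any cut vertex, and so is weakly $2$-connected (still $\infty$-ended, since only finite parts were removed). The finite pieces split off are finite graphs, which I would further split into blocks.

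In a $1$-ended component, every cut vertex has only finite sides except one, so iterating finite splits decomposes it into its finite blocks together with possibly one $1$-ended block. In a $2$-ended component, the cut vertices separating the two ends lie along a bi-infinite sequence, and I would use a single $2$-ended infinite split there, after first removing the finite hanging pieces by finite splits. The blocks produced are finite (hence $0$-ended and $2$-connected) or strongly $2$-connected with one or two ends. By Lemma~\ref{lem:locally_finite_sequence_gives_weak_isom} the induced map $\alpha:E(\cG_1)\to E(\cG_1^*)$ is a weak isomorphism.

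\textbf{Step 2: reduce $\cG_2$ and build the middle map.} I would do the same to $\cG_2$, obtaining $\beta:E(\cG_2)\to E(\cG_2^*)$. Set $\psi=\beta\circ\varphi\circ\alpha^{-1}$, a weak isomorphism $\cG_1^*\to\cG_2^*$. Both graphings satisfy the hypothesis of Theorem~\ref{thm:connected_preserving}, so $\psi$ and $\psi^{-1}$ preserve components, and by Corollary~\ref{cor:preserve_no_ends} $\psi$ preserves the number of ends. Hence $\psi$ splits as a disjoint union of two maps:
\begin{itemize}
\item a weak isomorphism between the $\infty$-ended parts, which is implemented by finite Whitney twists by Theorem~\ref{thm:infty_ended_whitney_twists};
\item a cycle-preserving measure-preserving bijection between the $\le2$-ended parts, which is implemented by finite and $2$-ended twists by Theorem~\ref{thm:1-2-ended}.
\end{itemize}
Running these two sequences simultaneously on disjoint parts gives a single locally finite sequence, and it ends in a graphing isomorphic to $\cG_2^*$ compatibly with $\psi$.

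\textbf{Step 3: return to $\cG_2$.} I would reverse $\beta$ by performing the corresponding joins, in the original order, as discussed in the remark following Example~\ref{ex:1-ended_tree_transform}. Composing the three sequences ($\alpha$, the twists realizing $\psi$, and the reversal of $\beta$) implements $\varphi$, using finitely many locally finite sequences.

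\textbf{Main obstacle.} I expect the hardest part to be Step~1 and its reversal. Step~1 must produce exactly the two component types measurably, including handling $1$-ended components whose cut-vertex tree is infinite. The reversal of $\beta$ must be a legitimate locally finite sequence of joins. For the $2$-ended part, one needs to check that the inverse of a $2$-ended infinite split is a $2$-ended infinite join of the same pieces. For the finite parts, I would argue as in the remark following Example~\ref{ex:1-ended_tree_transform}: each stage glues finite pieces to the already-assembled structure. One also has to confirm that the finite blocks produced can be matched up so that the edge bijections agree.
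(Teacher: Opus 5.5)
Your proposal is correct and follows the paper's proof essentially step by step. You disassemble both graphings by locally finite sequences of finite and $2$-ended infinite splits into strongly $2$-connected $\le 2$-ended components and weakly $2$-connected $\infty$-ended components, separate the two types via Theorem~\ref{thm:connected_preserving} and Corollary~\ref{cor:preserve_no_ends}, apply Theorems~\ref{thm:infty_ended_whitney_twists} and~\ref{thm:1-2-ended}, and finish by reversing the disassembly of $\cG_2$ as in Example~\ref{ex:1-ended_tree_transform}. Your extra details, namely the stage-wise splitting of finite sides and writing the middle map as $\beta\circ\varphi\circ\alpha^{-1}$, only make explicit what the paper states briefly.
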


As discussed in Subsection~\ref{subsec:infinitely_many_steps}, it is not clear if this implies the existence of a single locally finite sequence implementing $\varphi$.

\begin{proof}
    As cycles are preserved, $\varphi$ maps the 2-connected components of $\cG_1$ to $2$-connected components of $\cG_2$. We will first perform vertex splits in both $\cG_1$ and $\cG_2$ to obtain graphings $\cG_1'$ and $\cG_2'$ whose $0$-, $1$- and $2$-ended components are strongly 2-connected, and $\infty$-ended components are weakly $2$-connected. The original weak isomorphism $\varphi$ induces a weak isomorphism $\varphi'$ from $\cG_1'$ to $\cG_2'$. 
    
    Indeed, in case $\cG_1$ and $\cG_2$ have (a positive measure of) components that are not weakly $2$-connected, we can use a finite split to cut off a positive measure subset of the edges forming finite components. By a locally finite sequence of such splits, we can ensure that the infinite components of the two graphings are weakly $2$-connected. 1-ended components that are weakly $2$-connected are automatically strongly $2$-connected: a vertex cut would create at least 2 infinite components, contradicting 1-endedness. 2-ended components might admit cut vertices, creating exactly 2 infinite components at each cut. In this case, however, we can perform a $2$-ended infinite split to partition the edge set into finite components. Therefore, we can ensure strong $2$-connectedness of the 2-ended components as well. In the end, all finite components can be split into their $2$-connected components. This finishes the construction of $\cG_1'$ and $\cG_2'$.

    It suffices to show that $\varphi'$ can be implemented by finitely many locally finite sequences of Whitney operations, as we can pre-compose them with the disassembly of $\cG_1$ into $\cG_1'$ and post-compose with the reverse operation of the disassembly of $\cG_2$ into $\cG_2'$. (The latter itself can be implemented by a locally finite sequence, see Example~\ref{ex:1-ended_tree_transform}.)

    By Theorem~\ref{thm:connected_preserving}, and Corollary~\ref{cor:preserve_no_ends}, $\varphi'$ preserves the connected components of $\cG_1'$ and $\cG_2'$, as well as the number of ends of components. Hence we can treat the $\infty$-ended components separately from $0$-, $1$-, and $2$-ended ones. The former was done in Theorem~\ref{thm:infty_ended_whitney_twists}, and the latter was done in Theorem~\ref{thm:1-2-ended}. 


\end{proof}

\section*{Acknowledgments} We thank L\'azsl\'o Lov\'asz and Krist\'of B\'erczi for introducing us to Whitney's theorems in finite combinatorics and many insightful conversations. We also thank Konrad Wrobel for a helpful discussion related to Example~\ref{ex:ladder}.

M.B.\ and G.T. were supported in part by the ERC Synergy Grant No. 810115 - DYNASNET.

M.B.\ was also partially supported by the Hungarian National Research, Development and Innovation Office, Advanced grant 153378 and by the EKÖP-25 University Research Scholarship Program of the Ministry for 
Culture and Innovation from the Source of the National Research, Development and 
Innovation Fund.

G.T.\ was also partially supported by the RTG award grant (DMS-2134107) from the NSF.

L.M.T.\ was supported by the National Research, Development and Innovation Fund grants number KKP-139502 and STARTING 150723.

\bibliographystyle{alpha}
\bibliography{uniqueness}

\bigskip
\noindent
{\bf Márton Borbényi}\\
Department of Computer Science, Eötvös Loránd University, Budapest, Hungary,\\ and HUN-REN Alfréd Rényi Institute of Mathematics, Budapest, Hungary.\\ Email: \texttt{borbenyi.marton@renyi.hu}
\medskip
\ \\
{\bf Grigory Terlov}\\
Department of Statistics and Operations Research, University of North Carolina, Chapel Hill, NC, USA.\\ 
Email: \texttt{gterlov@unc.edu}
\medskip
\ \\
{\bf László Márton Tóth}\\
HUN-REN Alfréd Rényi Institute of Mathematics, Budapest, Hungary,\\
and Department of Algebra and Number theory, Eötvös Loránd University, Budapest, Hungary.\\
Email: \texttt{toth.laszlo.marton@renyi.hu}

\end{document}